\numberwithin{equation}{section}
\setlist[enumerate,1]{label={\rm(\arabic*)}, ref={\rm\arabic*}} 
\crefname{enumi}{}{}
\let\oref\ref
\tikzset{
	every loop/.style={very thick},
	comp/.style={circle,fill,black,,inner sep=0pt,minimum size=5pt},
	order bottom left/.style={pos=.05,left,font=\tiny},
	order top left/.style={pos=.9,left,font=\tiny},
	order bottom right/.style={pos=.05,right,font=\tiny},
	order top right/.style={pos=.9,right,font=\tiny},
	order node dis/.style={text width=.75cm},
	circled number/.style={circle, draw, inner sep=0pt, minimum size=12pt},
	below left with distance/.style={below left,text height=10pt},
    below right with distance/.style={below right,text height=10pt}
	}
\tikzset{
  symbol/.style={
    draw=none,
    every to/.append style={
      edge node={node [sloped, allow upside down, auto=false]{$#1$}}}
  }
}
\newtheorem{lemma}{Lemma}[section]
\newtheorem{theorem}[lemma]{Theorem}
\newtheorem{thm}[lemma]{Theorem}
\newtheorem{proposition}[lemma]{Proposition}
\newtheorem{conjecture}[lemma]{Conjecture}
\theoremstyle{definition}
\newtheorem{dfx}[lemma]{Definition}
\newtheorem{df}[lemma]{Definition}
\theoremstyle{remark}
\newtheorem{remark}[lemma]{Remark}
\newtheorem{example}[lemma]{Example}
\def\={\;=\;}  \def\+{\,+\,}
\def\be{\begin{equation}}   \def\ee{\end{equation}}
\def\bes{\begin{equation*}}    \def\ees{\end{equation*}}
\def\ba{\be\begin{aligned}} \def\ea{\end{aligned}\ee}
\def\bas{\bes\begin{aligned}}  \def\eas{\end{aligned}\ees}
\definecolor{Mgreen}{RGB}{140,180,0} %
\newcommand*{\lspl}{\widetilde{\psi}} 
\renewcommand{\tilde}{\widetilde}
\newcommand*{\lcm}{\operatorname*{lcm}}
\newcommand*{\Pic}{\operatorname{Pic}}
\newcommand{\rom}[1]{\textup{\uppercase\expandafter{\romannumeral#1}}}
\newcommand{\ol}{\overline}
\newcommand*{\calH}{\mathcal H}
\newcommand*{\calL}{\mathcal L}
\newcommand{\calM}{{\mathcal M}}
\newcommand{\calO}{{\mathcal O}}
\newcommand{\calN}{{\mathcal N}}
\newcommand*{\calX}{\mathcal X}
\newcommand{\calZ}{{\mathcal Z}}
\newcommand{\calY}{{\mathcal Y}}
\newcommand{\CC}{{\mathbb{C}}}
\newcommand{\PP}{{\mathbb{P}}}
\newcommand{\ZZ}{{\mathbb{Z}}}
\newcommand{\bfs}{{\boldsymbol{s}}}
\newcommand{\bft}{{\boldsymbol{t}}}
\newcommand{\bfz}{{\boldsymbol{z}}}
\newcommand{\bfsigma}{{\boldsymbol{\sigma}}}
\newcommand{\bfomega}{{\boldsymbol{\omega}}}
\newcommand{\barmoduli}[1][g]{{\overline{\mathcal M}}_{#1}}
\DeclareDocumentCommand{\teich}{ O{\Sigma} O{\bfs}}{{\mathcal T}_{(#1,#2)}}
\DeclareDocumentCommand{\Pteich}{ O{\Sigma} O{\bfs} O{\mu}}{\mathcal{T}_{(#1,#2)}(#3)}
\DeclareDocumentCommand{\bigPteich}{ O{\Sigma} O{\bfs} O{\mu}}{\mathcal{T}^+_{(#1,#2)}(#3)}
\DeclareDocumentCommand{\Oteich}{ O{\Sigma} O{\bfs} O{\mu}}{\Omega \mathcal{T}_{(#1,#2)}(#3)}
\DeclareDocumentCommand{\bigOteich}{ O{\Sigma} O{\bfs} O{\mu}}{\Omega\mathcal{T}^+_{(#1,#2)}(#3)}
\DeclareDocumentCommand{\augteich}{ O{\Sigma} O{\bfs}}{{\overline{\mathcal T}}_{(#1,#2)}}
\DeclareDocumentCommand{\BSteich}{ O{\Sigma} O{\bfs}}{{\overline{\mathcal T}}^{\rm BS}_{(#1,#2)}}
\DeclareDocumentCommand{\Bteich}{ O{\Lambda} }{{{\mathcal T}}_{#1}}
\DeclareDocumentCommand{\BBSteich}{ O{\Lambda} }{{{\mathcal T}}^{\rm BS}_{#1}}
\DeclareDocumentCommand{\BBBSteich}{ O{\Lambda} }{{\widehat{{\mathcal T}}}^{\rm BS}_{#1}}
\DeclareDocumentCommand{\BPteich}{ O{\Lambda} O{\mu}}{\mathcal{T}_{#1}(#2)}
\DeclareDocumentCommand{\OBnoGRC}{ O{\Lambda} O{\mu}}{\Omega^{no} {{\mathcal T}}_{#1}(#2)}
\DeclareDocumentCommand{\OBteich}{ O{\Lambda} O{\mu}}{\Omega {{\mathcal T}}_{#1}(#2)}
\DeclareDocumentCommand{\OBBSteich}{ O{\Lambda} O{\mu}}{\Omega{{\mathcal T}}^{\rm BS}_{#1}(#2)}
\DeclareDocumentCommand{\flatMod}{ O{\Sigma} O{\bfs} O{\mu}}{\operatorname{Mod}_{(#1,#2)}(\mu)}
\DeclareDocumentCommand{\TNP}{ O{\Sigma} O{\bfs} O{\mu}}{\operatorname{TNP}_{(#1,#2)}(\mu)}
\newcommand{\aG}{\Gamma}                  
\newcommand{\eG}{\Gamma}       
\newcommand{\eL}{\Lambda}                   
\newcommand{\ltop}[1][e]{\ell(#1^{+})}                   
\newcommand{\lbot}[1][e]{\ell(#1^{-})}                  
\DeclareDocumentCommand{\pin}{ O{z} O{\omega}}{P^{\rm in}_{#1}}
\DeclareDocumentCommand{\pout}{ O{z} O{\omega}}{P^{\rm out}_{#1}}
\newcommand*{\Textd}[1][\Lambda]{T^{\bullet}_{#1}}
\newcommand*{\Tnorm}[1][\Lambda]{\overline{T}_{#1}^{n}}
\newcommand*{\Tsnorm}[1][\Lambda]{\overline{T}_{#1}^{s}}
\newcommand*{\Tw}[1][\Lambda]{\mathrm{Tw}_{#1}}  
\newcommand*{\sTw}[1][\Lambda]{\mathrm{Tw}_{#1}^s}  
\DeclareDocumentCommand{\sTwi}{ O{\Lambda} O{i}}{\mathrm{Tw}_{#1,#2}^{sv}}  
\newcommand*{\msds}{multi-scale differentials\xspace}
\DeclareDocumentCommand{\kmoduli}{ O{\mu} O{g} O{n} }{\mathcal{M}_{#2,#3}(#1)}
\DeclareDocumentCommand{\barkmoduli}{ O{\mu} O{g} O{n} }{\mathcal{\overline{M}}_{#2,#3}(#1)}
\DeclareDocumentCommand{\obarkmoduli}{ O{\mu} O{g} O{n} }{\Omega\mathcal{\overline{M}}_{#2,#3}(#1)}
\DeclareDocumentCommand{\LMS}{ O{\mu} O{g,n}} {\Xi\overline{\mathcal{M}}_{#2}(#1)}
\DeclareDocumentCommand{\RBLMS}{ O{\mu} O{g,n}}
{\Xi\widehat{\mathcal{M}}_{#2}(#1)}
\DeclareDocumentCommand{\MDstratum}{ O{\Lambda} O{\Lambda'}}{\mathcal{MD}_{#1}^{#2}}
\DeclareDocumentCommand{\OMDstratum}{  O{\Lambda} O{\Lambda'}}{\Omega\mathcal{MD}_{#1}^{#2}}
\DeclareDocumentCommand{\MDstratums}{ O{\Lambda} O{\Lambda'}}{\mathcal{MD}_{#1}^{s,#2}}
\DeclareDocumentCommand{\PDstratum}{ O{\Lambda} O{\Lambda'}}{\mathcal{PD}_{#1}^{#2}}
\DeclareDocumentCommand{\ptwT}{ O{\Lambda} O{\mu}}{{\Omega\mathcal{T}^{pm}_{#1}(#2)}}
\DeclareDocumentCommand{\ptwTm}{ O{\Lambda} O{\mu}}{{\Omega\mathcal{T}^{pm}_{#1}(#2)^-}}
\DeclareDocumentCommand{\kptwT}{ O{\Lambda} O{\mu}}{{\Omega^{k}\mathcal{T}^{pm}_{#1}(#2)}}
\DeclareDocumentCommand{\ocpM}{ O{g} O{\mu}}{\Omega{\overline{\mathcal{M}}_{#1}({#2})}}
\DeclareDocumentCommand{\cpM}{ O{g} O{\mu}}{{\overline{\mathcal{M}}_{#1}({#2})}}  
\DeclareDocumentCommand{\Oaugteich}{ O{\Sigma} O{\bfs} O{\mu}}{\Omega \overline{\mathcal{T}}_{(#1,#2)}(#3)}
\DeclareDocumentCommand{\Okaugteich}{ O{\Sigma} O{\bfs} O{\mu}}{\Omega^{k} \overline{\mathcal{T}}_{(#1,#2)}(#3)}
\DeclareDocumentCommand{\Paugteich}{ O{\Sigma} O{\bfs} O{\mu}}{\PP \Omega \overline{\mathcal{T}}_{(#1,#2)}(#3)} 
\DeclareDocumentCommand{\Dstratum}{ O{\Lambda} O{\Lambda'}}{{\PP\Xi\mathcal D}_#1^{#2}}
\DeclareDocumentCommand{\Dstratums}{ O{\Lambda} O{\Lambda'}}{{\PP\Xi\mathcal D}_#1^{#2,s}}
\DeclareDocumentCommand{\ODstratum}{ O{\Lambda} O{\Lambda'}}{{\Xi\mathcal D}_#1^{#2}}
\DeclareDocumentCommand{\ODstratums}{ O{\Lambda} O{\Lambda'}}{{\Xi\mathcal D}_#1^{#2,s}}
\DeclareDocumentCommand{\ODstratumso}{ O{\Lambda} O{\Lambda'}}{{\Xi\mathcal D}_{#1,\circ}^{#2,s}}
\DeclareDocumentCommand{\ODstratumo}{ O{\Lambda} O{\Lambda'}}{{\Xi\mathcal D}_{#1,\circ}^{#2}}
\DeclareDocumentCommand{\sMSfmark}{ O{\mu} O{\eL}}{\mathbf{MS}^s_{(#1,#2)}}
\DeclareDocumentCommand{\MSfmark}{ O{\mu} O{\eL}}{\mathbf{MS}_{(#1,#2)}}
\DeclareDocumentCommand{\sMSfun}{ O{\mu} }{\mathbf{MS}^s_{#1}}
\DeclareDocumentCommand{\MSfun}{ O{\mu} }{\mathbf{MS}_{#1}}
\DeclareDocumentCommand{\MSfun}{ O{\mu} }{\mathbf{MS}_{#1}}
\DeclareDocumentCommand{\MSgrp}{ O{\mu} }{\mathcal{MS}_{#1}}
\DeclareDocumentCommand{\GMS}{ O{\mu} }{\mathcal{GMS}_{#1}}
\DeclareDocumentCommand{\GSMS}{ O{\mu} O{g,n}} {G\Xi\overline\calM_{#2}(#1)}
\DeclareDocumentCommand{\sMDfmark}{ O{\mu} O{\eL}}{\mathbf{MD}^s_{(#1,#2)}}
\DeclareDocumentCommand{\MDfmark}{ O{\mu} O{\eL}}{\mathbf{MD}_{(#1,#2)}}
\DeclareDocumentCommand{\MDfun}{ O{\mu} O{\eG}}{\mathbf{MD}_{(#1,#2)}}
\newcommand{\whg}{\widehat{g}}
\newcommand{\wh}[1]{{\widehat{#1}}}
\DeclareDocumentCommand{\Thick}{ O{(X,\bfz)} O{\epsilon}}{#1_{#2}}
\DeclareDocumentCommand{\LMSk}{ O{\mu} O{g}} {\Xi^{k}\overline\calM_{#2}(#1)}
\DeclareDocumentCommand{\LMSc}{ O{\wh\mu} O{\whg}} {\Xi\overline\calM_{#2}(#1)}
\DeclareDocumentCommand{\LMSkc}{ O{\wh\mu} O{\whg}} {\Xi^{\sim}\overline\calM_{#2}(#1)}
\newcommand{\TWGRC}[1][\oG]{TWGRC}
\newcommand{\TW}[1][\oG]{TW}
\newcommand*{\doublerightarrow}[2]{\mathrel{
  \settowidth{\@tempdima}{$\scriptstyle#1$}
  \settowidth{\@tempdimb}{$\scriptstyle#2$}
  \ifdim\@tempdimb>\@tempdima \@tempdima=\@tempdimb\fi
  \mathop{\vcenter{
    \offinterlineskip\ialign{\hbox to\dimexpr\@tempdima+1em{##}\cr
    \rightarrowfill\cr\noalign{\kern.5ex}
    \rightarrowfill\cr}}}\limits^{\!#1}_{\!#2}}}
\newcommand*{\triplerightarrow}[1]{\mathrel{
  \settowidth{\@tempdima}{$\scriptstyle#1$}
  \mathop{\vcenter{
    \offinterlineskip\ialign{\hbox to\dimexpr\@tempdima+1em{##}\cr
    \rightarrowfill\cr\noalign{\kern.5ex}
    \rightarrowfill\cr\noalign{\kern.5ex}
    \rightarrowfill\cr}}}\limits^{\!#1}}}
\newcommand{\on}[1]{\operatorname{#1}}
\newcommand{\bb}[1]{{\mathbb{#1}}}
\newcommand{\ca}[1]{{\mathcal{#1}}}
\newcommand{\bd}[1]{{\mathbf{#1}}}
\newcommand{\ul}[1]{{\underline{#1}}}
\newcommand{\Span}[1]{\left<#1\right>}
\newcommand{\lra}{\longrightarrow}
\newcommand{\hra}{\hookrightarrow}
\newcommand{\sub}{\subseteq}
\def\lowsim{\vbox to 0pt{\vss\hbox{$\scriptstyle\sim$}\vskip-1.5pt}}
\newcommand{\longiso}{\overset{\lowsim}\lra}
\newcommand{\iso}{\xrightarrow{\lowsim}}
\LetLtxMacro{\phiorig}{\phi}
\renewcommand{\phi}{\varphi}
\newcommand{\Mbar}{\overline{\ca M}}
\newcommand{\isom}{\iso}
\newcommand{\longisom}{\longiso}
\newcommand{\cat}[1]{\bd{#1}}
\newcommand{\trop}{\mathsf{trop}}
\newcommand{\gp}{\mathsf{gp}}
\newcommand{\Picabs}{\mathfrak{Pic}}
\newcommand{\aj}{\mathsf{aj}}
\newcommand{\M}{\mathsf{M}}
\newcommand{\ghost}{\overline{\mathsf{M}}}
\newcommand{\sltelement}{\xi}
\newcommand{\vedges}{E^v}
\newcommand{\coarse}{{\sf{coarse}}}
\newcommand{\MW}{\mathrm{MW}}
\newcommand{\pt}{\mathrm{pt}}
\DeclareRobustCommand\longtwoheadrightarrow
\newcommand{\supth}[1]{\ensuremath{#1^{\mathrm{th}}}} 
\title{A tale of two moduli spaces: Logarithmic \\and multi-scale differentials}
\author{Dawei Chen}
\address{Department of Mathematics, Boston College, Chestnut Hill, MA 02467, USA}
\email{dawei.chen@bc.edu}
\author{Samuel Grushevsky}
\address{Simons Center for Geometry and Physics, and Department of Mathematics, Stony Brook University, Stony Brook, NY 11794-3636,~USA}
\email{sam@math.stonybrook.edu}
\author{David Holmes}
\address{Mathematisch Instituut, Universiteit Leiden, Einsteinweg 55, 2333 CC Leiden, Netherlands} 
\email{holmesdst@math.leidenuniv.nl}
\author{Martin M\"oller}
\address{Institut f\"ur Mathematik, Goethe-Universit\"at Frankfurt,
Robert-Mayer-Str. 6-8, 60325 Frankfurt am Main, Germany}
\email{moeller@math.uni-frankfurt.de}
\author{Johannes Schmitt}
\address{ETH Zurich, Department Mathematik, Raemistrasse 101, CH-8092 Zurich, Switzerland} 
\email{johannes.schmitt@math.uzh.ch}
\begin{document}


\removeabove{1cm}
\removebetween{0.8cm}
\removebelow{1cm}

\maketitle

\begin{prelims}

\DisplayAbstractInEnglish

\medskip

\DisplayKeyWords

\smallskip

\DisplayMSCclass

\end{prelims}


\newpage

\setcounter{tocdepth}{1}

\tableofcontents


\section{Introduction}

\subsection{Background and main results}
Let $\mu = (m_1, \ldots, m_n)$ be a tuple of integers with $\sum_{i=1}^n m_i = 2g-2$.  The (projectivized) \emph{stratum of differentials of type} $\mu$ is the moduli space of smooth curves $X$ of genus $g$ with distinct marked points $z_1,\dots,z_n\in X$ such that $\sum_{i=1}^n m_i z_i$ is a (possibly meromorphic) canonical divisor.

The study of differentials with prescribed zeros and poles is important for at least two reasons. On the one hand, a (holomorphic) differential induces a flat metric with conical singularities at its zeros, such that the underlying Riemann surface can be realized as a polygon with edges pairwise identified by translations. Varying the shape of the polygons by affine transformations of the plane induces an action on the strata of differentials (called Teichm\"{u}ller dynamics), whose orbit closures (called affine invariant subvarieties) govern intrinsic properties of surface dynamics. On the other hand, a differential (up to multiplication by a scalar) corresponds to a canonical divisor in the underlying complex curve. Hence the union of the moduli spaces of differentials with all possible configurations of zeros stratifies the Hodge bundle over the moduli space of curves, thus producing a number of remarkable questions to investigate from the viewpoint of algebraic geometry, such as compactification, enumerative geometry, and cycle class calculations. The interplay of these aspects has brought the study of differentials to an exciting new stage (see, \textit{e.g.}, \cite{zorichsurvey, wrightsurvey, chensurvey} and the references therein for an introduction to this subject).

As in
many other moduli problems, having a geometrically meaningful compactification plays a crucial role in the study of the strata of differentials. Extending the setup of canonical divisors with prescribed zeros and poles to (pre-)stable curves, we define an algebraic stack $\GSMS$, the moduli space of {\em generalized {simple} multi-scale differentials of type~$\mu$}. The relative coarse moduli space $\GMS$ over $\barmoduli[g,n]$ of this stack is defined the same way as the multi-scale differentials in~\cite{LMS}, but dropping the global residue condition.\footnote{Our definition thus solves a task left open in~\cite{LMS}, namely to describe the smooth stack~$\LMS$ dominating the stack of multi-scale differentials~$\MSgrp$ without invoking Teichm\"uller markings.}  Compared to the multi-scale space, the key player in \cite{LMS}, the stack $\GSMS$ has additional irreducible components whose generic elements parameterize differentials on (strictly) nodal curves. Indeed $\GSMS$ maps onto the space of twisted canonical divisors constructed by Farkas--Pandharipande \cite{fapa}. A minimal logarithmic structure on the space of twisted canonical divisors was described in \cite{ChenChenLog}, which extracts the information of meromorphic differentials from lower levels, but does not specify the full level structure. The precise definitions on these related objects are recalled in~\ref{sec:famGMS}.

On the logarithmic side, Marcus and Wise \cite{MarcusWiseLog} defined, for any line bundle $\ca L$ on the universal curve $X_{g,n}$ over $\Mbar_{g,n}$, a space $\cat{Rub}_\ca L$ over $\Mbar_{g,n}$. The fiber of $\cat{Rub}_\ca L$ over a curve $X$ is the set of piecewise linear functions $\beta$ on the tropicalization of $X$, together with an isomorphism of line bundles from $\ca O_X(\beta)$ to $\ca L$. The natural $\bb C^*$ quotient, which forgets the data of the isomorphism, is denoted by $\bb P(\cat{Rub}_\ca L)$. When $\ca L = \ca O_{X_{g,n}}(\sum_i m_i z_i)$, this space $\bb P(\cat{Rub}_\ca L)$ is the space of rubber maps to $\bb P^1$ with zeros and poles prescribed by the $m_i$, giving an alternative definition to that of Li, Graber and Vakil \cite{Li2001Stable-morphism, GV}. This machinery gives an extremely clean and functional definition of the double ramification cycle, as well as its logarithmic, pluricanonical, universal,
and iterated variants; see \cite{BHPSS, HolmesSchmitt, MPS, MR, HMPPS}. 

To connect this space with moduli of differentials, we define the line bundle
\[\ca L_\mu = \omega_{X_{g,n}/\Mbar_{g,n}}\left(-\sum_{i=1}^n m_i z_i\right)\]
on $X_{g,n}$, leading to the stack $\cat{Rub}_{\ca L_\mu}$, together
with its relative coarse moduli space $\cat{Rub}_{\ca L_\mu}^{\sf{coarse}}$ over $\Mbar_{g,n}$.\footnote{See \cite{Abramovich2011Twisted-stable-} for the definition of relative coarse moduli spaces. Moreover, note that one can replace $\omega$ with any power $\omega^{\otimes k}$ in the formula for $\ca L_\mu$, extending the theory to $k$-canonical divisors.} The virtual fundamental class of $\bb P(\cat{Rub}_{\ca L_\mu}^{\sf{coarse}})$ is the `canonical' double ramification cycle described in~\cite{HolmesSchmitt}.

The definitions of the spaces $\cat{Rub}_{\ca L_\mu}$ and $\GMS$ look very different. They can be found in~\Cref{sec:rubber,sec:famGMS}, respectively. The main aim of this paper is to show that these definitions are in fact \emph{essentially equivalent}. More precisely, we prove the following theorem.

\begin{thm} \label{intro:mainiso} For any tuple of integers $\mu = (m_1, \ldots, m_n)$
with $\sum_{i=1}^n m_i = 2g-2$, there is an isomorphism of algebraic stacks over $\Mbar_{g,n}$
\bes
F \colon \cat{Rub}_{\ca L_\mu} \lra \GSMS 
\ees
which induces an isomorphism of the corresponding relative coarse
moduli spaces over $\Mbar_{g,n}$
\bes
\ol{F} \colon \cat{Rub}_{\ca L_\mu}^{\sf{coarse}} \lra \GMS\,.
\ees
\end{thm}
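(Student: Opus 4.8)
The plan is to construct $F$ by producing, from a point of $\cat{Rub}_{\ca L_\mu}$ over a (pre)stable curve $X/S$, all the data defining a generalized simple multi-scale differential: a level structure on the dual graph, twisted differentials level-by-level, and prong-matchings; and then to show this assignment is an isomorphism of stacks by building its inverse and checking compatibility on functors of points. First I would unpack the rubber datum: a piecewise-linear function $\beta$ on the tropicalization $\trop(X)$ together with an isomorphism $\ca O_X(\beta) \xrightarrow{\sim} \ca L_\mu = \omega_{X/S}(-\sum m_i z_i)$. The key observation is that the slopes of $\beta$ along the edges of the dual graph encode exactly the orders of vanishing of the associated (twisted) differential along the two branches of each node, while the level graph is recovered by ordering the vertices of $\trop(X)$ according to the values of $\beta$ (constant on each irreducible component of a fixed tropical curve, after passing to a suitable subdivision). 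On each level, the isomorphism $\ca O_X(\beta)\cong \omega_{X/S}(-\sum m_i z_i)$ restricted to the corresponding subcurve produces, after twisting by the divisor recording $\beta$, a section of the dualizing sheaf with the prescribed zeros and poles — this is the level-wise twisted differential. The prong-matchings should come from comparing the leading coefficients of these differentials at the two sides of each vertical node, which the isomorphism of line bundles rigidifies.

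Next I would go in the reverse direction: given a generalized simple multi-scale differential in $\GSMS$, the level graph gives a piecewise-linear function $\beta$ on $\trop(X)$ (the level of a vertex, suitably normalised, with integral slopes on edges dictated by the vanishing orders), and the twisted differentials glue — using precisely the compatibility encoded in being a twisted differential of type $\mu$, plus the prong-matching data to handle the transition across levels — into a single isomorphism $\ca O_X(\beta)\xrightarrow{\sim}\omega_{X/S}(-\sum m_i z_i)$. One then checks $F$ and this construction are mutually inverse. Throughout, both sides are categories fibered in groupoids over $\Mbar_{g,n}$, and one must verify that $F$ commutes with base change and respects isomorphisms (equivariance for the $\CC^*$-actions scaling the differential / translating $\beta$ is what will descend $F$ to $\ol F$ on relative coarse moduli spaces, using the functoriality of relative coarsification from \cite{Abramovich2011Twisted-stable-}).

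The main obstacle, I expect, is the careful bookkeeping of \emph{integrality and prong-matchings}. On the multi-scale side the data is discrete and rigid (integral level functions up to rescaling, prong-matchings valued in finite cyclic groups), whereas on the rubber side $\beta$ a priori lives in a real cone and the line-bundle isomorphism is continuous data; one has to show the simple multi-scale condition (totally ordered level graph, no horizontal edges in the generic stratum of each component) corresponds exactly to the open locus of rubber maps with the expected combinatorics, and that the torsor of prong-matchings matches the finite ambiguity in trivializing $\ca O_X(\beta)$ along vertical nodes. A second, more technical point is handling prestable (not just stable) curves and the stabilization/contraction maps on both sides compatibly — on the logarithmic side this is built into $\cat{Rub}_{\ca L}$ via subdivisions of tropicalizations, and one must match it with the semistable models underlying multi-scale differentials. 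Once the bijection on objects and the compatibility with morphisms and base change are in hand, that $F$ is an isomorphism of stacks (rather than merely an equivalence of fibers) is formal, and $\ol F$ follows by the universal property of relative coarse moduli spaces.
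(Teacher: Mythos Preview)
Your high-level outline is correct in spirit---the level graph comes from the values of $\beta$, and the isomorphism $\ca O_X(\beta)\cong \ca L_\mu$ restricted level-by-level produces the twisted differentials---but it misses the central technical mechanism of the paper, and without it several steps do not go through.

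A generalized \emph{simple} multi-scale differential carries, beyond the level graph and twisted differentials, a \emph{simple rescaling ensemble} $R^s\colon B\to \overline T^s_{\Gamma_b}$ (a compatible choice of level parameters $t_i$ and smoothing parameters $f_e$) and prong-matchings, all considered up to the action of the simple level rotation torus. You do not explain where $R^s$ comes from, and ``comparing leading coefficients'' is not enough to produce prong-matchings. The paper's key device is the notion of a \emph{log splitting} $\tilde\psi\colon \tilde P\to \M_B$ of the quotient $\M_B\to\ghost_B$ (working with the \emph{minimal} log structure, whose characteristic monoid is exactly $\tilde P\oplus \bb N\langle E^h\rangle$). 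A choice of log splitting simultaneously manufactures the level parameters $t_i=\alpha(\tilde\psi(p_i))$, the smoothing parameters $f_e=\alpha(\tilde\psi(g(e)))$, the rescaled differentials $\omega_{(i)}$, and---via the canonical identification $\ca O_B(\delta(e))|_{B_e}\cong \ca N_e^\vee$---the prong-matchings $\sigma_e=(\tilde\psi\circ g)(e)|_{B_e}$. Two log splittings differ by an element of $\Hom(\tilde P,\ca O_B^\times)$, and this torsor structure is precisely what matches the simple level rotation torus action on the MSD side. Without this, you have no mechanism for producing $R^s$, no explanation of why the resulting MSD is well-defined up to the correct equivalence, and no way to see that the relative automorphism groups agree.

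Two further points. First, $\beta$ does not ``live in a real cone''; it takes values in $\ghost_B^\gp$, and the integrality is already built in---the genuine subtlety is the saturation condition (2) in the definition of $\cat{Rub}$, which corresponds to the divisibility needed for the $t_i$ to exist. Second, the coarse-moduli statement is not formal: one must show that $\GSMS\to\GMS$ \emph{is} the relative coarse space over $\Mbar_{g,n}$, which the paper does by identifying the fiber of $\GSMS$ over a versal chart of $\GMS$ with $[\overline T^s_\Gamma/K_\Gamma]$ and using that $\overline T^n_\Gamma$ is its coarse space.
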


Note that the global residue condition described in \cite{BCGGM1} can isolate the main component of $\GMS$, called the {\em multi-scale} space and denoted by ${\mathcal MS}_\mu$. In other words, a generalized multi-scale differential not satisfying the global residue condition is not smoothable while preserving the prescribed zero and pole orders. (This global residue condition arises from applying Stokes' theorem to subcurves of the limiting nodal curve when differentials degenerate from nearby smooth curves, thereby imposing that certain sums of residues at the nodes vanish. See \textit{op.~cit.}~for further details.) Moreover, in \cite{LMS} the space of multi-scale differentials ${\mathcal MS}_\mu$ was shown to possess nice geometric properties, such as smoothness (as a stack), normal crossings boundary, and extension of the ${\rm GL}_2(\mathbb R)$-action to the boundary (after a real oriented blowup). It would be interesting to see whether these properties can be obtained directly by using rubber differentials and logarithmic geometry. 
 
\subsection{Applications and related topics}
In what follows we address several constructions, results, and conjectures related to the main result above. 
\subsubsection{A blowup description of the space of multi-scale differentials}
First, describing a modular compactification via {\em blowups} can be useful in many aspects, \textit{e.g.}, for projectivity and intersection calculations. There is a natural action of $\CC^*$ on generalized multi-scale differentials by simultaneous rescaling of all differentials, and we denote the quotient, the space of `projectivized generalized multi-scale differentials', by $\PP(\GMS)$; \ref{intro:mainiso} induces an isomorphism $\PP(\cat{Rub}_{\ca L_\mu}^{\sf{coarse}}) \isom \PP(\GMS)$. 

In the case of genus zero, we can identify $\PP(\cat{Rub}_{\ca L_\mu}^{\sf{coarse}})$ with a blowup of $\overline{\mathcal M}_{0,n}$.  

\begin{thm}[\ref{thm:blowup-0}]
\label{intro:blowup-0}
For $g=0$ there exists an explicit sheaf of ideals in $\overline{\mathcal M}_{0,n}$ such that the normalization of its blowup is  $\PP(\cat{Rub}_{\ca L_\mu}^{\sf{coarse}})$. 
\end{thm}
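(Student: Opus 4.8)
The plan is to realize $\PP(\cat{Rub}_{\ca L_\mu}^{\sf{coarse}})$ as a toroidal modification of $\overline{\mathcal M}_{0,n}$ and then to translate that modification into a blowup. Recall that $\overline{\mathcal M}_{0,n}$ is a smooth projective variety with normal crossings boundary, hence a toroidal embedding without self-intersection; its associated cone complex $\Sigma_{0,n}$ is the moduli space of $n$-marked stable tropical curves of genus $0$, glued from the cones $\sigma_\Gamma\cong\RR_{\geq 0}^{E(\Gamma)}$ indexed by stable $n$-marked trees $\Gamma$. By the theory of toroidal embeddings, proper birational toroidal modifications of $\overline{\mathcal M}_{0,n}$ correspond to subdivisions of $\Sigma_{0,n}$, and each such modification is obtained by blowing up an ideal sheaf supported on the boundary. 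The strategy therefore has three steps: (i) show $\pi\colon\PP(\cat{Rub}_{\ca L_\mu}^{\sf{coarse}})\to\overline{\mathcal M}_{0,n}$ is a toroidal modification; (ii) describe the subdivision $\Sigma_\mu$ it induces; (iii) convert $\Sigma_\mu$ into an explicit ideal sheaf. (Equivalently one could work with $\PP(\GMS)$ by \ref{intro:mainiso}; the two routes coincide.)

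Step (ii) is the heart of the matter, and in genus $0$ it is transparent. Over the open stratum with dual tree $\Gamma$, a point of $\cat{Rub}_{\ca L_\mu}$ is a piecewise-linear function $\beta$ on the associated tropical curve together with an isomorphism $\ca O_X(\beta)\to\ca L_\mu$; since $X$ has genus $0$ we have $\Pic^0(X)=0$, so such an isomorphism, when it exists, is unique up to the scalar that is quotiented out in $\PP(\cat{Rub}_{\ca L_\mu})$. Hence a point of $\PP(\cat{Rub}_{\ca L_\mu})$ over this stratum is simply a piecewise-linear $\beta$ with prescribed tropical divisor $\operatorname{div}_\Gamma(\beta)=\trop(\ca L_\mu)$, a divisor on $\Gamma$ determined by $\mu$. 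Because $\Gamma$ is a tree, integrating this divisor from the leaves inward pins down the slope of $\beta$ along every edge in terms of $\mu$ alone; thus, modulo the global additive constant, the value $\beta_\ell(v)$ at a vertex $v$ is a fixed $\ZZ$-linear function of the edge-length vector $\ell\in\sigma_\Gamma$, and the only remaining combinatorial datum is the ordering of the values $\beta_\ell(v)$. Consequently $\sigma_\Gamma$ gets subdivided by the level hyperplanes $H_{v,v'}=\{\ell:\beta_\ell(v)=\beta_\ell(v')\}$, cut out by the $\ZZ$-linear forms $L_{v,v'}(\ell):=\beta_\ell(v)-\beta_\ell(v')$; these have mixed sign exactly when the slopes of $\beta$ alternate along the path from $v$ to $v'$, which is why $\pi$ is genuinely nontrivial only in codimension $\geq 2$. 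These subdivided cones are compatible with the face maps of $\Sigma_{0,n}$ — contracting an edge amounts to setting the relevant $\ell_e=0$, under which tropical divisors and piecewise-linear functions restrict compatibly — so they glue to a global subdivision $\Sigma_\mu$ of $\Sigma_{0,n}$. Comparing functors of points over the common open $\mathcal M_{0,n}$ and invoking properness then shows $\pi$ is the toroidal modification attached to $\Sigma_\mu$, which also settles step (i) (and matches what one extracts from the Marcus--Wise construction, which exhibits $\cat{Rub}_{\ca L_\mu}$ as a log modification of $\overline{\mathcal M}_{0,n}$).

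For step (iii), work in boundary coordinates $(x_e)_{e\in E(\Gamma)}$ near the stratum of $\Gamma$ and write each form as a difference of its positive and negative parts, $L_{v,v'}=L_{v,v'}^+-L_{v,v'}^-$ with $L^\pm\in\ZZ_{\geq 0}^{E(\Gamma)}$; set $x^{L^\pm}:=\prod_e x_e^{L^\pm_e}$. Subdividing $\sigma_\Gamma$ along a single $H_{v,v'}$ is the blowup of the two-generated monomial ideal $(x^{L_{v,v'}^+},x^{L_{v,v'}^-})$, and the common refinement $\Sigma_\mu$ is produced from the ideal sheaf $\ca I_\mu$ that is locally the product of these ideals over all pairs $\{v,v'\}$ (after replacing it by a suitable saturation, absorbing the normalization inherent in the toroidal construction). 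The positive-part monomials are stable under the face maps, so these local monomial ideals patch to a coherent ideal sheaf $\ca I_\mu$ on $\overline{\mathcal M}_{0,n}$, and by construction $\Bl_{\ca I_\mu}\overline{\mathcal M}_{0,n}\cong\PP(\cat{Rub}_{\ca L_\mu}^{\sf{coarse}})$.

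The main obstacle I anticipate is precisely this last passage. First, the locally defined monomial ideals must be assembled into an honest, canonical coherent ideal sheaf on $\overline{\mathcal M}_{0,n}$, which is not toric, so one cannot simply quote a toric statement; one has to track the behaviour of the forms $L_{v,v'}$ and their positive parts under all edge-contractions and the automorphism groups $\Aut(\Gamma)$. Second, one needs the \emph{un-normalized} blowup of the chosen ideal to already coincide with the (normal) toroidal modification, which may force a careful choice of generators rather than the naive product, together with a direct check that the new cones of $\Sigma_\mu$ are smooth enough. Both difficulties are governed entirely by piecewise-linear functions on trees, which is what keeps the genus-zero case tractable; the higher-genus phenomena — cycles in the dual graph, and a nontrivial Jacobian so that $\beta$ is no longer determined by $\ell$ — are dealt with separately via a global blowup of the incidence variety compactification.
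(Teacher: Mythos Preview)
Your approach is essentially the paper's: identify the subdivision of $\Sigma_{0,n}$ by the hyperplanes $\beta_\ell(v)=\beta_\ell(v')$, translate each hyperplane into the two-generator monomial ideal $(x^{L_{v,v'}^+},x^{L_{v,v'}^-})$, and take a product over pairs. The paper writes these ideals as $J(v,v')=I(v,v')+I(v',v)$ with $I(v,v')=\prod_{h\in\gamma}\delta(h)^{\max(\kappa(h),0)}$ along the path $\gamma$ from $v$ to $v'$, which is exactly your $(x^{L_{v,v'}^+},x^{L_{v,v'}^-})$.

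The gap is precisely the one you flag, and your tentative resolution of it is not correct. The naive product $\prod_{\{v,v'\}} J(v,v')$ does \emph{not} glue: if $e$ joins $v_1,v_2$ and contracts to $v'$, then after inverting $\delta(e)$ one has $J(v_1,u)\sim J(v_2,u)\sim J(v',u)$ for each other vertex $u$, so the contracted product contributes one such factor per $u$ while the uncontracted product contributes two. The paper's fix is not saturation but a choice of \emph{exponents}: set $w(v)=\operatorname{valence}(v)-2$ (positive by stability) and take
\[
J(\Gamma)=\prod_{(v,v')\in V\times V} J(v,v')^{\,w(v)\,w(v')}.
\]
Since $w(v')=w(v_1)+w(v_2)$ under contraction, the exponents rebalance and $J(\Gamma')=J(\Gamma)$ after inverting $\delta(e)$; this is the content of the paper's compatibility lemma and is what makes $J(\ca L_\mu)$ a global ideal sheaf. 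Note also that the paper does \emph{not} arrange for the un-normalized blowup to already be the target: the statement proved is that the \emph{normalization} of $\Bl_{J(\ca L_\mu)}\Mbar_{0,n}$ equals $\PP(\cat{Rub}_{\ca L_\mu}^{\sf{coarse}})$, so your second anticipated difficulty is simply sidestepped rather than overcome.
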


We recall that the projectivized stratum of differentials can be compactified in different ways. Firstly, one can consider simply its closure in the Deligne--Mumford compactification $\Mbar_{g,n}$. Secondly, one can consider the closure of the stratum in the total space of the projectivized Hodge bundle over $\Mbar_{g,n}$ (twisted by the polar parts). This compactification is described completely in \cite{BCGGM1}, and is called the incidence variety compactification (IVC). The IVC clearly admits a morphism onto the Deligne--Mumford closure of the stratum, while $\PP({\mathcal MS}_\mu)$ maps onto the IVC, and in general both these morphisms are `forgetful', \textit{i.e.}~contract some loci in the compactifications. We further write NIVC for the normalization of the IVC.

In \cite{nguyen} Nguyen showed that, in the case of genus zero, the IVC can be described as an explicit blowup of $\Mbar_{0,n}$. From the above theorem, one can also retrieve Nguyen's result, which we do in~\ref{prop:blowup-0}.  
 
In arbitrary genus, recall that the multi-scale space ${\mathcal MS}_\mu$ is the main component of $\GMS$, whose generic element parameterizes differentials with prescribed zero and pole orders on smooth curves. 
 
\begin{thm}[\ref{thm:blowup-g}]
For arbitrary genus there exists a global sheaf of ideals on the NIVC such that the normalization of the blowup of the NIVC along this ideal  gives the projectivized multi-scale space $\PP({\mathcal MS}_\mu)$.  Consequently, the coarse moduli space of the stack $\PP({\mathcal MS}_\mu)$ is a projective variety.  
\end{thm}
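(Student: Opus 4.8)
The plan is to upgrade the genus-zero statement \ref{intro:blowup-0} to arbitrary genus by working relative to the NIVC instead of relative to $\Mbar_{g,n}$. By \ref{intro:mainiso} and the discussion of the $\CC^*$-action, we have $\PP({\mathcal MS}_\mu) \cong \PP(\cat{Rub}_{\ca L_\mu}^{\coarse})|_{\rm GRC}$, i.e.\ the multi-scale space is the closed substack of $\PP(\cat{Rub}_{\ca L_\mu}^{\coarse})$ cut out by the global residue condition; so it suffices to realize this substack as a (normalized) blowup of the NIVC. First I would recall that there is a natural forgetful morphism $\PP({\mathcal MS}_\mu)\to{\rm NIVC}$: a multi-scale differential on $(X,\bfz)$ determines, by taking the projectivized top-level differential together with its induced polar parts, a point of the IVC, and this lifts to the normalization because $\PP({\mathcal MS}_\mu)$ is smooth, hence normal, by \cite{LMS}. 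The content of the theorem is that this morphism is a blowup followed by normalization.

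The key step is to produce the sheaf of ideals on the NIVC. Here I would mimic the genus-zero construction: on the NIVC one has tautologically the data of a stable curve with a section of (a twist of) its relativized dualizing sheaf, and the boundary strata of the NIVC are indexed by the possible vanishing orders / degeneration types. On each such stratum the multi-scale space records, in addition, the relative scaling parameters between levels of the associated level graph — precisely the extra data that the blowup must remember. The ideal sheaf $\calI$ should be the product (over the relevant boundary divisors of the NIVC, or more precisely over codimension-one data associated to edges of level graphs) of the ideals generated by the local equations of those divisors together with the functions recording the leading coefficients of the differential along the nearby smooth locus — this is exactly the $g=0$ recipe of \ref{intro:blowup-0}, transported along the universal curve. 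One then checks, working \'etale-locally on the NIVC using the plumbing/coordinate description of the IVC from \cite{BCGGM1} and the local charts for multi-scale differentials from \cite{LMS}, that blowing up $\calI$ and normalizing produces exactly the local model of $\PP({\mathcal MS}_\mu)$: the extra Proj coordinates introduced by the blowup match the projectivized level-rotation torus coordinates $T_\Lambda$, and normalization is needed precisely because the scaling parameters appear to powers given by the edge lengths (the $\ell(e)$), so the naive blowup is non-normal along these loci. The universal property of blowup, applied to the morphism $\PP({\mathcal MS}_\mu)\to{\rm NIVC}$ (whose source is normal), then gives a canonical map from $\PP({\mathcal MS}_\mu)$ to the normalized blowup, and the local computation shows it is an isomorphism.

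The main obstacle, I expect, is the local analysis at the deepest boundary strata, where the level graph has many levels and the GRC becomes a nontrivial linear condition on residues: one must verify that the chosen ideal sheaf, which is built from "divisorial" data edge by edge, nonetheless cuts out (after blowup and normalization) the full multi-scale space including the global residue condition, rather than some larger space like $\PP(\GMS)$. Concretely this means checking that the GRC locus inside $\PP(\cat{Rub}_{\ca L_\mu}^{\coarse})$ already maps to the NIVC (not just to the Deligne--Mumford closure) and that its preimage under the normalized blowup is all of $\PP({\mathcal MS}_\mu)$ — equivalently, that imposing the GRC on both sides is compatible with the blowup. Once the local model is pinned down, the final clause is immediate: the NIVC is projective (it is a closed subscheme of a projectivized twisted Hodge bundle over the projective stack $\Mbar_{g,n}$, whose coarse space is projective), a blowup of a projective scheme along a coherent ideal sheaf is projective, normalization preserves projectivity, and hence the coarse space of $\PP({\mathcal MS}_\mu)$ is a projective variety.
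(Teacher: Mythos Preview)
Your proposal has the right overall architecture --- produce an ideal sheaf on the NIVC, blow up and normalize, compare locally to the multi-scale charts --- but the actual construction of the ideal is not the one that works, and you have misidentified the main obstacle.

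First, the ideal. You propose to build it from edge/divisorial data, ``products over boundary divisors associated to edges of level graphs,'' transporting the genus-zero recipe. But the genus-zero recipe depended on $\Gamma$ being a tree: the slopes $\kappa$ and the path functions $I(v,v')$ are only well-defined because any two vertices are joined by a unique path. In higher genus this fails, and the paper instead switches to a vertex-based construction. By normality of the NIVC, each vertex $v$ carries an \emph{adjusting parameter} $h_v$ (this is \cite[Proposition~11.13]{LMS}), and the local ideal is
\[
J(\Gamma) \;=\; \prod_{(v,v')\in V(\Gamma)\times V(\Gamma)} (h_v,h_{v'})^{\,w(v)w(v')},
\qquad w(v)\coloneqq 2g(v)-2+\on{valence}(v).
\]
Blowing this up makes every pair $(h_v,h_{v'})$ principal, hence totally orders the adjusting parameters; this is exactly the ``orderly'' condition of \cite{LMS}, and then \cite[Theorem~14.8]{LMS} identifies the normalized orderly blowup with the multi-scale space. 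You do not mention adjusting parameters at all, and without them there is no candidate for the ideal in positive genus.

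Second, the obstacle. You flag the GRC as the hard part, but this is a red herring: the NIVC is by definition (the normalization of) the closure of the open stratum inside the Hodge bundle, so every point of the NIVC already satisfies the GRC, and any blowup of it lands in the main component automatically. The genuine difficulty --- and the reason \cite{LMS} only obtained a \emph{local} blowup description --- is making the local ideals $J(\Gamma)$ glue under edge contraction. This is where the exponents $w(v)w(v')$ earn their keep: when an edge $e$ joining $v_1,v_2$ is contracted to $v'$, one has $h_{v_1}\sim h_{v_2}\sim h_{v'}$ modulo units and $w(v')=w(v_1)+w(v_2)$, and a short computation with these identities shows $J(\Gamma)\sim J(\Gamma')$ after inverting $\delta_e$. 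Your proposal does not address gluing, and the edge-based ideal you sketch would not satisfy it.

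Once the ideal is correctly built and glued, your final paragraph on projectivity is fine and matches the paper.
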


In \cite{LMS} a local blowup construction to obtain $\PP({\mathcal MS}_\mu)$ from the normalization of the IVC was described. That construction did not glue to a global sheaf of ideals on IVC, and hence did not yield the projectivity of $\PP({\mathcal MS}_\mu)$. In \cite{ccm} the projectivity of $\PP({\mathcal MS}_\mu)$ was established by constructing an explicit ample divisor class on it. Thus the above theorem provides a distinct conceptual understanding of the projectivity result.

\subsubsection{A Hodge double ramification cycle}\label{sec:intro:hodge_DR}
Next we propose a refined version of the {\em double ramification} ({\em DR}\,) {\em cycle} in the twisted Hodge bundle and conjecture a Pixton-style formula for this class, involving coefficients of higher powers of the regularizing parameter `$r$'. For this purpose, we also generalize our considerations to $k$-differentials. 
 
Let $A = (a_1, \ldots, a_n)\in \bb Z^n$, where $|A| \coloneqq \sum_{i=1}^n a_i = k(2g-2+n)$ for some $k > 0$, and denote by
\begin{equation*}
 \calL_A \coloneqq \omega^{\otimes k}\left(-\sum_{i=1}^n (a_i - k) z_i\right)
\end{equation*}
the associated degree zero line bundle on $X_{g,n}$, where $\pi\colon X_{g,n}\to \overline{\calM}_{g,n}$ is the universal curve with sections $z_i$, and $\omega$ is the relative canonical bundle.\footnote{Here we switch to the logarithmic version of indices to match the notation in \cite{Janda2016Double-ramifica}. In other words, as a signature of $k$-differentials, each of the zero and pole orders is given by $a_i-k$. In particular, by slight abuse of notation, $\calL_A$ is simply the bundle we denoted by $\calL_\mu$ in the previous convention.} Taking 
$$
\calH \coloneqq \omega^{\otimes k}\left(-\sum_{\substack{i \in \{1,\ldots,n\}: \\  a_i < 0}} a_i z_i + \sum_{i=1}^n kz_i\right)
$$
to be the relative logarithmic $k$-canonical bundle twisted by the polar part, 
we obtain a natural diagram 
\begin{equation}\label{eq:map_to_hodge}
\begin{tikzcd}
\PP\left(\cat{Rub}_{\calL_A}\right) \arrow[r,"F"] \arrow[dr,"p",swap] &  \PP(\pi_*\calH) \arrow[d,"q"]\\
& \Mbar_{g,n}
\end{tikzcd}
\end{equation}
(see the discussion leading to~\ref{lem:map-hodge} for more details). Pushing forward the virtual fundamental class of $\PP(\cat{Rub}_{\calL_A})$ gives a lift
$$
\widetilde{{\rm DR}}^k_A \= F_* \left[\PP\left(\cat{Rub}_{\calL_A}\right) \right]^\mathrm{vir}
$$
of the twisted DR cycles to $\bb P(\pi_*\calH)$, which we call the twisted {\em Hodge DR cycle}.  
 
Let $H = c_1(\calO(1))$ be the universal line bundle class on $\bb P(\pi_*\calH)$, and let $\eta = F^*H$ be its pullback to $\PP(\cat{Rub}_{\calL_A})$.\footnote{In the literature sometimes $\xi$ denotes the universal line bundle class on the space of $k$-differentials and  $\eta$ denotes the tautological line bundle class $c_1(\calO(-1))$.} By the projective bundle formula associated to the map $q$, to determine the class of $\widetilde{{\rm DR}}^k_A$ in the Chow ring $\operatorname{CH}^\bullet(\bb P(\pi_*\calH))$, it suffices to determine the cycle class   
\begin{equation} \label{eqn:qpushforwardetau} q_{*}\left(\widetilde{{\rm DR}}^k_A \cdot H^u\right) \= p_*\left(\left[\PP\left(\cat{Rub}_{\calL_A}\right) \right]^\mathrm{vir} \cdot \eta^u \right) \in \operatorname{CH}^{g+u}\left(\overline{\calM}_{g,n}\right) \end{equation}
for every $u$.  

Before proceeding to give a conjectural formula for these cycles, let us make a remark about the case $k=0$. When trying to follow the construction above, we encounter the issue that in general the higher cohomology of $\calH$ will not vanish, so that $\bb P(\pi_*\calH)$ is not a projective bundle. In~\ref{sect:universal_bundle} we explain how this can be remedied.
However, there is also an alternative approach to defining $\eta$, which makes clearer a connection to relative Gromov--Witten theory: there both the space $\PP(\cat{Rub}_{\calL_A})$ and its forgetful map $p$ to~$\Mbar_{g,n}$ still make sense, and it was proven in \cite[Proposition 50]{BHPSS} that there is a natural isomorphism
$$
\PP\left(\cat{Rub}_{\calL_A}\right) \,\cong \, \Mbar_{g,A}\left(\mathbb{P}^1, 0, \infty\right)^\sim
$$
with the space of stable maps to \emph{rubber} $\mathbb{P}^1$ relative to $0, \infty$, with contact orders specified by the vector $A$. This space of stable maps parameterizes maps from prestable curves to a chain of rational curves, with marked points $0,\infty$ at opposite ends of the chain (see \cite[Section 0.2.4]{Janda2016Double-ramifica} for details). What is important for us is that it still carries a natural divisor class $\eta = \Psi_{\infty}$ defined as the class of the cotangent line bundle at the marked point~$\infty$ on the chain of rational curves.
 
Continuing in the general case $k \geq 0$, consider the space of twisted $r$-spin structures $\overline{\calM}_{g;A}^{r,k}$ constructed in \cite{Chiodo_twisted, Jarvis_higher_spin}. 
This is a compactification of the moduli space of smooth marked curves~$X$ and line bundles~$L$ on~$X$ with an isomorphism $L^{\otimes r} \cong \omega_X^{\otimes k} (-\sum (a_i - k) z_i)$. In the compactification, the curve~$X$ is allowed to acquire nodal singularities that are stacky points with some finite stabilizer group, making~$X$ a \emph{twisted curve} in the sense of \cite{MR1862797}. 
The moduli space then carries a universal curve $\pi\colon \calX \to \overline{\calM}_{g;A}^{k,r}$ with a line bundle~$\calL$ and isomorphism $\calL^{\otimes r} \cong \omega_\pi^{\otimes k} (-\sum (a_i - k) z_i)$. Here we follow the notation of \cite{Janda2016Double-ramifica}. Forgetting the line bundle and the stacky structure on $\calX$ gives a map  $\epsilon\colon \overline{\calM}_{g;A}^{k,r}\to \overline{\calM}_{g,n}$.  
Define the following Chiodo's class as the first cycle class given in \cite[Proposition~5]{Janda2016Double-ramifica}: 
$$ {\rm Ch}_{g,A}^{k,r,d} \coloneqq r^{2d-2g+1} \epsilon_{*}c_d\left(-R^{*}\pi_{*}\calL\right) \in {\rm CH}^d\left(\overline{\calM}_{g,n}\right), $$
where $R^*$ denotes derived push-forward. It is a polynomial in $r$ (for $r$ sufficiently large). Following com\-pu\-ta\-tions of Chiodo \cite{Chiodo_twisted}, the class ${\rm Ch}_{g,A}^{k,r,d}$ can be computed explicitly as a sum over stable graphs, decorated with polynomials in $\kappa$ and $\psi$-classes (see \cite[Corollary~4]{Janda2016Double-ramifica}). We propose the following conjecture, giving a formula for the cycle classes~\ref{eqn:qpushforwardetau}.
 
\begin{conjecture}[Hodge DR] 
\label{conj:HDR}
For every $g,k,u \geq 0$ and every $A \in \mathbb{Z}^n$ satisfying $|A|=k(2g-2+n)$, the following relation holds: 
$$ p_*\left(\left[\PP\left(\cat{Rub}_{\calL_A}\right) \right]^\mathrm{vir} \cdot \eta^u \right) \= [r^u]{\rm Ch}_{g,A}^{k, r, g+u} \in \mathrm{CH}^{g+u}\left(\Mbar_{g,n}\right),$$ 
where $[r^u]$ means taking the coefficient of $r^u$. 
\end{conjecture}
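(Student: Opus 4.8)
The plan is to reduce the conjecture to a virtual localization computation modelled on the proof of the ordinary double ramification cycle formula, with the insertion $\eta^u$ carried along as extra bookkeeping. First one should record that the case $u=0$ is already known: it is precisely the $k$-twisted DR formula, proved for $k=0$ in \cite{Janda2016Double-ramifica} and in the twisted generality needed here in \cite{BHPSS}. So the genuine content lies in the powers $\eta^u$ with $u\geq 1$, and the task is to track these through the localization argument. By \cite[Proposition~50]{BHPSS} we identify $\PP(\cat{Rub}_{\calL_A})$ with the space $\Mbar_{g,A}(\PP^1,0,\infty)^\sim$ of stable maps to rubber $\PP^1$ relative to $0,\infty$, under which $\eta=\Psi_\infty$ is the cotangent line class of the target chain at the point $\infty$. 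Thus $p_*([\PP(\cat{Rub}_{\calL_A})]^\mathrm{vir}\cdot\eta^u)$ is a rubber invariant carrying a $\psi_\infty^u$-insertion, and the goal becomes an explicit formula for this class in $\operatorname{CH}^{g+u}(\Mbar_{g,n})$.

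The core step is to run the virtual $\CC^*$-localization machinery of \cite{Janda2016Double-ramifica} on an auxiliary moduli space of stable maps to $\PP^1=\PP(\CC_0\oplus\CC_\infty)$ (or its relative variant), in which the rubber space over the ``tube'' appears as a fixed locus and the insertion $\psi_\infty^u$ becomes a power of the equivariant parameter $t$; concretely one expects to extract $p_*([\PP(\cat{Rub}_{\calL_A})]^\mathrm{vir}\cdot\eta^u)$ as a fixed coefficient of a generating series of equivariant relative invariants. The localization contributions over the fixed points $0$ and $\infty$ are, by Chiodo's computation \cite{Chiodo2008Towards-an-enum} on the moduli of $r$-th roots, governed by the classes $\mathrm{Ch}^{k,r,d}_{g,A}$, and the ``$t\leftrightarrow r$'' dictionary of \cite{Janda2016Double-ramifica} — together with the polynomiality of $\mathrm{Ch}^{k,r,d}_{g,A}$ in $r$ — should convert the coefficient extraction in $t$ into the coefficient extraction $[r^u]$ on the right-hand side, the $\psi_\infty^u$-insertion being exactly what shifts the relevant Chiodo degree from $g$ (the $u=0$ case) up to $g+u$. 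The vanishing that isolates the principal term should again follow, as for $u=0$, from a dimension and Euler-class estimate showing that the generating series has no terms in a certain range of $t$-degree.

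An alternative and more direct route is worth keeping in mind: by \ref{intro:mainiso} together with \ref{thm:blowup-g}, the space $\PP(\cat{Rub}_{\calL_A})$ is an explicit normalized blowup of the incidence variety compactification, hence closely tied to the multi-scale space $\PP({\mathcal MS}_\mu)$, on whose boundary the self-intersection behaviour of the tautological class $\eta=\xi$ is understood. One could therefore attempt to compute $p_*([\PP(\cat{Rub}_{\calL_A})]^\mathrm{vir}\cdot\eta^u)$ boundary stratum by boundary stratum and match the outcome with the stable-graph expansion of $\mathrm{Ch}^{k,r,g+u}_{g,A}$ supplied by \cite[Corollary~4]{Janda2016Double-ramifica}; this is essentially the reformulation \ref{eqn:qpushforwardetau} combined with the projective bundle formula for $q$.

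The main obstacle, in either approach, is controlling the combinatorics of the contributions decorated by $\eta^u$: one must show that, apart from the principal contribution, every term either vanishes or reassembles into lower-complexity pieces already governed by the conjecture for smaller $(g,n)$, and that the principal contribution is \emph{exactly} $[r^u]\mathrm{Ch}^{k,r,g+u}_{g,A}$ rather than a correction of it. A secondary but unavoidable technical point concerns the case $k=0$, where $\PP(\pi_*\calH)$ need not be a projective bundle, so that the identification of $\eta=F^*H$ via \ref{lem:map-hodge} with $\eta=\Psi_\infty$ must be carried out through the construction of \ref{sect:universal_bundle}. Finally, one expects that for $u$ outside a bounded range the class should vanish — a Pixton-type phenomenon — and establishing such a vanishing in parallel would likely be needed as an input to the argument that pins down the principal term.
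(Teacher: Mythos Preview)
The statement you are trying to prove is a \emph{conjecture}, and the paper does not contain a proof of it in the stated generality. What the paper does prove is the special case $k=0$ (\ref{thm:HDR_k_is_0}), and for $u=0$ it cites \cite{BHPSS}. So there is no ``paper's own proof'' of the full statement to compare against; your proposal is an attempt at something the authors explicitly leave open.

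That said, your plan contains a concrete gap that would prevent it from going through for $k>0$. The identification $\PP(\cat{Rub}_{\calL_A})\cong \Mbar_{g,A}(\PP^1,0,\infty)^\sim$ from \cite[Proposition~50]{BHPSS}, on which your localization strategy rests, is a statement about the case $k=0$ only: it is precisely when $\calL_A$ has total degree~$0$ that the rubber space parameterizes maps to a chain of rational curves. For $k\geq 1$ there is no such target space of rubber $\PP^1$'s, and hence no $\CC^*$-equivariant relative theory to localize on in the way \cite{Janda2016Double-ramifica} does. This is not a technical wrinkle but the main reason the conjecture remains open: the entire localization apparatus that produces the Chiodo class on the right-hand side is, at present, available only for $k=0$.

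For the case $k=0$ your outline is in the right spirit, but the paper's actual argument is much shorter and does not redo any localization. It first identifies $\eta=\Psi_\infty$ via the constructions in \ref{sect:universal_bundle}, then invokes \cite[Corollary~4.3]{FWY}, which already computes $p_*([\Mbar_{g,A}(\PP^1,0,\infty)^\sim]^{\mathrm{vir}}\cdot\Psi_\infty^u)$ as a sum involving the $r$-shifted Chiodo class $\mathrm{Ch}^{0,r,\bullet}_{g,A(r)}$, and finally collapses that sum to $[r^u]\mathrm{Ch}^{0,r,g+u}_{g,A}$ using the shift identity for Chiodo classes from \cite[Theorem~4.1(ii)]{DaniloEulerChar}. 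In other words, the localization you propose has effectively already been carried out in \cite{FWY}; the paper's contribution for $k=0$ is the bookkeeping that matches their output to the clean right-hand side of the conjecture. Your alternative route via the blowup description and boundary-stratum matching is interesting but is not what the paper does, and would face the same obstruction for $k>0$: one would still need a mechanism producing the $r$-dependence of the Chiodo class, which in all known cases comes from an auxiliary space of $r$-th roots.
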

 
For $u =0$, by definition the left-hand side of this equation
is the usual twisted DR cycle ${\rm DR}^k_A$, and by \cite[Proposition 5]{Janda2016Double-ramifica} the right-hand side agrees with Pixton's formula for this cycle. Therefore, the conjecture is true for $u=0$ by the results of \cite{BHPSS}.
 
For $u>0$, the conjecture can be verified computationally in many examples for
the special case $g=0$. Indeed, in this case, the space $\cat{Rub}_{\calL_A}$ agrees with the space of multi-scale $k$-differentials by
\ref{intro:mainiso} (since the global residue condition is automatically satisfied in the case $g=0$). Then the software package \texttt{diffstrata}, see \cite{CoMoZadiffstrata}, can compute powers of $\eta$ on this space using relations in its Picard group, and express the left-hand side of the conjecture in terms of tautological classes. On the other hand, the right-hand side of the conjecture can be computed in \texttt{admcycles}, see \cite{DSvZ}, using the graph-sum formula from \cite{Janda2016Double-ramifica}. Using this, the prediction of the conjecture has been verified for several example vectors $A$, giving many non-trivial equalities in the Chow group of $\Mbar_{0,n}$. The calculations in \texttt{diffstrata} for $k>1$ rely on some code in development related to the forthcoming paper \cite{CMS2023}.

On the other hand, for $k=0$, the left-hand side of the conjecture has been computed in \cite[Corollary~4.3]{FWY}. The formula given there is similar, but not equal, to the one above. However, using properties of the Chiodo class proven in \cite[Theorem 4.1(ii)]{DaniloEulerChar}, a short computation shows that the formula from \cite{FWY} can be simplified to the one we give above.\footnote{Special thanks go to Longting Wu for patiently explaining their formula and to Danilo Lewa\'nski for informing us of the above property of the Chiodo class.}

\begin{theorem}\label{thm:HDR_k_is_0}
\ref{conj:HDR} is true for $k=0$: for any $g,u \geq 0$ and any vector $A \in \mathbb{Z}^n$ with sum $|A|=0$, we have
\[
p_*\left(\left[\PP\left(\cat{Rub}_{\calL_A}\right) \right]^\mathrm{vir} \cdot \eta^u \right) 
= p_*\left(\left[\Mbar_{g,A}\left(\mathbb{P}^1, 0, \infty\right)^\sim\right]^\mathrm{vir} \cdot \Psi_\infty^u \right)  
\= [r^u]{\rm Ch}_{g,A}^{0, r, g+u} .
\]
\end{theorem}

\subsection{Sketch of the comparison}
We hope that this paper will foster more communications between two
groups of researchers, those working in logarithmic geometry for moduli spaces and those working in moduli of differentials for Teichm\"uller dynamics. With this in mind, we have written out
definitions of objects on both sides of the story in a rather detailed way,
in particular assuming minimal background knowledge about logarithmic structures.
We now give an overview of the comparison in~\ref{intro:mainiso}.

\smallskip
The definition of generalized multi-scale differentials on a stable curve $X$ is
geometrically very concrete but quite lengthy. The {\em level structure} (or {\em full
order}\,) on the vertices of the dual graph $\Gamma$ of $X$, corresponding to the irreducible components of $X$, encodes the vanishing orders of a  differential in a family of differentials on smooth curves that degenerates to
a given multi-scale differential on a nodal curve. One can twist differentials that vanish identically, on the irreducible
components of the same level, by a rescaling parameter for that level, to obtain {\em
twisted differentials} that are not identically zero on the components on
that level. A multi-scale differential contains the combinatorial
data of the zero and pole orders of twisted differentials at the nodes.
Moreover, the {\em prong-matchings} of a multi-scale differential are
combinatorial data that arise from choices of smoothing a nodal differential
with matching zero and pole orders at the two branches at a node, under the flat metric
induced by the differential. Lastly,  a multi-scale differential stores
the smoothing parameters of the nodes in a way consistent with the level
structure, packaged in the notion of a {\em rescaling ensemble}.
On all these parameters, a certain {\em level rotation torus} acts and
induces a notion of equivalence that forgets the extra information due to
various choices being made in the above process, \textit{e.g.}, how simultaneously
scaling twisted differentials on the same level affects prong-matchings.

The definition of an element of $\cat{Rub}_{\ca L_\mu}$
is very concise; it is simply a piecewise linear function on the tropicalization subject to certain conditions (see Definitions \oref{def:rub} and \oref{def:rub_L}). However, it may seem cryptic at a first reading. In particular,
it may not be immediately apparent why the data of a log curve, a piecewise
linear function, and an isomorphism of line bundles should yield up all
the above data of an equivalence class of multi-scale differentials. Some parts
of the comparison (such as the enhanced level graph) are obtained essentially
by some bookkeeping, but extracting the \emph{level rotation torus} and
\emph{prong-matchings} from the logarithmic data requires significantly more care. 

Our first key insight about prong-matchings is~\ref{lem:prong_matching_comparison}, giving a new, coordinate-free characterization of prong-matching via the residue isomorphism. The second key insight exhibits the reason for the equivalence relation given by the level rotation torus in log language.  We define a \emph{log splitting} of a point in $\cat{Rub}_{\ca L_\mu}(B)$ essentially as a section of the quotient map $\M_B \to \ghost_B$, from the sheaf of monoids~$\M_B$ built into the log structure, to the ghost sheaf $\ghost_B$. The precise statement is given in~\ref{df:logsplitting}. We show that the set of log splittings is closely related to the level rotation torus, and in particular changing the choice of log splitting corresponds to the action of the level rotation torus.

Finally, we remark that an analogue of~\ref{intro:mainiso} should also hold for rubber $k$-differentials and multi-scale $k$-differentials. Indeed, on the logarithmic side the generalization is straightforward, as noted earlier. Moreover, the space of multi-scale $k$-differentials was described similarly in \cite{CoMoZa}. Thus the arguments in the current paper can be adapted directly to compare the two versions of $k$-differentials. We leave the details to the interested reader.  
 
\subsection*{Outline of the paper}
In~\ref{sec:rubber} we give the basic definitions of logarithmic rubber maps, and in~\ref{sec:famGMS} we do the same for generalized multi-scale differentials. In the somewhat technical~\ref{sec:minimal_log_str}, we describe the underlying algebraic stack that comes from the logarithmic definition in~\ref{sec:rubber}, which will be essential for what follows. \Cref{sec:log_to_GMS} is the technical heart of our comparison theorem, where we show how to construct a multi-scale differential from a logarithmic one, and vice versa. In~\ref{sect:universal_bundle} we discuss several constructions of the universal line bundle class $\eta$ that appears in the Hodge DR conjecture and prove the conjecture in the case of $k = 0$. In~\ref{sec:blowup_descriptions} we describe some of the moduli spaces concerned via blowup constructions. Finally, the sign conventions generally adopted in the logarithmic and multi-scale worlds are unfortunately opposite to one another; in the appendix we explain a small variation on the logarithmic definitions which makes the signs match. 
 
\subsection*{Acknowledgments}
The authors would like to thank Quentin Gendron for many helpful discussions. We also thank Qile Chen, Honglu Fan, Danilo Lewa\'nski, Rahul Pandharipande, Adrien Sauvaget, Johannes Schwab, and Longting Wu for sharing comments and insights on a preliminary version of the current paper. This project was initiated during an online seminar organized by D.\,H.\ and J.\,S.; we would like to thank all the participants for making the seminar a success. Finally, we thank the anonymous referees for carefully reading the paper and providing many helpful comments that helped clarify some arguments and presentation.

\section{Logarithmic rubber maps}\label{sec:rubber}

In the following we recall the relevant notions from logarithmic geometry that are needed to define logarithmic rubber maps and make the comparison to the space of generalized multi-scale differentials. Instead of a broader introduction, we focus on the precise parts of the theory that are needed.  We refer the reader to \cite{Kato1989Logarithmic-str,Ogus2018Lectures-on-Log} for a more detailed treatment of the basic notions of log geometry, and mention more specialized references where appropriate later in the text.

\subsection{Overview of log divisors}
\label{sec:logoverview}

A log scheme is a pair
\begin{equation}\label{eq:log_str}
(B, \alpha\colon \M_B \to \calO_B),
\end{equation}
where $B$ is a scheme, $\M_B$ is a sheaf of monoids on $B$, and $\alpha$ is a map of monoids, where $\calO_B$ is equipped with the multiplicative monoid structure, and where we assume that $\alpha$ induces an isomorphism $\alpha^{-1}(\calO_B^\times) \to \calO_B^\times$. We write $\ghost_B \coloneqq \M_B/\alpha^{-1}(\calO_B^\times)$; this is called the \emph{ghost sheaf} or \emph{characteristic sheaf}. We write the monoid operation on $\M_B$ as multiplication, and that on $\ghost_B$ as addition.  Recall that a monoid $M$ is called \emph{saturated} if the natural map $M \to M^\gp$ to its \emph{groupification} is injective, and if, for every $n \in \bb Z_{\ge 1}$ and $g \in M^\gp$ with $ng \in M$, we have $g \in M$.  A log structure is called saturated if all its stalks are saturated. We work throughout only with fine saturated log structures (log structures admitting charts; see \cite[Section~III.1.2]{Ogus2018Lectures-on-Log} by finitely generated saturated monoids). 

If $\beta \in \Gamma(B, \ghost_B^\gp)$, then the preimage of $\beta$ in the short exact sequence
\begin{equation}\label{eq:stdsequence}
1 \lra \calO_B^\times \lra \M_B^\gp \lra \ghost_B^\gp \lra 1
\end{equation}
is a $\bb G_m$-torsor, which we denote by $\calO_B^\times(\beta)$. We write $\calO_B(\beta)$ for the associated line bundle (see the appendix for our sign convention here).

Following \cite{Kato2000Log-smooth-defo}, the formal definition of a \emph{log curve} is a morphism of log schemes\footnote{The reader concerned about the case $g=1$, $n=0$ should rather take log algebraic spaces.} $\pi\colon X \to B$ that is proper, saturated, log smooth, and  has geometric fibers which are reduced and of pure dimension 1. This definition is rarely important to us, so rather than explicating the terms involved, we present a crucial structure result (to be found in \cite[Section~1.8]{Kato2000Log-smooth-defo}). If $\pi\colon X \to B$ is a log curve, then the underlying map of schemes is a prestable curve, and if $x$ is a geometric point of $X$ mapping to a geometric point $b$ of $B$, then exactly one of the following three cases holds: 
\begin{enumerate}
    \item $x$ is a smooth point of $X$, and the natural map $\ghost_{B,b} \to \ghost_{X,x}$ is an isomorphism; 
    \item $x$ is a smooth point of $X$, and $\ghost_{X, x} \cong \ghost_{B, b} \oplus \bb N$ with the natural map $\ghost_{B, b} \to \ghost_{X, x}$ corresponding to the inclusion of the first summand (in this case we say $x$ is a marked point, and we choose a total ordering on our markings to be compatible with the standard definition of marked prestable curves);
    \item $x$ is not a smooth point of the fiber $X_b$ (\textit{i.e.}~$x$ is a node), and there exist a unique element $\delta_x \in \ghost_{B,b}$ and an isomorphism 
    \begin{equation}\label{eq:smoothing_param_1}
        \ghost_{X,x} \cong \left\{(u,v) \in \ghost_{B,b}^2 \textrm{ such that } \delta_x \textrm{ divides } u-v\right\}. 
    \end{equation}
\end{enumerate}
We warn the reader that the ghost sheaf $\ghost_X$ does \emph{not} fully determine the log structure; the units contain important additional information. 

We write $\mathfrak M$ for the fibered category over $\cat{LogSch}$ whose objects are log curves $X/B$, with the projection taking $X/B$ to $B$. This is representable by an algebraic stack with log structure, see \cite[Appendix A]{Gross2013Logarithmic-Gro}, generalizing the construction of \cite{Kato2000Log-smooth-defo} in the stable case. As shown in those references, the underlying algebraic stack of $\mathfrak M$ is naturally isomorphic to the stack of prestable curves. The stack $\mathfrak M$ naturally contains all $\Mbar_{g,n}$ as open substacks, by equipping a stable curve $X/B$ with its basic log structure (see \cite{Kato2000Log-smooth-defo,Gross2013Logarithmic-Gro}), equivalently, with the log structure coming from the boundary divisor. 

Given a log scheme, we define
\begin{equation*}
\mathbb G_m^{\trop}(B) \,\coloneqq\, \Gamma\left(B, \ghost_B^{\gp}\right),
\end{equation*}
which we call the tropical multiplicative group.  It can naturally be extended to a presheaf $\mathbb G_{m,B}^{\trop}$ on the category $\cat{LogSch}_B$ of log schemes over $B$,
and admits a log smooth cover by log schemes (with subdivision $[\bb P^1/\bb G_m]$); see \cite[Section 4.1]{MarcusWiseLog}.

\begin{df}\label{def:rub}
We define $\cat{Rub}$ as the stack in groupoids over $\mathfrak M$, with objects being tuples
\bes
\left(\pi \colon X \to B, \, \beta\colon X \to \bb G_{m,B}^\trop\right)
\ees
with $X/B$ a log curve, satisfying two conditions on each strict geometric fiber:
\begin{enumerate}
\item\label{d:r-1}
The image of $\beta$ is fiberwise totally ordered,\footnote{Here we mean that for any two elements in the image of $\beta$, one of their differences is contained in~$\ghost_B$.} with largest element~$0$.
\item\label{d:r-2}
Writing $R$ for the stack obtained from $\bb G_m^\trop$ by subdividing at the image of $\beta$, we require that the fiber product $X \times_{\beta, \bb G_m^\trop} R$ is a log curve.
\end{enumerate}
The morphisms are defined by pullback.
\end{df}

Over a given geometric point of $B$, write $N+1$ for the cardinality of the image of $\beta$; since the latter is totally ordered, there is a unique isomorphism~$\tau$ of totally ordered sets between the image of $\beta$ and $\{0,-1,\ldots, -N\}$. The composition \be \label{eq:defell} \ell \coloneqq \tau \circ \beta \ee is then called the \emph{normalized level function} associated with~$\beta$.

\begin{remark}
This definition will be unpacked in~\ref{sec:unpacking_rub_definition}, but for now we make a couple of remarks on how it differs from that given in Marcus--Wise \cite{MarcusWiseLog}. Firstly, they declare the image of~$\beta$ to have \emph{smallest} element 0; this makes no material difference, and the reason for our choice of conventions is explained in the appendix. 

More significantly, condition~\eqref{d:r-2} is not stated by Marcus and Wise. However, it is assumed, for example in datum (R1) in Section 5.5 of their paper. Most of their results go through without this condition, but it is necessary for making a connection to the spaces of rubber maps of Li, Graber--Vakil, \textit{etc.}, and is also necessary for the comparison results in the present paper.

In fact, dropping condition~\eqref{d:r-2} (and thus passing to the space of Marcus and Wise) is exactly the same as taking the coarse moduli space of $\cat{Rub}$ relative to $\mathfrak M$. We write $\cat{Rub}^{\MW}$ for this space. Combining Theorem~4.3.2 and Proposition 5.1.2 of \cite{MarcusWiseLog} shows that the space $\cat{Rub}^{\MW}$ is an algebraic space over the relative Picard stack over $\mathfrak M$. However, because the line bundle $\ca O(\beta)$ is canonically trivial along the locus where $\beta$ takes value 0, the map from $\cat{Rub}^{\MW}$ to the relative Picard stack factors via the relative Picard space, so that $\cat{Rub}^{\MW}$ is an algebraic space over $\mathfrak M$. On the other hand, $\cat{Rub} \to \cat{Rub}^{\MW}$ is a root stack (see the proof of~\ref{thm:rub_smooth} for more details), and so the relative coarse space of $\cat{Rub}$ is exactly $\cat{Rub}^{\MW}$.
\end{remark}

\begin{theorem}[\textit{cf.} \cite{MarcusWiseLog}]
  The category $\cat{Rub}$ is a log algebraic stack locally of finite
  presentation.
\end{theorem}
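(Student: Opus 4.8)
The plan is to realise $\cat{Rub}$ in stages over the log algebraic stack $\mathfrak{M}$ of log curves, using the two facts recalled above: that $\mathfrak{M}$ is a log algebraic stack locally of finite presentation, and that $\bb G_m^\trop$ admits a log smooth cover by log schemes built by iterated subdivision along $[\bb P^1/\bb G_m]$.

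First I would treat the auxiliary stack $\cat{PL}\to\mathfrak{M}$ of piecewise linear functions on log curves, whose $B$-points are pairs $(\pi\colon X\to B\text{ a log curve},\ \beta\colon X\to \bb G_{m,B}^\trop)$ — i.e.\ $\beta\in\Gamma(X,\ghost_X^\gp)$ — with \emph{no} conditions imposed. \'Etale-locally on $\mathfrak{M}$, where the characteristic sheaf of $B$ admits a chart and the relative tropicalisation of $X/B$ is constant, the datum $\beta$ is the same as a piecewise linear function on that tropicalisation; since the dual graph of $X$ has finitely many edges there are only finitely many combinatorial types of such a $\beta$ (the tuple of edge slopes together with the chamber structure of the vertex values), and each type is represented by a log scheme that is log smooth over $\mathfrak{M}$, pulled back from the subdivision cover of $\bb G_m^\trop$. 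Assembling these charts and using that $\mathfrak{M}$ is log algebraic and locally of finite presentation shows $\cat{PL}$ is a log algebraic stack locally of finite presentation; this is essentially the construction of the stack of piecewise linear functions in \cite{MarcusWiseLog}.

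It then remains to check that $\cat{Rub}\subset\cat{PL}$, cut out by conditions (1) and (2), is an open substack. For condition (1): within a fixed combinatorial type, whether the image of $\beta$ is fibrewise totally ordered and has a largest element depends only on the type, so the condition selects a union of the pieces above — an open (in fact open and closed) substack — while the normalisation ``largest element $0$'' only rigidifies a translation ambiguity, handled exactly as in \cite{MarcusWiseLog} (up to the sign flip of \ref{appendix:sign_conventions}, under which our convention and theirs agree). For condition (2): at a point of the resulting substack, the base change $X_R\coloneqq X\times_{\beta,\bb G_m^\trop}R$ along the subdivision $R\to\bb G_m^\trop$ at the image of $\beta$ is a log modification of $X$, so $X_R\to B$ is automatically proper, integral and saturated; the only way for it to fail to be a log curve is that some geometric fibre fails to be reduced of pure dimension $1$ (equivalently, $X_R\to B$ fails to be log smooth there), which happens exactly when the subdivision bisects a node whose smoothing parameter does not divide the relevant kink of $\beta$. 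As log smoothness and reducedness of geometric fibres are open conditions (flatness being transparent in the toric-local model), condition (2) also defines an open substack. An open substack of a log algebraic stack locally of finite presentation is again one, which proves the theorem.

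I expect the real work to be the first step: showing that the stack of piecewise linear functions — equivalently the Hom stack into $\bb G_m^\trop$ — is algebraic and, above all, \emph{locally of finite presentation}, despite $\bb G_m^\trop$ not being of finite type (being a filtered colimit over all subdivisions). The crucial input is the boundedness observation that on a fixed log curve only finitely many combinatorial types of $\beta$ occur, so that the colimit defining $\bb G_m^\trop$ is absorbed into a finite amount of combinatorial data; this is the technical core of \cite{MarcusWiseLog}. By comparison, verifying the openness of conditions (1) and (2) is routine bookkeeping.
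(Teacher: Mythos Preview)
Your overall strategy---build the stack of PL functions on log curves (this is Marcus--Wise's $\cat{Div}$), then impose conditions (1) and (2)---is the right one, and your handling of condition (1) matches what Marcus and Wise do. The gap is in your treatment of condition (2).

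You claim that condition (2) cuts out an \emph{open} substack, arguing that log smoothness of $X_R \to B$ is an open condition. But the failure mode you identify---the subdivision bisecting an edge at a point where the required root of the smoothing parameter does not lie in $\ghost_{B,b}$---is not a condition on the underlying scheme of $B$ at all; it is a condition on the \emph{log structure}. Concretely (see \ref{prop:rub_translation}(3) and the discussion around \ref{fig:Rub_subdivision_fan}), condition (2) demands that $\ghost_{B,b}$ contain the elements $\frac{y-\beta'(v_1)}{\kappa_e}$. Over the minimal object of Marcus--Wise's version of $\cat{Rub}$ (without condition (2)) these roots typically do \emph{not} exist, and no Zariski or \'etale neighbourhood will produce them: one must enlarge the characteristic monoid. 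This is precisely a root-stack construction along boundary strata, not an open immersion. The paper's one-line proof says exactly this: Marcus--Wise establish algebraicity without condition (2), and imposing condition (2) is a root stack, which preserves the property of being a log algebraic stack locally of finite presentation.

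So your argument breaks at the sentence ``As log smoothness and reducedness of geometric fibres are open conditions\ldots''. Openness of log smoothness applies to families over a \emph{fixed} log base; here the issue is which log bases are admissible, and that is governed by divisibility in the monoid, not by any open condition on the scheme. Replace your openness claim for condition (2) with the observation that it is a root stack over the Marcus--Wise space, and the proof goes through.
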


Marcus and Wise prove this in the absence of condition~\eqref{d:r-2} above, but imposing this condition simply corresponds to a root stack construction, and does not affect the result. One benefit of imposing condition~\eqref{d:r-2} is the following theorem, which did not hold for the version of $\cat{Rub}$ considered by Marcus and Wise.

\begin{theorem}\label{thm:rub_smooth}
The algebraic stack $\cat{Rub}$ is smooth. 
\end{theorem}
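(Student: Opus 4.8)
The plan is to prove smoothness by exhibiting $\cat{Rub}$ étale-locally as a fibre product of smooth stacks over a smooth base, using the explicit structure of log curves recalled in \ref{sec:logoverview}. The starting point is that the stack $\mathfrak M$ of log curves is smooth (its underlying algebraic stack is the stack of prestable curves, which is smooth), and that $\cat{Rub}$ maps to $\mathfrak M$. So it suffices to analyse, étale-locally on $\mathfrak M$, the relative structure of $\cat{Rub}$, i.e.\ the choice of the piece-wise linear function $\beta\colon X \to \bb G_{m,B}^\trop$ subject to conditions (1) and (2) of \ref{def:rub}. The combinatorial type of such a $\beta$ (the induced total order on vertices of the dual graph, i.e.\ the normalized level function $\ell$) is locally constant, so it is enough to fix the combinatorial type and show that the corresponding locally closed substack is smooth.

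First I would work étale-locally on the base $B$ where the log structure admits a global chart by a sharp fs monoid $\ghost_{B}$, which we may take to be $\bb N^r$ after a further étale localization — this is the stratum of $\mathfrak M$ where the curve has a fixed dual graph $\Gamma$ with $r$ nodes, and the smoothing parameters $\delta_{x}$ of \ref{eq:smoothing_param_1} are the standard generators $e_{1},\dots,e_{r}$. On such a chart, giving a $\beta\colon X \to \bb G_{m,B}^\trop$ amounts to giving an element $\ell(v) \in \ghost_{B}^\gp = \bb Z^r$ for each vertex $v$ of $\Gamma$ (the values of $\beta$ on components), with the constraint coming from \ref{eq:smoothing_param_1} that across each node the two values must differ by a multiple of the corresponding $\delta_{x}$, and with conditions (1) and (2) translating into: the values are totally ordered with top value $0$, and the subdivision of $\bb G_m^\trop$ at the image of $\beta$ turns $X$ back into a log curve — which, after subdivision, is the condition that the induced level structure is a genuine grading so that each "horizontal" node gets $\delta_{x}$ dividing $0$ (same level) and each "vertical" node gets a strictly positive difference, with the relevant smoothing parameters of the subdivided family still generating a free submonoid. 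The upshot is that, after fixing the level graph, the stack $\cat{Rub}$ étale-locally on $\mathfrak M$ is the quotient of the smooth chart parameterizing the curve and the $\bb Z$-valued level data by a torus (the automorphisms/rescalings of the levels), i.e.\ it is a root-stack-type construction over the prestable locus: condition (2) just adjoins roots of the smoothing parameters along the vertical nodes (one root for each "level passage"), and root stacks of smooth stacks along normal crossings divisors are smooth.

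Concretely, the cleanest route is a deformation-theoretic one: to prove $\cat{Rub}$ is smooth it is enough (by the infinitesimal lifting criterion for algebraic stacks, or for its coarse space once we know it is Deligne--Mumford) to check that for every square-zero extension $B \hookrightarrow B'$ of affine schemes with a log structure, and every object of $\cat{Rub}$ over $B$ together with a lift of the underlying log curve to $B'$, the piece-wise linear function $\beta$ lifts. But the obstruction to lifting a log curve lies in a group built from $\ghost$ and the node-smoothing parameters, which vanishes since $\mathfrak M$ is log smooth (equivalently, its underlying stack is smooth); and once the log curve is lifted, $\beta$ is determined combinatorially by the level data, which is rigid under square-zero extensions of the base log structure because $\Gamma(B',\ghost_{B'}^\gp) \to \Gamma(B,\ghost_{B}^\gp)$ is an isomorphism (the ghost sheaf sees only the log-stratification, not the scheme-theoretic thickening). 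Hence the lift exists and smoothness follows. The main obstacle — and the step I would spend the most care on — is the precise bookkeeping of condition (2): one must check that subdividing $\bb G_m^\trop$ at the image of $\beta$ and pulling back does not introduce new nodes or non-reduced structure beyond a controlled root-stack along the vertical nodes, so that the subdivided total space is again log smooth over $B$; this is exactly the place where the difference from Marcus--Wise's setup (who omit condition (2)) matters, and it is what upgrades their result to actual smoothness of $\cat{Rub}$ rather than merely local finite presentation.
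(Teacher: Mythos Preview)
Your proposal has the right intuitions but misses the single technical point that actually makes the argument work: the minimal characteristic monoid of $\cat{Rub}$ is \emph{free}. The paper's proof (in \ref{sec:smoothness_of_Rub}) runs as follows: (i) $\cat{Rub} \to \mathfrak M$ is log \'etale (Marcus--Wise plus the root stack, which is itself log \'etale), so $\cat{Rub}$ is log smooth; (ii) the explicit computation of the minimal log structure in \ref{sec:minimal_log_str} shows that at a point with level graph~$\Gamma$ the ghost sheaf is $\tilde P \oplus \bb N\Span{E^h}$, a free monoid on the level passages and the horizontal edges; (iii) log smooth with free characteristic monoid implies classically smooth, by the standard chart argument (locally there is a smooth strict map to $\bb A^r$).

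Your deformation-theoretic route has a genuine gap. To prove smoothness of the \emph{underlying} algebraic stack you must lift $B$-points with minimal log structure along a square-zero thickening $B \hookrightarrow B'$ of \emph{schemes}, and in particular you must produce the log structure on~$B'$. You propose to do this by first lifting the log curve via smoothness of $\mathfrak M$, and then observing that $\beta$ is ``combinatorially rigid''. But the minimal log structure coming from $\mathfrak M$ has ghost $\bb N\Span{E}$, whereas a minimal $\cat{Rub}$-point has ghost $\tilde P \oplus \bb N\Span{E^h}$; these differ, and the elements demanded by condition~(2) of \ref{def:rub} (equivalently condition~(3) of \ref{prop:rub_translation}) typically do not live in $\bb N\Span{E}$. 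So with the $\mathfrak M$-lift there is in general no $\beta$ on $B'$ satisfying the $\cat{Rub}$ conditions. What actually needs to be lifted is the map $\alpha\colon \tilde P \oplus \bb N\Span{E^h} \to \calO_B$ to $\calO_{B'}$, and the reason this is unobstructed is exactly that the monoid is free --- which is the step you did not prove.

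Your root-stack remark is closer in spirit but also incomplete. It is true that imposing condition~(2) is a root stack over the Marcus--Wise version of $\cat{Rub}$, but that version is \emph{not} smooth (this is exactly the phenomenon in \ref{fig:Rub_subdivision_fan}), so ``root stacks of smooth stacks along normal crossings divisors are smooth'' does not apply. The map $\cat{Rub} \to \mathfrak M$ is a subdivision followed by a root construction; the subdivision creates simplicial-but-singular cones, and the root construction refines the lattice to make them smooth. Translated back into monoids, this is once again the statement that the minimal characteristic monoid of $\cat{Rub}$ is free --- the computation you would need to carry out to make either of your sketches into a proof.
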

 
The proof of~\ref{thm:rub_smooth} will be given in~\ref{sec:smoothness_of_Rub}.

Given $\beta \in \ghost_X^\gp(X)$, then taking the preimage in the standard exact sequence~\ref{eq:stdsequence} applied to $X$ yields the line bundle $\ca O_X(\beta)$; in other words, it yields an Abel--Jacobi
map
\bes
\aj\colon \cat{Rub} \lra \Picabs
\ees
to the Picard stack of the universal curve over $\mathfrak M$ (the stack of pairs $(X/B, \ca F)$, where $X/B$ is a log curve and~$\ca F$ is a line bundle on $X$).  One of the main results of \cite{MarcusWiseLog} is that the composite of this Abel--Jacobi map with the forgetful map $\Picabs \to \on{Pic}$ to the relative Picard space is proper.

\begin{df}
Write $n$ for the locally constant function on $\mathfrak M$ giving the number of markings. Then there is an \emph{outgoing slopes} map
\bes
\cat{Rub} \lra \bb Z^n
\ees
sending a point $(X/B, \beta)$ to the outgoing slopes of $\beta$, \textit{i.e.}~the values of $\beta$ in the groupifications of the stalks $\ghost_{X/B}(z_i) \coloneqq \ghost_X(z_i) / \ghost_B(\pi(z_i)) = \bb N$ at the markings.

Given a tuple $\mu = (m_1, \ldots, m_n)$ of integers, we define $\cat{Rub}_\mu$ to be the open-and-closed substack of $\cat{Rub}$ where the log curve has $n$~markings and the outgoing slopes are given by~$\mu$.
\end{df}
Note that the forgetful map from $\cat{Rub}_\mu$ to the locus in $\mathfrak M$ where the curves carry exactly $n$ markings is birational (it is an isomorphism over the locus of smooth curves); in particular, if we fix a genus and a number of markings, then $\cat{Rub}_\mu$ is connected.

Writing $d\coloneqq \sum_{i=1}^n m_i$, the image of $\cat{Rub}_\mu$ under the Abel--Jacobi
map~$\aj$ lands in the connected component $\Picabs^d$ of $\Picabs$ consisting of line bundles of (total) degree $d$.

\begin{remark}
In fact one can show that the map $\cat{Rub}_\mu \to \mathfrak M$ is not only birational onto the locus in $\mathfrak M$ with $n$ markings but also \emph{log \'etale}. 
This is a type of map basically consisting of an iterated blowup of boundary strata, followed by root constructions\footnote{We cannot assume that the degrees of these roots are invertible in the base ring, so this map should probably not be considered to be log \'etale outside of characteristic zero (though conventions in the literature differ).} on some of these strata, and then followed by taking an \'etale map. For the details, we refer the reader \textit{e.g.}~to the paper \cite{HMPPS}, where such morphisms are used extensively. An important point there is that they can be described uniquely by an (incomplete) subdivision of the tropicalization of $\mathfrak M$. While again we do not explain the details, one consequence is that one can obtain a smooth local model of the morphism $\cat{Rub}_\mu \to \mathfrak M$ by the toric map induced via some explicit subdivision of a cone.

In~\ref{fig:Rub_subdivision_fan}
we use this to illustrate the importance of condition~\eqref{d:r-2} in~\ref{def:rub}. For this, consider a point of $\mathfrak M$ where the curve has genus zero and the stable graph $\Gamma$ has three vertices and two edges $e_1, e_2$ as illustrated. Assume that each vertex carries one marking and that $\mu$ is chosen so that the unique slopes of a piecewise linear function on the edges are $1,2$ for $e_1, e_2$ (see~\ref{df:PL} for a discussion of piecewise linear functions). 

Then the tropicalization of $\mathfrak M$ contains a cone $\sigma_\Gamma = \mathbb{R}_{\geq 0}^2$ parameterizing the ways of putting lengths $\ell_1, \ell_2$ on the two edges. Depending on which of the quantities $\ell_1$ or $2 \ell_2$ is greater, a piecewise linear function on $\Gamma$ with the given slopes will take a larger value on either $v_2$ or $v_1$.  Then the smooth local picture of $\cat{Rub}_\mu \to \mathfrak M$ is given by the map of toric varieties associated to the subdivision of $\sigma_\Gamma$ along the ray spanned by $(\ell_1, \ell_2) = (2,1)$.

However, there is a subtlety: for the standard integral structure (black dots), the upper cone is simplicial, but not smooth.
Indeed, the primitive generators $(0,1)$, $(2,1)$ of its rays form a rational basis, but not an integral basis. Hence, the toric variety associated to this cone has a singularity, which would contradict~\ref{thm:rub_smooth}. And indeed, this is precisely what happens for the variant of $\cat{Rub}$ defined by omitting condition~\eqref{d:r-2} from~\ref{def:rub}. Putting this condition forces us to adjoin the element 
$\ell_1/2$
to the dual of the lattice on the upper cone. Correspondingly, on that upper cone we take the sublattice of all points $(\ell_1, \ell_2)$ with $\ell_1$ an even integer (depicted by circled dots).\footnote{Note that in contrast to the toric situation, not all cones in the tropicalization of $\cat{Rub}$ lie in the same ambient vector space with integral structure, so that it is possible to change this integral structure on different cones of the tropicalization.} Then the new ray generators are $(0,1/2)$, $(1,1/2)$, which indeed form a basis of the integral structure $\mathbb{Z} \oplus (1/2) \mathbb{Z}$, so that $\cat{Rub}$ is smooth as claimed.

\usetikzlibrary{shapes.misc}

\begin{figure}
\[
\begin{tikzpicture}
\tikzset{cross/.style={cross out, draw=black, minimum size=2*(#1-\pgflinewidth), inner sep=0pt, outer sep=0pt},
cross/.default={3pt}}

\filldraw[black!20] (0,0) -- (4,0) -- (4,2) -- (0,0);
\filldraw[black!10] (0,0) --  (4,2) -- (4,3) -- (0,3) -- (0,0);

\draw[thick, ->] (0,0) -- (4.5,0) node[below right]{$\ell_1$};
\draw[thick, ->] (0,0) -- (0,3.5) node[above left] {$\ell_2$};
\draw[thick] (0,0) -- (4.5,2.25);

 \foreach \x in {0,...,4}
    \foreach \y in {0,...,3} 
       {\filldraw[black] (\x, \y) circle (1pt);}

\foreach \p in {(0,1),(0,2),(0,3),(2,1),(2,2),(2,3), (4,2), (4,3)}
 {\draw \p circle (2.3pt) {};}

\filldraw (5.5,1.5) circle (2pt) {};
\filldraw (5.2,0.5) circle (2pt) node[below]{$v_1$};
\filldraw (5.8,0.8) circle (2pt) node[below]{$v_2$};
\draw[thick] (5.5,1.5) to node[midway, left] {$\ell_1$} (5.2,0.5);
\draw[thick] (5.5,1.5) to node[midway, right] {$\ell_2$} (5.8,0.8);

\filldraw (5.5,4.5) circle (2pt) {};
\filldraw (5.2,3.8) circle (2pt) node[below]{$v_1$};
\filldraw (5.8,3.5) circle (2pt) node[below]{$v_2$};
\draw[thick] (5.5,4.5) to node[midway, left] {$\ell_1$} (5.2,3.8);
\draw[thick] (5.5,4.5) to node[midway, right] {$\ell_2$} (5.8,3.5);
\end{tikzpicture}
\]
    \caption{Subdivision associated to the drawn stable graph, with slope $1$ at edge $e_1$ (of length $\ell_1$) and slope $2$ at edge $e_2$ (of length $\ell_2$).}
    \label{fig:Rub_subdivision_fan}
\end{figure}
\end{remark}

\subsection{Logarithmic rubber differentials} \label{sec:rubdiffspace}

The stack $\cat{Rub}$ is in some sense the universal space of logarithmic rubber maps. In this section we specialize to the case of logarithmic rubber differentials. For this we fix $g$, $n$ and write $X_{g,n}/\Mbar_{g,n}$ for the universal curve, with markings $\bfz = (z_1,\ldots,z_n)$. Fix a tuple~$\mu = (m_1, \dots, m_n)$ of integers such that $d= \sum_{i=1}^n m_i = 2g-2$.
We define a line bundle on the universal curve $X_{g,n}$ over $\Mbar_{g,n}$ by
the formula
\bes
\calL \,\coloneqq\, \calL_\mu \,\coloneqq\,
\omega_{X_{g,n}/\Mbar_{g,n}}\left(-\sum_{i=1}^n m_i z_i\right),
\ees
where $\omega = \omega_{X_{g,n}/\Mbar_{g,n}}$ is the relative dualizing sheaf of $X_{g,n}\to\Mbar_{g,n}$. Then $\calL$ induces a morphism
\bes
\phi_{\calL}\colon \Mbar_{g,n} \lra \Picabs . 
\ees

\begin{df}\label{def:rub_L}
We define the \emph{space of logarithmic rubber differentials} to be
\begin{equation}\label{eq:rub_L_def}
\cat{Rub}_\calL \coloneqq \cat{Rub}_{\ul 0} \times_{\Picabs, \phi_{\calL}} \Mbar_{g,n}.
\end{equation}
\end{df}

\begin{remark}
If we had taken the fiber product over the relative Picard space (instead of the Picard stack), we would have obtained the projectivized space $\PP(\cat{Rub}_\calL)$. This is the approach taken in \cite{MarcusWiseLog,BHPSS}, as the space $\PP(\cat{Rub}_\calL)$ is what is needed for the study of the double ramification cycle. 
\end{remark}
\begin{remark}\label{rk:no_leg_weights}
There are two equivalent descriptions of the rubber differential space as
\bes
  \cat{Rub}_\calL \= \cat{Rub}_{\ul 0} \times_{\Picabs, \phi_\calL} \Mbar_{g,n}
  \= \cat{Rub}_\mu \times_{\Picabs, \phi_{\omega}} \Mbar_{g,n}. 
\ees
\end{remark}

\subsection{Local description}\label{sec:unpacking_rub_definition}

In what follows we will make the definition of the space $\cat{Rub}$ more explicit for log curves over `sufficiently small' bases, more precisely, for \emph{nuclear} log curves as defined in \cite{Holmes2020Models-of-Jacob}. This is a slight refinement of asking for the base to be atomic (in the sense of \cite{AbraWise}), and is needed because a log curve even over a point does not have a well-defined dual graph unless the residue field is sufficiently large. We omit the details of the definition of a nuclear log curve, mentioning only the key properties we use:
\begin{enumerate}
\item\label{Kprop-1}
  For any family of log curves $X/B$ with $B$ locally of finite type, there exists a strict\footnote{A map $f\colon X \to Y$ of log schemes is \emph{strict} if the log structure on $X$ is the pullback of the log structure on $Y$. In particular, the strict \'etale topology on log schemes reflects very closely the usual \'etale topology on schemes. } \'etale cover $\bigsqcup_{i\in I} B_i \to B$ such that each $X \times_B B_i \to B_i$ is nuclear.
\item\label{Kprop-2}
For $X/B$ a nuclear log curve and for any $b \in B$, the curve $X_b$ has a well-defined dual graph $\Gamma_b$, with edges labelled by non-zero elements of $\ghost_{B,b}$; we denote the \emph{label} (also called
\emph{length}) of $e$ by $\delta_e$; this was denoted by $\delta_x$ in~\ref{eq:smoothing_param_1}. If $\delta_e' \in \M_B(B)$ is a lift of $\delta_e$, then $\alpha(\delta_e') \in \ca O_B(B)$ is a \emph{smoothing parameter} for $e$, in the sense that $X$ can be described locally around the corresponding point by an equation $uv = \alpha(\delta_e')$. 
The stalk of $\ghost_{X}$ at the corresponding node $q$ of the fiber over $b\in B$  
is given by
\begin{equation}\label{eq:ghost-at-singular_point}
\ghost_{X,q} = \left\{(u, v) \in \ghost_{B, b} \oplus  \ghost_{B, b} \text{ such that } \delta_e \mid (u-v)\right\}. 
\end{equation}
\item \label{Kprop-3}
For $X/B$ nuclear, the base $B$ has a unique closed stratum,\footnote{Every log scheme comes with a decomposition into locally closed subschemes (called \emph{strata}), where the ghost sheaf is locally constant.} and, for any $b$ in that closed stratum, the restriction gives an isomorphism $\Gamma(B, \ghost_{B}) \isom \ghost_{B,b}$.

\item\label{Kprop-4} If $X/B$ is nuclear and $b$, $b' \in B$, with $b$ in the closed stratum, there is a natural identification (of labelled graphs) of $\Gamma_{b'}$ with the graph obtained from $\Gamma_b$ by mapping every label to $\ghost_{B,b'}$, and then contracting all edges that are labelled by 0. We often abuse notation by writing $\ghost_B\coloneqq\ghost_{B,b}$ (for $b$ in the closed stratum) in place of $\Gamma(B, \ghost_{B})$. We often write $\Gamma$ for the graph over any point in the closed stratum, which comes with an $\ghost_B$-metric. 
\end{enumerate}
If $B$ is the spectrum of a noetherian strictly Henselian local ring with atomic log structure (for example, if $B$ is the spectrum of a separably closed field), then by \cite[Lemma 3.40]{Holmes2020Models-of-Jacob} any log curve $X/B$ is nuclear.

Let $X/B$ be a nuclear log curve. Let $b \in B$ be a point in the closed stratum, with associated dual graph $\Gamma$ with \emph{vertex set}
$V = V(\Gamma)$, {\em set of half-edges} $H=H(\Gamma)$ (including legs), and {\em set of non-leg half-edges} $H'=H'(\Gamma)$.

\begin{df}\label{df:PL}
A \emph{piecewise linear} (\emph{PL}\,) \emph{function} on $X/B$ is an element of $\Gamma(X,\ghost_X^\gp)$.

A \emph{combinatorial PL function} on $X/B$ consists of the data:
\begin{enumerate}
\item
a function $\beta'\colon V(\Gamma) \to \ghost_{B, b}^\gp$ (the \emph{values} on the vertices), and
\item a function $\kappa\colon H'(\Gamma) \to \bb Z$ (the \emph{slopes} on the non-leg\footnote{In this paper we do not include slopes on the legs, as we are interested only in the case where these slopes are equal to $0$ (since we work throughout with $\cat{Rub}_{\ul 0}$). Recall that, as discussed in~\ref{rk:no_leg_weights}, we have moved the data of the zeros and poles into the line bundle $\ca L_\mu$. } half-edges)
\end{enumerate}
such that if $h_1$ and $h_2$ are half-edges forming an edge $e$, with $h_i$
attached to vertex $v_i$, we have
\bes
\kappa(h_2) \delta_e \= \beta'(v_2) - \beta'(v_1)
\ees
(so that in particular $\kappa(h_1)+\kappa(h_2)=0$). 
Edges of~$\Gamma$ with slope zero (that is, where both half-edges have slope zero) are called \emph{horizontal}; all the other edges of~$\Gamma$ are called \emph{vertical}.
\end{df}

\medskip
We want to show that these two types of PL functions are in natural bijection.  First, we construct a combinatorial PL function from any PL function.  At generic points $\eta$ of $X_b$, there is a natural isomorphism $\ghost_{B, b} = \ghost_{X,\eta}$, so the section $\beta \in H^0(X,\ghost_X^\gp)$ determines a function $\beta'\colon V \to \ghost_{B, b}^\gp$.  To complete the definition of~$\kappa$, we first show the following. 

\begin{lemma} \label{le:divisibility}
 If\, $h_1$ and $h_2$ are half-edges forming an edge $e$, with $h_i$ attached to vertex $v_i$, then for the function $\beta'$ constructed from $\beta$ as above, the value $\beta'(v_2) - \beta'(v_1)$ is an integer multiple of\, $\delta_e$.
\end{lemma}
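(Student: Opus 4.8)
The plan is to read the divisibility off directly from the explicit description of the characteristic sheaf at the node $q=q_e$ corresponding to $e$. First I would pass to the stalk of $\beta$ there: since $X/B$ is nuclear, property~\eqref{eq:ghost-at-singular_point} gives an identification
\bes
\ghost_{X,q}\;\cong\;\{(u,v)\in\ghost_{B,b}\oplus\ghost_{B,b}\ :\ \delta_e\mid(u-v)\}\,,
\ees
and I would groupify the resulting inclusion $\ghost_{X,q}\hookrightarrow\ghost_{B,b}\oplus\ghost_{B,b}\hookrightarrow\ghost_{B,b}^\gp\oplus\ghost_{B,b}^\gp$ to obtain an injection $\ghost_{X,q}^\gp\hookrightarrow\ghost_{B,b}^\gp\oplus\ghost_{B,b}^\gp$ whose image lies inside $\{(u,v):u-v\in\ZZ\delta_e\}$. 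The only inputs here are that an injective homomorphism from a fine (hence integral) monoid into a group induces an injection on groupifications, and that $\ZZ\delta_e$ is a subgroup of $\ghost_{B,b}^\gp$, hence closed under subtraction; note also that $\delta_e\neq 0$ since edges of a nuclear log curve carry nonzero labels, so ``integer multiple of $\delta_e$'' is the honest assertion $\ZZ\delta_e\ni\beta'(v_2)-\beta'(v_1)$.

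Next I would match the two coordinate projections $\ghost_{X,q}^\gp\to\ghost_{B,b}^\gp$ with the cospecialization maps of $\ghost_X^\gp$ from $q$ to the generic points $\eta_1,\eta_2$ of the components $X_{v_1},X_{v_2}$ meeting at $q$, followed by the canonical isomorphisms $\ghost_{X,\eta_i}^\gp\cong\ghost_{B,b}^\gp$. This is precisely the local structure of a log curve at a node (\cite{Kato2000Log-smooth-defo,Gross2013Logarithmic-Gro}), the identification in \eqref{eq:ghost-at-singular_point} being the one adapted to an ordering of the two branches. Writing $\beta_q=(\beta_1,\beta_2)$ for the image of the stalk of $\beta$ at $q$ under the embedding of the first paragraph, compatibility of the global section $\beta$ with cospecialization then gives $\beta_i=\beta|_{\eta_i}=\beta'(v_i)$, directly from the definition of $\beta'$ as the value of $\beta$ at the generic points. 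Combining with the first paragraph, $\beta'(v_2)-\beta'(v_1)=\beta_2-\beta_1\in\ZZ\delta_e$, which is the claim. When $e$ is a loop, so $v_1=v_2$, the statement reads $0\in\ZZ\delta_e$ and is trivial, so all the content sits in the case $v_1\neq v_2$ (equivalently $\eta_1\neq\eta_2$); after the lemma one may then legitimately set $\kappa(h_2)\coloneqq(\beta'(v_2)-\beta'(v_1))/\delta_e\in\ZZ$.

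The step I expect to require the most care is the identification in the second paragraph: making precise that the two projections appearing in \eqref{eq:ghost-at-singular_point} are exactly the cospecializations of $\ghost_X^\gp$ to the two branch generic points, and fixing sign/ordering conventions so that the first coordinate corresponds to $v_1$ and the second to $v_2$. Everything else -- the groupification bookkeeping of the first paragraph and the unwinding of the definition of $\beta'$ -- is routine, and no hypothesis beyond nuclearity of $X/B$ (to have a well-defined dual graph with labels $\delta_e$) is needed.
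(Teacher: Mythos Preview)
Your proposal is correct and follows essentially the same approach as the paper: read off the divisibility from the description \eqref{eq:ghost-at-singular_point} of $\ghost_{X,q}$ at the node, and identify the two coordinate projections with the values $\beta'(v_1),\beta'(v_2)$ at the adjacent generic points. The paper compresses all of this into a single sentence, while you spell out the groupification step, the cospecialization bookkeeping, and the loop case; these elaborations are accurate but not substantively new.
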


\begin{proof} 
This follows from~\ref{eq:ghost-at-singular_point} and the fact that the images of $\beta$ under the two projections to $\ghost_{B, b}^\gp$ are exactly given by $\beta'(v_1)$ and $\beta'(v_2)$.
\end{proof}

In the notation of \Cref{le:divisibility}, we can then \emph{define}
\begin{equation} \label{eq:defkappa}
\kappa(h_2) \,\coloneqq\, \frac{\beta'(v_2) - \beta'(v_1)}{\delta_e}
\end{equation}
(which is unique because $\ghost_{B,b}$ is torsion-free). This accomplishes one direction of the following lemma.

\begin{lemma} \label{le:PLbijection}
The above construction induces a bijection between the set of\, PL functions and the set of combinatorial PL functions.
\end{lemma}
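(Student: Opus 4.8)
The plan is to exhibit an explicit inverse to the map $\beta \mapsto (\beta',\kappa)$ constructed above the lemma (which already takes care of one direction, with $\kappa$ integral by \ref{le:divisibility}) and then check that the two composites are the identity. Everything reduces to a local computation of the sheaf $\ghost_X^\gp$ on the nuclear log curve $X/B$, for which the two inputs are: away from the nodes and markings $\ghost_X^\gp$ is the constant sheaf $\ghost_{B,b}^\gp$, and at a node $q$ lying on an edge $e=\{h_1,h_2\}$ the stalk is $\ghost_{X,q}^\gp = \{(u,v)\in(\ghost_{B,b}^\gp)^2 : \delta_e\mid(u-v)\}$ by \ref{eq:ghost-at-singular_point}, which in particular embeds into $(\ghost_{B,b}^\gp)^2$ via the two projections to the two adjacent branches.

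Concretely I would cover $X$ by: a small connected open $U_v$ around each generic point of $X_v$, contained in the smooth unmarked locus, so that $\ghost_X^\gp|_{U_v}\cong\ghost_{B,b}^\gp$ is constant, with some value $a_v\in\ghost_{B,b}^\gp$; a disjoint chart $U_q$ around each node $q$, on which a section is a pair in $\ghost_{X,q}^\gp$, one entry per branch; and a disk $U_{z_i}$ around each marking, on which a section is a pair in $\ghost_{B,b}^\gp\oplus\ZZ$, the $\ZZ$ being the leg slope. A section of $\ghost_X^\gp$ over $X$ is exactly a compatible family on this cover. The only non-trivial overlaps lie in the smooth loci of the $X_v$, where $\ghost_X^\gp$ is constant, and gluing there forces the branch-entry of the section near each node or marking on the $X_v$-side to equal $a_v$. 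Hence a global section of $\ghost_X^\gp$ amounts precisely to values $(a_v)_{v\in V(\Gamma)}$ with $\delta_e\mid(a_v - a_{v'})$ for every edge $e=\{v,v'\}$ — which is exactly a $\beta'$ as in \ref{df:PL}, with $\kappa$ recovered by \ref{eq:defkappa} — together with the leg slopes, which vanish in our setting (this is the single point where working over $\cat{Rub}_{\ul 0}$ enters) and so contribute no extra data.

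Matching this identification with the earlier construction is then immediate: by construction $a_v$ is the restriction of $\beta$ to the generic point of $X_v$, so $a_v = \beta'(v)$, and \ref{eq:defkappa} is exactly the inverse formula; conversely, the section built from a combinatorial PL function $(\beta',\kappa)$ visibly restricts to $\beta'(v)$ at generic points and has slopes $\kappa$ at the nodes, hence maps back to $(\beta',\kappa)$. The one step that requires care — essentially the only non-formal point — is the gluing at the nodes: one must track that the two projections in the stalk description \ref{eq:ghost-at-singular_point} correspond to the two \emph{adjacent} generic values $\beta'(v)$ and $\beta'(v')$ in the correct order, so that the divisibility built into $\ghost_{X,q}^\gp$ translates precisely into integrality of the slope $\kappa$ given by \ref{eq:defkappa}, with no spurious sign or rescaling (in particular the degenerate case of a loop edge, where both projections hit the same $a_v$, correctly forces $\kappa=0$).
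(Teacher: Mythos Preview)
Your proof is correct and matches the paper's approach: build the inverse by setting $\beta=\beta'(v)$ on the smooth locus of each component and then use the stalk description \ref{eq:ghost-at-singular_point} at the nodes to see there is a unique compatible extension. The only difference is organizational --- the paper constructs the section fiber-by-fiber, transferring the combinatorial PL function to each $X_{b'}$ via property~(4) of nuclear log curves and then asserting that these glue, whereas you cover the total space $X$ directly; this lets the paper sidestep your assertion that $\ghost_X^\gp|_{U_v}$ is the \emph{constant} sheaf $\ghost_{B,b}^\gp$ (strictly it is $\pi^{-1}\ghost_B^\gp$, which need not be constant when $B$ has points outside the closed stratum), though for the purpose of computing global sections the two agree by property~(3). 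Your explicit handling of the leg slopes is a point the paper leaves to the footnote in \ref{df:PL}.
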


\begin{proof} Let $\beta'$ be a combinatorial PL function; we build a PL function $\beta$ giving the inverse image of $\beta'$ under the construction above. If $x$ is a smooth point of $X_b$, then $\ghost_{X, x} = \ghost_{B, b}$, and we define the value of $\beta$ at $x$ to be $\beta'(v)$, where $v$ corresponds to the irreducible component of $X_b$ containing $x$. The presentation~\ref{eq:ghost-at-singular_point} makes it clear that there is a unique way to extend this section to all non-smooth points $x \in X_b$. For any other point $b' \in B$, the combinatorial PL function can naturally be transferred (using property (4) of the definition of a nuclear log curve) to the fiber $X_{b'}$, and we repeat the above argument to give a PL function on $X_{b'}$. These then fit together to define a global PL function on $X/B$.
\end{proof}

Our concrete local description of $\cat{Rub}$ is now given by the next
proposition. 

\begin{proposition}\label{prop:rub_translation}
For $X/B$ nuclear and $b\in B$ in the closed stratum, let $V$ be the vertex set of the associated dual graph of\, $b$. Then there is a natural bijection between the set of\, $X/B$-points of\, $\cat{Rub}_{\ul 0}$ $($i.e.~the set of maps $B \to \cat{Rub}_{\ul 0}$ lying over $X/B)$
and the set of maps
\begin{equation}\label{eq:PL_literal}
\beta'\colon V \lra \ghost_{B, b}^\gp
\end{equation}
satisfying the following conditions:
\begin{enumerate}
\item\label{p:r_t-1} The divisibility condition $\delta_e \mid \beta'(v_2) - \beta'(v_1)$ holds at every edge $e$ in $\Gamma_b$ connecting vertices $v_1,v_2 \in V$.
\item\label{p:r_t-2} The image of $\beta'$ is a totally ordered subset of\, $\ghost_{B, b}^\gp$ with largest element being $0$.
\item\label{p:r_t-3} For every edge $e$ connecting vertices $v_1$ and $v_2$, with slope $\kappa_e$ $($defined as the absolute value of\, {\rm\ref{eq:defkappa}}$)$, and for every $y \in \on{Image}(\beta')$ with $\beta'(v_1) < y < \beta'(v_2)$, the monoid $\ghost_{B, b}$ contains the element $\frac{y - \beta'(v_1)}{\kappa_e}$. 
\end{enumerate}
\end{proposition}

\begin{proof}
Conditions~\eqref{p:r_t-1}  and~\eqref{p:r_t-2} are translations of point~\eqref{d:r-1} of~\ref{def:rub}. Condition~\eqref{p:r_t-3} corresponds to point~\eqref{d:r-2} of~\ref{def:rub}, as explained in \cite[Section~6.2]{BHPSS}.
\end{proof}

\begin{remark}
If $\beta_1'$ and $\beta_2'$ are combinatorial PL functions with the same slopes $\kappa_e$, then there exists an element $c \in \ghost_{B,b}^\gp$ such that $\beta_1' = \beta_2' + c$. In the definition of $\cat{Rub}$, we restrict to PL functions whose values are totally ordered and take maximum value $0$, and such functions are completely determined by the values of their slopes $\kappa$. 
\end{remark}

We would like to characterize in a similar spirit when a point of $\cat{Rub}$ lifts to $\cat{Rub}_\calL$. More concretely, this means describing explicitly the line bundle $\calO_X(\beta)$ associated to a PL function. The next lemma describes the \emph{restriction} of $\calO_X(\beta)$ to the irreducible components of the curve $X_b$ (in the case where $\beta$ comes from $\cat{Rub}_{\ul 0}$, \textit{i.e.}~has vanishing outgoing slopes). To describe the gluing between irreducible components would require us to get into quite a few more details of log geometry, and is not necessary for what we do in this paper. 

\begin{lemma}[\textit{cf.} {\cite[Lemma 2.4.1]{RSPWI}}] \label{lem:Obetarestriction}
 Let $Y$ be the normalization of an irreducible component of $X_b$, corresponding to a vertex $v$. For each half-edge $h$ attached to $v$, write $\kappa_h$ for the slope $($in the sense of\, {\rm\ref{eq:defkappa}}$)$ and $z_h \in Y$ for the associated preimage of a node of\, $X_b$. Then there is a canonical isomorphism 
 \bes
 \calO_X(\beta)|_{Y} \= \pi^*\calO_B(\beta'(v))\otimes_{\calO_Y} \calO_Y\left(\sum_h \kappa_h z_h\right).
 \ees
\end{lemma}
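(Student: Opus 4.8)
The plan is to build the asserted isomorphism by working separately over the smooth locus of $Y$ and in a neighbourhood of each node lying over it, using the description of $\calO_X(\beta)$ as the line bundle attached to a $\bb G_m$-torsor, and then gluing. Since the isomorphism is to be canonical, it suffices to produce it strict-\'etale locally on $B$; so by property (1) of nuclear log curves I may assume $X/B$ is nuclear with $b$ in the closed stratum, and then $\beta$ is recorded by its values $\beta'$ and slopes $\kappa$ via \ref{le:PLbijection}.

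First I would recall the mechanics of $\calO_X(\beta)$: by definition it is the line bundle associated to the preimage of $\beta$ under $\M_X^\gp \surj \ghost_X^\gp$ in \ref{eq:stdsequence}. Over any open on which $\beta$ lifts to a section $\tilde\beta$ of $\M_X^\gp$ the torsor is trivialised, so $\calO_X(\beta)$ acquires a canonical generator $e_{\tilde\beta}$; and if $\tilde\beta,\tilde\beta'$ are two lifts over a common open, then $\tilde\beta'-\tilde\beta\in\ker(\M_X^\gp\to\ghost_X^\gp)=\calO_X^\times$ and $e_{\tilde\beta'}=\alpha(\tilde\beta'-\tilde\beta)\,e_{\tilde\beta}$. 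It is worth stressing that this torsor description, rather than a realisation of $\calO_X(\beta)$ inside the sheaf of rational functions, is the one to use here: on the component $Y$ the naive rational generator typically vanishes identically, so the embedding into rational functions does not restrict to $Y$ in a useful way.

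Next I would carry out the two local computations. Over $Y^\circ=Y\setminus\{q_h\}_h$ the log structure restricted from $X$ is simply pulled back from $B$ and $\beta|_{Y^\circ}$ is the constant section $\pi^*\beta'(v)$; lifting it via the pullback of a local lift of $\beta'(v)$ on $B$ gives a canonical identification $\calO_X(\beta)|_{Y^\circ}\cong\pi^*\calO_B(\beta'(v))|_{Y^\circ}$. Near a node $q_h$, let $u$ be a local equation of $q_h$ on $Y$ and $\tilde u\in\M_Y$ the element with $\alpha(\tilde u)=u$; then the groupified stalk is $\ghost_{Y,q_h}^\gp=\ghost_{B,b}^\gp\oplus\bb Z\tilde u$, and the slope relation $\kappa_h\delta_e=\beta'(v)-\beta'(v')$ of \ref{df:PL} at the edge through $q_h$ (with $v'$ the vertex across that edge) rewrites $\beta|_{Y,q_h}=\pi^*\beta'(v)-\kappa_h\tilde u$. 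Lifting this and comparing with the pullback generator used on $Y^\circ$, the two generators differ by $\alpha(-\kappa_h\tilde u)=u^{-\kappa_h}$, so near $q_h$ we obtain $\calO_X(\beta)|_Y\cong\pi^*\calO_B(\beta'(v))\otimes\calO_Y(\kappa_h q_h)$, the coefficient being $+\kappa_h$ precisely because of the sign convention of \ref{appendix:sign_conventions}.

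Finally I would glue: both identifications are manufactured canonically from the same data, so they agree on the overlaps $Y^\circ\cap(\text{a neighbourhood of }q_h)$ and patch together to the desired canonical isomorphism $\calO_X(\beta)|_Y\cong\pi^*\calO_B(\beta'(v))\otimes_{\calO_Y}\calO_Y(\sum_h\kappa_h q_h)$ on all of $Y$. I expect the only genuinely delicate step to be the node-local one: correctly describing the log structure $\ghost_Y$ at $q_h$ after normalising (so that $q_h$ becomes an honest marked point of $Y$) and pinning down the orientation of $\kappa_h$ so that the exponent of $u$ comes out as $+\kappa_h$ rather than its negative --- which is exactly the issue \ref{appendix:sign_conventions} exists to settle. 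This reproduces the argument behind the cited \cite[Lemma 2.4.1]{RSPWI}.
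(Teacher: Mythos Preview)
The paper does not give its own proof of this lemma; it simply cites \cite[Lemma 2.4.1]{RSPWI} and uses the statement. Your argument is a correct direct verification along the standard lines: trivialise the torsor for $\beta$ locally, identify it with $\pi^*\calO_B(\beta'(v))$ on the smooth locus of $Y$, and compute the twist at each preimage of a node using the slope relation and the sign convention of \ref{appendix:sign_conventions}. The node-local step you flag as delicate is indeed the only place requiring care, and your description (writing $\beta$ near the node as $\pi^*\beta'(v)-\kappa_h\bar u$ and then invoking that $\calO_Y(\bar u)$ is the ideal sheaf of $q_h$) handles it correctly. So your proposal is sound and there is nothing in the paper to compare it against beyond the bare citation.
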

In particular, for a point $(X_b/b, \beta)$ of $\cat{Rub}_{\ul 0}$ to lie in $\cat{Rub}_{\calL}$, it is necessary (though not in general sufficient) to require that, on the normalization $Y$ of any irreducible component of $X_b$, there exists an isomorphism 
\begin{equation*}
    \calL_Y \,\cong\, \calO_Y\left(\sum_h \kappa_h z_h \right),
\end{equation*}
where the sum runs over all half-edges $h$ attached to $v$. 

\section{Generalized \msds} \label{sec:famGMS}

We recall basic notions from \cite{LMS}, in order to define the groupoids $\GSMS$ of \emph{simple generalized multi-scale differentials} and $\GMS$ of \emph{generalized multi-scale differentials}, where $\mu = (m_1, \ldots, m_n)$ is a tuple of integers with sum~$2g-2$. The adjective `generalized' refers to the fact that we do not impose the global residue condition.

\subsection{Enhanced level graphs}
\label{sec:tori}

The boundary strata of the stack of generalized multi-scale differentials are indexed by \emph{enhanced level graphs}. Such an enhanced level graph, typically denoted by~$\Gamma$, is the dual graph of a stable curve, with legs corresponding to the marked points, with a level structure (\textit{i.e.}~a weak full order, equality being permitted) on the set of vertices $V(\Gamma)$, and with enhancements $\kappa_e$, which are non-negative integers attached to the edges. The edges~$E(\Gamma)$ are grouped into the set of horizontal edges~$E^h(\Gamma)$ joining vertices at the same level, and the set of vertical edges~$E^v(\Gamma)$.  The enhancements are required to be zero precisely for horizontal edges.  We thus may consider an enhancement as a function
\bes
\kappa\colon H(\Gamma) \lra \bb Z
\ees
on the set of half-edges of~$\Gamma$, assigning~$\kappa_e>0$ to the upper half and $-\kappa_e<0$ to the lower half of a vertical edge, assigning zero to both halves of a horizontal edge, and letting $\kappa$ agree with $m_i$ at the legs of the graph.  We normalize the set of levels so that the top level is zero, and let $L(\Gamma)$ be the set of levels below zero, usually given by consecutive negative integers $L(\Gamma)=\{-1,\dots,-N\}$, where $N\coloneqq |L(\Gamma)|$, so that we typically use the \emph{normalized level function}
\begin{equation}\label{eq:normlev}
\ell\colon V(\aG) \longtwoheadrightarrow \{0,-1,\dots,-N\}.
\end{equation}
Occasionally, we use $L^\bullet(\aG)$ for the set of all levels including the zero level. In what follows  we will only consider enhancements that are
{\em admissible} in the sense that the degree equality
\be \label{eq:admenhancement}
\deg(v) \,\coloneqq\, \sum_{j\mapsto v} m_j + \sum_{e \in E^+(v)}  (\kappa_e-1)
- \sum_{e \in E^-(v)} (1+\kappa_e) - h(v) \= 2g(v)-2
\ee
holds, where $j\mapsto v$ means the leg of order $m_j$ is attached to the vertex $v$; \textit{i.e.}~the first sum goes over all legs attached to $v$,
where $E^+(v)$ (resp.\ $E^-(v)$) is the set of vertical edges whose upper
(resp.\ lower) end is the vertex~$v$ (we often write $e^+ = v$, 
  resp.\ $e^- = v$, to express this adjacency),
and $h(v)$ is the number of 
horizontal half-edges adjacent to $v$. 

Enhanced level graphs come with two kinds of undegeneration maps. First, for any subset $I = \{i_1,\dots,i_n\}$ of $\{-1,\dots,-N\}$, there is the vertical undegeneration map $\delta_{i_1,\dots,i_n}$, a map of graphs that contracts all vertical edges except those that go from the level at or above $i_{k}+1$ to a level at or below $i_{k}$, for some $i_k \in I$. Especially important among those are the two-level undegenerations~$\delta_i$, which contract all vertical edges except those that cross a level passage above~$i$, \textit{i.e.}~go from a vertex at level $i+1$ or above to a vertex at level $i$ or below.  Second, for $S \subset E^h(\Gamma)$ there is the horizontal undegeneration maps $\delta^h_S$ that contract all the horizontal edges except those in~$S$. An \emph{undegeneration} of a level graph is a composition of a vertical and a horizontal undegeneration.  Undegenerations determine the adjacency of boundary strata of the space of multi-scale differentials.

\subsection{Prong-matchings}
\label{sec:PM}

Let $(X,\omega)$ be a smooth complex curve with a meromorphic 1-form.  If a differential~$\omega$ has a zero of order $m \geq 0$ at~$q \in X$, then there exists a local coordinate (and, in fact, there are $m+1$ such choices)~$z$ on~$X$ centered at~$q$ such that locally in this coordinate $\omega=z^mdz$; similarly, for a pole of order $m\le -2$ at~$q\in X$, one can find a local coordinate such that $\omega=(z^m+r/z)dz$. Given such a local coordinate, a ({\em complex}\,) {\em prong} is one of the $2|m+1|$ vectors in $T_qX$ of the form $\zeta^j\tfrac{\partial}{\partial z}$ in this local coordinate~$z$, where $\zeta$ is a primitive root of unity of order $2|m+1|$; see \cite[Definition~5.4]{LMS}. In what follows we will mostly care about the set of $P_q^{\rm out}$ of \emph{outgoing} prongs at the zeros of a differential, that is, the set of $m+1$ prongs
there where the exponent~$j$ of $\zeta^j$ is even, and the set $P_q^{\rm inc}$ of \emph{incoming} prongs at the poles of the differential, that is, those $|m+1|$ prongs there where the exponent~$j$ is odd.

Now let $X$ be a stable curve with a node~$q$ corresponding to a vertical edge $e\in E^v(\Gamma)$ where two components of $X$ meet, and suppose these components $X_1$ and $X_2$ come with differential forms $\omega_1$ and~$\omega_2$ having a zero and a pole, respectively, at the respective preimages $q^+\in X_1$ and $q^-\in X_2$ of~$q$.  A ({\em local}\,) {\em prong-matching} at the node~$q$ is a cyclic order-reversing bijection $\sigma_e\colon P^{\rm in}_{q^-} \to P^{\rm out}_{q^+}$ between the incoming prongs at~$q^-$ and the outgoing prongs at~$q^+$.

Now let $(X,\bfz,\Gamma,\bfomega)$ be a pointed stable curve with an enhanced level graph~$\Gamma$, and let $\bfomega = (\omega_{(i)})_{i \in L^\bullet(\Gamma)}$ be a \emph{twisted differential of type~$\mu$} compatible with~$\Gamma$, possibly except for the global residue condition.  Following \cite{BCGGM1}, this means a collection of meromorphic differentials~$\omega_v$ for each vertex~$v$, vanishing to order~$m_i$ at each of the marked points~$z_i$, vanishing to order~$\kappa(h)-1$ at the preimages of nodes associated to the half-edges~$h\in H'(\Gamma)$,  and such that the residues at the two sides of a horizontal node add up to zero.  Grouping objects level-wise, we denote by $\omega_{(i)}$ the tuple of differentials~$\omega_v$ for all vertices~$v$ on level~$i$.

Given a twisted differential, we have defined above local
prong-matchings
for each vertical edge. Packaging such a choice for each vertical edge
$e \in E^v(\Gamma)$, we call the collection $\bfsigma = (\sigma_e)_{e \in E^v(\Gamma)}$
a {\em global prong-matching}.

\medskip
There is an alternative viewpoint on prong-matchings, which can be generalized to germs of families $\calX \to B$, where a node~$q$ corresponding to an edge~$e$ in the dual graph of the special fiber persists over the base.  In the normalization of the family, there are two components $X^\pm$ (as the edge is vertical, necessarily $X^+\ne X^-$) that admit sections~$q^\pm$ that specify the two preimages of the node~$q$.  We let
\be \label{eq:defNe}
\calN_e^\vee \,\coloneqq\, (q^+)^*\omega_{X^+}\otimes (q^-)^*\omega_{X^-}.
\ee
A \emph{local prong-matching} (see \cite[Definition~5.6]{LMS}) is then a section $\sigma_e$ of $\calN^\vee_e$ such that for any pair $(v^+,v^-)$ of an incoming and an outgoing horizontal prong, the equation $\sigma_e(v^+\otimes v^-)^{\kappa_e} = 1$ holds. To see the equivalence, given $\sigma_e$, we assign to $v^-$ the prong $v^+$ given by the condition $\sigma_e(v^+\otimes v^-) = 1$. Conversely, given a bijection $s\colon P^{\rm in}_{q^-} \to P^{\rm out}_{q^+}$ of incoming and outgoing prongs, we define $\sigma_e\in \calN_e^\vee$ by setting $\sigma_e(p\otimes s(p))=1$ for any $p\in P^{\rm in}_{q^-}$. The fact that $s$ is order  reversing implies that this $\sigma_e\in \calN_e^\vee$ is well defined, which justifies using the same notation $\sigma_e$ for both viewpoints on a prong-matching. A \emph{global prong-matching} is a collection of local prong-matchings for each persistent node (as will be defined formally in~\ref{sec:germfam}) in the family.

We give another reformulation that eliminates the dependence on the choice
of a preferred (`horizontal') direction. Let $U^\pm$ be neighbourhoods
of the points $q^\pm$ in the
normalization of~$\calX$. Suppose the edge~$e$ joins level~$i$ to the lower level~$j$.
Then $\omega_{(i)}$ extends uniquely to a section of $\omega_{U^+/B}(-(\kappa_e-1) q^+)$
and $\omega_{(j)}$ to a section of $\omega_{U^-/B}((\kappa_e+1) q^-)$.
Restricting these to $q^+$ and $q^-$, respectively, yields canonical elements
\bes
\tau^+ \in \omega_{U^+/B}(-(\kappa_e-1) q^+)|_{q^+} = T_{q^+}^{\otimes -\kappa_e} \quad
\text{and} \quad
\tau^- \in \omega_{U^-/B}((\kappa_e+1) q^-)|_{q^-} = T_{q^-}^{\otimes  \kappa_e}
\ees
(where we use the residue isomorphism for the equalities).
We define
\bes
\tau_e \coloneqq (\tau^+)^{-1} \otimes (\tau^-) \in \left(T_{q^+} \otimes T_{q^-}\right)^{\otimes \kappa_e}
\= \calN_e^{\otimes \kappa_e}.
\ees

\begin{lemma}\label{lem:prong_matching_comparison}
In the notation of the previous definition, let $v^+$ and $v^-$ be some horizontal
prongs at $e$. Then $(v^+ \otimes v^-)^{\otimes \kappa_e} \in \calN_e^{\otimes \kappa_e}$
is independent of the choice of prongs and of the direction to be called horizontal, and we have
\begin{equation}
\tau_e \= (v^+ \otimes v^-)^{\otimes \kappa_e}.
\end{equation}
\end{lemma}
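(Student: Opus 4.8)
The plan is to reduce to a single geometric fiber of $B$ — both $\tau_e$ and $(v^+\otimes v^-)^{\otimes\kappa_e}$ are compatible with base change, and on such a fiber the prongs $v^\pm$ are by definition the tangent vectors $\partial/\partial z|_{q^+}$, $\partial/\partial w|_{q^-}$ of suitable adapted local coordinates — and then to unwind the canonical identifications occurring in the definition of $\tau_e$ in these coordinates. Concretely, given horizontal prongs $v^+$ at $q^+$ and $v^-$ at $q^-$, I would fix, using the normal-form description of prongs recalled in the text (\cite[Theorem~4.1]{LMS}), coordinates $z$ on $U^+$ and $w$ on $U^-$ centred at $q^+$ resp.\ $q^-$ with $\omega_{(i)} = z^{\kappa_e-1}\,dz$ and $\omega_{(j)} = (w^{-\kappa_e-1}+r/w)\,dw$, and with $v^+ = \partial/\partial z|_{q^+}$, $v^- = \partial/\partial w|_{q^-}$.

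First I would compute $\tau^+$. The identification $\omega_{U^+/B}(-(\kappa_e-1)q^+)|_{q^+}=T_{q^+}^{\otimes-\kappa_e}$ is the tensor product of $\omega_{U^+/B}|_{q^+}=T^\vee_{q^+}$ with the standard isomorphism $\calO(-(\kappa_e-1)q^+)|_{q^+}=(T^\vee_{q^+})^{\otimes(\kappa_e-1)}$ carrying the local generator $z^{\kappa_e-1}$ to $(dz)^{\otimes(\kappa_e-1)}$. Under this, $\omega_{(i)}$ corresponds to $dz\otimes z^{\kappa_e-1}$ in $\omega_{U^+/B}\otimes\calO(-(\kappa_e-1)q^+)$, and restricting to $q^+$ gives $\tau^+=(dz)^{\otimes\kappa_e}$, hence $(\tau^+)^{-1}=(\partial/\partial z)^{\otimes\kappa_e}=(v^+)^{\otimes\kappa_e}$. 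The parallel computation at $q^-$ uses $\calO((\kappa_e+1)q^-)|_{q^-}=T_{q^-}^{\otimes(\kappa_e+1)}$ with local generator $w^{-\kappa_e-1}\mapsto(\partial/\partial w)^{\otimes(\kappa_e+1)}$; since only the leading polar term of $\omega_{(j)}$ survives the restriction, one gets $\tau^-=dw\otimes(\partial/\partial w)^{\otimes(\kappa_e+1)}=(\partial/\partial w)^{\otimes\kappa_e}=(v^-)^{\otimes\kappa_e}$. Combining, $\tau_e=(\tau^+)^{-1}\otimes\tau^-=(v^+)^{\otimes\kappa_e}\otimes(v^-)^{\otimes\kappa_e}=(v^+\otimes v^-)^{\otimes\kappa_e}$, which is the asserted formula.

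The independence statements then follow for free: $\tau_e$ is built only from $\omega_{(i)}$, $\omega_{(j)}$ and canonical residue isomorphisms, with no choices involved, so the identity just established shows that $(v^+\otimes v^-)^{\otimes\kappa_e}$ depends neither on the choice of the prongs nor on the direction called horizontal. Alternatively one checks this directly: replacing $v^\pm$ by other prongs at $q^\pm$ multiplies each by a $\kappa_e$-th root of unity, which disappears after raising to the $\kappa_e$-th power; and rotating the preferred direction by $e^{i\theta}$ multiplies $v^+$ by $e^{i\theta/\kappa_e}$ and $v^-$ by $e^{-i\theta/\kappa_e}$ (as one reads off the normal forms), so that $v^+\otimes v^-$, and a fortiori its $\kappa_e$-th power, is unchanged.

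The only genuinely delicate point, and where I expect to spend the most care, is the bookkeeping: getting every canonical isomorphism and its orientation exactly right — the residue isomorphism at higher-order zeros and poles, the identification $\calO(D)|_D\cong N_D$ with the correct sign, and the interaction with the opposing sign conventions flagged in \ref{appendix:sign_conventions} — so that the exponents $\kappa_e-1$, $\kappa_e$, $\kappa_e+1$ cancel precisely as above. A stray dual or sign would break the formula, so this is a disciplined unwinding rather than a conceptually hard step. The borderline case $\kappa_e=1$, where $\omega_{(j)}$ has a pole of order $2$ and the residue term appears in its normal form, causes no trouble, since only the $w^{-2}\,dw$ term contributes to $\tau^-$.
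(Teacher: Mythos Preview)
Your proposal is correct and follows essentially the same approach as the paper: both arguments reduce to the normal-form coordinates and verify the identity there, together with the direct check that different prong or direction choices cancel. The paper is terser (it declares the normal-form computation ``obvious'' and proves the independence statements first, with the rotation factor written as $e^{2\pi i\theta}$ on $v^\pm$ rather than your $e^{\pm i\theta/\kappa_e}$), whereas you spell out the residue-isomorphism bookkeeping for $\tau^\pm$ explicitly and derive independence as a corollary of the formula; these are cosmetic differences only.
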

\begin{proof} 
For a fixed direction, the different choices of prongs~$v^+$ differ by $\supth{\kappa_e}$ roots of unity, and likewise for~$v^-$. Thus the formula for~$\tau_e$ implies that it does not depend on these prong choices. On the other hand, changing the direction from horizontal to direction~$\theta$ multiplies~$v^+$ by $e^{2\pi i\theta}$ and $v^-$ by $e^{-2\pi i\theta}$, and thus preserves $v^+\otimes v^-$. The equality is obvious, as can be seen by writing it out in any local coordinate that puts the differentials in normal form.
\end{proof}

This implies that the earlier definitions of prong-matching agree with the following.

\begin{df} \label{df:PMfinal}
  A \emph{local prong-matching} is a section $\sigma_e$ of $\calN^\vee_e$
  such that $\sigma_e^{\kappa_e}(\tau_e) = 1$.
\end{df}

\subsection{Level rotation tori}
\label{sec:LRT}

To an enhanced level graph, we associate some groups and algebraic tori.  The {\em level rotation group} $R_\Gamma \cong \ZZ^{L(\Gamma)}$ acts on the set of all global prong-matchings, where the $\supth{i}$ factor twists by one (\textit{i.e.}~multiplies~$\sigma_e$ by $e^{2 \pi i/\kappa_e}$) all prong-matchings associated to edges that cross the {\em $\supth{i}$ level passage}, a horizontal line above level~$i$ and below level $i+1$.\footnote{In this paper we index levels and all quantities indexed by them, such as $t_i$, $s_i$, $\delta_i$ below, by negative integers, as in \cite{LMS}, but contrary to several subsequent papers that use this compactification.}  The ({\em vertical}\,) {\em twist group} is the subgroup $\Tw[\Gamma] \subset R_\Gamma$ fixing all the prong-matchings under the above action. The level rotation group also acts (via its $\supth{i}$ component) on the set of prong-matchings of the two-level undegenerations $\delta_i(\Gamma)$.  We define the {\em simple twist group} $\sTw[\Gamma] \subset
\Tw[\Gamma] \subset R_\Gamma$ to be the subgroup that fixes each of the prong-matchings of each $\delta_i(\Gamma)$.

Let $\CC^{L(\Gamma)} \to (\CC^*)^{L(\Gamma)}$ be the universal covering
of the algebraic torus $(\CC^*)^{L(\Gamma)}$; we identify the level rotation group
$R_\Gamma \subset \CC^{L(\Gamma)}$ as the kernel of this covering. As a subgroup of the
level rotation group, the (simple) twist group acts on $\CC^{L(\Gamma)}$, and we define
the {\em level rotation torus} $T_\Gamma \coloneqq \CC^{L(\Gamma)}/\Tw[\Gamma] $,
together with its simple counterpart, the {\em simple level rotation
torus} $T^s_\Gamma \coloneqq \CC^{L(\Gamma)}/\sTw[\Gamma]$. See
\ref{sec:QuotTwist} for an example when these two tori differ.

Next we define the data that provide the model for the toroidal embedding of the
boundary inside the space of multi-scale differentials.
Since $\sTw[\Gamma] = \oplus_i \Tw[\delta_i(\Gamma)]$ has by definition
a direct sum decomposition level by level, the
simple level rotation torus comes with a natural level-wise identification
$T^s_\Gamma \cong (\CC^*)^{L(\Gamma)}$.  The
embedding $\CC^* \hookrightarrow \CC$ with respect to these coordinates
defines an embedding $T^s_\Gamma \hookrightarrow \ol{T}^s_\Gamma
\coloneqq\CC^{L(\Gamma)}$. We let
\be \label{eq:aidef}
a_i \coloneqq a_{\delta_i(\Gamma)} \coloneqq \lcm_{e \in \delta_i(\Gamma)} \kappa_e
\ee
be the least common multiple of the enhancements of the edges of~$\Gamma$ that persist in the two-level undegeneration $\delta_i(\Gamma)$.
Then $\sTw[\Gamma] \cong \oplus_i a_i \ZZ \subset R_\Gamma$.
Consequently, $T^s_\Gamma$ is a cover of the
original torus~$(\CC^*)^{L(\Gamma)}$, of degree $\prod_i a_i$. Finally,
we define the \emph{quotient twist group} to be
\be
K_\Gamma \,\coloneqq\, \Tw[\Gamma] /\sTw[\Gamma].
\ee
This group acts on $T_\Gamma^s$ with quotient $T_\Gamma$. In coordinates the
quotient map is given by
\begin{equation}\begin{aligned} \label{eq:simpletotorus}
(\CC^*)^{L(\Gamma)} & \,\lra\, (\CC^*)^{L(\Gamma)} \times (\CC^*)^{E^v(\Gamma)} \\
(q_i) &\,\longmapsto\,
(\{r_i\}_{i \in L(\Gamma)}, \{\rho_e\}_{e \in E^v(\Gamma)}) \,:=\, \left(\left\{q_i^{a_i}\right\}_{i \in L(\Gamma)}, \left\{ \prod_{i=\lbot}^{\ltop-1} q_i^{a_i/\kappa_e} \right\}_{e \in E^v(\Gamma)}\right),
\end{aligned}\end{equation}
where $q_i,r_i,\rho_e$ denote the coordinates on the corresponding tori, and we view ${T}_\Gamma \subset (\CC^*)^{L(\Gamma)} \times (\CC^*)^{E^v(\Gamma)}$
as cut out by the equations
\be \label{eq:rrho}
r_{\lbot} \cdots  r_{\ltop-1} \= \rho_e^{\kappa_e}
\ee
for each~$e$. The action of $K_\Gamma$ on $T^s_\Gamma$  extends to an action on the
closure $\ol{T}^s_\Gamma$, and we let $\ol{T}^n_\Gamma \coloneqq
\ol{T}^s_\Gamma/K_\Gamma$,
which is the normalization of the closure of ${T}_\Gamma \subset (\CC^*)^{L(\Gamma)}
\times (\CC^*)^{E^v(\Gamma)}$ inside $\CC^{L(\Gamma)} \times \CC^{E^v(\Gamma)}$.

All these tori come with their {\em extended versions}, denoted with an extra
dot (\textit{e.g.}, $T_\Gamma^\bullet$), that have an extra $\CC^*$-factor. This factor
will act on differentials of all levels simultaneously by multiplying all differentials by a common factor, and lead to the {\em projectivized}
version of the corresponding quotient functor.

\subsection{Controlled families of generalized multi-scale differentials}
\label{sec:germfam}

We will now use these constructions to define families of generalized multi-scale differentials for families $\calX \to B$ of curves over `small enough' base schemes. The general case will then be treated by gluing. The notion of `small enough' that we will use is that of \emph{controlled curves} from \cite[Section~2.6]{biesel2023fine}. This deviates slightly from the corresponding definition
in \cite[Section~11]{LMS}, where such families are merely defined as germs, and allows to \textit{a priori} say under which morphisms our families can be pulled back without having to pass to suitable representatives of germs.

We do not recall the definition of controlled curves in full, but recall the key properties that we will need. If $\calX \to B$ is a controlled curve, then for every~$p\in B$, the fiber $X_p$ has a well-defined dual graph~$\Gamma_p$. Moreover, there exists a \emph{controlling point} $b \in B$ together with \emph{smoothing parameters} $f_e \in \ca O_B(B)$ for every edge $e$ of $\Gamma_b$, such that $f_e$ vanishes exactly on the locus of $p\in B$ where the corresponding node persists in~$X_p$, and such that the family has the local form $u_e v_e = f_e$ in a neighbourhood of the corresponding node. The dual graph of $X_p$ is obtained from that of $X_b$ by contracting exactly those $e$ such that $f_e(p) \neq 0$. The function $f_e$ is unique up to multiplication by units in $\ca O_B(B)$; we write $[f_e]\in \ca O_B(B)/\ca O_B^\times(B)$ for the equivalence class of the smoothing parameter. Given any family of stable curves $\calX \to B$, over a locally noetherian base, there exists an \'etale cover $\bigsqcup_i B_i \to B$ with each $\calX \times_B B_i \to B_i$ controlled (see \cite[Lemma 2.6.9]{biesel2023fine}).
Our first step is to define `standard' open subsets of controlled families of curves where the collection of rescaled differentials are defined. Let $\mu = (m_1,\ldots, m_n)$ be a tuple of  integers with $\sum_{i=1}^n m_i = 2g-2$. If~$X$ is a stable $n$-marked curve with enhanced level graph structure on the dual graph, then for any level~$i$, the subcurve $X^{[i]} \coloneqq X_{(\le i)}\setminus(X_{(>i)} \cup \bfz^\infty)$ is open in $X$, where $X_{(\leq i)}$ is the subcurve at and below level~$i$, $X_{(>i)}$ is the subcurve above level~$i$, and $\bfz^\infty$ is the union of those markings with $m_j < 0$.

We now fix a stable $n$-marked controlled curve $\calX\to B$ and a controlling point $b \in B$, and suppose we are given an enhanced level graph structure on the dual graph of $X_b$. If $p \in B$ is another point, we have an edge contraction $\Gamma_b \to \Gamma_p$, which naturally induces on $\Gamma_p$ the structure of an enhanced level graph (see \cite[Section~5.1]{LMS} for details). In particular, the set~$L_b$ of levels of~$\Gamma_b$ naturally surjects onto the set~$L_p$ of levels of~$\Gamma_p$: we have $l_p\colon L_b \twoheadrightarrow L_p$. The reader may check that the following sets~$U_i\subset\calX$ are indeed open.

\begin{df}
Given a stable $n$-marked controlled curve $\calX\to B$ and a level $i\in L_b$, we define $U_i \subseteq \calX$, the \emph{standard open set at level~$i$}, to be the union over all $p \in B$ of $X_p^{[l_p(i)]}$.
\end{df}

We say that a node $e$ is \emph{persistent} in the family~$\calX$ if $f_e=0\in\calO_B$. If the dual graph~$\Gamma_b$ has been provided with an enhanced level graph structure, we say that a node $e$ is \emph{semi-persistent} if $f_e^{\kappa_e}=0$. The notion of prong-matchings makes sense for a persistent node~$q$.

\medskip
For our families of multi-scale differentials, we need to include an explicit choice of smoothing parameters~$f_e$ into our data. This can be achieved via a section of the partial compactification $\Tsnorm[\Gamma]$ of the simple level rotation torus. Indeed, given the coordinates $(r_i, \rho_e)$ on the torus closure from~\ref{eq:rrho}, a morphism $R^s\colon B\to \Tsnorm[\Gamma]$ determines for each vertical edge~$e$ a function $f_e\in \calO_{B}$, and for each level~$i$ a function $s_i\in \calO_{B}$, defined as the compositions $f_e= \rho_e\circ P \circ R^s$ and $s_i = r_i\circ P \circ R^s$, where $P\colon \Tsnorm[\Gamma] \to \Tnorm[\Gamma]$ is the canonical morphism.  If an edge~$e$ joins levels $j<i$, then by \cref{eq:rrho} these functions satisfy
\begin{equation} \label{eq:f2sj} 
  f_e^{\kappa_e} \= s_j \cdots s_{i-1}.
\end{equation}
The following definition makes precise the notion that a morphism~$R^s$ as above defines a compatible system of node-smoothing parameters.

\begin{df} \label{def:RescEns}
A \emph{simple rescaling ensemble} is a morphism $R^s\colon B\to \Tsnorm[\Gamma_b]$ such that the parameters $f_e\in \calO_{B}(B)$ for each vertical edge~$e$ determined by~$R^s$ lie in the equivalence class~$[f_e]$ determined by the family $\pi\colon\calX\to B$.  A \emph{rescaling ensemble} is a morphism $R\colon B\to \Tnorm[\Gamma_b]$ which arises as the composition $\pi\circ R^s$ for some simple rescaling ensemble $R^s$.
\end{df}
 
The $s_i$ and $f_e$ will be called the \emph{rescaling parameters} and
\emph{smoothing parameters} determined by $R$ or $R^s$. 
The composition of $R^s$ with the coordinate projections gives functions~$t_i$
such that $s_i = t_i^{a_i}$. We refer to those $t_i$ as the \emph{level parameters}.

Recall that the adjective `generalized' in the following definition refers to the fact that
the global residue condition has been dropped, compared to~\cite{LMS}. For an
illustration of some elements of the definition, see~\ref{fig:rescaleddiff}.
The well-definedness of the period in the following definition is checked (in any
characteristic), \textit{e.g.} in \cite[Lemma~1.8]{Boj}.

\begin{df} \label{def:collRD}
A \emph{collection of generalized rescaled differentials of type~$\mu$} on the stable $n$-pointed controlled curve $(\pi\colon \calX \to B, \bfz)$ with a controlling point $b\in B$ is a collection of sections~$\omega_{(i)}$ of~$\omega_{\calX/B}$ defined on the standard open subsets $U_i$ of~$\calX$, indexed by the levels~$i\in L^\bullet(\Gamma)$ of the enhanced level graph. The irreducible components of~$X_b$ on a level strictly below~$i$ are called \emph{vertical zeros}, and those on a level strictly above~$i$ are called \emph{vertical poles} of~$\omega_{(i)}$.  We require the collection to satisfy the following constraints:
\begin{enumerate}
\item\label{d:c-1} There exist sections $s_i \in H^0(B, \mathcal{O}_B)$ with $s_i(b)=0$ such that for any levels $j<i$, the differentials satisfy $\omega_{(i)} \= s_j \cdots s_{i-1} \omega_{(j)}$ on $U_i\cap U_j$.
\item\label{d:c-2} For any edge $e$ joining levels $j<i$, for any $p \in B$ over which $e$ persists, and for some (equivalently, any) choice of functions $u_e, v_e$ on~$\calX$ and $f_e$  on $B$
such that the family has local normal form $u_e v_e = f_e$, there exists a unit $\lambda$ in the strict Henselization\footnote{In the complex analytic category, this would be the germ of a non-vanishing function around the node.} $(\ca O_B[u_e, v_e]/(u_ev_e - f_e))_{(p,0,0)}^{\rm sh}$ of the local ring at $(p, u_e=0, v_e=0)$ such that
\begin{equation}\label{eq:new_normal_form}
\omega_{(i)} \= \lambda u_e^{\kappa_e} \frac{du_e}{u_e} \quad \text{and} \quad     \omega_{(j)} \= - \lambda v_e^{-\kappa_e}\frac{dv_e}{v_e}. 
\end{equation}
\item\label{d:c-3} The $\omega_{(i)}$ have order~$m_k$ along the sections $\calZ_k$ of the $\supth{k}$ marked point that meet the level $i$ subcurve of $X_b$; these are called {\em horizontal zeros and poles} (where $\calZ^{\infty}$ records the horizontal poles). Moreover, $\omega_{(i)}$ is holomorphic and non-zero away from its horizontal and vertical zeros and poles.
\end{enumerate}

If the rescaling and smoothing parameters $s_i, f_e$ for the collection $\omega_{(i)}$ agree with those of a rescaling ensemble~$R^s$ or~$R$, we call them {\em compatible}. We denote the collection by $\bfomega = (\omega_{(i)})_{i \in L^\bullet(\Gamma)}$. 
\end{df}

The unit $\lambda$ is unique since $\omega_{(i)}$ is a generating section of $\omega$ after inverting $u_e$ and $\omega_{(j)}$ is a generating section of $\omega$ after inverting $v_e$.  We then apply the following result from commutative algebra.

\begin{lemma}
Let $R$ be a ring, $f\in R$, and $A = R[u,v]/(uv-f)$. Then $h\colon A \to A[u^{-1}] \times A[v^{-1}]$ is injective.
\end{lemma}

\begin{proof} Assume $h(a) = 0$. Any element $a \in A$ has a unique normal form
$$a \= e_0 + c_1 u + \cdots + c_n u^n + d_1 v + \cdots + d_m v^m.$$
Under the unique isomorphism $A[u^{-1}] \to R[u, u^{-1}]$ sending $v$ to $f/u$, the element $a$ is mapped to
$$e_0 + c_1 u + \cdots + c_n u^n + d_1 f u^{-1} + \cdots + d_m f^m u^{-m}.$$ Therefore, the vanishing of the first component of $h$ implies $e_0 = c_1 = \cdots = c_n = 0$, as can be seen just by reading off the coefficients of the $R$-basis $\{u^t : t \in \mathbb Z\}$. Similarly, the vanishing of the second component implies $d_1 = \cdots = d_m =0$.
\end{proof}

Applying this to $R=\calO_B$, $u=u_e$, $v=v_e$, and $f=f_e$, we first conclude that the map
\[
\ca O_B[u_e, v_e]/(u_ev_e - f_e) \lra \ca O_B[u_e, v_e, u_e^{-1}]/(u_ev_e - f_e) \times \ca O_B[u_e, v_e,v_e^{-1}]/(u_ev_e - f_e)
\]
is injective. Localization and strict Henselization are flat. Therefore, the above injectivity is preserved. 

The reader comparing with the definition in \cite{LMS} will realize that there in item~(2) it is required that for any edge $e$ joining levels $j<i$
of $\Gamma$, there are functions $u_e, v_e$ on~$\calX$ and $f_e$  on $B$,
such that the family has local normal form $u_e v_e = f_e$, and in these
coordinates
\begin{equation}\label{eq:5}
\omega_{(i)} \= \left(u_e^{\kappa_e} + f_e^{\kappa_e}r_{e, (j)}\right)\frac{du_e}{u_e}
\quad\text{and}  \quad
    \omega_{(j)} \= - \left( v_e^{-\kappa_e} + r_{e, (j)}\right) \frac{dv_e}{v_e},
\end{equation}
where $\kappa_e$ is the enhancement of $\Gamma_b$ at $e$. The two normal forms are equivalent at least if~$f_e^{\kappa_e} \neq 0$: Equation~\ref{eq:5} implies~\ref{eq:new_normal_form} by taking $\lambda = 1 + r_{e,(j)} (f_e/u_e)^{\kappa_e}$. Conversely, given~\ref{eq:new_normal_form} we may change $v_e$ to a coordinate that is in the normal form~\ref{eq:5} by \cite[Theorem~4.3]{LMS}.  The form~\ref{eq:new_normal_form} is the one we can directly associate with a rubber differential; see~\ref{subset:rescaled_from_log}.

\medskip
\def\surfacehole#1{
\begin{scope}[shift={#1}, scale=0.7]
\draw[thick] plot [smooth, tension = 0.5,xshift=-1.4cm, yshift=2.15cm] coordinates {(1.1,-2.1) (1.2,-2.2) (1.4,-2.3) (1.6,-2.2) (1.7,-2.1)};
\draw[thick] plot [smooth, tension = 0.8,xshift=-1.4cm, yshift=2.15cm] coordinates { (1.2,-2.2) (1.4,-2.13) (1.6,-2.2)};
\end{scope}
}

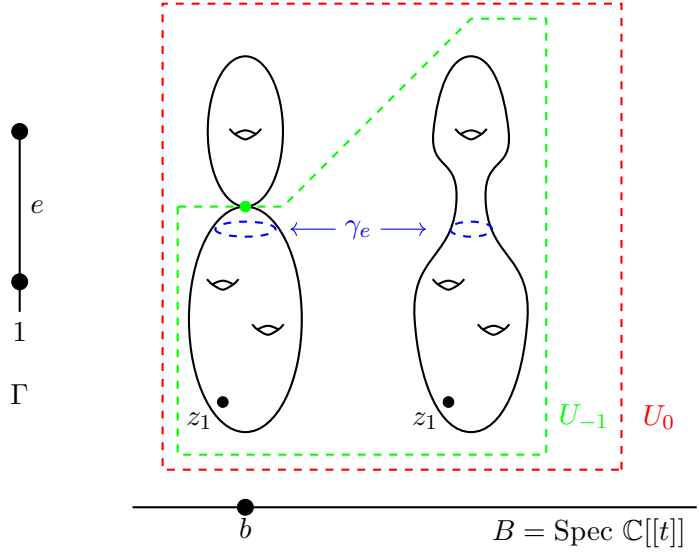
\begin{figure}[htb]
    \centering
\[
\begin{tikzpicture}
\filldraw (0,3) circle (3pt);
\filldraw (0,5) circle (3pt);
\draw[thick] (0,3) to node[midway, right]{$e$} (0,5);
\draw[thick] (0,3) -- (0,2.6) node[below] {$1$};
\draw (0,1.5) node {$\Gamma$};

\draw[thick] (1.5,0) -- (9,0) node[below left]{$B = \on{Spec}(\mathbb{C}[[t]])$};
\filldraw (3,0) circle (3pt) node[below]{$b$};

\draw[thick] plot [smooth cycle, tension = 1, xshift=0 cm, yshift = 0cm] coordinates { (3,4) (2.5,5) (3,6) (3.5,5) };
\draw[thick] plot [smooth cycle, tension = 1, xshift=0 cm, yshift = 0cm] coordinates { (3,4) (2.25,2.5) (3,1) (3.75,2.5) };
\surfacehole{(3,5)};
\surfacehole{(2.7,3)};
\surfacehole{(3.3,2.4)};
\filldraw (2.7,1.4) circle (2pt) node [below left]{$z_1$};

\draw[thick] plot [smooth cycle, tension = 1, xshift=3  cm, yshift = 0cm] coordinates { (3.2,4) (3.5,5) (3,6) (2.5,5) (2.8,4) (2.25,2.5) (3,1) (3.75,2.5) };
\surfacehole{(6,5)};
\surfacehole{(5.7,3)};
\surfacehole{(6.3,2.4)};
\filldraw (5.7,1.4) circle (2pt) node [below left]{$z_1$};

\draw[thick, blue, dashed] plot [dotted, smooth cycle, tension = 1] coordinates { (3,3.8) (2.6,3.7) (3,3.6) (3.4,3.7) };
\draw[thick, blue, dashed,xshift=3 cm] plot [dotted, smooth cycle, tension = 1] coordinates { (3,3.8) (2.72,3.7) (3,3.6) (3.28,3.7) };
\draw[blue] (4.5,3.7) node {$\gamma_e$};
\draw[blue, ->] (4.2,3.7) -- (3.6,3.7);
\draw[blue, ->] (4.8,3.7) -- (5.4,3.7);

\filldraw[green] (3,4) circle (2pt);
\draw[thick, green, dashed] plot [dotted, smooth cycle, tension = 0.0] coordinates { (3,4) (2.1,4) (2.1,0.7) (3,0.7) (7,0.7) (7,6.5) (6,6.5) (3.5,4) };
\draw[green] (7.5,1.2) node {$U_{-1}$};
\draw[thick, red, dashed] plot [dotted, smooth cycle, tension = 0.0] coordinates { (1.9,0.5) (8,0.5) (8,6.7) (1.9,6.7) };
\draw[red] (8.5,1.2) node {$U_{0}$};

\end{tikzpicture}
\]    
\caption{The underlying curve for a family of generalized rescaled differentials of type $\mu=(4)$, with neighbourhoods $U_{0}$, $U_{-1}$ (in red, green) and the vanishing cycle $\gamma_e$ (in blue).}
    \label{fig:rescaleddiff}
\end{figure}

\begin{remark}\label{rem:induced_prong_matching}
Let $\bfomega$ be a collection of generalized rescaled differentials with
a compatible rescaling ensemble~$R^s$ or $R$. For any non-semi-persistent
edge~$e$,  denote by $B_e\subset B$ the vanishing locus of $f_e$. Then there is a natural \emph{induced prong-matching} $\sigma_e$
over~$B_e$, which is determined by the choice
of the rescaled differentials $\omega_{(i)}$ and the rescaling ensemble.
This prong-matching~$\sigma_e$ is defined explicitly in local coordinates by
writing it as $\sigma_e=d u_e \otimes dv_e$ when restricting to the nodal
locus corresponding to $e$, where $u_e$ and $v_e$ are as in~\ref{eq:5}
with~$f_e$ prescribed by the rescaling ensemble.  Any two possible choices
of $u_e$ and $v_e$ are of the form~$\alpha_e u_e$ and $\alpha^{-1}_e v_e$ for
some unit $\alpha_e \in \calO_{B}^*$ (see \cite[Section 4]{LMS}), 
so the induced prong-matching does not depend on this choice.
\end{remark}

We can now package everything into our main notion.

\begin{df} \label{def:germMSD}
Given a controlled family of pointed stable curves $(\pi\colon\calX\to B, \bfz)$, a ({\em controlled}\,) {\em family of generalized {simple} \msds} of type~$\mu$ over $B$ consists of the following data:
\begin{enumerate}
\item\label{d:gMSD-1} the structure of an enhanced level graph on the dual graph $\Gamma_b$ of the fiber $X_b$;  
\item\label{d:gMSD-2} a simple rescaling ensemble $R^s\colon B \to \Tsnorm[\Gamma_b]$, compatible with
\begin{enumerate}
\item\label{d:gMSD-3} a collection of generalized rescaled differentials $\bfomega = (\omega_{(i)})_{i \in L^\bullet(\Gamma_b)}$ of type~$\mu$, and
\item\label{d:gMSD-4} a collection of prong-matchings $\bfsigma = (\sigma_e)_{e \in E^v(\Gamma)}$, where $\sigma_e$ is a section of~$\calN_e^\vee$ over~$B_e$, the vanishing locus of $f_e$.  If~$e$ is a non-semi-persistent node, $\sigma_e$ is required to agree with the induced prong-matching defined in~\Cref{rem:induced_prong_matching}.
  \end{enumerate}
\end{enumerate}
\end{df}

A section of the simple level rotation torus $T^s_{\Gamma_b}(\calO_B)$, that is, a morphism $\sltelement \colon B \to T^s_{\Gamma_b}$, acts on all of the above data via
\[
\sltelement \cdot \left(\omega_{(i)}, R^s,
\sigma_e\right) \= \left(\sltelement \cdot \omega_{(i)}, \sltelement^{-1}\cdot R^s,\sltelement \cdot \sigma_e \right).
\]
Here, for $\sltelement \in T^s_{\Gamma_b}(\calO_B)$ mapping to $((r_i)_{i\in L(\Gamma_b)}, (\rho_e)_{e \in E^v(\Gamma_b)})$
under the quotient map~\ref{eq:simpletotorus}, the action is defined by
\[
\sltelement\cdot \omega_{(i)} \= \left(\prod_{\ell \geq i} r_\ell\right)
\omega_{(i)}, \quad \sltelement\cdot \sigma_e \= \rho_e \sigma_e,
\]
and  $\sltelement^{-1}\cdot R^s$ denotes the post-composition of $R^s$ with the multiplication by~$\sltelement^{-1}$.\footnote{Most of the checks that this action is well defined are straightforward. To verify part~\eqref{d:gMSD-2} of~\ref{def:germMSD}, assume we are given local coordinates $u,v$ around a node associated to $e \in E^v(\Gamma_b)$ such that the differentials have the normal form~\ref{eq:5}. Then the rescaled differential is put in the required normal form using the new coordinates $\widehat u = (\prod_{\ell \geq i} r_\ell )^{1/\kappa_e} u_e$ and  $\widehat v = (\prod_{\ell \geq j} r_\ell )^{-1/\kappa_e} v_e$.}

An \emph{isomorphism between two controlled families of generalized simple \msds} over the same base $B$ and with the same controlling point $b$ \be \label{eq:defmorphismMSD} (\pi'\colon\calX'\lra B, \bfz', \Gamma_{b}, (R^s)', \bfomega', \bfsigma') \,\longrightarrow\, (\pi\colon\calX\lra B, \bfz, \Gamma_{b}, R^s, \bfomega, \bfsigma) \ee consists of an isomorphism $\phi\colon \calX'\to \calX$ and an element $\sltelement \in T^s_{\Gamma_{b}}(\calO_{B})$ such that
\begin{enumerate}
\item[i)] $\phi$ defines an isomorphism of families of pointed
  stable curves,
\item[ii)] the induced isomorphism of dual graphs $ \Gamma'_{b}\to \Gamma_b$ is also an isomorphism of enhanced level graphs,
\item[iii)] the action of $\sltelement$ sends $((R^s)', \bfomega', \bfsigma')$
to $\phi^* (R^s, \bfomega,\bfsigma)$.
\end{enumerate}

Pullbacks of families of controlled generalized \msds are defined as in \cite[Section~11.2]{LMS}. This step requires some care, since the number of levels, the nodes where the prong-matching is an induced prong-matching, and the target of the rescaling ensemble map change. With this in hand, we can define a family of generalized \msds over any scheme locally of finite type over $\bb C$ by sheafifying the notion already defined for controlled families, using that controlled families form a base for the \'etale topology in the sense of \cite[Definition A.3]{biesel2023fine}. This definition is analogous to that in \cite[Section~11.3]{LMS}, and can be seen as a groupoid-version of the constructions worked out in \cite[Appendix A]{biesel2023fine}.

\begin{df} \label{def:simpleMSD}
We let $\GSMS$ be the groupoid of families of
generalized simple multi-scale differentials.
\end{df}

There are two variants of this definition. First, replacing $T^s_{\Gamma_b}(\calO_{B, b})$ with the extended level rotation torus $\Textd[\Gamma_b](\calO_{B, b})$ in the definition of a morphism, we obtain projectivized generalized simple \msds. Here the additional torus factor acts by scaling the differential on all levels simultaneously, including level~$0$. These are relevant to get compact spaces. Here we compare the unprojectivized definitions and will not elaborate further on this.

Second, there is a `non-simple' variant that we need to compare to the relative coarse moduli space. The remarks above about pullback and sheafification apply here as well.

\begin{df} \label{def:germnonsimpleMSD}
A {\em family of \emph{controlled} generalized \msds} of type~$\mu$ is
defined as in~\ref{def:germMSD}, replacing~\eqref{d:gMSD-2} by a rescaling ensemble $R\colon B \to \Tnorm[\Gamma_b]$. A morphism of such controlled families consists of $(\phi, \tilde{\phi}, \sltelement)$ as above, except that now we allow $\sltelement \in T_{\Gamma_{b'}}(\calO_{B'})$. We let $\GMS$ be the resulting groupoid of families of generalized multi-scale differentials.
\end{df}

Modifying \Cref{def:collRD} by additionally imposing  the global residue condition gives a
groupoid that we denote by $\LMS$ for the simple version (\ref{def:germMSD})
and by $\MSgrp$ for the non-simple version (\ref{def:germnonsimpleMSD}).
We state the comparison to the objects defined in \cite{LMS}.

\begin{proposition} \label{prop:smoothDM}
The stack $\LMS$ is a smooth DM-stack. The stack $\MSgrp$ is a stack with finite quotient singularities and agrees with the normalization of the orderly blowup of the normalized incidence variety compactification; see \cite[Section~14]{LMS}.
\end{proposition}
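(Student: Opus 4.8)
The plan is to unwind \ref{def:germMSD} and \ref{def:germnonsimpleMSD}, together with the global residue condition of \cite{BCGGM1}, and to match the resulting groupoids $\LMS$ and $\MSgrp$ against the objects of \cite{LMS}. The point is that in \cite{LMS} the moduli space $\LMS$ is constructed via \emph{Teichm\"uller markings}, namely as the quotient of a complex manifold (the Teichm\"uller-marked model) by a discrete group $G$ built from the mapping class group, acting properly discontinuously with finite point stabilizers; the non-simple variant $\MSgrp$ is likewise built there, and \cite[Section~14]{LMS} identifies it with the normalization of the orderly blowup of the NIVC. So once we show that the marking-free \ref{def:germMSD}, \ref{def:simpleMSD} (respectively \ref{def:germnonsimpleMSD}) reproduce exactly the $\LMS$ (respectively $\MSgrp$) of \cite{LMS}, the two assertions of the proposition follow directly from the corresponding structural results there.

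First I would build the dictionary between the two descriptions of $\LMS$. Forgetting the Teichm\"uller marking from a simple multi-scale differential in the sense of \cite{LMS} produces precisely a germ as in \ref{def:germMSD} satisfying the global residue condition: the dual graph with its level structure is the enhanced level graph; the local normal forms \ref{eq:5}, together with the periods along vanishing cycles, give the collection of generalized rescaled differentials of \ref{def:collRD}; the section of $\Tsnorm[\Gamma_b]$ records the level parameters $t_i$ (with $s_i = t_i^{a_i}$) and the coordinates $(r_i,\rho_e)$ of \ref{eq:simpletotorus}; and the local expressions $\sigma_e = du_e\otimes dv_e$ are the prong-matchings, agreeing with the induced ones at non-semi-persistent nodes by \ref{rem:induced_prong_matching}. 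Conversely, over a strictly Henselian base any family of generalized simple multi-scale differentials admits a Teichm\"uller marking after a strict \'etale cover, the set of such markings forming a torsor under $G$. Hence, with the global residue condition imposed, our $\LMS$ is the quotient stack of the Teichm\"uller-marked manifold of \cite{LMS} by the properly discontinuous $G$-action with finite stabilizers; smoothness descends from the manifold, and the Deligne--Mumford property holds because the automorphisms of the underlying stable pointed curve are finite and the residual automorphisms lie in the finite torsion of the tori involved.

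For $\MSgrp$, the key observation is that \ref{def:germnonsimpleMSD} differs from \ref{def:germMSD} only by replacing a simple rescaling ensemble, a morphism to the smooth scheme $\Tsnorm[\Gamma_b] = \CC^{L(\Gamma_b)}$, by a rescaling ensemble, a morphism to $\Tnorm[\Gamma_b] = \Tsnorm[\Gamma_b]/K_{\Gamma_b}$, and correspondingly allowing the twisting parameter $\sltelement$ in a morphism to range over $T_{\Gamma_b}$ rather than $T^s_{\Gamma_b}$; equivalently, $\MSgrp$ is obtained from $\LMS$ by forgetting the $a_i$-th root data $t_i$ of the $s_i$. Since $\Tnorm[\Gamma]$ is the normal toric variety attached to a simplicial --- but in general non-smooth --- cone, which is exactly the phenomenon illustrated around \ref{fig:Rub_subdivision_fan}, it has at worst finite quotient singularities, and therefore so does $\MSgrp$. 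Combining this with the dictionary, $\MSgrp$ parameterizes the same objects and morphisms as the non-simple multi-scale space of \cite{LMS}, so the two are isomorphic, and the identification with the normalization of the orderly blowup of the NIVC is then \cite[Section~14]{LMS}. The main obstacle is the bookkeeping in the dictionary step: one must verify that $\Tsnorm[\Gamma]$, $\Tnorm[\Gamma]$ and the compatibility conditions of \ref{def:RescEns} reproduce the level-by-level toroidal charts of \cite{LMS} on the nose, and that our notion of pullback of germs --- with its subtleties concerning the changing number of levels, the changing set of nodes carrying an induced prong-matching, and the changing target of the rescaling-ensemble morphism --- matches the pullback of \cite{LMS}. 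This is lengthy but essentially routine, amounting to \cite[Sections~11--14]{LMS} transposed to the marking-free setting.
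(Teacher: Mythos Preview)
Your approach is sound in outline and the paper even acknowledges that it would work: the text preceding the proof says explicitly that identifying the marking-free $\LMS$ with the Teichm\"uller-marked stack of \cite{LMS} ``would require recalling the lengthy definitions of level-wise real blow-up and Teichm\"uller marking from \cite{LMS}'' and that this identification ``directly implies the second statement of the proposition.'' So for the statement about $\MSgrp$ and the orderly blowup, you and the paper agree that the work is the dictionary, and both defer it to \cite{LMS}.

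Where you diverge is the smoothness of $\LMS$. You want to inherit smoothness from the complex-manifold structure of the Teichm\"uller-marked model in \cite{LMS}. The paper instead gives a direct argument that \emph{does not use the smoothness results of \cite{LMS}}: it takes a versal deformation $B = \Tnorm[\Gamma_b] \times B_0$ of $\MSgrp$ (with $B_0$ smooth, parameterizing the component-wise data), forms the fiber product $\widehat{B} = B \times_{\MSgrp} \LMS$, and shows by a groupoid computation that $\widehat{B} \cong [\Tsnorm[\Gamma_b]/K_\Gamma] \times B_0$. Since $\Tsnorm[\Gamma_b] \cong \CC^{L(\Gamma_b)}$ and $B_0$ are smooth, these $\widehat{B}$ give a smooth atlas for $\LMS$. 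The payoff is an independent, self-contained smoothness proof, which is the paper's stated goal (``the main content of the proposition, the smoothness of this stack without using the smoothness results from~\cite{LMS}''). Your route is shorter to state but imports all the analytic machinery of \cite{LMS}; the paper's route is more algebraic and localizes the difficulty to identifying the $K_\Gamma$-quotient structure of the fiber product.
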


The paper \cite[Section~14.2]{LMS} also defines a smooth stack denoted
by~$\LMS$, patched
locally from quotients of stacks with a Teichm\"uller marking. The full proof
that this stack is isomorphic to the stack with the same symbol defined here
would require recalling the lengthy definitions of level-wise real blowup
and Teichm\"uller marking from \cite[Section~12]{LMS}.
This identification directly implies
the second statement of the proposition. The proof given here
provides the main content of the proposition, the smoothness of this stack,
without using the smoothness results from~\cite{LMS}.

\begin{proof}
Recall from \cite[Section~8.1 and Theorem~10.1]{LMS} that a versal deformation space $B$ of $\MSgrp$ is given by a product $B = \Tnorm[\Gamma_b] \times B_0$, where $\Tnorm[\Gamma_b]$ gives a parameterization of possible rescaling ensembles $R$ (which have values in $\Tnorm[\Gamma_b]$), and where $B_0$ parameterizes the remaining data (deformations of the components $\calX_v$ for $v \in V(\Gamma_b)$ and twisted differentials on these components).  In fact, this local structure is given for the model space in \cite[Section~8.1]{LMS}. This model space is locally isomorphic to the Dehn space by the plumbing construction given in \cite[Theorem~10.1]{LMS}, and \cite[Proposition~12.5]{LMS} shows that every family can locally be lifted to the Dehn space.

Consider the fiber product
$$
\begin{tikzcd}
\widehat{B}\coloneqq B \times_{\MSgrp} \LMS \arrow[r] \arrow[d] & \LMS \arrow[d]\\
B \arrow[r] & \MSgrp\rlap{.}
\end{tikzcd}
$$
We claim that $\widehat{B}$ is equal to the stack quotient $[\overline{T}_\Gamma^s /
K_\Gamma]$ times the product~$B_0$ of the other factors. Then the maps $\widehat{B}
\to \LMS$ provide a smooth cover by spaces which are smooth themselves, which
implies the claimed smoothness of $\LMS$.

To show that $\widehat{B}$ is equal to $[\overline{T}_\Gamma^s / K_\Gamma] \times B_0$, we write down explicitly the maps $\widetilde B \to \widehat{B}$, where $\widetilde B$ carries a controlled curve. For this, recall\footnote{For a reminder on fiber products of stacks, we recommend the excellent paper \cite{Fantechi-Stacks-for-everybody}.} that a morphism to a fiber product as above is given by a triple 
\[
\left(\widetilde{B} \to \LMS, \widetilde{B} \to B, G\right),
\]
where $G$ is a $2$-isomorphism between the compositions 
\[
\widetilde{B} \lra \LMS \lra \MSgrp\quad\text{and}\quad\widetilde{B} \lra B \lra \MSgrp.
\]
Inserting the definitions of the moduli stacks, this data above is equivalent to a triple of
\begin{itemize}
    \item a controlled family $(\pi\colon\calX\to \widetilde{B}, \bfz, \Gamma_{b}, R^s \colon \widetilde B \to \Tsnorm[\Gamma_b], \bfomega, \bfsigma)$ of generalized simple multi-scale differentials,
    \item morphisms $s_T\colon \widetilde B \to \Tnorm[\Gamma_b]$ and $s_0\colon \widetilde B \to B_0$ (which together can be thought of as a morphism $(s_T,s_0)\colon \widetilde B\to \Tnorm[\Gamma_b]\times B_0=B$), 
    \item an isomorphism ($\calX \cong \calX'$, $\sltelement \in T_{\Gamma_b}(\calO_{\widetilde{B}})$) of generalized (non-simple) multi-scale differentials, sending the family $(\pi\colon\calX\to \widetilde{B}, \bfz, \Gamma_{b}, R, \bfomega, \bfsigma)$ to the family $(\pi'\colon\calX'\to \widetilde{B}, \bfz', \Gamma_{b}, R', \bfomega', \bfsigma')$ induced by $(s_T, s_0)\colon \widetilde B \to B$.
\end{itemize}
By identifying the families of curves $\calX \cong \calX'$, we can act on the
pair $(s_T, s_0)$ with  the section $\sltelement$ of the level rotation torus.
Replacing $(s_T, s_0)$ by this modified pair, we obtain a new, equivalent, triple of data, where the isomorphism in the last bullet point is taken as the identity. 
But then we see that such a triple is uniquely determined by the pair
\[
\left(R^s \colon \widetilde{B} \lra \Tsnorm[\Gamma_b],\ s_0\colon \widetilde{B} \lra B_0\right),
\]
by taking $s_T$ in the second bullet point as the composition $\widetilde{B} \to \Tsnorm[\Gamma_b] \to \Tnorm[\Gamma_b]$ and taking the data $(\pi,\bfz,\Gamma_b,\bfomega,\bfsigma)$ in the first bullet point that is determined by the non-simple generalized multi-scale differential from $(s_T, s_0)\colon \widetilde B \to B$.

Above we have found that any morphism $\widetilde{B} \to \widehat{B}$ can be described by a morphism $(R^s, s_0) \colon \widetilde{B} \to \Tsnorm[\Gamma_b] \times B_0$. Two such morphisms are $2$-isomorphic if they can be related by compatible isomorphisms for the stacks $B$ and $\LMS$ in the fiber product. Since $B$ is a scheme, the only such isomorphisms come from sections $\sltelement \colon \widetilde{B} \to T^s_{\Gamma_b}$ leaving the underlying non-simple generalized multi-scale differential fixed. These are exactly identified with sections $\sltelement \colon \widetilde{B} \to K_{\Gamma_b}$, which act in a natural way on the first morphism $R^s \colon \widetilde{B} \to \Tsnorm[\Gamma_b]$. 
Since $\widetilde{B}$ is connected, the section $\sltelement$ is necessarily constant, so that we have identified\footnote{For the second equality below, we use that for a finite group $K$ acting on a scheme $\overline{T}$, the morphisms $\widetilde B \to [\overline{T}/K]$ can be identified with the groupoid $\{\widetilde{B} \to \overline{T}\}/K$, perhaps after replacing $\widetilde B$ by an \'etale cover, which is harmless for the argument. This itself uses the definition of the quotient stack together with the fact that all $K$-torsors over a scheme $\widetilde B$ as above are \'etale-locally trivial.}
\[
\mathrm{Mor}\left(\widetilde{B}, \widehat{B}\right) \= \mathrm{Mor}\left(\widetilde{B}, \Tsnorm[\Gamma_b] \times B_0\right)/K_\Gamma \= \mathrm{Mor}\left(\widetilde{B},\left[\Tsnorm[\Gamma_b]/K_\Gamma\right] \times B_0\right).
\]
This proves the isomorphism $\widehat{B} \cong [\Tsnorm[\Gamma_b]/K_\Gamma] \times B_0$. Since both the quotient stack $[\Tsnorm[\Gamma_b]/K_\Gamma]$ and $B_0$ are smooth, this finishes the proof.
\end{proof}
\begin{proof}[Proof of~\ref{intro:mainiso}, second part]
  Assuming the first part of the theorem, the proof of the second part is completed by showing that the map $\GSMS \to \GMS$ is the relative coarse moduli space over $\Mbar_{g,n}$. First, we observe that the map $\GMS \to \Mbar_{g,n}$ is  representable. Indeed, the stabilizers $(\varphi, \widetilde \varphi, \xi)$ of a family of generalized multi-scale differentials lying over the identity morphism
  $\varphi=\mathsf{id}_B$, $\widetilde \varphi=\mathsf{id}_X$ of the underlying stable curves are those $\xi \in T_{\Gamma_b}(\calO_{B})$ fixing both the differentials $\bfomega$ and the prong-matchings~$\bfsigma$. By the definition of the level rotation torus, this forces $\xi$ to be trivial, so that indeed the stabilizers of $\GMS$ inject to the stabilizers of $\Mbar_{g,n}$.

By the definition of the relative coarse space, we then have a factorization
\[
\GSMS \lra \GSMS^{\sf{coarse}} \lra \GMS,
\]
and we show that the second map is an isomorphism. For this, let $B \to \GMS$ be associated to a controlled family of generalized multi-scale differentials. Then we have a commutative diagram, where we \emph{define} the diagrams on the right to be cartesian:
\[
\begin{tikzcd}
{[\overline{T}_{\Gamma_b}^s/K_\Gamma]} \arrow[dd] & \GSMS_B \arrow[r] \arrow[l, dashed] \arrow[d] & \GSMS \arrow[d]\\
 & \GSMS_B^{\sf{coarse}}  \arrow[r] \arrow[d]  & \GSMS^{\sf{coarse}} \arrow[d]\\
\overline{T}_{\Gamma_b}^n & B \arrow[l,"R"] \arrow[r] & \GMS\rlap{.}
\end{tikzcd}
\]
For the family $B \to \GMS$ given by a tuple $(\pi\colon\calX\to {B}, \bfz, \Gamma_{b}, R, \bfomega, \bfsigma)$ and a map $s\colon \widetilde B \to B$, we claim that after we shrink $\widetilde B$ in the \'etale topology, the morphisms $\widetilde B \to \GSMS_B$ lying over $s$ are precisely given by the data of
\begin{equation}  \label{eqn:mod_normal_form_stacky_fiber_product}
R^s \colon \widetilde B \lra \Tsnorm[\Gamma_b] \quad\text{such that } \left(\Tsnorm[\Gamma_b] \to \Tnorm[\Gamma_b]\right) \circ R^s \= R \circ s\,
\end{equation}
with the automorphisms of this data described by the (necessarily locally constant) sections $\sltelement \colon \widetilde{B} \to K_{\Gamma_b}$.  To see this, one repeats the analysis of the stacky fiber product from \Cref{prop:smoothDM} for $\GSMS_B$.\footnote{In that proof we did use the normal form $B=\Tnorm[\Gamma_b] \times B_0$ to split the map $s \colon \widetilde B \to B$ as $(s_T, s_0)$, where $s_T = R \circ s$. However, the only place where this was actually used was in observing that the data $(s=(s_T, s_0), R^s)$ satisfying the condition in~\ref{eqn:mod_normal_form_stacky_fiber_product} is equivalent to $(R^s, s_0)$ by setting $s_T = (\Tsnorm[\Gamma_b] \to \Tnorm[\Gamma_b]) \circ R^s$. } From this description, we see that $\GSMS_B$ is also the fiber product of~$B$ with the map $[\overline{T}_{\Gamma_b}^s/K_\Gamma] \to \overline{T}_{\Gamma_b}^n$, so that the dotted arrow on the top left makes the left diagram cartesian.

To conclude, we first note that by \cite[Proposition 3.4]{Abramovich2011Twisted-stable-} the space $\GSMS_B^{\sf{coarse}}$, which was defined as a fiber product, is in fact a relative coarse space for $\GSMS_B$ over $\Mbar_{g,n}$.  But since the map $\GSMS_B \to \Mbar_{g,n}$ factors through the representable map $B \to \Mbar_{g,n}$, the space $\GSMS_B^{\sf{coarse}}$ is \emph{also} a coarse space of $\GSMS_B$ over $B$, by an application of~\ref{lem:annoyingcoarsespaces} below to $\calX = \GSMS_B$, $\calY'=B$, and $\calY=\Mbar_{g,n}$.

On the other hand, since $\overline{T}_{\Gamma_b}^n$ is the coarse space of $[\overline{T}_{\Gamma_b}^s/K_\Gamma]$ (over $\mathrm{Spec}(\mathbb{C})$), applying \cite[Proposition 3.4]{Abramovich2011Twisted-stable-} again shows that $B$ \emph{itself} is the coarse space of $\GSMS_B$. This proves that the map $\GSMS_B^{\sf{coarse}} \to B$ is an isomorphism. Since we prove this for any $B \to \GMS$, we conclude that $\GSMS^{\sf{coarse}} \to \GMS$ is an isomorphism, as desired.
\end{proof}

\begin{lemma} \label{lem:annoyingcoarsespaces}
Consider a sequence of morphisms $\calX \to \calY' \to \calY$ of algebraic stacks, locally of finite presentation, and assume the relative inertia $I(\calX /\calY) \to \calX$ is finite. Then if\, $\calY' \to \calY$ is representable, the relative coarse space $\calX^{\sf{coarse},\calY}$ of\, $\calX$ over $\calY$ is isomorphic to the relative coarse space $\calX^{\sf{coarse},\calY'}$ of\, $\calX$ over $\calY'$.
\end{lemma}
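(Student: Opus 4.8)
The plan is to show that the relative coarse space $\pi\colon\calX\to\calX^{\coarse,\calY}$, equipped with a canonically induced morphism to $\calY'$, satisfies the definition of the relative coarse space of $\calX$ over $\calY'$; the asserted isomorphism then follows from uniqueness of relative coarse spaces. Here I use the description in \cite{Abramovich2011Twisted-stable-}: for $\calX\to\calS$ locally of finite presentation with $I(\calX/\calS)\to\calX$ finite, the relative coarse space $\calX\to\calX^{\coarse,\calS}\to\calS$ is characterized by the conditions that $\pi\colon\calX\to\calX^{\coarse,\calS}$ is proper and quasi-finite with $\calO_{\calX^{\coarse,\calS}}\iso\pi_*\calO_\calX$, that $\calX^{\coarse,\calS}\to\calS$ is representable, and that every $\calS$-morphism $\calX\to\calZ$ with $\calZ\to\calS$ representable factors uniquely through $\calX^{\coarse,\calS}$. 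To begin with, $\calX^{\coarse,\calY'}$ exists: because $\calY'\to\calY$ is representable, any automorphism of an object of $\calX$ over the identity of $\calY$ maps to an automorphism over the identity of $\calY$ in $\calY'$, hence is the identity there, so $I(\calX/\calY')=I(\calX/\calY)$, which is finite by hypothesis.

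Next I would construct the morphism $\calX^{\coarse,\calY}\to\calY'$. Since $\calY'\to\calY$ is representable, the structure map $\calX\to\calY'$ is an $\calY$-morphism to a representable $\calY$-stack, so the universal property of $\calX^{\coarse,\calY}$ yields a unique factorization $\calX\to\calX^{\coarse,\calY}\xrightarrow{\bar f}\calY'$ over $\calY$. The composite $\calX^{\coarse,\calY}\xrightarrow{\bar f}\calY'\to\calY$ is representable, hence $\bar f$ is representable as well, by the standard fact that in a composition of morphisms of algebraic stacks, if the composite is representable then so is the first factor (this is visible on relative inertia). Moreover the three properties of the morphism $\pi\colon\calX\to\calX^{\coarse,\calY}$ itself---proper, quasi-finite, and $\calO_{\calX^{\coarse,\calY}}\iso\pi_*\calO_\calX$---refer only to $\pi$ and not to any base, so they continue to hold when we view $\pi$ as a morphism of $\calY'$-stacks.

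It then remains to verify the universal property over $\calY'$. Given an $\calY'$-morphism $g\colon\calX\to\calZ$ with $\calZ\to\calY'$ representable, the composite $\calZ\to\calY'\to\calY$ is representable, so by the universal property of $\calX^{\coarse,\calY}$ over $\calY$ there is a unique $\calY$-morphism $h\colon\calX^{\coarse,\calY}\to\calZ$ with $h\circ\pi=g$. Precomposing with $\pi$ the two morphisms $\calX^{\coarse,\calY}\to\calY'$ given by $\bar f$ and by $\calX^{\coarse,\calY}\xrightarrow{h}\calZ\to\calY'$ returns in both cases the structure morphism $\calX\to\calY'$; by the uniqueness in the factorization of the previous paragraph these two morphisms agree, so $h$ is in fact a morphism over $\calY'$, and it is the unique $\calY'$-morphism factoring $g$. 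Hence $\pi\colon\calX\to\calX^{\coarse,\calY}$ together with $\bar f$ is a relative coarse space of $\calX$ over $\calY'$, and uniqueness of relative coarse spaces gives $\calX^{\coarse,\calY'}\isom\calX^{\coarse,\calY}$.

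I expect the only genuine obstacle to be organizational rather than mathematical: keeping ``representable'' fixed to mean representable by algebraic spaces throughout, invoking the universal property only against targets that are representable over the appropriate base, and---the one point needing real care---checking that the factorization $h$ produced over $\calY$ in the last step is genuinely compatible with the morphisms to $\calY'$, which is exactly where the uniqueness half of the universal property is used.
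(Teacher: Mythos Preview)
Your proof is correct and takes a somewhat different route from the paper's. You verify directly that $\calX^{\coarse,\calY}$, equipped with the induced map $\bar f$ to $\calY'$, satisfies the defining universal property of the relative coarse space over $\calY'$, working entirely with the characterization of relative coarse spaces as initial among factorizations through representable morphisms. The paper instead first produces a comparison map $\calX^{\coarse,\calY}\to\calX^{\coarse,\calY'}$ (via the universal property of the former applied to the latter, which is representable over $\calY$), then pulls back along a cover $U\to\calY$ by an algebraic space; invoking the construction in \cite{Abramovich2011Twisted-stable-}, both $\calX_U\to\calX^{\coarse,\calY}_U$ and $\calX_U\to\calX^{\coarse,\calY'}_U$ become \emph{absolute} coarse moduli spaces of $\calX_U$, and uniqueness of absolute coarse spaces forces the comparison map to be an isomorphism on the cover, hence globally. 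Your argument is more self-contained, using only the universal property and elementary facts about representability (in particular your observation that $I(\calX/\calY')=I(\calX/\calY)$ and that a first factor of a representable composite is representable); the paper's argument trades this for the known compatibility of relative coarse spaces with base change to an algebraic space. Both reach the conclusion with comparable effort.
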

\begin{proof}
Consider the solid diagram of morphisms
\[
\begin{tikzcd}
\calX \arrow[d, equal] \arrow[r] & \calX^{\sf{coarse},\calY'} \arrow[r] \arrow[d, dashed, "g", xshift=0.2cm] & \calY' \arrow[d]\\
\calX \arrow[r] & \calX^{\sf{coarse},\calY} \arrow[r] \arrow[u, dashed, "f", xshift=-0.2cm] & \calY\rlap{.}
\end{tikzcd}
\]
Then by the properties of relative coarse spaces (see \cite[Theorem 3.1(2)]{Abramovich2011Twisted-stable-}),  there exists a morphism $f$ as above, since $\calX^{\sf{coarse},\calY}$ is initial among factorizations of $\calX \to \calY$ via a representable map. But then the induced map $\calX^{\sf{coarse},\calY} \to \calY'$ via $f$ is representable, so by the same universal property we obtain the map $g$, and one verifies that $f,g$ are inverse isomorphisms.
\end{proof}

\begin{remark} \label{rem:PMEqcount}
In practice it is often relevant to determine the number of projectivized multi-scale differentials on a given pointed curve with twisted differential $(X,\bfz,\Gamma, \bfomega)$.  By the definition of the above equivalence relation, this is the number of \emph{prong-matching equivalence classes}, \textit{i.e.}~the number of orbits of the set of global prong-matchings under the action of the level rotation group $R_\Gamma$. Determining this number is complicated in general, but, for a two-level graph with prongs $\kappa_1,\ldots,\kappa_s$, there are $\prod \kappa_i / \lcm(\kappa_i)$ prong-matching equivalence classes.
\end{remark}

\subsection{Quotient twist group and rescaling ensembles in a worked example}
\label{sec:QuotTwist}

Consider the triangle graph $\Gamma$ with three levels, each containing one vertex, and
three edges forming a triangle, as illustrated in~\ref{fig:X_L_pic} (to which
we also refer for the labelling of the edges).
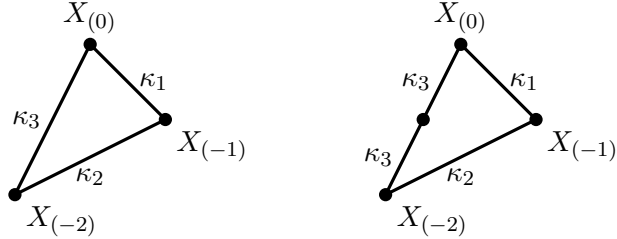
\begin{figure}[ht]
  \begin{tikzpicture}[very thick]
    \begin{scope}
\fill (0,0) coordinate (x0) circle (2.5pt); \node [above] at (x0) {$X_{(0)}$};
\fill (1,-1) coordinate (x1) circle (2.5pt); \node [below right] at (x1) {$X_{(-1)}$};
\fill (-1,-2) coordinate (x2) circle (2.5pt); \node [below right] at (x2) {$X_{(-2)}$};

 \draw[] (x0) -- node[right]{$\kappa_1$} node[left]{} (x1);
 \draw (x0) --node[left]{$\kappa_3$} node[right]{} (x2);
 \draw (x1) -- node[below]{$\kappa_2$} node[above]{} (x2);
  \end{scope}
\end{tikzpicture}
\quad \quad \quad
\begin{tikzpicture}[very thick]   \begin{scope}
\fill (0,0) coordinate (x0) circle (2.5pt); \node [above] at (x0) {$X_{(0)}$};
\fill (1,-1) coordinate (x1) circle (2.5pt); \node [below right] at (x1) {$X_{(-1)}$};
\fill (-1,-2) coordinate (x2) circle (2.5pt); \node [below right] at (x2) {$X_{(-2)}$};
\fill (-0.5,-1) coordinate (x3) circle (2.5pt); \node [below right] at (x3) {};
 \draw[] (x0) -- node[right]{$\kappa_1$} node[left]{} (x1);
 \draw (x0) --node[left]{$\kappa_3$} node[right]{} (x3);
  \draw (x3) --node[left]{$\kappa_3$} node[right]{} (x2);
 \draw (x1) -- node[below]{$\kappa_2$} node[above]{} (x2);
 \end{scope}
\end{tikzpicture}
\quad \quad
\caption{The triangle graph (the generic fiber~$X$, left) and its subdivision
(the special fiber $X_L$, right) where the extra vertex stands for a semistable rational component.}
\label{fig:X_L_pic}
\end{figure}
For simplicity we restrict to the case $\kappa_1 = 1 = \kappa_2$ and $\kappa_3 = n$. In this case the simple twist group is $\sTw[\Gamma] = n\ZZ \oplus n\ZZ$. The full twist group is generated by the simple twist group and the element $(1,-1)$. In particular, we note that the {quotient twist group} is
\be \label{eq:KGammaexample}
K_\Gamma \= \Tw[\Gamma]/\Tw[\Gamma]^s \= \ZZ/n\ZZ.
\ee
To work explicitly with invariants, we specialize to the case $n=3$ in what follows.  

The simple level rotation torus is isomorphic to $(\CC^*)^2$, hence $\ol{T}^s_\Gamma \cong \CC^2$, and a morphism $R^s\colon B \to \ol{T}^s_\Gamma$ is given by two functions $(t_{-1},t_{-2})$. Consequently,
\begin{align*}
  \ol{T}^n_\Gamma \= \ol{T}^s_\Gamma / K_\Gamma &\= \left\{(s_{-1}, s_{-2}, f_{1}, f_{2}, f_{3})
  : f_1^1 \= s_{-1}, f_{2}^1 \= s_{-2}, f_{3}^{3} \= s_{-1} s_{-2} \right\}\\
  &\= \left\{(f_{1}, f_{2},  f_{3}) : f_{3}^{3} \= f_{1} f_{2}\right\}, 
\end{align*}
where $s_{-1}=f_{1}=t_{-1}^3$, $s_{-2}=f_2= t_{-2}^3$, and $f_3= t_{-1}t_{-2}$.  In this case the rescaling ensemble $R$ induced by $R^s$ is given by the composition of $R^s$ with the quotient map $\ol{T}^s_\Gamma \to \ol{T}^s_\Gamma / K_\Gamma$, and has coordinates
$$
(s_{-1}, s_{-2}, f_1, f_2, f_3) \= \left(t_{-1}^3, t_{-2}^3, t_{-1}^3, t_{-2}^3, t_{-1} t_{-2}\right).
$$

Let $\bfomega = (\omega_0, \omega_{-1},\omega_{-2})$ be a twisted differential on some pointed stable automorphism-free curve~$(X,\bfz)$ compatible with the~$\Gamma$ discussed here. By plumbing (see \cite[Section~12]{LMS}), we get a family of curves $\calX \to \ol{T}^n_\Gamma$ with an underlying collection of rescaled differentials
\bes
\omega_{(0)} \= \omega_0, \quad \omega_{(-1)} \= s_{-1} \omega_{-1},\quad
\omega_{(-2)} \= s_{-1}s_{-2} \omega_{-2}\,
\ees
and the rescaling ensemble~$R$.\footnote{In fact, replacing the initial datum~$\bfomega$ by the universal equisingular deformation inside the appropriate stratum of differentials and taking as new base $\ol{T}^n_\Gamma$ times the base of the equisingular deformation, we obtain the universal family.}

To summarize: \emph{near $(X,\bfz,\Gamma,\bfomega)$ as above, $\GMS = \MSgrp$,
since there are no global residue conditions; both functors are representable by an algebraic variety;
this algebraic variety is singular with a quotient singularity given by the group~$K_\Gamma$.}

\medskip
Finally, we remark that as illustrated in~\ref{fig:X_L_pic}, a geometric way to think of the $[\mathbb{P}^1/\mathbb{G}_m]$ subdivision is to modify the definition of level graphs by eliminating all long edges (\textit{i.e.}~edges crossing more than one level passage), and instead inserting semistable rational vertices at each level crossed by a long edge, with the same number of prongs. Then the corresponding twist group, level rotation torus, and rescaling ensemble have only their `simple' versions.  To see this concretely, suppose $uv = f$ is the local model of a node corresponding to a long edge crossing $k$ level passages, where $f^\kappa = s_{i-k}\cdots s_{i-1}$
as in~\ref{eq:f2sj}. Introduce new parameters $u_j, v_j, f_j$ for $i-k\leq j\leq i-1$ satisfying $u_jv_j = f_j$, $f_j^{\kappa} = s_{j}$, $v_ju_{j-1} = 1$, $u_{i-1} = u$, and $v_{i-k} = v$. Then $v_j$ and $u_{j-1}$ are coordinates on the inserted semistable rational vertex at each intermediate level~$j$ that can subdivide the long edge into $k$ short edges, where the differential on the semistable rational vertex is $u_{j-1}^{\kappa} (du_{j-1}/u_{j-1}) = - v_j^{-\kappa} (dv_j/v_j)$ and the number of prongs at each node in the inserted semistable rational curves remains equal to $\kappa$.  

A logarithmic version of this construction (the replacement of an edge crossing $k$ levels by a chain of $k-1$ projective lines) can be described in terms of the \emph{divided tropical lines} of \cite{MarcusWiseLog}. We will not use this in what follows, but we give a brief sketch. The PL function $\beta$ with maximum value $0$ can be seen as a map from $X$ to the `tropical line' $[\bb P^1/\bb G_m]$. Write $\ca P$ for the subfunctor of $\on{Hom}_{\cat{LogSch}}(-, [\bb P^1/\bb G_m])$ consisting of log maps to $[\bb P^1/\bb G_m]$ such that the images of the vertices of $X$ form a totally ordered set in the characteristic monoid. Then $\ca P \to [\bb P^1/\bb G_m]$ is a quotient by $\bb G_m$ of a contraction map from a chain of rational curves to a single rational curve. The fiber product $X \times_{[\bb P^1/\bb G_m]} \ca P$ is then the `divided' curve constructed above, all of whose edges cross at most one level. 

\section{The underlying algebraic stack of Rub}\label{sec:minimal_log_str}

The category $\cat{Rub}$ is naturally fibered over $\cat{LogSch}$. Our goal in this section is to understand its underlying algebraic stack (a fibered category over $\cat{Sch}$). We use the notion of minimal log structures from \cite{Gillam} and \cite[Appendix~B]{Wise2016Moduli-of-morph}. We describe here the minimal log structures on points of $\cat{Rub}$, as a variation on the description of the minimal log structures on $\cat{Div}$ given in the proof of
\cite[Theorem 4.2.4]{MarcusWiseLog}.

Throughout this section we work with $\cat{Rub}_{\ul 0}$ in place of $\cat{Rub}$, as it is notationally slightly simpler, and fits better with what we do in the rest of the paper. The interested reader will check that the results go through essentially unchanged for $\cat{Rub}$. 

\subsection{Brief recap on minimal log structures}
\label{sec:minimal_ls_for_rub}

This is taken from \cite[Appendix~B]{Wise2016Moduli-of-morph}, based
on \cite{Gillam}. The purpose of minimal log structures is to understand how to pass from a category fibered in groupoids (CFG) $X$ over $\cat{LogSch}$ to a CFG over $\cat{Sch}$. The wide subcategory of $\cat{LogSch}$ with morphisms the strict morphisms is a CFG over $\cat{Sch}$, so one could just take the corresponding wide subcategory of $X$ and composite. However, this is the `wrong' way to extract the underlying CFG of $X$ over $\cat{Sch}$. For an elementary example, let $X \coloneqq (\pt, \bb N^2)$ be a point with log structure~$\bb N^2$. Then there are very many maps from $Y \coloneqq (\pt, \bb N)$ to $X$: one can choose both the underlying monoid map $\bb N^2 \to \bb N$ and the lift to the log structure giving a $\bb C^*$ parameter. Hence if we take the CFG over $\cat{LogSch}$ associated to $X$ and view it as a CFG over $\cat{Sch}$ via taking strict morphisms and then the forgetful functor, we will get a very large and complicated object,\footnote{For example, the fiber over $\pt \in \cat{Sch}$ is the category of pairs of a log structure $M$ on $\pt$ and an associated log morphism $(\pt, M) \to X$. } when what we really wanted was a point!

However, given a map $T \to \pt$ of schemes, there exist a unique log structure $M$ on $T$ and a morphism $(T,M) \to X = (\pt, \mathbb{N}^2)$ such that any other log morphism $(T, M') \to X$ factors through $(T,M) \to X$. Namely,~$M$ is simply the pullback of the log structure $\mathbb{N}^2$ on $\pt$ under $T \to \pt$. Such a log structure is called \emph{minimal}, and if we take the full subcategory of log schemes over $X$ given by minimal objects, then view it as a CFG over $\cat{Sch}$ via the forgetful functor, we recover exactly what we wanted, namely a point. 

In the next two subsections, we will apply the same machinery to the CFG $\cat{Rub}_{\ul 0}$ over $\cat{LogSch}$. An object $(X/B, \beta)$ of $\cat{Rub}_{\ul 0}$ is called \emph{minimal} if every solid diagram in $\cat{Rub}_{\ul 0}$
\begin{equation}\label{eq:def_of_minimal}
 \begin{tikzcd}
  (X'/B', \beta') \arrow[rr]\arrow[dr] && (X/B, \beta)\rlap{,}  \\
  & (X''/B'', \beta'') \arrow[ur, dashed]
\end{tikzcd}
\end{equation} 
with the induced maps $\ul B' \to \ul B$ and $\ul B' \to \ul B''$ on underlying schemes being isomorphisms, admits a unique dashed arrow.

Gillam's results imply that the full subcategory of $\cat{Rub}_{\ul 0}$ consisting of minimal objects, together with its natural forgetful functor to $\cat{Sch}$, is (equivalent to) the underlying algebraic stack of $\cat{Rub}_{\ul 0}$. Thus, objects are those log points of $\cat{Rub}_{\ul 0}$ for which the log structure is minimal, and morphisms are simply the usual morphisms of log objects.

As such, if we want to understand the relative inertia of $\cat{Rub}_{\ul 0}$ over $\mathfrak M$, we need to understand not only the minimal objects and their morphisms, but also all possible ways of equipping a schematic object of $\cat{Rub}_{\ul 0}$ with minimal log structure.

\begin{remark} \label{Rmk:setoid_warning}
A warning: suppose that one starts with a CFG over $\cat{LogSch}$ which is equivalent to a category fibered in setoids (CFS), and which has enough minimal objects. It is then representable by an algebraic stack with log structure, but this \emph{need not} be equivalent to a category fibered in setoids over schemes (in other words, it can still have non-trivial stacky structure). The most elementary example of this is perhaps the subdivision of $\bb G_m^\trop$ at $0$, which is certainly a category fibered in setoids over $\cat{LogSch}$, but whose underlying algebraic stack is $[\bb P^1/\bb G_m]$. This is because a given schematic point can admit two (or more) different minimal log structures, which can have several isomorphisms between them even if we have a CFS over $\cat{LogSch}$; the fiber over \emph{any given} log scheme can still have no non-trivial automorphisms.
\end{remark}

\subsection{Minimal log structures for $\cat{Rub}_{\ul 0}$}

Let $(X/B, \beta)$ be a point of $\cat{Rub}_{\ul 0}$ with $X/B$ nuclear, where $\M_B$ is the sheaf of monoids on~$B$. Recall that from this family and any $b \in B$, we obtain
\begin{itemize}
    \item the stable graph $\Gamma$ describing the shape of $X_b$;
    \item the length maps $\delta\colon E(\Gamma) \to \ghost_{B,b}$, which we extend to a monoid homomorphism 
\bes
\delta\colon \bb N\Span{E(\Gamma)} \lra \ghost_{B,b};
\ees
\item the value map $\beta: V(\Gamma) \to \ghost_{B,b}^\gp$ at vertices, whose image is totally ordered, inducing the level map
\[
\ell \colon V(\Gamma) \lra \{0,-1, \ldots, -N\} \= \{0\} \sqcup L(\Gamma); 
\]
\item the slopes $\kappa \colon H(\Gamma) \to \mathbb{Z}$ at half-edges, where, given an edge $e \in E(\Gamma)$ consisting of half-edges $h,h'$, we set $\kappa_e = |\kappa(h)| = |-\kappa(h')|$ and denote by $E^v = \{e \in E(\Gamma) : \kappa_e > 0\}$  the set of vertical edges and by $E^h = \{e \in E(\Gamma) : \kappa_h = 0\}$ the set of horizontal edges.
\end{itemize}
For $i\in L(\Gamma)$, we define
with~\ref{eq:aidef}
\bes
a_i \,\coloneqq\, \on{lcm}_e \kappa_e, 
\ees
where the $\on{lcm}$ runs over the set of all edges $e$ such that $\ell(e^-) \le i<\ell(e^+)$ (we say such an edge~$e$ \emph{crosses level~$i$}).
We let $\tilde P \coloneqq \bb N\Span{p_{-1},\dots, p_{-N}}$ be the free monoid on $N=|L(\Gamma)|$ generators. Then we can define a map
$g\colon \vedges \to \tilde P$ by
\begin{equation}\label{eq:g_delta_e}
   g(e) \coloneqq \sum_{i = \ell(e^-)}^{\ell(e^+) - 1}\frac{a_i}{\kappa_e}p_i,
\end{equation}
and extend this map additively to a map 
$g\colon \bb N\Span{\vedges} \to \tilde P$. 
Finally, we let 
$$
\sigma_i \coloneqq \beta(v_i) - \beta(v_{i-1}) \in \ghost_{B,b},
$$
where $v_i$ is any vertex of level $i$. 

\begin{lemma}
The difference $\sigma_i$ is divisible by $a_i$ in $\ghost_{B,b}$.
\end{lemma}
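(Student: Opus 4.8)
The plan is to deduce $a_i \mid \sigma_i$ from the individual divisibilities $\kappa_e \mid \sigma_i$, taken over the vertical edges $e$ whose enhancement $\kappa_e$ enters the $\on{lcm}$ defining $a_i$, and then to pass to the least common multiple using that $\ghost_{B,b}$ is fine and saturated. Two bookkeeping points come first. The element $\sigma_i = \beta(v_i) - \beta(v_{i-1})$ does not depend on the chosen vertices: since $\ell = \tau\circ\beta$ with $\tau$ a bijection, all vertices of a given level share a $\beta$-value. And $\sigma_i$ actually lies in $\ghost_{B,b}$, not just in $\ghost_{B,b}^\gp$: the image of $\beta$ is totally ordered with higher levels carrying larger values, so the difference of two values at consecutive levels lies in $\ghost_{B,b}$.

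The heart of the matter is the claim that for each vertical edge $e$ crossing the level passage recorded by $i$ one has $\kappa_e \mid \sigma_i$ in $\ghost_{B,b}$, and this is where the defining conditions of $\cat{Rub}_{\ul 0}$ enter. By \ref{le:divisibility} together with the definition \ref{eq:defkappa} of the slope, $\beta(e^+) - \beta(e^-) = \kappa_e\,\delta_e$, so $\kappa_e$ divides $\beta(e^+) - \beta(e^-)$. Crucially, condition (3) of \ref{prop:rub_translation} states precisely that for every value $y$ of $\beta$ strictly between $\beta(e^-)$ and $\beta(e^+)$ the element $(y - \beta(e^-))/\kappa_e$ lies in $\ghost_{B,b}$; applying this with $y$ equal to the value of $\beta$ on each level strictly between $\ell(e^-)$ and $\ell(e^+)$ shows that $\kappa_e \mid \beta(v_k) - \beta(e^-)$ for every level $k$ in the closed range $[\ell(e^-), \ell(e^+)]$. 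Subtracting two consecutive such congruences, $\kappa_e$ divides the increment of $\beta$ between any two consecutive levels within that range; since $e$ crosses the passage recorded by $i$, the two levels entering $\sigma_i$ both lie in $[\ell(e^-), \ell(e^+)]$, and the claim follows.

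To finish, list the edges contributing to $a_i$ as $e_1, \dots, e_s$. Each $\kappa_{e_j}$ divides $\sigma_i$ in $\ghost_{B,b}$, hence in the torsion-free group $\ghost_{B,b}^\gp$. In a torsion-free abelian group, if integers $m$ and $n$ both divide an element $x$ then so does $\on{lcm}(m,n)$: writing $x = my = nz$ and $am + bn = \gcd(m,n)$, one checks $x/\on{lcm}(m,n) = az + by$. Iterating over $e_1, \dots, e_s$ gives $\sigma_i = a_i\, w$ for a unique $w \in \ghost_{B,b}^\gp$, and since $\sigma_i \in \ghost_{B,b}$ and this monoid is saturated, $w \in \ghost_{B,b}$; that is, $a_i \mid \sigma_i$ in $\ghost_{B,b}$, as claimed.

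The one genuine subtlety, rather than a computational obstacle, is recognizing that condition (3) of \ref{prop:rub_translation} (equivalently condition (2) of \ref{def:rub}) must be invoked. Using only the relation $\beta(e^+) - \beta(e^-) = \kappa_e\delta_e$ and telescoping, one obtains merely that $\kappa_e$ divides a \emph{sum} of consecutive increments of $\beta$, not the individual increment $\sigma_i$; and in fact the conclusion fails for the version of $\cat{Rub}$ with condition (2) removed --- this is the same phenomenon as the non-smooth (simplicial but not unimodular) cone in the example around \ref{fig:Rub_subdivision_fan}.
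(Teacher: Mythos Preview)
Your proof is correct and follows essentially the same route as the paper's own argument: reduce divisibility by $a_i$ to divisibility by each $\kappa_e$ via saturation, and read the latter off condition~(3) of \ref{prop:rub_translation}. You simply spell out the two steps the paper leaves implicit --- the subtraction of consecutive divisibilities coming from condition~(3), and the B\'ezout/saturation passage to the $\on{lcm}$ --- and your closing remark on why condition~(2) of \ref{def:rub} is genuinely needed is apt and matches the paper's own discussion around \ref{fig:Rub_subdivision_fan}.
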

\begin{proof}
Showing that $\sigma_i$ is divisible by $a_i$ is exactly equivalent to showing that it is divisible by $\kappa_e$ for every edge $e$ crossing level $i$ (since we work with saturated monoids, if an element is divisible by two integers, then it is also divisible by their least common multiple). But this is exactly condition~\eqref{p:r_t-3} in~\ref{prop:rub_translation}.
\end{proof}

Set $\tau_i\coloneqq \sigma_i /a_i\in \ghost_{B,b}$ (noting that division in $\ghost_{B,b}$ is unique since $\ghost_{B,b}$ is sharp, integral, and saturated), and define a monoid homomorphism
\begin{equation}\label{eq:hom_psi}
\psi\colon \tilde P \lra \ghost_{B,b}, \quad \psi\colon p_i \longmapsto \tau_i. 
\end{equation}

\begin{lemma} \label{lem:gdeltapsi}
The triangle 
\begin{equation}
 \begin{tikzcd}
\bb N\Span{\vedges} \arrow[r, "g"] \arrow[dr, "\delta"] & \tilde P \arrow[d, "\psi"]\\
& \ghost_{B,b}
\end{tikzcd}
\end{equation}
commutes. 
\end{lemma}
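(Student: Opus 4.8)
The plan is to reduce the claim to an identity on the generators of $\bb N\Span{\vedges}$, and then to unwind the three definitions involved, watching the factors $a_i$ cancel and a telescoping sum reproduce the slope relation of \ref{eq:defkappa}.

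Since $\psi\circ g$ and $\delta$ are both homomorphisms out of the free commutative monoid $\bb N\Span{\vedges}$, it suffices to verify that $\psi(g(e)) = \delta_e$ for a single vertical edge $e\in\vedges$. Before doing that I would record two elementary points. First, $\beta$ is constant on each level, because $\ell=\tau\circ\beta$ with $\tau$ injective on the image of $\beta$; hence $\sigma_i\in\ghost_{B,b}$ does not depend on the choice of vertices, and, by the preceding lemma together with unique divisibility in the sharp, integral, saturated monoid $\ghost_{B,b}$, the element $\tau_i=\sigma_i/a_i$ is well defined. Second, for every level $i$ crossed by $e$ one has $\kappa_e\mid a_i$, since $\kappa_e$ then occurs in the $\on{lcm}$ defining $a_i$; so the coefficients $a_i/\kappa_e$ appearing in $g(e)$ are honest nonnegative integers.

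Then I would compute directly. From the definitions of $g$ and $\psi$,
\[
\psi(g(e)) \= \sum_i \frac{a_i}{\kappa_e}\,\psi(p_i) \= \sum_i \frac{a_i}{\kappa_e}\,\tau_i ,
\]
the sums running over the levels $i$ crossed by $e$. Multiplying by $\kappa_e$ (which is injective on $\ghost_{B,b}$, as $\ghost_{B,b}^{\gp}$ is torsion-free) and using $a_i\tau_i=\sigma_i$ gives
\[
\kappa_e\cdot\psi(g(e)) \= \sum_i \sigma_i .
\]
Because each $\sigma_i$ is the difference of the $\beta$-values at two adjacent levels, this sum telescopes to $\beta(e^{+})-\beta(e^{-})$, the difference of the values at the top and bottom vertices of $e$. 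Finally, the restriction $\beta'$ of $\beta$ to the vertices is a combinatorial PL function by \ref{le:PLbijection}, so \ref{eq:defkappa} applied to the vertical edge $e$, whose upper half-edge carries the enhancement $\kappa_e$, gives $\beta(e^{+})-\beta(e^{-})\=\kappa_e\,\delta_e$. Cancelling $\kappa_e$ from $\kappa_e\cdot\psi(g(e))\=\kappa_e\,\delta_e$ then yields $\psi(g(e)) = \delta_e = \delta(e)$, as required.

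The one step I expect to need genuine care is the index bookkeeping in the telescoping: one must check that the levels indexing the sum defining $g(e)$ match exactly the consecutive gaps whose $\beta$-differences are the $\sigma_i$, so that the sum collapses precisely to $\beta(e^{+})-\beta(e^{-})$ with no leftover boundary terms. Everything else is a routine substitution.
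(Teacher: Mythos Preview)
Your proof is correct and follows essentially the same approach as the paper: a direct computation on a single generator $e$, using the definitions of $g$ and $\psi$, the identity $a_i\tau_i=\sigma_i$, a telescoping sum to recover $\beta(e^+)-\beta(e^-)$, and the PL-function relation $\beta(e^+)-\beta(e^-)=\kappa_e\delta_e$. The paper's version is terser (dividing by $\kappa_e$ directly rather than multiplying and then cancelling), but the argument is identical.
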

\begin{proof}
We compute  
\begin{equation}
\psi(g(e)) \= \psi\left(\sum_i \frac{a_i}{\kappa_e} p_i\right) \= \sum_i \frac{a_i}{\kappa_e} \tau_i \= \frac{1}{\kappa_e}\sum_i \sigma_i \= \frac{1}{\kappa_e} \left(\beta(v_+) - \beta(v_-)\right) \= \delta_e,
\end{equation}
where the last equality comes from the fact that $\beta$ is a PL function. 
\end{proof}

\begin{df}
\label{def:basicness}We say $(X/B, \beta)$ for~$B$ nuclear is \emph{basic} if the natural map 
$$\psi \oplus \delta|_{E^h} \colon \tilde P \oplus \bb N \Span{E^h} \lra \ghost_{B,b}$$
is an isomorphism. In general we say a point of $\cat{Rub}_{\ul 0}$ is \emph{basic} if it is so on a nuclear cover. 
\end{df}
Our motivation for introducing this definition lies in~\ref{lem:basic_is_minimal}. The intuition behind the definition is that $\ghost_{B,b}$ is precisely big enough to contain the elements that are necessary to accommodate the images of the maps $\delta$, the differences of images of $\beta$, and roots of these differences whose existence is required by condition~\eqref{p:r_t-3} in~\ref{prop:rub_translation}. 

\begin{lemma}
Every point of\, $\cat{Rub}_{\ul 0}$ comes with a map to a basic object. 
\end{lemma}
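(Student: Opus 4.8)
\emph{Proof proposal.} The plan is to equip the underlying scheme $\ul B$ with a new, tautologically minimal log structure built out of the combinatorial data attached to $(X/B,\beta)$, to produce a morphism to the resulting object which is an isomorphism on underlying schemes, and to check that the target satisfies \ref{def:basicness}. Since every construction below is insensitive to strict \'etale base change (and glues), I would assume $X/B$ is nuclear, and freely use $\Gamma$, $\delta$, $\kappa$, $a_i$, $\tilde P$, $g$, $\psi$ as above; abbreviate $Q := \tilde P\oplus\NN\Span{E^h}$ and $\bar\rho := \psi\oplus\delta|_{E^h}\colon Q\to\ghost_{B,b}$.

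First I would construct the target. Let $\calQ$ be the sheaf of monoids on $B$ with stalk $Q$ (well defined, as $\tilde P$ and $\NN\Span{E^h}$ are functorial in the data of $X/B$), equipped with the morphism $\calQ\to\ghost_B$ with stalk $\bar\rho$, and set $\M^\circ_B := \M_B\times_{\ghost_B}\calQ$. This is a log structure on $\ul B$: an element $(m,q)$ is invertible iff $q$ is, which by sharpness of the stalks of $\calQ$ forces $q=0$ and $m\in\calO_B^\times$, so the units of $\M^\circ_B$ are exactly $\calO_B^\times$. Moreover the preimage of $0$ under $\bar\rho$ is $\{0\}$, since a finite sum of nonzero elements of the sharp monoid $\ghost_{B,b}$ is never $0$; hence $\ghost_{B^\circ}=\calQ$, and $\M^\circ_B$ is fs because $Q$ is free. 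Writing $B^\circ := (\ul B,\M^\circ_B)$, the projection $\M^\circ_B\to\M_B$ is a morphism of log structures and so defines $f\colon B\to B^\circ$ over $\id_{\ul B}$.

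Next I would lift $(X/B,\beta)$. I would equip $\ul X\to\ul B$ with the log structure making it a log curve $X^\circ/B^\circ$ whose node label is $\delta^\circ_e := g(e)\in\tilde P\subset Q$ at vertical $e$ and $\delta^\circ_e := e\in\NN\Span{E^h}\subset Q$ at horizontal $e$; such a log curve exists by the structure theory of log curves (\cite{Kato2000Log-smooth-defo,Gross2013Logarithmic-Gro}), and the labels are compatible across the strata of $B^\circ$ because $\bar\rho(\delta^\circ_e)=\delta_e$ (using \ref{lem:gdeltapsi} for the vertical edges) and $\bar\rho$ kills only $0$, so the same edges degenerate over every point. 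Picking lifts of the $\delta^\circ_e$ to $\M^\circ_B$ whose images in $\M_B$ are genuine smoothing parameters of $X/B$ gives $X^\circ\times_{B^\circ}B=X$ as log schemes. Then I would take $\beta^\circ$ to be the PL function on $X^\circ/B^\circ$ corresponding (via \ref{le:PLbijection}) to the combinatorial PL function that is constant on levels, equal to $0\in Q^\gp$ on the top level and jumping by $a_i p_i$ across the passage above level $i$; the formula for $g$ shows this has slopes $\kappa$. I claim conditions (1)--(3) of \ref{prop:rub_translation} hold: (1) because $\beta^\circ(v_2)-\beta^\circ(v_1)=\kappa_e\,\delta^\circ_e$ at each edge; (2) because the values of $\beta^\circ$ are totally ordered by level, with maximum $0$; and (3) because whenever $\ell(e^-)\le i<\ell(e^+)$ the edge $e$ crosses level $i$, so $\kappa_e\mid a_i$ and the roots $\sum(a_j/\kappa_e)p_j$ that (3) requires lie in $\tilde P\subset Q$. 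Thus $(X^\circ/B^\circ,\beta^\circ)\in\cat{Rub}_{\ul 0}$, and $f^*(X^\circ/B^\circ,\beta^\circ)=(X/B,\beta)$: the underlying curves agree, and $f^*\beta^\circ$ is a combinatorial PL function on $X/B$ with slopes $\kappa$ (again $\bar\rho^\gp(\delta^\circ_e)=\delta_e$ by \ref{lem:gdeltapsi}) and top value $0$, so it equals $\beta$ by the uniqueness recorded after \ref{prop:rub_translation}. Hence $f$, together with these identifications, is a morphism $(X/B,\beta)\to(X^\circ/B^\circ,\beta^\circ)$ in $\cat{Rub}_{\ul 0}$ that is an isomorphism on underlying schemes.

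It then remains to see that $(X^\circ/B^\circ,\beta^\circ)$ is basic. Its combinatorial invariants are again $\Gamma$, $\kappa$, $\tilde P$, $E^h$, with $\ghost_{B^\circ,b}=Q=\tilde P\oplus\NN\Span{E^h}$ and length map $e\mapsto g(e)$, resp.\ $e\mapsto e$; unwinding the definition of $\beta^\circ$, its associated map $\psi^\circ$ sends $p_i$ to $p_i$, so the map $\psi^\circ\oplus\delta^\circ|_{E^h}$ of \ref{def:basicness} is the identity of $Q$, in particular an isomorphism, and $(X^\circ/B^\circ,\beta^\circ)$ is basic, as required. I expect the real work to sit not in any single step but in the index bookkeeping behind condition (3) of \ref{prop:rub_translation}: one must confirm that passing from $\ghost_{B,b}$ to $Q$ adjoins exactly the roots the subdivision condition demands and nothing that would break the total order — which is precisely what the $\on{lcm}$ in $a_i$ and the formula for $g$ were engineered to ensure, and where \ref{lem:gdeltapsi} and the divisibility of each $\sigma_i$ by $a_i$ enter.
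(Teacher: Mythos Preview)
Your proposal is correct and follows essentially the same approach as the paper: both construct the new log structure as the fiber product $\M_B\times_{\ghost_B}(\tilde P\oplus\bb N\Span{E^h})$ over the map $\psi\oplus\delta|_{E^h}$, equip the resulting log curve with the length map $e\mapsto g(e)$ (resp.\ $e\mapsto e$) and the PL function $v\mapsto -\sum_{j=\ell(v)}^{-1}a_jp_j$, and observe that the associated $\psi^\circ$ is the identity so the target is basic. Your write-up is simply more explicit than the paper's in checking conditions (1)--(3) of \ref{prop:rub_translation}, the pullback identity $f^*\beta^\circ=\beta$, and the basicness at the end.
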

\begin{proof}
For $(X/B, \beta)$ a nuclear point of $\cat{Rub}_{\ul 0}$, we define a sheaf of monoids $P$ on $B$ as the fiber product
\begin{equation}
P \coloneqq \left(\tilde P\oplus \bb N \Span{E^h}\right) \times_{\ghost_B} \M_B. 
\end{equation}
This $P$ comes with a map $P \to \calO_B$, namely the composition of the
projection to the second factor $\M_B$ and the old log structure map
$\M_B \to \calO_B$,  making it into a log structure. 

This uses that, for \emph{any} nuclear point $(X/B, \beta)$, the map $\psi \oplus \delta|_{E^h}$ from the definition above satisfies that the preimage of $0 \in \ghost_B$ is $0 \in \tilde P\oplus \bb N \Span{E^h}$. From this it also follows that, at any point $b$ in the closed stratum of $B$, the stalk at $b$ of the ghost sheaf $\overline{P}$ of $P$ is given by 
\[
\overline{P}_b \= \tilde P\oplus \bb N \Span{E^h}.
\]
Now we make $(\ul B, P)$ into a point of $\cat{Rub}_{\ul 0}$: we take the underlying family $\ul X / \ul B$ of curves, and equip $\ul X$ with a log structure making it a log curve over $(\ul B, P)$ with length map
\[
\widetilde \delta \colon E(\Gamma) \lra \tilde P\oplus \bb N \Span{E^h}, \quad e \longmapsto 
\begin{cases}
\left(\sum_{i=\ell(e^-)}^{\ell(e^+)-1} \frac{a_i}{\kappa_e} p_i, 0 \right) & \text{ for }e \in E^v,\\
\left(0, e \right) & \text{ for }e \in E^h.
\end{cases}
\]
With this we obtain a family of log curves $(\widetilde X/(\ul B, P))$. Using~\ref{prop:rub_translation} we then lift to a $(\ul B, P)$-point of $\cat{Rub}_{\ul 0}$ by specifying the combinatorial PL function
\begin{equation}
  \beta\colon V(\Gamma) \lra \left(\tilde P\oplus \bb N \Span{E^h}\right)^\gp,
  \quad v \longmapsto -\sum_{j = \ell(v)}^{-1}a_j p_j.
\end{equation}
The construction gives a map from $(X/B, \beta)$ to this basic object $(\ul B,P) \to \cat{Rub}_{\ul 0}$.
\end{proof}

\begin{lemma}
  \label{lem:basic_is_minimal} The $\cat{Rub}_{\ul 0}$-point $(X/B, \beta)$ is basic
  if and only if it is minimal. 
\end{lemma}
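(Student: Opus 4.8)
The plan is to recognize \emph{basicness} (\ref{def:basicness}) as exactly the minimality condition of Gillam and Wise, by comparing characteristic monoids. Throughout I would pass to a nuclear strict \'etale cover, since both notions are \'etale-local and since over such a cover a point of $\cat{Rub}_{\ul 0}$ is pinned down by the combinatorial data $(\Gamma,\ell,\kappa)$ together with the length map $\delta\colon E(\Gamma)\to\ghost_{B,b}$ and the PL function $\beta\colon V(\Gamma)\to\ghost_{B,b}^\gp$, subject to the conditions of \ref{prop:rub_translation}. A morphism of two such points over a fixed underlying scheme is the same as a morphism of the ambient log structures compatible with $\delta$ and $\beta$, so the whole content is how this interacts with the characteristic monoids.

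The first step is to show \emph{basic $\Rightarrow$ minimal}. Assume $\psi\oplus\delta|_{E^h}\colon \tilde P\oplus\bb N\Span{E^h}\xrightarrow{\ \sim\ }\ghost_{B,b}$ and take a solid diagram $(X'/B',\beta')\to(X/B,\beta)$, $(X'/B',\beta')\to(X''/B'',\beta'')$ with $\ul B'\to\ul B$ and $\ul B'\to\ul B''$ isomorphisms. Identifying the three underlying schemes, a dashed arrow $(X''/B'',\beta'')\to(X/B,\beta)$ amounts to a monoid homomorphism $\ghost_{B,b}\to\ghost_{B'',b}$ (lifting to $\M$, and) compatible with the diagram, and since $\ghost_{B,b}$ is freely generated by the $p_i$ and by $E^h$, it is determined by the images of these generators. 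For uniqueness: a morphism of log curves forces the horizontal generator $e\in E^h$ to map to $\delta''_e$, while $a_ip_i$ corresponds to $a_i\tau_i=\sigma_i=\beta(v_i)-\beta(v_{i-1})$, so its image is forced to be $\sigma''_i$; as $\ghost_{B'',b}$ is sharp and saturated, division by $a_i$ is unique and $p_i$ must map to $\tau''_i\coloneqq\sigma''_i/a_i$. For existence: define the homomorphism by these very assignments; this is legitimate since $a_i\mid\sigma''_i$ by the divisibility lemma (equivalently condition~(3) of \ref{prop:rub_translation}) applied to the point $(X''/B'',\beta'')$. One then checks, using the commuting triangle of \ref{lem:gdeltapsi}, that under this map $\delta_e\mapsto\delta''_e$ for every edge and $\beta(v)\mapsto\beta''(v)$ for every vertex, so that it extends to a genuine morphism of log curves with PL functions, and that its composite with $(X'/B',\beta')\to(X''/B'',\beta'')$ recovers the given map to $(X/B,\beta)$ (both agree on the generators, since $\delta'$ and $\beta'$ are the pullbacks of $\delta''$ and $\beta''$); the lift from characteristic monoids to $\M$ is then automatic by nuclearity. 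This verification — tracking the log structures themselves rather than only their characteristic quotients, and checking the pulled-back curve and PL function really match — is the step I expect to be the most laborious.

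For the converse, \emph{minimal $\Rightarrow$ basic}, I would use the lemma proved just above, which produces for every point $(X/B,\beta)$ a morphism $\iota\colon(X/B,\beta)\to(\ul B,P)$ to a \emph{basic} object with $\ul\iota$ an isomorphism; by the direction already established, $(\ul B,P)$ is minimal. It then suffices to record the formal fact that a morphism $f\colon M\to N$ in $\cat{Rub}_{\ul 0}$ with $\ul f$ an isomorphism and $M,N$ both minimal is an isomorphism: minimality of $M$ applied to $M\xrightarrow{\id}M$ and $M\xrightarrow{f}N$ yields $g\colon N\to M$ with $g\circ f=\id_M$, and minimality of $N$ applied to the pair $M\xrightarrow{f}N$, $M\xrightarrow{f}N$ yields a unique self-map of $N$ precomposing with $f$ to $f$; since both $\id_N$ and $f\circ g$ do so, $f\circ g=\id_N$. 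Hence, when $(X/B,\beta)$ is minimal, $\iota$ is an isomorphism, so $(X/B,\beta)\cong(\ul B,P)$; as basicness is a condition on the characteristic monoid and the combinatorial data, both preserved by isomorphisms in $\cat{Rub}_{\ul 0}$, it transports back to $(X/B,\beta)$, completing the proof.
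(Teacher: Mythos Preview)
Your proposal is correct and follows essentially the same approach as the paper. The paper's proof is a one-line reference to \cite[Theorem~4.2.4]{MarcusWiseLog}, isolating the two ingredients you use: that the image of $\bb N\Span{E}$ has finite index in $\tilde P\oplus\bb N\Span{E^h}$, and that division is unique in sharp integral saturated monoids. Your argument unpacks this, pinning down the generators $p_i$ via $\sigma_i=a_ip_i$ (using compatibility with $\beta$) rather than via the vertical-edge images $g(e)$ as the finite-index hint suggests; both routes are equivalent and rely on the same unique-division step. Your treatment of the converse direction via the preceding ``every point maps to a basic object'' lemma together with the formal ``two minimal objects related by a morphism over an isomorphism are isomorphic'' argument is exactly the standard Gillam mechanism, which the paper leaves implicit. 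The lift from characteristic monoids to the full log structures, which you correctly flag as the laborious step, is likewise not spelled out in the paper's proof.
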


\begin{proof}
Our proof mimics closely that of \cite[Theorem 4.2.4]{MarcusWiseLog}. To show that basic objects are minimal, consider a diagram as in~\ref{eq:def_of_minimal}; the problem comes down to showing there is a unique dashed arrow making the diagram 
\begin{equation}
    \begin{tikzcd}
        \ghost_{B'} & & \tilde P \oplus \bb N\Span{E^h} \ar[ll] \ar[dl, dashed]\\
       &  \ghost_{B''}\ar[ul] &
    \end{tikzcd}
\end{equation}
commute. The existence
of this arrow comes from the fact that we can apply the same formula~\ref{eq:hom_psi} to define the map, and the arrow is unique since the image of $\bb N\Span{E}$ has finite index in $\tilde P\oplus \bb N\Span{E^h}$, and division is unique in sharp, integral, saturated monoids. Conversely, applying~\ref{eq:hom_psi} shows that every minimal monoid admits a map from one which is basic (and hence minimal), and the definition of minimality furnishes an inverse to this map.
\end{proof}

\begin{df}
Let $\cat{Rub}_{\ul 0}' $ be the full subcategory of $\cat{Rub}_{\ul 0}$ whose objects have minimal log structure, viewed as a fibered category over $\cat{Sch}$ via forgetting the log structure and the curve.
\end{df}

As explained in~\ref{sec:minimal_ls_for_rub}, Gillam's minimality machinery immediately yields the main theorem of this section, slightly refining the results of \cite{MarcusWiseLog}.

\begin{theorem}\label{thm:underlying_stack_of_rub}
The underlying algebraic stack of\, $\cat{Rub}_{\ul 0}$ is given by $\cat{Rub}_{\ul 0}'$.
\end{theorem}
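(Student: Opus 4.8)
The plan is to deduce the statement formally from Gillam's minimality machinery, as recalled in \ref{sec:minimal_ls_for_rub}, whose only nontrivial hypothesis is that the category fibred in groupoids $\cat{Rub}_{\ul 0}$ over $\cat{LogSch}$ ``admits all minimal objects''. This hypothesis is already established: for every nuclear point $(X/B,\beta)$ of $\cat{Rub}_{\ul 0}$ the lemma preceding \ref{lem:basic_is_minimal} constructs a morphism to a basic object inducing an isomorphism on underlying schemes, and \ref{lem:basic_is_minimal} identifies basic with minimal. So I would first record that, after passing to a nuclear cover (which is strict \'etale, so that the explicit construction applies) and checking that the resulting minimal log structures glue --- this uses property~(4) of nuclear log curves together with the compatibility of the construction of the basic object with the canonical identifications of labeled dual graphs over different points of the base --- every object of $\cat{Rub}_{\ul 0}$ receives a morphism from a minimal object over an isomorphism of underlying schemes.

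With this in hand I would invoke the general principle of \cite{Gillam} and \cite[Appendix~B]{Wise2016Moduli-of-morph}, stated in \ref{sec:minimal_ls_for_rub}: for a category fibred in groupoids over $\cat{LogSch}$ admitting all minimal objects, the full subcategory of minimal objects, with its forgetful functor to $\cat{Sch}$, is the underlying fibred category over $\cat{Sch}$, and it carries a canonical log structure recovering the original CFG. The remaining checks are routine: the minimal monoids $\tilde P\oplus\bb N\Span{E^h}$ are free, hence fine and saturated, so that we stay within fine saturated log schemes; and $\cat{Rub}_{\ul 0}$ satisfies strict-\'etale descent and is an algebraic stack with log structure locally of finite presentation, by the theorem of Marcus--Wise recalled before \ref{thm:rub_smooth}. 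Hence the underlying fibred category over $\cat{Sch}$ is an algebraic stack locally of finite presentation.

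Finally, since $\cat{Rub}_{\ul 0}'$ was defined precisely as the full subcategory of $\cat{Rub}_{\ul 0}$ on objects with minimal log structure, viewed over $\cat{Sch}$, the previous two paragraphs identify it with the underlying algebraic stack of $\cat{Rub}_{\ul 0}$, which is the assertion of the theorem. The same argument goes through verbatim for $\cat{Rub}$ in place of $\cat{Rub}_{\ul 0}$, the nonzero outgoing slopes affecting only the bookkeeping in the construction of the basic object. I do not expect a genuine obstacle in this argument: all the substance of the section lies in the explicit construction of basic objects and in \ref{lem:basic_is_minimal}, both already in hand, and the only point requiring a little care is the gluing of the fiberwise minimal log structures along a nuclear cover noted above.
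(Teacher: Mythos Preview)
Your proposal is correct and follows essentially the same approach as the paper: the paper's entire argument is the sentence preceding the theorem, which says that Gillam's minimality machinery immediately yields the result given the two preceding lemmas (existence of maps to basic objects and the identification basic $=$ minimal). Your write-up is in fact more detailed than the paper's, which does not spell out the gluing over a nuclear cover or the fs check.
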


\subsection{Smoothness of $\cat{Rub}_{\ul 0}$}\label{sec:smoothness_of_Rub}

With the preparations above, we can now prove~\ref{thm:rub_smooth}, stating that the algebraic stack $\cat{Rub}_{\ul 0}$ is smooth.

\begin{proof}[Proof of~\ref{thm:rub_smooth}]

We write $k$ for the base ring (which the reader may take to be $\bb C$, but we are careful to make this proof work in any characteristic). We equip $\on{Spec} (k)$ with trivial log structure. 

We write $\cat{Rub}^{\MW}_{\ul 0}$ for the variant of $\cat{Rub}_{\ul 0}$ considered by Marcus and Wise \cite{MarcusWiseLog}; this is the same as our space except that they drop condition~\eqref{d:r-2} of the definition. The map $\cat{Rub}^{\MW}_{\ul 0} \to \mathfrak M$ is proven in \cite[Lemma 4.2.5 and Corollary 5.3.5]{MarcusWiseLog} to be log \'etale; hence $\cat{Rub}^{\MW}_{\ul 0}$ is log smooth (over $\on{Spec} (k)$ with trivial log structure). 

Let $p$ be a geometric point of $\cat{Rub}_{\ul 0}$ mapping to a geometric point $p'$ of $\cat{Rub}^{\MW}_{\ul 0}$, and write $\bar P$ and $\bar Q$ for the stalks of the respective characteristic monoids. The map $\bar Q \to \bar P$ is injective and has finite cokernel; it corresponds to taking roots of suitable parameters in order to make condition~\eqref{d:r-2} in the definition of $\cat{Rub}_{\ul 0}$ be satisfied. 

The log smoothness of $\cat{Rub}^{\MW}_{\ul 0}$ implies that there exist a scheme $U$ and smooth strict morphisms $f\colon U \to \cat{Rub}^{\MW}_{\ul 0}$ and $g\colon U \to \on{Spec} (k[\bar Q])$
such that $p'$ lies in the image of $f$. Since $\cat{Rub}_{\ul 0}$ is obtained from $\cat{Rub}^{\MW}_{\ul 0}$ by taking the roots that transform $\bar Q$ into $\bar P$, we have a diagram of pullback squares
\begin{equation}
 \begin{tikzcd}
 \cat{Rub}_{\ul 0}\ar[d] & V \ar[l] \ar[d] \ar[r] & \on{Spec} (k[\bar P])\ar[d] \\
 \cat{Rub}^{\MW}_{\ul 0} & U \ar[r] \ar[l] & \on{Spec} (k[\bar Q]). 
\end{tikzcd}
\end{equation}
Now~\ref{def:basicness},~\ref{lem:basic_is_minimal}, and~\ref{thm:underlying_stack_of_rub} together imply that the stalks of the characteristic monoid of $\cat{Rub}_{\ul 0}$ are free monoids of finite rank; in other words, $\on{Spec} (k[\bar P])$ is an affine space over $k$, in particular is smooth over $k$. 
\end{proof}

Note that the base-change $\cat{Rub}_\calL$ is \emph{not} in general smooth, except in genus zero (when the map $\mathfrak M \to \on{Pic}$ is an open immersion, hence smooth). For example, it can have many non-reduced irreducible components; see \cite{HolmesSchmitt}. In particular, the smoothness of the main component of $\cat{Rub}_{\calL_\mu}$ (proven in \cite{LMS} granting the verification that the spaces named $\LMS$ there and in~\ref{prop:smoothDM} indeed agree) does not follow directly from~\ref{thm:rub_smooth} outside of genus zero.

\subsection{Relative automorphisms}\label{sec:log_automorphism_example}

As a log stack, $\cat{Rub}_{\ul 0}$ has trivial automorphisms relative to the stack of log
curves. But (as discussed in \ref{Rmk:setoid_warning}) this does not mean that the underlying
algebraic stack of minimal objects has trivial automorphisms. Rather, they come
from automorphisms of the log structure; the following remark makes this precise. 
\begin{remark}
In general, given a map $\calX \to \calY$ of log stacks with underlying stacks $\ul{\calX}, \ul{\calY}$ and a point $\ul x \colon \mathrm{Spec}(\bb C) \to \ul{\calX}$, we can ask: what is the relative inertia of $\ul x$ over $\ul y = (\ul{\calX} \to \ul{\calY}) \circ \ul x$? For this, let $(\mathrm{Spec}(\bb C), \M_x) \to \calX$ and $(\mathrm{Spec}(\bb C), \M_y) \to \calY$ be the minimal log structures lifting $\ul x, \ul y$. Then by minimality of $\M_y$, the composition $(\mathrm{Spec}(\bb C), \M_x) \to \calX \to \calY$ must factor through a map
\[
f \colon (\mathrm{Spec}(\bb C), \M_x) \lra (\mathrm{Spec}(\bb C), \M_y).
\]
Such a map is uniquely described by a monoid map $\M_y \to \M_x$ over $(\bb C, \times) = (\calO_{\mathrm{Spec}(\bb C)}, \times)$. Then the desired group of automorphisms is just the group of those automorphisms of $\M_x$ that are constant on the image of $\M_y$ and commute with the map to $(\bb C, \times)$.
\end{remark}

Returning to our situation, the `tropical'
part of the log structure (the ghost sheaf $\ghost$) has no non-trivial
automorphisms relative to the stack of log curves. Thus the automorphisms all arise from automorphisms of the log
structure $\M$ that are trivial on $\ghost$ and trivial on the structure sheaf.
So they are really automorphisms of the extension structure of $\M$.

\subsection{The worked example again}
Let $(X/\bb C, \beta\in \ghost^\gp_X)$ be a point of $\cat{Rub}_{\ul 0}$ with the
underlying enhanced level graph given by~\ref{fig:X_L_pic}, still
restricting to the case $\kappa_1 = \kappa_2 = 1$ and $\kappa_3 = n$. We would
like to understand the relative inertia of this point of $\cat{Rub}_{\ul 0}$
over~$\mathfrak M$. 

The minimal monoid on $\bb C$ for the curve $X/\bb C$ is just $\bb N\Span{E} = \bb N\Span{e_1, e_2, e_3}$, and the minimal monoid as a point in $\cat{Rub}_{\ul 0}$ is given by $\tilde P = \bb N\Span{p_{-1}, p_{-2}}$, with one generator $p_i$ for each \emph{level} $i$ (there are no horizontal edges in this example; otherwise, they should also appear in this minimal monoid). The natural map is then given by
\bes
g\colon \bb N\Span{E} \lra \tilde P, \quad e_1 \longmapsto np_{-1},\; \; e_2 \longmapsto np_{-2}, \;\; e_3 \longmapsto   p_{-1} + p_{-2}.
\ees
To see this, note that $a_1 = a_2 = n$, and then apply Formula~\ref{eq:g_delta_e}. Note that there are no non-trivial automorphisms of $\tilde P$ that act as the identity on the image of $g$. The map $g$ extends in the obvious manner to a map on the stalks of the log structures
\bes
\bb N\Span{E} \oplus \bb C^* \lra P =  \tilde P \oplus \bb C^*,
\ees
and the relative inertia is then given by the automorphisms of $\tilde P\oplus \bb C^*$ which act as the identity on the image of $\bb N\Span{E} \oplus \bb C^*$, and which lie over the identity map on $\tilde P$ (since any automorphism of $\tilde P$ constant on the image of $g$ must be the identity). Such an automorphism is defined on $ ((1,0),1)$ and $ ((0,1),1)$ by
\begin{equation*}
((1,0),1) \longmapsto ((1,0),u) \quad \text{ and } \quad ((0,1),1) \longmapsto ((0,1),v)
\end{equation*}
for some $u$, $v \in \bb C^*$ satisfying
\begin{enumerate}
\item
$u^n = 1$, because $n((1,0),1) = ((n,0), 1^n)$ lies in the image of
$\bb N\Span{E} \oplus \bb C^*$ and is thus fixed;
\item $v^n = 1$ for the analogous reason;
\item $uv = 1$, because $((1,1),1)$ lies in the image of $\bb N\Span{E} \oplus \bb C^*$ and is thus fixed.
\end{enumerate}
Such a choice of $u$, $v$ evidently determines such an automorphism.
(Or, more precisely, there are two canonical isomorphisms with the roots
of unity, one coming from `above' and the other from `below' on the graph,
and the composite of these isomorphisms is the inversion map on the group
of roots of unity).

We conclude that the \emph{relative inertia for this triangle graph is
  equal to the group~$K_\Gamma$ computed in~{\rm\ref{eq:KGammaexample}}.}

\section{From logarithmic to multi-scale}\label{sec:log_to_GMS}

In this section we construct the morphism of functors~$F\colon \cat{Rub}_{\calL_\mu} \to \GMS$ whose existence is claimed in~\ref{intro:mainiso}, and then prove that theorem. At the end of the section, we include two related results about describing the multi-scale space as a Zariski closure and describing a morphism from the rubber space to the Hodge bundle, which can be of independent interest.
 
Let $(X/B, \beta\in \Gamma(X,\ghost_X^\gp), \phi\colon \calO_X(\beta) \isom {\calL_\mu}) \in \cat{Rub}'_{{\calL_\mu}}$.  Recall that the prime on $\cat{Rub}$ indicates that we are working with the minimal log structure as described in~\ref{sec:minimal_log_str}, and that we always work with \emph{saturated} log structures.

We assume moreover for now, and for most of this section, that $X/B$ is nuclear,
and explain at the end why the functor glues to general families.

\subsection{The enhanced level graph} \label{eq:enhLG}

The first item to build the $F$-image of $(X/B,\beta,\phi)$ is an
enhanced level graph.
As the underlying graph~$\Gamma$, we simply take the dual graph of the curve fiber over the closed stratum
of~$B$. The level structure, given  in terms of a normalized level function,
comes from $\beta\in \ghost_X^\gp(X)$ as explained in~\ref{eq:defell}. The definition
of the enhancement~$\kappa$ is given in~\ref{eq:defkappa}, where the divisibility
required for this definition has been proven in~\Cref{le:divisibility}. The stability
condition just comes from the fact that we work with stable curves.

Given a vertex $v$ and the corresponding component $X_v$ of the central fiber, the
admissibility of $\kappa$ comes down to the equalities
  \be \label{eq:slopeadmiss}
  2g(v) - 2 + \#H'(v)  - \sum_{j\mapsto v} m_j  \= \deg\left({\calL_\mu}|_{X_v}\right)
  \=   \sum_{h \mapsto v}  \kappa_h
  \ee
(recall that $H'(v)$ denotes the set of non-leg half-edges attached to a vertex $v$). 
  The first equality is immediate from the definition of $\ca L_\mu$, and the second comes from the isomorphism $\phi$ and a computation of the degree of $\ca O_X(\beta)$ on the component $X_v$ presented in~\ref{lem:Obetarestriction}.

The dual graph $\Gamma_{b'}$ of the fiber over a general $b'$ (possibly outside the closed stratum) comes with a level structure obtained from $\Gamma$ by undegeneration (as defined in~\ref{sec:tori}), by the Key Property~\eqref{Kprop-4} of nuclear log curves from~\ref{sec:unpacking_rub_definition}. Constructing the rest of the data of a generalized multi-scale differential requires more work, which we now begin.

\subsection{Logarithmic splittings and rotations}

We write $\tilde P = \bb N\Span{p_{-1}\dots, p_{-N}}$ as
in~\ref{sec:minimal_log_str}.

\begin{df}\label{df:logsplitting}
A \emph{log splitting} is a map
\begin{equation}
\lspl \colon \tilde P \lra \M_B 
\end{equation}
whose composition with the canonical map $\M_B \to \ghost_{B,b}$ is
the map $\psi\colon \tilde P \hra \ghost_{B,b}$ from~\ref{eq:hom_psi}
(recall that we work throughout this section with minimal objects). 

The \emph{simple log level rotation torus} $T^s_{log}$, abbreviated as \emph{simple LLRT}, is the set of log splittings.
\end{df}

\begin{remark}
Let us unpack the simple log level rotation torus definition a bit. Recall our key exact sequence~\ref{eq:stdsequence}. The presence of the $\gp$ is not so important, as we always work with integral monoids (\textit{i.e.}~monoids which inject into their groupifications).  Consequently, a choice of a splitting is essentially a choice of an invertible function on $B$ (which we think of as a scalar) for every level below $0$. Pre-composing~$\tilde \psi$ with the map $g$ from~\ref{eq:g_delta_e} and using~\ref{lem:gdeltapsi}, we then also obtain a lift of the map $\delta$, \textit{i.e.}~a choice of a scalar for each edge. These must satisfy appropriate compatibility equations, and the saturation condition also imposes the existence of certain roots.
\end{remark}

\begin{df}
The \emph{simple log rotation group} is the group 
\[\on{Hom}_{\rm mon}\left(\tilde P, \calO_B^\times(B)\right)
\= \on{Hom}_{\gp}\left(\tilde P^{\gp}, \calO_B^\times(B)\right),\]
where the identification stems from the universal property of the groupification.\footnote{Note that there is also a (non-simple) log rotation group, consisting of the set of compatible choices of elements in $\calO_B^*(B)$ for all $e \in E^v$ and the elements $\sigma_i = \beta(v_i) - \beta(v_{i-1})$. Since this non-simple group will not be needed in the following, we do not give a formal definition.}
\end{df}

We define an action of an element $\phi$ of the simple log rotation group on
the simple log rotation torus by the formula
\be \label{eq:logrotaction}
\left(\phi \cdot \lspl\right)(p) \coloneqq \phi(p)\lspl(p) \quad \text{for $p \in
  \tilde P$}.
\ee

\begin{lemma} \label{le:pseudotorsor}
Via the action~{\rm\ref{eq:logrotaction}}, the simple LLRT is
either empty, or a torsor for the simple log rotation group. After possibly shrinking $B$, we can ensure the existence of a log splitting. 
\end{lemma}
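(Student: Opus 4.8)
The two claims are logically independent and should be handled separately. For the torsor statement, the action in \ref{eq:logrotaction} is clearly free (if $\phi(p)\lspl(p)=\lspl(p)$ in the integral monoid $\M_B$, then $\phi(p)=1$), so it only remains to show that it is transitive when the set of log splittings is non-empty. Given two log splittings $\lspl_1,\lspl_2\colon\tilde P\to\M_B$, both lie over the same map $\psi\colon\tilde P\to\ghost_{B,b}$ by definition; hence for each generator $p_i$ the elements $\lspl_1(p_i)$ and $\lspl_2(p_i)$ of $\M_B^\gp$ have the same image in $\ghost_{B,b}^\gp$, so their ratio $\phi(p_i)\coloneqq\lspl_2(p_i)\lspl_1(p_i)^{-1}$ lies in the kernel $\calO_B^\times(B)$ of \ref{eq:stdsequence}. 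Since $\tilde P$ is free on the $p_i$, this extends uniquely to a group homomorphism $\phi\colon\tilde P^\gp\to\calO_B^\times(B)$, i.e.\ an element of the simple log rotation group, and by construction $\phi\cdot\lspl_1=\lspl_2$. This gives transitivity, and combined with freeness shows the simple LLRT is a torsor whenever it is non-empty.

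**Existence after shrinking.** For the second claim I would argue that, after possibly replacing $B$ by a smaller neighborhood of $b$, the map $\M_B\to\ghost_{B,b}$ admits a section over the image of $\psi$. The point is that $\tilde P$ is free on the generators $p_{-1},\dots,p_{-N}$, so a log splitting is the same data as a choice, for each $i$, of a lift $\lspl(p_i)\in\M_B(B)$ of $\psi(p_i)=\tau_i\in\ghost_{B,b}$. Now $\tau_i$ is a global section of $\ghost_B$ (using property (3) of nuclear log curves, which identifies $\Gamma(B,\ghost_B)$ with $\ghost_{B,b}$), and the surjection $\M_B\surj\ghost_B$ of sheaves of monoids admits local lifts by definition of a log structure and of charts; after shrinking $B$ so that it is small enough (e.g.\ admits a global chart, which is automatic since we may take $B$ strictly Henselian local or atomic), each $\tau_i$ lifts to a global section of $\M_B$. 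Making such a choice for each of the finitely many generators $p_i$ produces the desired log splitting $\lspl$.

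**Main obstacle.** The genuinely substantive point — and the only place real care is needed — is the passage from "locally liftable" to "globally liftable after shrinking": one must know that the relevant neighborhood of $b$ over which all $N$ lifts exist simultaneously can be taken of the form already built into the setup (nuclear, atomic, or strictly Henselian local base), so that shrinking is harmless for the rest of the construction. This is essentially the statement that on such a base the exact sequence \ref{eq:stdsequence} of abelian sheaves has enough global sections, which follows because the obstruction to lifting a section of $\ghost_B^\gp$ to $\M_B^\gp$ lies in $H^1$ of the structure sheaf units, and this vanishes on the local bases we use (or is killed by passing to a strict étale cover, which we are always free to do). Everything else is the bookkeeping spelled out above. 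I would write the transitivity argument in full and treat the existence claim in a sentence or two, citing the relevant property of nuclear log curves from \ref{sec:unpacking_rub_definition} and the definition of a log structure.
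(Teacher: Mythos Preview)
Your proposal is correct and follows essentially the same approach as the paper. The paper packages both claims into the long exact sequence in cohomology associated to \ref{eq:stdsequence}: transitivity and freeness come from exactness at $H^0(B,\M_B^\gp)$, and existence after shrinking comes from the fact that the obstruction classes of the $\tau_i$ in $H^1(B,\calO_B^\times)=\Pic(B)$ can be killed on a small enough open neighborhood $B_0$ of $b$; your hands-on ratio argument and your chart/$H^1$ discussion are exactly these two steps unpacked.
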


Recall that a pseudo-torsor is a space with a free transitive action, but
unlike a torsor, it may be empty (here, if the base~$B$ is too large
to support the appropriate sections). Thus \Cref{le:pseudotorsor} says that the
simple LLRT is a pseudo-torsor.

\begin{proof}
In the exact sequence
\[
1 \lra H^0\left(B, \calO_B^\times\right) \lra H^0\left(B,\M_B^\gp\right) \lra \underbrace{H^0\left(B, \ghost_B^\gp\right)}_{\=\ghost_{B,b}^\gp} \lra H^1(B, \calO_B^\times) \lra \cdots, 
\]
if all elements $\psi(p_i)=\tau_i \in \ghost_{B,b}$ map to zero in $H^1(B, \calO_B^\times)$, then they have preimages in $H^0(B,\M_B)$; by the freeness of $\ghost_{B,b}$, this implies the existence of a log splitting. Any such choices of preimages differ precisely by elements in $H^0(B, \calO_B^\times)$, which together define an element of the simple log rotation group. Thus the action of this group is free and transitive. 

Finally, if the elements $\tau_i \in \ghost_{B,b}$ do \emph{not} map to zero in $H^1(B, \calO_B^\times) = \Pic(B)$, we can always find an open neighbourhood $B_0$ of $b \in B$ on which these line bundles are trivial after all. Then on $B_0$, the long exact sequence and the argument above show the existence of a lift, finishing the proof.
\end{proof}

\subsection{Log viewpoint on smoothing and rescaling parameters}

In this subsection we construct the rescaling ensemble from the choice of
a log splitting, and provide auxiliary statements about the smoothing and
rescaling functions contained in the ensemble.

Let $\lspl \colon \tilde P \to \M_B$ be a log splitting. Recall the definitions 
of the maps $g\colon \bb N\Span{E^v} \to \tilde P$ from~\ref{eq:g_delta_e} and
 $\alpha\colon \M_B \to \calO_B$ from the definition of a log scheme.

\begin{df}
The \emph{smoothing parameter associated
to a vertical edge $e\in E^v(\Gamma)$ by the log splitting $\lspl$} is
\begin{equation}\label{eq:logfedef}
f_e \coloneqq (\alpha \circ \widetilde{\psi} \circ g)(e). 
\end{equation}
Fix a level $i\in L(\Gamma)$. The \emph{level parameter} and \emph{rescaling
parameter} associated to $i$ by $\lspl$ are
\begin{equation}\label{eq:logstdef}
t_i \coloneqq (\alpha \circ \widetilde{\psi})(p_i) \quad \text{and} \quad
s_i \coloneqq (\alpha \circ \widetilde{\psi})(a_i p_i).  \qedhere
\end{equation}
\end{df}
The collection of functions $\bft = (t_i)_{i\in L(\Gamma)}$ defines a map
$R^s\colon B \to \ol{T}^s_\Gamma$ to the closure of the simple level rotation
torus, which is just $\CC^N,$ and a rescaling parameter $s_i = r_i \circ
\pi \circ R^s$ in the notation of~\ref{sec:germfam}.

\begin{lemma} \label{lem:getsimplescalingens}
The morphism $R^s\colon B \to \ol{T}^s_\Gamma$ defined above is a simple
rescaling ensemble.
\end{lemma}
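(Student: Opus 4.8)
The claim is that the tuple $\bft=(t_i)_{i\in L(\Gamma)}$ arising from a log splitting $\lspl$ defines a morphism $R^s\colon B\to\ol T^s_\Gamma\cong\CC^N$ which is a \emph{simple rescaling ensemble} in the sense of \ref{def:RescEns}; that is, for every vertical edge $e\in E^v(\Gamma)$ the function $f_e$ determined by $R^s$ via the torus coordinates lies in the equivalence class $[f_e]\in\calO_B/\calO_B^\times$ coming from the family $\calX\to B$. Since a simple rescaling ensemble is by definition just \emph{any} morphism $B\to\ol T^s_\Gamma$ whose induced smoothing parameters lie in the prescribed classes, there are really only two things to check: that the $t_i$ really do assemble into a morphism to $\ol T^s_\Gamma=\CC^N$ (immediate, as each $t_i=(\alpha\circ\lspl)(p_i)\in\calO_B(B)$), and the substantive point, that the $f_e$ computed from $R^s$ through \ref{eq:rrho} coincide — up to units — with the smoothing parameters of the family.

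\textbf{Step 1: unwind what $R^s$ produces.} Recall from \ref{sec:germfam} that a morphism $R^s\colon B\to\Tsnorm[\Gamma]$ (here $\ol T^s_\Gamma=\CC^N$ with coordinates $t_i$) induces, via the quotient map $\ol T^s_\Gamma\to\ol T^n_\Gamma$ and the coordinate expressions \ref{eq:simpletotorus}, functions $s_i=t_i^{a_i}$ and, for each vertical edge $e$ crossing levels $\ell(e^-)\le i<\ell(e^+)$, a function $f_e=\prod_{i=\ell(e^-)}^{\ell(e^+)-1}t_i^{a_i/\kappa_e}$. The first step is simply to record that the $f_e$ and $s_i$ constructed from our $R^s$ in this purely torus-theoretic way agree on the nose with the functions $f_e$, $s_i$ of \ref{eq:logfedef}–\ref{eq:logstdef}: indeed $f_e=(\alpha\circ\lspl\circ g)(e)=\alpha\bigl(\lspl(\sum_i\tfrac{a_i}{\kappa_e}p_i)\bigr)=\prod_i\alpha(\lspl(p_i))^{a_i/\kappa_e}=\prod_i t_i^{a_i/\kappa_e}$ by multiplicativity of $\alpha$ and of $\lspl$, using the formula \ref{eq:g_delta_e} for $g$; likewise $s_i=\alpha(\lspl(a_ip_i))=t_i^{a_i}$. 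So the only remaining assertion is $[f_e]=[\alpha(\delta_e')]$ where $\delta_e'\in\M_B(B)$ is any lift of the edge length $\delta_e\in\ghost_B$.

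\textbf{Step 2: the key identification, via \ref{lem:gdeltapsi}.} By \ref{lem:gdeltapsi} the composite $\psi\circ g\colon\bb N\Span{E^v}\to\ghost_{B,b}$ equals $\delta|_{E^v}$, so $\psi(g(e))=\delta_e$. On the other hand, by \ref{df:logsplitting} the composition of $\lspl$ with $\M_B\to\ghost_{B,b}$ is exactly $\psi$. Hence $\lspl(g(e))\in\M_B(B)$ is a lift of $\delta_e$, i.e.\ $\lspl(g(e))$ and $\delta_e'$ have the same image in $\ghost_B$, so they differ by an element of $\alpha^{-1}(\calO_B^\times)$. Applying $\alpha$ gives $f_e=\alpha(\lspl(g(e)))$ and $\alpha(\delta_e')$ differing by a unit, i.e.\ $[f_e]=[\alpha(\delta_e')]$, which by the discussion in \ref{sec:germfam} is precisely the requirement that $R^s$ be a simple rescaling ensemble. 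The case of horizontal edges does not enter, since only $E^v$ appears in the definition.

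\textbf{Main obstacle.} There is no deep obstacle: the whole proof is bookkeeping assembling \ref{eq:g_delta_e}, \ref{eq:hom_psi}, \ref{lem:gdeltapsi}, \ref{df:logsplitting}, and the coordinate formula \ref{eq:rrho}. The one point requiring a little care is making sure the ``equivalence class'' matching is stated for the right notion of smoothing parameter — i.e.\ that $\alpha(\delta_e')$, with $\delta_e'$ an arbitrary $\M_B$-lift of the length $\delta_e$, is indeed a valid local equation $u_ev_e=\alpha(\delta_e')$ for the node, which is exactly Key Property (2) of nuclear log curves recalled in \ref{sec:unpacking_rub_definition}. Once that is invoked, the identity $[f_e]=[\alpha(\delta_e')]$ from Step 2 closes the argument.
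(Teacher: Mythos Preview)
Your proof is correct and follows essentially the same approach as the paper: both reduce to showing that $\lspl(g(e))\in\M_B$ lies over $\delta_e\in\ghost_{B,b}$ (via \ref{lem:gdeltapsi} and the definition of a log splitting), and then invoke Key Property~(2) of nuclear log curves to conclude that $\alpha$ of any such lift is a valid smoothing parameter. Your Step~1, verifying that the torus-coordinate $f_e$ matches the log-splitting $f_e$, is a helpful explicit check that the paper leaves implicit.
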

\begin{proof}
By~\ref{def:RescEns} we must verify that the functions $f_e$ from~\ref{eq:logfedef} are indeed smoothing parameters for their respective nodes, lying in the correct equivalence class in $\calO_B / \calO_B^\times$. To see this, consider the following diagram: 
\[
\begin{tikzcd}
\bb N \Span{E^v} \arrow[r,"g"] \arrow[ddd,"\delta"] & \tilde P \arrow[dd,"\tilde \psi"] & & \\
& & \calO_B^\times \arrow[ld] &\calO_B^\times\rlap{.} \arrow[ld] \\
& \M_B \arrow[r,"\alpha"] \arrow[dl] & \calO_B \arrow[dl] &\\
\ghost_{B,b} \arrow[r,"\overline \alpha"] & \calO_B / \calO_B^\times & & 
\end{tikzcd}
\]
What we must show is that $f_e = (\alpha \circ \widetilde{\psi} \circ g)(e) \in \calO_B$ maps to the class of a smoothing parameter in $\calO_B / \calO_B^\times$. Now the commutativity of the upper-left quadrilateral follows from~\ref{lem:gdeltapsi} and the assumption that $\tilde \psi$ lifts the map $\psi\colon \tilde P \to \ghost_{B,b}$. On the other hand, the map $\overline \alpha$ is just \emph{defined} to make the lower quadrilateral commute. Then we have
\[
  [f_e] \= (\alpha \circ \widetilde{\psi} \circ g)(e) \=
  \overline \alpha(\delta(e)) \in \calO_B / \calO_B^\times.
\]
The fact that $\overline\alpha$ maps $\delta(e)$ to a smoothing parameter for the node associated to $e$ is then just a basic property of families of log curves; see Key Property~\eqref{Kprop-2} of~\ref{sec:unpacking_rub_definition}. 
\end{proof}

\subsection{The collection of rescaled differentials}
\label{subset:rescaled_from_log}

By the definition of lying in $\cat{Rub}_{\calL_\mu}$, we are given an isomorphism
\begin{equation}
\phi\colon \omega_{X/B}\left(-\sum_{k=1}^n m_k z_k\right)\longisom \calO_X(\beta).
\end{equation}

On the other hand, it follows from the definition of $\psi$ that the element $-\sum_{j=i}^{-1} a_j p_j \in \tilde P^\gp$ maps to $\beta(v_i) \in \ghost_{B,b}^\gp$ under $\psi$, where $v_i \in V(\Gamma)$ is any vertex on level $i$. Using the log splitting $\tilde \psi$, we obtain the elements
\[
o_i \coloneqq \tilde \psi\left(- \sum_{m=i}^{-1} a_m p_m \right) \in \M_B^\gp
\]
in the preimage of $\beta(v_i)$. Since this preimage can be identified as the complement of the zero section in $\calO_B(\beta(v_i))$, we can see $o_i$ as a nowhere-vanishing section of $\calO_B(\beta(v_i))$.

Adapting the convention from~\ref{def:collRD} to the family $X/B$, we define $X_{(>i)} \subseteq X$ to be the closed subset of components of fibers $X_{b'}$ whose closure in $X$ does not intersect any component of the central fiber~$X_b$ at a level less than or equal to $i$.
Then we define $U_i = X \setminus (X_{(>i)} \cup Z^\infty)$, where $Z^\infty \subseteq X$ is the image of sections associated to marked poles. Then we claim that there is a well-defined map
\begin{equation} \label{eqn:betavitobeta}
w_i\colon \pi^* \calO_B(\beta(v_i))|_{U_i} \lra \calO_X(\beta)|_{U_i}.
\end{equation}
Indeed, the left-hand side is the line bundle on $U_i$ associated to the piecewise linear function which is \emph{constant}, equal to $\beta(v_i)$. Since we removed $X_{(>i)}$, this function dominates the function $\beta$ on the right, so we have a map as desired. Thus $w_i(\pi^* o_i)$ gives a section of $\calO_X(\beta)$ on $U_i$, and we define
\be \label{eq:defomegafromlogspl}
\omega_{(i)} \,\coloneqq \, \phi^{-1} w_i(\pi^* o_i) \in H^0\left(U_i, \omega_{X/B}\left(-\sum_{k=1}^n m_k z_k\right) \right).
\ee

We check that $\omega_{(i)}$ satisfies the conditions in~\ref{def:collRD} and that the smoothing and rescaling parameters  $f_e$ and $s_i$ defined in~\ref{eq:logfedef} and~\ref{eq:logstdef} (and thus the simple rescaling ensemble~$R^s$) are compatible with these generalized rescaled differentials.
\begin{enumerate}
\item
For any levels $j < i <0 $, there is a natural map of line bundles
$\calO_B(\beta(v_i)) \to \calO_B(\beta(v_j))$. 
On the level of sections, we then have 
\[
o_i \= \tilde \psi\left(- \sum_{m=i}^{-1} a_m p_m\right) \= \tilde \psi\left(- \sum_{m=j}^{-1} a_m p_m\right) \cdot \prod_{m=j}^{i-1} \tilde \psi(a_m p_m) \longmapsto o_j \cdot \prod_{m=j}^{i-1} \tilde \psi(a_m p_m). 
\]
Via the isomorphism $\varphi^*$, and using that $s_m = \alpha(\tilde \psi(a_m p_m))$, this becomes the desired equality $\omega_{(i)} = \omega_{(j)} \cdot \prod_{m=j}^{i-1} s_m$. The fact
that $s_i$ vanishes at the closed point of $B$ comes from the fact that the
map of line bundles is the zero map when restricted to the fibers over the
closed point of $B$.
\item Choose local coordinates $u_e$ and $v_e$ so that $X$ is locally given by $u_ev_e = f_e$. Then the isomorphism $\phi$ corresponds near the node to a non-vanishing section of $\omega_{X/B}(-\beta)$. Now $\omega$ has a local generating section which is given after inverting $u_e$ by $\frac{du_e}{u_e}$ and after inverting $v_e$ by $-\frac{dv_e}{v_e}$, and (perhaps after adjusting the choices of coordinate $u_e$ and $v_e$) $\ca O_X(-\beta)$ has a local generating section which is given after inverting $u_e$ by $u_e^{\kappa_e}o_i$ and after inverting $v_e$ by $o_jv_e^{-\kappa_e}$. By dividing the just-described local generating section by the one given by the isomorphism $\phi$, we obtain a local unit $\lambda$ such that 
\begin{equation}
\omega_{(i)} = \lambda u_e^{\kappa_e}  \frac{du_e}{u_e} \quad \text{and} \quad \omega_{(j)} = - \lambda v_e^{-\kappa_e}  \frac{dv_e}{v_e}\, . 
\end{equation}

\item  On the normalization $Y_i$ of the union of all components of the special fiber $X_b$ sitting at level $i$, we have (see~\ref{lem:Obetarestriction})
\bes
\calO_X(\beta)|_{Y_i} \= \pi^*\calO_B(\beta(v_i)) \otimes_{\calO_{Y_i}}
\calO_{Y_i}\left(\sum_h \kappa_h h\right),
\ees
where the sum is taken over all non-leg half-edges~$h$ attached to the vertices
at level $i$.
Pulling back via $\varphi^*$, the line bundle on the left becomes
\[
\omega_{X_b}\left(-\sum_{k=1}^n m_k z_k\right)|_{Y_i} \=
\omega_{Y_i}\left(-\sum_{k=1}^n m_k z_k + \sum_h h\right).
\]
Tensoring with $\calO_{Y_i}(-\sum_h \kappa_h h)$ on both sides, we then get
\[
\omega_{Y_i}\left(-\sum_{k=1}^n m_k z_k - \sum_h (\kappa_h -1) h\right) \cong \pi^*\calO_B(\beta(v_i)).
\]
Seeing $\omega_{(i)}$ as a meromorphic section on the left, it then corresponds to the nowhere-vanishing section $\pi^* o_i$ on the right. Thus it extends to all of $Y_i$ on the left-hand side. But then this extension seen as a meromorphic section of $\omega_{Y_i}$ has the desired order
$m_k$ at the marked points $z_k$ and $\kappa_h-1$ at the preimage of the node associated to $h$. 
\end{enumerate}

\subsection{Prong-matchings}

To recall the notion of a prong-matching, consider a vertical edge $e \in E^v$,
and let $B_e \hookrightarrow B$ be the closed subscheme of $B$ over which the
node~$e$ persists, \textit{i.e.}~the vanishing locus of the smoothing parameter~$f_e$.

The sections~$q^\pm$ of the two preimages of the node identify $B_e$ as
a subscheme of the blowup of $X \times _B B_e$ along the section corresponding
to $e$. Recalling~\ref{eq:defNe}, we let $\calN^\vee_e \= (q^+)^*\omega_{X_+}\otimes (q^-)^*\omega_{X_-}$ be the corresponding line bundle on $B_e$. Then a local prong-matching at $e$ is a section
$\sigma_e$ of $\calN_e^\vee$ such that $\sigma_e^{\kappa_e}(\tau_e)=1$ for the section $\tau_e \in \calN_e^{\kappa_e}$ defined in~\ref{lem:prong_matching_comparison}.

To identify this notion in the logarithmic context, recall that we have the element $\delta(e) \in \ghost_{B,b}$. Then the bundle $\calN_e^\vee$ has an interpretation as follows. 

\begin{lemma} \label{le:nbiso}
There are canonical isomorphisms of line bundles
\begin{equation}\label{eq:O_delta_eq_W}
\calO_B(\delta(e))|_{B_e} \= \calN^\vee_e
\end{equation}
for each edge~$e$.  Moreover, let $\mathfrak{f} \in \M_B$ be an element mapping to $\delta(e) \in \ghost_{B,b}$, so that we can see it as a section of $\calO_B(\delta(e))$.  Then the function $f=\alpha(\mathfrak{f}) \in \calO_B$ is a smoothing parameter for the node associated to $e$. Let $u,v$ be local coordinates around the node on $X$ such that the Henselized local ring at the node is the Henselization of $\calO_B[u,v]/(uv-f)$. Then the isomorphism~{\rm\ref{eq:O_delta_eq_W}} sends the section $\mathfrak{f}|_{B_e} \in \calO_B(\delta(e))|_{B_e}$ to
$$
du \otimes dv \in \calN_e^\vee \= (q^+)^*\omega_{X_+}\otimes (q^-)^*\omega_{X_-}.
$$
\end{lemma}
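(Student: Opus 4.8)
The plan is to construct the isomorphism \ref{eq:O_delta_eq_W} étale-locally around the node, check that the local constructions glue, and then verify the claimed formula for the image of $\widehat f|_{B_e}$. First I would work in a strict étale neighborhood of the node $q_e$ in $X/B$, which by the local structure of log curves (case (3) in \ref{sec:logoverview}, or point (2) of the nuclear properties in \ref{sec:unpacking_rub_definition}) has the form $\Spec\big(\calO_B[u,v]/(uv-f)\big)$ for a smoothing parameter $f=\alpha(\widehat f)$, with $\widehat f\in\M_B$ a lift of $\delta(e)$. The two branches of the node correspond to $v=0$ (giving $q^+$ on $X_+$, with local coordinate $u$) and $u=0$ (giving $q^-$ on $X_-$, with local coordinate $v$). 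On $B_e$ we have $f=0$, so the section $\widehat f\in\M_B$ maps to $0$ under $\M_B\to\calO_B$ but is a nowhere-vanishing section of the line bundle $\calO_B(\delta(e))$ by the general recipe from \ref{eq:stdsequence}. The idea is that $du$ is a generating section of $(q^+)^*\omega_{X_+}$ near $q^+$ and $dv$ of $(q^-)^*\omega_{X_-}$ near $q^-$, so $du\otimes dv$ trivializes $\calN_e^\vee$ there; I then \emph{define} the local isomorphism by sending the trivializing section $\widehat f|_{B_e}$ of $\calO_B(\delta(e))|_{B_e}$ to $du\otimes dv$.

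The key point is that this local isomorphism is independent of the choice of coordinates $u,v$ and of the lift $\widehat f$, so that the local isomorphisms glue to a global one on $B_e$. Any other admissible pair of coordinates is of the form $u'=\lambda u$, $v'=\lambda^{-1}v$ for a unit $\lambda\in\calO_B^\times$ (after shrinking), since $u'v'=f=uv$ and $u'$, $v'$ must be coordinates on the respective branches; such a change scales $du\otimes dv$ by $\lambda\cdot\lambda^{-1}=1$, hence fixes the trivializing section of $\calN_e^\vee$. Similarly, changing the lift $\widehat f$ to $\eta\widehat f$ for $\eta\in\calO_B^\times$ rescales both the section $\widehat f$ of $\calO_B(\delta(e))$ by $\eta$ and the smoothing parameter $f$ by $\eta$; but rescaling $f$ forces a compensating rescaling of the coordinates (e.g. $u\mapsto \eta u$, $v\mapsto v$), which again multiplies $du\otimes dv$ by $\eta$, so the defined map is unchanged. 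Thus the local data is canonical and glues; moreover it is an isomorphism of line bundles because it sends a nowhere-vanishing section to a nowhere-vanishing section.

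The last assertion — that $\widehat f|_{B_e}\mapsto du\otimes dv$ — is then immediate, since it is precisely the defining property of the isomorphism just constructed. The main obstacle I anticipate is not any single hard computation but rather bookkeeping: being careful that the various identifications (the trivialization $\widehat f$ of $\calO_B(\delta(e))$ coming from the short exact sequence \ref{eq:stdsequence}, the residue/coordinate identifications of $(q^\pm)^*\omega_{X_\pm}$, and the blowup description of $B_e$ inside $\Bl(X\times_B B_e)$) are all compatible on overlaps, and that the sign conventions (see \ref{appendix:sign_conventions}) for $\calO_B(\beta)$ are consistently applied. Once the gluing argument above is in place, these compatibilities are routine, so I would present the proof compactly: fix the local model, define the map on the distinguished sections, check the two invariances (coordinate change and lift change), and conclude.
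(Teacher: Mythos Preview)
Your approach is correct and yields a valid proof, but it differs from the paper's. The paper argues by base-change: both sides of \ref{eq:O_delta_eq_W} commute with pullback, so one reduces to the versal deformation, where the log structure is divisorial and $\alpha$ is injective. In that setting $\calO_B(\delta(e))$ is literally the ideal sheaf $(f)\subset\calO_B$, while $\calN_e^\vee$ is canonically the conormal bundle of $\{f=0\}$, i.e.\ $(f)/(f^2)=(f)|_{B_e}$; the two identifications match, and tracing through gives $\widehat f\mapsto du\otimes dv$. This sidesteps all invariance checks, since in the divisorial case there is a unique lift $\widehat f$ and hence no gluing ambiguity.

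Your direct local construction is fine, but one step is stated imprecisely: it is not true that every admissible coordinate change has the form $u'=\lambda u$, $v'=\lambda^{-1}v$ with $\lambda\in\calO_B^\times$, even after shrinking. In general $u'=a(u,v)\,u$, $v'=b(u,v)\,v$ for units $a,b$ in the local ring with $ab=1$, and these may depend nontrivially on $u,v$. What saves you is that $(q^\pm)^*$ only sees the linear part at the node: $du'|_{q^+}=a(0,0)\,du$ and $dv'|_{q^-}=b(0,0)\,dv$, and $a(0,0)b(0,0)=1$, so $du'\otimes dv'=du\otimes dv$ as sections of $\calN_e^\vee$. With that correction your gluing argument goes through. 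The paper's reduction is shorter; yours is more explicit and makes the formula $\widehat f\mapsto du\otimes dv$ the \emph{definition} rather than a consequence, which has its own expository appeal.
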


\begin{proof}
Since both sides commute with base-change, it is enough to check this in the
universal case, in which the log structure is divisorial coming from the
boundary (and the map $\alpha$ of the log structure is injective, so there
are no non-trivial automorphisms of the log structure). Over a versal
deformation $R$, the local equation of the node is given by $R[u,v]/(uv-f)$,
where $f \in R$ is an element corresponding to $\delta(e)$. So $\calO_B(\delta(e))$
is canonically identified with the ideal sheaf generated by~$f$ in $R$
(\textit{cf.} the appendix). On the other hand, $\ca{N}^\vee_e$
is canonically identified with the conormal bundle in $R$ to the locus $f=0$
(see \cite[Section~XIII.3]{acgh2}) and thus agrees with $\calO_B(\delta(e))|_{Z_e}$. Tracing through the constructions of these canonical identifications yields the second part of the lemma; alternatively, this can be seen as a very slight generalization of \cite[Section 4]{edixhoven1998neron}, where his $c(x)$ corresponds to the element $du \otimes dv$ and his $\pi^{x(e)}$ to the element $f$. 
\end{proof}

Let $\lspl\colon \tilde P \to \M_B$ be a log splitting, and let $e$ be a
vertical edge. By~\ref{lem:gdeltapsi} the element $(\lspl \circ g)(e) \in \M_B$
maps to $\delta(e) \in \ghost_{B,b}$ and hence lies in $\calO_B^\times(\delta(e))
\sub \M_B$ (by the definition of this bundle via~\ref{eq:stdsequence}).
Applying the isomorphism
of~\ref{eq:O_delta_eq_W}, we thus obtain a section of $\calN^\vee_e$.
\begin{df}  \label{def:logPM}
We call the section $\sigma_e = (\lspl \circ g)(e)|_{B_e} \in H^0(B_e, \calN^\vee_e)$ the \emph{local prong-matching $\sigma_e = \sigma_e(\lspl)$
at~$e$ determined by the log splitting}. The collection
$\bfsigma = (\sigma_e)_{e \in E^v(\Gamma)}$ of these is called the
\emph{global prong-matching determined by the log splitting}.
\end{df}

There are two compatibility statements to check for this definition: to
get a prong-matching, see the discussion after~\ref{eq:defNe}, and to make this
part of a multi-scale differential, see~\ref{def:germMSD}\eqref{d:gMSD-4}.

\begin{lemma}
The prong-matching $\bfsigma$ determined by any log splitting is indeed a
prong-matching in the sense of \ref{df:PMfinal}. 
\end{lemma}

\begin{proof}
Assume that the vertical edge $e$ connects levels $i>j$ in $\Gamma$. From~\ref{df:PMfinal}, we need to show that $\sigma_e^{\kappa_e}(\tau_e) = 1$, where~$\tau_e$ is the
section of $\calN_e^{\kappa_e}$ defined as
$\tau_e = (q^+)^* \omega_{(i)}^{-1} \otimes (q^-)^* \omega_{(j)}$.

On the other hand, the differentials $w_{(i)}$ and $w_{(j)}$ are also determined
in~\ref{eq:defomegafromlogspl} by the formulae
\[
\omega_{(i)} \= \phi^* w_i\left(\pi^* \tilde \psi\left(- \sum_{m=i}^{-1} a_m p_m\right)\right) \quad
\text{and} \quad\omega_{(j)} \= \phi^* w_j\left(\pi^* \tilde \psi\left(- \sum_{m=j}^{-1} a_m p_m\right)\right).
\]
We put this into the formula for $\tau_e$; the pullbacks $(q^+)^*, (q^-)^*$ cancel the pullback $\pi^*$. Interpreting~$\tau_e$ as a section of $\calO_B(-\kappa_e \delta(e))$ via~\ref{eq:O_delta_eq_W}, we thus have
\[
\tau_e \= \tilde \psi \left(\sum_{m=i}^{-1} a_m p_m - \sum_{m=j}^{-1} a_m p_m \right)
\= \tilde \psi \left(- \sum_{m=j}^{i-1} a_m p_m \right)
\= \tilde \psi \left(- \kappa_e g(e) \right) \= \sigma_e^{- \kappa_e}.
\]
Here in the second to last equality, we used the definition of $g$ from~\ref{eq:g_delta_e}. This finishes the proof that $\sigma_e^{\kappa_e}(\tau_e)=1$, and thus that $\sigma_e$ is a local prong-matching.
\end{proof}

\begin{lemma}
Let $\lspl\colon \tilde P \to \M_B$ be a log splitting and $e$ a non-semi-persistent vertical node $($i.e.~$f_e^{\kappa_e} \neq 0)$. Then the local prong-matching determined by $\lspl$ is equal to that induced in~\ref{rem:induced_prong_matching}. 
\end{lemma}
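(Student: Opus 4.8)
The plan is to identify both prong-matchings, as sections of $\calN_e^\vee$ over $B_e$, with the element $(q^+)^*(du)\otimes(q^-)^*(dv)$ coming from any local coordinates $u,v$ on $X$ near the node with $uv=f_e$, and then to check that this element is independent of the choice of such coordinates.

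First I would pin down the log-splitting prong-matching. By \ref{def:logPM} it is $\sigma_e(\lspl)=(\lspl\circ g)(e)|_{B_e}$, transported to $\calN_e^\vee$ via the isomorphism \ref{eq:O_delta_eq_W}. As recalled just before that definition, $(\lspl\circ g)(e)\in\M_B$ maps to $\delta(e)\in\ghost_{B,b}$ (using the splitting property of $\lspl$ together with \ref{lem:gdeltapsi}), so it is an element $\widehat f$ of the kind featuring in \ref{le:nbiso}, with $\alpha(\widehat f)=(\alpha\circ\lspl\circ g)(e)=f_e$ by \ref{eq:logfedef}. Applying \ref{le:nbiso} to this $\widehat f$ and to local coordinates $u,v$ on $X$ near the node with $uv=f_e$, the isomorphism \ref{eq:O_delta_eq_W} carries $\sigma_e(\lspl)$ to $(q^+)^*(du)\otimes(q^-)^*(dv)\in H^0(B_e,\calN_e^\vee)$.

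Next I would describe the induced prong-matching. Since $e$ is non-semi-persistent, \cite[Theorem~4.3]{LMS} supplies local coordinates $u_e,v_e$ on $X$ near the node putting $\omega_{(i)},\omega_{(j)}$ into the normal form \ref{eq:5}, and by definition the induced prong-matching of \ref{rem:induced_prong_matching} is $(q^+)^*(du_e)\otimes(q^-)^*(dv_e)$. The smoothing parameter occurring in that normal form is the one prescribed by the rescaling ensemble, which by \ref{lem:getsimplescalingens} is exactly the $f_e$ of \ref{eq:logfedef}; in particular $u_ev_e=f_e$.

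It remains to compare the two sections. With the branch conventions of \ref{eq:5} (so $\{u=0\}$ and $\{u_e=0\}$ are the same branch $X_-$ of the fiber over $B_e$, and likewise $\{v=0\}=\{v_e=0\}=X_+$), one has $u_e=\beta u$ and $v_e=\beta^{-1}v$ for a unit $\beta$ on a neighborhood of the node in $X$, the two units being mutually inverse because $u_ev_e=uv=f_e$. Restricting to $B_e$ and differentiating along the sections $q^\pm$ gives $(q^+)^*(du_e)\otimes(q^-)^*(dv_e)=(\beta\circ q^+)\,(\beta\circ q^-)^{-1}\cdot(q^+)^*(du)\otimes(q^-)^*(dv)$, and $\beta\circ q^+=\beta\circ q^-$ since both equal the restriction of $\beta$ to the nodal section. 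Hence the two prong-matchings coincide. The one genuine obstacle is bookkeeping: one must verify that the function $f_e$ appearing in \ref{le:nbiso}, in the normal form \ref{eq:5}, and in the rescaling ensemble is literally the same function (this is where \ref{eq:logfedef} and \ref{lem:getsimplescalingens} are invoked), and carry out the final coordinate-change computation over all of $B_e$, not merely at its closed point.
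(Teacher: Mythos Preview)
Your approach is the same as the paper's; both hinge on the second clause of \ref{le:nbiso}. The paper's proof is a one-liner: with $\widehat f = (\lspl\circ g)(e)$ one has $\alpha(\widehat f) = f_e$ by \ref{eq:logfedef}, and \ref{le:nbiso} applied directly with the normal-form coordinates $(u_e, v_e)$ shows that the log-splitting prong-matching equals $du_e\otimes dv_e$, which is the induced prong-matching by definition.

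Your detour through an auxiliary coordinate pair $(u,v)$ is unnecessary, and your comparison step contains a small gap. The assertion that $v_e = \beta^{-1} v$ for the \emph{same} unit $\beta$ satisfying $u_e = \beta u$ amounts to cancelling $uv$ from the identity $u_ev_e = uv$; this is only legitimate when $f_e = uv$ is a non-zero-divisor in $\calO_B$, and the hypothesis $f_e^{\kappa_e}\ne 0$ does not guarantee that. The simplest repair is to apply \ref{le:nbiso} with $(u_e, v_e)$ from the outset, as the paper does. Alternatively you could observe that \ref{le:nbiso}, applied once with $(u, v)$ and once with $(u_e, v_e)$, exhibits both $du\otimes dv$ and $du_e\otimes dv_e$ as the image of the \emph{same} section $\widehat f|_{B_e}$ under the \emph{same} canonical isomorphism \ref{eq:O_delta_eq_W}, so they agree --- no direct coordinate comparison is needed.
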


\begin{proof}
The local prong-matching $\sigma_e$ of~\ref{rem:induced_prong_matching} is constructed
by writing the local equation of the node as $uv=f_e$ and setting 
$$\sigma_e \,\coloneqq \,du \otimes dv \in \calN_e^\vee
\= (q^+)^*\omega_{X_+}\otimes (q^-)^*\omega_{X_-}. $$
On the other hand, the local prong-matching $\widetilde\sigma_e$ associated to~$e$
by $\lspl$ is given by applying the isomorphism in~\ref{le:nbiso} to the
element $(\lspl \circ g)(e)$. 

Recalling that $f_e = (\alpha \circ \lspl \circ g)(e)$, we see that the desired equality $\sigma_e=\widetilde\sigma_e$ is then the second part of~\ref{le:nbiso}. 
\end{proof}

\subsection{Morphism of functors from rubber to multi-scale}
\label{sec:functor_rub_to_gms}

We put the above together to build a morphism of functors after restricting to base schemes which are locally of finite type over $\bb C$ (this restriction is harmless since $\cat{Rub}_{\calL_\mu}$ is representable by a locally finite-type Deligne--Mumford stack).  We first make this construction in a local situation, then globalize. We start with a family $(X/B, \beta\in \ghost_X(X), \phi)$, which we take to have minimal saturated log structure, and which is both nuclear and controlled. This immediately gives us the structure of an enhanced level graph. We \emph{choose} a log splitting $\lspl \colon \tilde P \to \M_B$ (perhaps after shrinking $B$).  This determines a simple rescaling ensemble, a collection of rescaled differentials, and induces local prong-matchings at each node. Hence we have a simple multi-scale differential.

We next claim that a different choice of log splitting yields an isomorphic simple multi-scale differential, together with a choice of isomorphism. Indeed, by~\ref{le:pseudotorsor} any two log splittings differ by the action of the simple LLRT, and one checks easily that the action of the simple LLRT corresponds to the action of the simple level rotation torus. 

For general $B$ locally of finite type, a family of multi-scale differentials is defined as a family of multi-scale differentials on a nuclear controlled cover, compatible on overlaps. 

It is clear from the constructions that the above map is independent of
choices and is compatible with shrinking the base~$B$; more precisely, given a map $B' \to B$ and a family of log differentials on $B$, we can either first apply our map (obtaining a family of multi-scale differentials on $B$) and then pull back to $B'$, or first pull back and then apply our map; unravelling the definitions yields that the results are canonically isomorphic. By descent we obtain a global morphism of functors $F\colon\cat{Rub}_{\calL_\mu}
\to \GSMS$. 

\subsection{Showing the map of functors induces an isomorphism}\label{sec:ess_surj}

The above construction gives a morphism  from the logarithmic space to the
multi-scale space. In this section we complete the proof of~\ref{intro:mainiso}
by showing that this functor induces an isomorphism. 

\begin{theorem}
The morphism
\begin{equation}
F \colon \cat{Rub}_{{\calL_\mu}} \lra \GSMS. 
\end{equation}
is an isomorphism. 
\end{theorem}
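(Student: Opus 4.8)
The plan is to construct an explicit inverse functor $G \colon \GSMS \to \cat{Rub}_{\calL_\mu}$ and verify that $F \circ G$ and $G \circ F$ are naturally isomorphic to the respective identities. As usual it suffices to work over a nuclear base $B$ (the spectrum of a strictly Henselian local ring), since both sides are sheaves for the strict \'etale topology and all constructions will be manifestly compatible with strict base change and with gluing. Over such a base, the main point is that all the logarithmic data — the sheaf of monoids $\M_B$, the minimal ghost sheaf $\ghost_{B,b} \cong \tilde P \oplus \bb N\Span{E^h}$, the PL function $\beta$, and the line bundle isomorphism $\phi$ — can be recovered intrinsically from a (simple) generalized multi-scale differential, up to the chosen equivalence.

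\textbf{Construction of $G$.} Starting from a germ of a generalized simple multi-scale differential $(\pi\colon\calX\to B,\bfz,\Gamma_b,R^s,\bfomega,\bfsigma)$, I would first build the ghost sheaf: set $\ghost_B \coloneqq \tilde P \oplus \bb N\Span{E^h}$, where $\tilde P = \bb N\Span{p_{-1},\dots,p_{-N}}$ is indexed by the levels of $\Gamma_b$ and $E^h$ by its horizontal edges. The minimal log structure $\M_B$ on the underlying scheme is then forced: by \ref{thm:underlying_stack_of_rub} together with \ref{def:basicness}, the minimal log structure is the one whose ghost sheaf is this free monoid and whose structure map is dictated by requiring that the smoothing parameters of the edges of $\calX/B$ realize the horizontal-edge lengths, and that $\alpha(\tilde\psi(a_i p_i)) = s_i$ and $\alpha(\tilde\psi(g(e))) = f_e$. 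Concretely, the simple rescaling ensemble $R^s$ — equivalently the level parameters $(t_i)$ together with the smoothing parameters $f_e$ of horizontal edges — together with the differentials $\bfomega$ (which provide, via the isomorphism $\calL_\mu|_{X_v} \cong \calO_{X_v}(\sum_h \kappa_h z_h)$ of \ref{lem:Obetarestriction}, the gluing data of $\calO_X(\beta)$) supply exactly a log splitting $\tilde\psi$, hence all of $\M_B$. The PL function $\beta$ is recovered as the combinatorial PL function $\beta'(v) = -\sum_{j=\ell(v)}^{-1} a_j p_j$ via \ref{le:PLbijection}; its outgoing slopes are $0$ since the zero/pole orders live in $\calL_\mu$; the divisibility and total-ordering conditions of \ref{prop:rub_translation} hold because the level structure is a weak full order and the $a_i$ are defined as the relevant l.c.m.'s. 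Finally, the isomorphism $\phi\colon \calO_X(\beta) \isom \calL_\mu$ is the one recording the differentials $\omega_{(i)}$: on each component of level $i$, $\omega_{(i)}$ trivializes $\calL_\mu(-\sum(\kappa_h-1)h) \cong \pi^*\calO_B(\beta(v_i))$, and $\phi$ is the resulting comparison, with the prong-matchings $\bfsigma$ precisely encoding the gluings across vertical nodes (this is the content of \ref{lem:prong_matching_comparison} and \ref{def:logPM}). The equivalence relation by the simple level rotation torus $T^s_{\Gamma_b}$ matches the choice of log splitting, by \ref{le:pseudotorsor} and the identification of the simple LLRT with $T^s_{\Gamma_b}$ already used in \ref{sec:functor_rub_to_gms}; so $G$ is well-defined on the quotient groupoid.

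\textbf{$F$ and $G$ are mutually inverse.} Going $\cat{Rub}_{\calL_\mu} \xrightarrow{F} \GSMS \xrightarrow{G} \cat{Rub}_{\calL_\mu}$: the enhanced level graph, PL function, smoothing/rescaling parameters, and prong-matchings are literally transported back to themselves by the constructions of \ref{eq:defkappa}, \ref{eq:g_delta_e}, \ref{eq:logfedef}, \ref{eq:logstdef}, \ref{eq:defomegafromlogspl}, \ref{def:logPM}; the only subtlety is that $F$ involved a \emph{choice} of log splitting, but $G$ reconstructs the canonical minimal log structure and $\beta$, and the two choices differ by an element of $T^s_{\Gamma_b}(\calO_B)$, which is exactly the ambiguity modded out on the multi-scale side — hence the composite is isomorphic to the identity, naturally. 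Going the other way, $\GSMS \xrightarrow{G} \cat{Rub}_{\calL_\mu} \xrightarrow{F} \GSMS$: one checks that $F$ applied to the reconstructed log datum gives back the enhanced level graph (by definition), the rescaling ensemble (\ref{lem:getsimplescalingens} with the log splitting built into $G$), the rescaled differentials (\ref{eq:defomegafromlogspl} inverts the trivializations used to define $\phi$ in $G$), and the prong-matchings (the lemmas after \ref{def:logPM}). Faithfulness and fullness on morphisms follow the same pattern: a morphism of rubber objects is a morphism of log curves over a fixed $\Mbar_{g,n}$-family, which by the minimality description is the same datum as a morphism of the underlying curves plus an action of the simple LLRT $= T^s_{\Gamma_b}$, which is exactly the datum $(\phi,\tilde\phi,\xi)$ of a morphism of simple multi-scale differentials in \ref{eq:defmorphismMSD}. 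Since $F$ and $G$ are mutually inverse functors, $F$ is an isomorphism of stacks.

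\textbf{The main obstacle.} The delicate point is not the combinatorics (enhanced level graph, $g$, $\psi$, rescaling ensembles) — that is bookkeeping — but verifying that $G$ really lands in $\cat{Rub}_{\calL_\mu}$ and not merely in some weaker version: one must show that the log structure reconstructed from $(\bfomega, R^s, \bfsigma)$ is \emph{saturated} and satisfies condition (2) of \ref{def:rub} (equivalently condition (3) of \ref{prop:rub_translation}, the existence of the required roots $\frac{y-\beta'(v_1)}{\kappa_e}$), and that the line-bundle isomorphism $\phi$ it produces is globally well-defined, i.e.\ the local trivializations by the $\omega_{(i)}$ glue correctly across vertical nodes. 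This last gluing is precisely where the prong-matchings enter, and the coordinate-free characterization $\tau_e = (v^+\otimes v^-)^{\otimes\kappa_e}$ of \ref{lem:prong_matching_comparison} together with the normal-form equation \ref{eq:5} is what makes it work: the prong-matching $\sigma_e$ is the data of the isomorphism $\calN_e^\vee \cong \calO_B(\delta(e))|_{B_e}$ of \ref{le:nbiso}, which is exactly the extension datum of $\M_B$ over the divisorial part. Checking that this glues to a \emph{global} $\phi$ on $X$, rather than just on the normalization, requires unwinding the identification of $\calO_X(\beta)$ near a node in terms of $u,v,f_e$ — the one piece of genuine log geometry that the authors deliberately postponed — and confirming it is consistent with the $\sigma_e^{\kappa_e}(\tau_e)=1$ condition. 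I expect this compatibility verification, carried out node by node in the local model $uv=f_e$, to be the technical heart of the argument.
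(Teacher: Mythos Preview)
Your proposal is correct and follows essentially the same approach as the paper: construct an explicit inverse over a strictly local base by equipping the underlying curve with the minimal log structure whose ghost sheaf is $\tilde P \oplus \bb N\Span{E^h}$ (the paper builds this as a pushout along $g'$ from the minimal log structure of the bare curve, but the content is identical), recover the PL function $\beta$ as the combinatorial function $v \mapsto -\sum_{j=\ell(v)}^{-1} a_j p_j$, and build the isomorphism $\phi$ from the differentials $\omega_{(i)}$ on the smooth locus, extending over the nodes via the prong-matchings. Your identification of the gluing of $\phi$ across vertical nodes as the technical heart is exactly what the paper flags as ``follows from the compatibilities conditions on prong-matchings by a local calculation,'' and your invocation of the simple LLRT/$T^s_{\Gamma_b}$ comparison to handle the choice of log splitting matches the paper's treatment of the composites.
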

\begin{proof}
Given a map $B \to \GSMS$ with the implicit stable curve over $B$ being controlled, we show that there exists a unique map $B \to \cat{Rub}_{\calL_\mu}$ making the following diagram commute:  
\begin{equation}
    \begin{tikzcd}
    B \arrow[d, dashed] \arrow[dr] & \\
    \cat{Rub}_{\calL_\mu} \arrow[r] & \GSMS\rlap{.}
    \end{tikzcd}
\end{equation}
The claimed isomorphism in the global situation then follows by descent. 
Let $(\pi\colon X \to B, \bfz, \Gamma, R^s, \bfomega, \bfsigma)$ be the simple multi-scale differential corresponding to $B \to \GSMS$. Given $i \in L(\Gamma)$, we write $t_i \in \calO_B(B)$ for the composition with the appropriate coordinate projections $B \to \overline T^s \to \bb C$. 

Let $\M_B$ be the minimal log structure making $X/B$ into a log curve; in particular, its characteristic monoid $\ghost_{B,b}$ is canonically identified with the free monoid $\bb N \Span{E}$ on the edges of $\Gamma$. For each edge $e$, the local prong-matching $\sigma_e$ at $e$ is by~\ref{le:nbiso} a section of the fiber of the bundle $\ca O(\delta_e)$ over the locus $B_e$ where the node persists, and we choose a section in $\M_B(B)$ lifting $f_e$ and restricting over $B_e$ to $\sigma_e$, yielding a splitting
\begin{equation*}
    \mathfrak f \colon \ghost_{B,b} \lra \M_B. 
\end{equation*}

Denote by  $\tilde P \coloneqq \Span{p_{-1}, \dots, p_{-N}}$ the free monoid on the levels, as usual, and define 
\begin{equation}\label{eq:t_map}
    t\colon \tilde P \lra \calO_B, \quad p_i \longmapsto t_i, 
\end{equation}
and 
\begin{equation}
    t'\colon \tilde P \oplus \bb N\Span{E^h}\lra \calO_B,
\end{equation}
acting as $t$ on the first summand and as $\mathfrak f$ on the second. 

Then let 
\begin{equation}
    g'\colon \bb N\Span{E} \lra \tilde P \oplus \bb N\Span{E^h}
\end{equation}
be the map given by $g$ on the vertical edges and by the identity on the horizontal edges. 

The equalities
\begin{equation}\label{eq:f_and_t}
    f_e \= \prod_{i = \ell(e^-)}^{\ell(e^+) - 1}  t_i^{\frac{a_i}{\kappa_e}},
\end{equation}
which come from~\ref{eq:simpletotorus} (where coordinates were denoted by $f_e=\rho_e$ and $t_i=q_i$), imply that the diagram
\begin{equation}
    \begin{tikzcd}
    \M_B \arrow[r, "\alpha"] & \calO_B\\
    \ghost_B \arrow[u, "\mathfrak f"] \arrow[r, "g'"] & \tilde P \oplus\bb N\Span{E^h} \arrow[u, "t'"]
    \end{tikzcd}
\end{equation}
commutes. 

Now we define a sheaf of monoids $P$ as the pushout
\begin{equation} \label{eqn:minlogpushout}
    \begin{tikzcd}
    \M_B \arrow[r] & P\\
    \ghost_B \arrow[u, "\mathfrak f"] \arrow[r, "g'"] & \tilde P \oplus\bb N\Span{E^h}\rlap{,} \arrow[u]
    \end{tikzcd}
\end{equation}
which by the commutativity of the previous diagram comes with a map $\alpha_P\colon P \to \calO_B$. One checks easily that $P$ is in fact a log structure on $B$, with characteristic sheaf $\overline P = \tilde P \oplus \bb N \Span{E^h}$ at a point $b \in B$ in the closed stratum. The map $\M_B \to P$ gives $X/(B, P)$ the structure of a log curve, and mapping a vertex $v$ of level $i$ to the element
\begin{equation}
    \left(-\sum_{j = i}^{-1} a_j p_j, 0 \right) \in \left(\tilde P \oplus \bb N \Span{E^h}\right)^\gp
\end{equation}
defines a map $\beta\colon V \to \overline P^\gp$ so that the pair $(X/B, \beta)$ is a (minimal) point of $\cat{Rub}$. 

To lift this point to a point of $\cat{Rub}_{\calL_\mu}$, we need to build an isomorphism of line bundles 
\begin{equation}
\calO_X(\beta) \longisom \omega_{ X /B}\left(-\sum_{i=1}^n m_i z_i\right). 
\end{equation}
We first define this map on the smooth locus; let $p \in B$ and let $x \in X_p$ be a smooth point of $X_p$, lying in the component associated to a vertex $v \in \Gamma$.  Then the image of $\beta$ in $\ghost_{X, x}^\gp = \overline P_p^\gp$ is given by $\beta(v)$.  Our splitting $\overline{P} \to P$ from~\ref{eqn:minlogpushout} extends to $\overline{P}^\gp \to P^\gp$, and thus $\beta(v)$ maps to a unique section of $\calO_B(\beta(v)) \subseteq P$. Then we define
\begin{equation}\label{eq:iso_open}
\calO_X(\beta)_x \longisom \omega_{X/B, x}
\end{equation}
to be the unique map sending this section to the differential $\omega_{(\ell(v))}$. Next we check that this isomorphism extends over the nodes. We treat only the case of a vertical node $e$, say passing from a vertex $v_i$ of level $i$ to a vertex $v_j$ of level $j$ with $j < i$; then the map near a horizontal node can be constructed just as in the smooth case. Write $\mathfrak f'$ for the natural map $\tilde P \to P$, so that 
\begin{equation}
\beta(v_i) \= \mathfrak f'\left(- \sum_{m=i}^{-1} a_m p_m\right) \quad\text{and}\quad \beta(v_j)\= \mathfrak f'\left(- \sum_{m=j}^{-1} a_m p_m\right).
\end{equation}
Setting $\mathfrak f_e \coloneqq \mathfrak f'(g'(\delta_e))$,  we have
\begin{equation}
\beta(v_i) \= \beta(v_j) \cdot \mathfrak f_e^{\kappa_e} \in P. 
\end{equation}
 We choose local coordinates~$u$ and~$v$ near the node, say~$v$ vanishes on the upper level component corresponding to~$v_i$. The line bundle~$\omega$ has a local generating section which is given after inverting~$u$ by~$\frac{du}{u}$ and after inverting~$v$ by $-\frac{dv}{v}$. The line bundle $\ca O(-\beta)$ has (perhaps after adjusting the local coordinates $u$ and $v$) a local generating section which is given after inverting~$u$ by $u^{\kappa_e}\beta(v_i)^{-1}$ and after inverting $v$ by $\mathfrak f_e^{\kappa_e} v^{-\kappa_e}\beta(v_j)^{-1}$. As such, the bundle $\omega(-\beta)$ has a local generating section that is given after inverting~$u$ by $u^{\kappa_e}\beta(v_i)^{-1}\frac{du}{u}$ and after inverting $v$ by $-\mathfrak f_e^{\kappa_e}v^{-\kappa_e} \beta(v_j)^{-1}\frac{dv}{v}$. The isomorphism~\ref{eq:iso_open} then corresponds to the section of $\omega(- \beta)$ that is given after inverting~$u$ by $\omega_{(i)}\beta(v_i)^{-1}$ and after inverting~$v$ by $\beta(v_j)^{-1} \omega_{(j)} = \beta(v_i)^{-1}\mathfrak f^{\kappa_e} \omega_{(j)}$. To conclude, we need to show that there exists an invertible function~$\lambda$ near the node such that after inverting~$u$ we have $\omega_{(i)} = \lambda u^{\kappa_e}\beta(v_i)^{-1}\frac{du}{u}$, and after inverting~$v$ we have $\mathfrak f^{\kappa_e} \omega_{(j)} = -\lambda \mathfrak f_e^{\kappa_e}v^{-\kappa_e} \beta(v_i)^{-1} \frac{dv}{v}$. But by condition~\eqref{d:c-2} of~\ref{def:collRD}, we know that there exists an invertible function~$\lambda$ such that 
\begin{equation}
\omega_{(i)} \= \lambda u^{\kappa_e} \frac{du}{u} \quad \text{and} \quad     \omega_{(j)} \= - \lambda v^{-\kappa_e}\frac{dv}{v}, 
\end{equation}
and this $\lambda$ clearly suffices. 

Unravelling the constructions earlier in this section verifies that the constructed point of $\cat{Rub}_{{\calL_\mu}}$ indeed maps to our starting point in $\GSMS$ under~$F$. 
 
 To show that we have constructed an isomorphism of fibered categories, we must finally check that the composites 
 \begin{equation}
 \cat{Rub}_{\calL_\mu}(B)  \lra \GSMS(B)  \lra  \cat{Rub}_{\calL_\mu}(B)
 \end{equation}
 and
 \begin{equation}
 \GSMS(B) \lra  \cat{Rub}_{\calL_\mu}(B)  \lra \GSMS(B)
 \end{equation}
 are isomorphic to the respective identities. This can be done by comparing the actions of the simple LLRT and the simple level rotation torus on the respective spaces; we omit the details. 
\end{proof}

\subsection{The multi-scale space as a Zariski closure}

Fix $g$, $n$, and define $\ca L_\mu$ on the universal curve over $\Mbar_{g,n}$ as before.

\begin{dfx} We define $\cat{Rub}_{\ca L_\mu}^\trop$
to be the fibered category of $\cat{LogSch}_{\Mbar_{g,n}}$ whose objects are pairs $(X/B, \beta)$, where $X/B \in \Mbar_{g,n}$ and $\beta$ is a PL function satisfying condition~\eqref{d:r-1} of~\ref{def:rub}, and such that the line bundle $\ca L_\mu(-\beta)$ has multi-degree 0 on each geometric fiber.
\end{dfx}

This is a slight variant on $\PP(\cat{Rub}_{\ca L_\mu})$. By ignoring the divisibility condition in~\ref{def:rub}, we are effectively taking the coarse moduli space relative to $\Mbar_{g,n}$, and we only require that $\ca L_\mu(-\beta)$ has multi-degree~0, rather than requiring it to be trivial. Since we in particular do not record the data of an isomorphism, we are effectively also taking a $\bb C^*$-quotient.

The map $\cat{Rub}_{\ca L_\mu}^\trop \to \Mbar_{g,n}$ is birational and representable, but not in general proper. Using stability conditions as in \cite{HMPPS}, we can construct a compactification 
\bes
\cat{Rub}_{\ca L_\mu}^\trop \lra  \PP(\cat{Rub}_{\ca L_\mu}^\theta) \lra \Mbar_{g,n},
\ees
where $\PP(\cat{Rub}_{\ca L_\mu}^\theta) \to \Mbar_{g,n}$ is proper, birational, and representable, and $\cat{Rub}_{\ca L_\mu}^\trop \to  \PP(\cat{Rub}_{\ca L_\mu}^\theta)$ is an open immersion; but we do not pursue this here as it would require substantial additional notation. 

Let $\PP(\ca{MS}^0) \sub \ca M_{g,n}$ be the locus of smooth curves over which $\ca L_\mu$ admits a non-zero global section; this can be seen as the interior of the locus of (projectivized, generalized) multi-scale differentials. 

\begin{theorem}
The Zariski closure of $\ca{MS}^0$ in $\cat{Rub}_{\ca L_\mu}^\trop$ $($or, equivalently, in $\PP(\cat{Rub}_{\ca L_\mu}^\theta))$ is equal to $\PP(\ca{MS}_\mu)$, the projectivized space of\, $($non-generalized$)$ multi-scale differentials. 
\end{theorem}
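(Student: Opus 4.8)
The plan is to transport $\PP(\ca{MS}_\mu)$ to the logarithmic side via the isomorphism $F$ of \ref{intro:mainiso}, exhibit $\PP(\cat{Rub}_{\ca L_\mu}^{\coarse})$ as a \emph{closed} subspace of $\cat{Rub}_{\ca L_\mu}^\trop$, and then observe that the Zariski closure of $\ca{MS}^0$ in the larger space is computed inside the smaller one.

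First I would invoke the $\bb C^*$-equivariant form of \ref{intro:mainiso}, which (as recorded in the introduction) gives an isomorphism $\PP(\cat{Rub}_{\ca L_\mu}^{\coarse}) \isom \PP(\GMS)$ over $\Mbar_{g,n}$ restricting to the identity on the open locus $\ca{MS}^0$ of smooth curves. Under this isomorphism the closed substack $\PP(\ca{MS}_\mu) \subseteq \PP(\GMS)$ cut out by the global residue condition (the main component) becomes a reduced closed subspace of $\PP(\cat{Rub}_{\ca L_\mu}^{\coarse})$ whose interior is $\ca{MS}^0$. By \ref{prop:smoothDM}, $\PP(\ca{MS}_\mu)$ is the normalization of the orderly blowup of the NIVC, and the NIVC is the normalization of the Zariski closure of $\ca{MS}^0$ in the projectivized twisted Hodge bundle over $\Mbar_{g,n}$; hence $\ca{MS}^0$ is dense in $\PP(\ca{MS}_\mu)$ and $\PP(\ca{MS}_\mu)\to\Mbar_{g,n}$ is proper. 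Being a reduced closed subspace of $\PP(\cat{Rub}_{\ca L_\mu}^{\coarse})$ in which $\ca{MS}^0$ is dense, $\PP(\ca{MS}_\mu)$ is therefore exactly the Zariski closure of $\ca{MS}^0$ inside $\PP(\cat{Rub}_{\ca L_\mu}^{\coarse})$.

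Next I would make precise the relationship between $\cat{Rub}_{\ca L_\mu}^\trop$ and $\PP(\cat{Rub}_{\ca L_\mu}^{\coarse})$: the forgetful functor $\PP(\cat{Rub}_{\ca L_\mu})\to\cat{Rub}_{\ca L_\mu}^\trop$, which drops the trivialization $\phi$ and the divisibility datum, induces a closed immersion on coarse moduli spaces. This rests on two points. One: ``ignoring the divisibility condition'' of \ref{def:rub} is precisely the operation of passing to the coarse space, which is the only place where the root-stack subtlety illustrated in \ref{fig:Rub_subdivision_fan} intervenes. Two: requiring $\ca L_\mu(-\beta)$ to be \emph{trivial} rather than of \emph{multi-degree $0$} is a closed condition, since in a family of prestable curves carrying a line bundle of multi-degree $0$ the triviality locus is the vanishing locus of the semicontinuous invariant $h^0$, equivalently the image of the zero section of the relative multi-degree-$0$ Picard space, which is closed and separated over the base. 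Together these exhibit $\PP(\cat{Rub}_{\ca L_\mu}^{\coarse})$ as a closed subspace of $\cat{Rub}_{\ca L_\mu}^\trop$, and since $\ca{MS}^0\subseteq\PP(\cat{Rub}_{\ca L_\mu}^{\coarse})$ it follows that its Zariski closure in $\cat{Rub}_{\ca L_\mu}^\trop$ coincides with its Zariski closure in $\PP(\cat{Rub}_{\ca L_\mu}^{\coarse})$, namely $\PP(\ca{MS}_\mu)$. For the parenthetical assertion, I would use that $\cat{Rub}_{\ca L_\mu}^\trop\hookrightarrow\PP(\cat{Rub}_{\ca L_\mu}^\theta)$ is an open immersion and $\PP(\cat{Rub}_{\ca L_\mu}^\theta)\to\Mbar_{g,n}$ is separated; since $\PP(\ca{MS}_\mu)\to\Mbar_{g,n}$ is proper, the image of $\PP(\ca{MS}_\mu)$ in $\PP(\cat{Rub}_{\ca L_\mu}^\theta)$ is closed and contains $\ca{MS}^0$ as a dense subset, hence is the Zariski closure there as well.

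I expect the genuine work to be concentrated in the third paragraph: verifying that the passage from $\cat{Rub}_{\ca L_\mu}$ to $\cat{Rub}_{\ca L_\mu}^\trop$ is, on coarse spaces, a closed immersion. Unwinding exactly why dropping the divisibility condition coincides with coarsening — and in particular reconciling the minimal log structures of the two fibred categories, which is what governs finer data such as the prong-matchings and the $\bb G_m^{\trop}$-subdivisions — is the delicate point; everything else is either a citation of \ref{intro:mainiso} and \ref{prop:smoothDM} or a formal manipulation of closures.
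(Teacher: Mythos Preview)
Your approach is correct and coincides with the paper's: the paper's proof is the two-sentence assertion that there is a natural closed immersion $\PP(\cat{Rub}^\coarse_{\ca L_\mu}) \to \cat{Rub}_{\ca L_\mu}^\trop$ and that $\PP(\ca{MS}_\mu)$ is the main component of $\PP(\cat{Rub}^\coarse_{\ca L_\mu})$, which is exactly what you unpack. One small slip: triviality of a multi-degree-$0$ bundle is the locus where $h^0 \ge 1$, not the vanishing locus of $h^0$; your alternative description via the zero section of the relative Picard space is the correct one.
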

\begin{proof}
  There is a natural closed immersion $\PP(\cat{Rub}^\coarse_{\ca L_\mu}) \to \cat{Rub}_{\ca L_\mu}^\trop$, and the main component of the space
  $\PP(\cat{Rub}^\coarse_{\ca L_\mu})$ is $\PP(\ca{MS}_\mu)$. 
\end{proof}

One can obtain the stacky version $\Xi \Mbar_{g,n}(\mu)$ (of which $\ca{MS}_\mu$ is the relative coarse moduli space) in a similar fashion, replacing $ \PP(\cat{Rub}_{\ca L_\mu}^\theta)$ with a stacky modification; we leave the details to the interested reader.

\section{The Hodge DR conjecture} \label{sect:universal_bundle}

In this section we present several equivalent constructions of the universal line bundle introduced in~\ref{sec:intro:hodge_DR}, discuss its various  properties, and prove~\ref{thm:HDR_k_is_0}. 

As explained in~\ref{sec:intro:hodge_DR}, the projectivized space of (generalized) multi-scale differentials comes with a map to the projectivized Hodge bundle, obtained by taking the differential at the top level, and allowing it to vanish at all lower levels. Pulling back $\ca O(1)$ from the Hodge bundle gives a line bundle on the generalized multi-scale space. We begin by giving several equivalent versions of this construction. 

First we write out explicitly the objects of the fibered category $\bb P(\cat{Rub})$: 
\begin{equation*}
\bb P(\cat{Rub}) \= \{(\pi\colon X\lra B, \beta, \ca F)\},
\end{equation*}
where $(X/B, \beta)$ is a point of $\cat{Rub}$ as in~\ref{def:rub}, and $\ca F$ is a line bundle on $B$. The Abel--Jacobi map sends such an object to $\pi^*\ca F (\beta)$, giving a proper, see~\cite[Theorem 4.3.2]{MarcusWiseLog}, map $\bb P(\cat{Rub}) \to \Picabs$.

Now fix a line bundle $\ca L$ on $X_{g,n}/\Mbar_{g,n}$, which is of total degree 0 on each fiber. Then we can write explicitly the fibered category of $\bb P(\cat{Rub}_{\ca L})$ as
\begin{equation*}
\bb P(\cat{Rub}_{\ca L}) \= \{(X/B, \beta, \ca F, \phi)\}, 
\end{equation*}
where $(X/B, \beta, \ca F)$ is an object of $\bb P(\cat{Rub})$ with $X/B$ stable of genus $g$, and $\phi\colon \pi^*\ca F (\beta) \to \ca L$ is an isomorphism. 

\subsection*{Construction 1: Tautological bundle}
This is just the bundle $\ca F$ on $\bb P(\cat{Rub})$, or its pullback to $\ca F$ on $\bb P(\cat{Rub}_{\ca L})$ along the tautological map. We denote the {\em dual} of this line bundle by $\eta$. 

\subsection*{Construction 2: Projective embedding}

Let $D$ be an effective divisor on $X_{g,n}$ such that $R^1\pi_*\calL(D)$ vanishes, so in particular $\pi_* \calL(D)$ is a vector bundle on $\Mbar_{g,n}$. Such a $D$ can always be found as an element of the linear system of a sufficiently relatively ample sheaf on $X_{g,n}$ over $\Mbar_{g,n}$. Then over $\bb P(\cat{Rub}_{\ca L})$ we have natural maps
\begin{equation}
\pi^*\ca F \longisom \ca L(-\beta) \lra \ca L \lra \ca L(D), 
\end{equation}
where the first map is induced by $\phi$, the second is induced by the natural map $\ca O(-\beta) \to \ca O$, and the third by the natural map $\ca O \to \ca O(D)$. Adjunction yields a map 
\begin{equation}
\ca F \= \pi_*\pi^*\ca F \lra \pi_*\ca L(D), 
\end{equation}
which is by definition a map 
\begin{equation}\label{lem:map-hodge}
F\colon \bb P(\cat{Rub}_{\ca L}) \lra \bb P_{\bb P(\cat{Rub}_{\ca L})}(\pi_*\ca L(D)).
\end{equation}
Indeed, note that our projectivizations are moduli of subbundles, not quotient bundles, so it is enough to check that this map is universally injective. But the formation of both sides commutes with arbitrary base-change, so it is enough to check this over a point, where it is clear. 

\begin{lemma}\label{lem_eta_comparison_1}
$F^*\ca O(1) = \eta$.
\end{lemma}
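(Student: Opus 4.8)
The plan is to identify $F^*\calO(1)$ with $\eta$ by tracing through the definition of the map $F$ and the standard description of the universal sub-bundle on a projective bundle. Recall that for a vector bundle $\calE$ on a base $S$, with $\bb P(\calE) = \bb P_S(\calE)$ parameterizing \emph{sub}-line-bundles of $\calE$ (as stated in the footnote), the tautological sequence is $0 \to \calS \to \pi_S^*\calE \to \calQ \to 0$, and one has $\calO(1) = \calS^\vee$, i.e. $\calO(-1) = \calS$ is the universal sub-bundle. A morphism $T \to \bb P_S(\calE)$ over $S$ is the same as a line bundle $M$ on $T$ together with an injection (of bundles, i.e. fiberwise nonzero) $M \hookrightarrow (\text{pullback of }\calE)$, and under this correspondence $M$ is exactly the pullback of $\calO(-1)$.

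First I would spell out, over $\bb P(\cat{Rub}_{\ca L})$, what morphism to $\bb P(\pi_*\calL(D))$ the construction produces: it is the one classified by the line bundle $\ca F$ together with the composite map of $\calO_{\bb P(\cat{Rub}_{\ca L})}$-modules
\bes
\ca F \;=\; \pi_*\pi^*\ca F \;\longrightarrow\; \pi_*\ca L(D)
\ees
obtained by adjunction from $\pi^*\ca F \xrightarrow{\ \sim\ } \ca L(-\beta) \to \ca L \to \ca L(D)$. So by the universal property recalled above, $F^*\calO(-1) \cong \ca F$, hence $F^*\calO(1) \cong \ca F^\vee = \eta$, which is precisely the claim. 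The one genuine thing to verify is that this adjunction map $\ca F \to \pi_*\ca L(D)$ is indeed fiberwise injective, so that it really defines a morphism to the projective bundle (and not merely a rational map); this follows because the composite $\pi^*\ca F \to \ca L(D)$ is injective as a map of sheaves with $\pi^*\ca F$ a line bundle — the first arrow is an isomorphism and the latter two are injections of sheaves on a curve, being inclusions of the form $\calO(-\beta) \hookrightarrow \calO \hookrightarrow \calO(D)$ twisted by a line bundle — and pushing forward an injection of a line bundle into a torsion-free sheaf along $\pi$, then restricting to a fiber, stays nonzero because $\ca F$ is pulled back from the base (so $H^0$ of its fiberwise restriction to the curve is nonzero, as $X/B$ has connected fibers).

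The main (and really only) obstacle is bookkeeping about conventions: one must be careful that the paper's projectivization parameterizes sub-bundles rather than quotient bundles (this is flagged in the footnote), so that $\calO(1)$ is the dual of the universal sub-bundle, and one must match this with Construction 1, where $\eta$ was \emph{defined} as the dual of $\ca F$. With those conventions pinned down the identification $F^*\calO(-1) = \ca F$ is immediate from the modular description of maps to a projective bundle, and dualizing gives $F^*\calO(1) = \eta$. I would therefore structure the proof as: (i) recall the universal property of $\bb P_S(\calE)$ for sub-bundles and that $\calO(-1)$ is the universal sub-bundle; (ii) observe that $F$ is by construction the classifying map of $(\ca F, \ca F \to \pi_*\calL(D))$; (iii) check fiberwise injectivity of this map as above; (iv) conclude $F^*\calO(-1) \cong \ca F$ and dualize.
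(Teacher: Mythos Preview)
Your argument is correct and is essentially the same as the paper's: both identify $F^*\calO(-1)\cong\ca F$ via the universal property of $\bb P_S(\calE)$ as a moduli of sub-line-bundles (the paper cites the Stacks Project for this), and then dualize, with the sign/convention bookkeeping being the only point requiring care. You add a justification of fiberwise injectivity that the paper leaves implicit, which is a harmless elaboration.
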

\begin{proof}
The equality $F^*\ca O(1) = \ca F^\vee$ is immediate from \cite[\href{https://stacks.math.columbia.edu/tag/0FCY}{Example 0FCY}]{stacks-project};  we obtain $\ca F^\vee$ instead of $\ca F$ because we define the projectivization to be the moduli of rank 1 subbundles, not rank 1 quotient bundles. 
\end{proof}

In particular, we observe that the line bundle $F^*\ca O(1)$ turns out to be independent of the choice of the sufficiently relatively ample divisor $D$. In the case considered in the introduction, we take 
\begin{equation}
\calL = \omega^{\otimes k}_{X_{g,n}/\Mbar_{g,n}}\left(-\sum_{i=1}^n (a_i - k) z_i\right)
\end{equation}
and $D=\sum_{i: a_i  > 0} a_i z_i$.

\subsection*{Construction 3: Pullback from rubber target}
For this construction we restrict to the case where $\ca L = \ca O_X(\sum_i a_i z_i)$ for $k = 0$; put another way, we choose a rational section of $\ca L$ whose locus of zeros and poles is contained in a union of disjoint sections of $X \to B$. 

We write 
\begin{equation*}
    D \= \sum_{i : a_i >0} a_i z_i \quad \text{and} \quad E \= -\sum_{i : a_i <0} a_i z_i. 
\end{equation*}
Since these are effective divisors, we have natural maps
\begin{equation*}
    \ca O_X \lra \ca O_X(D) \quad\text{and}\quad \ca O_X \lra \ca O_X(E), 
\end{equation*}
and combining with the natural map $\ca O_X \to \ca O_X(\beta)$ and the isomorphism $\phi\colon \pi^*\ca F (\beta) \isom \ca O_X(D - E)$ yields maps 
\begin{equation*}
    \ca O_X(-E)(-\beta)  \lra \ca O_X \quad \text{and} \quad \ca O_X(-E)(-\beta) \lra \ca O_X(D - E)(-\beta) \longisom \pi^* \ca F. 
\end{equation*}
The induced map 
\begin{equation*}
    \ca O_X(-E)(-\beta) \lra \ca O_X \oplus \pi^* \ca F\, 
\end{equation*}
is universally injective since the first map is injective around the support of $D$ and the second is injective away from the support of $D$. This induces a map 
\begin{equation*}
    X \lra \bb P(\ca O_B \oplus \ca F). 
\end{equation*}
The cotangent line at $\infty$ to this rubber target is then given by 
\begin{equation}
    \Psi_\infty = \ca F^\vee. 
\end{equation}
We have deduced the following result. 

\begin{lemma}\label{lem_eta_comparison_2}
$\Psi_\infty = \eta$.
\end{lemma}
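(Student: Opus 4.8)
The statement to prove is \ref{lem_eta_comparison_2}: $\Psi_\infty = \eta$ on $\bb P(\cat{Rub}_{\ca L})$, where $\ca L = \ca O_X(\sum_i a_i z_i)$, $\Psi_\infty$ is the cotangent line at $\infty$ of the rubber target $\bb P(\ca O_B \oplus \ca F)$ constructed just above, and $\eta$ is the dual of the tautological bundle $\ca F$ from Construction 1.

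The plan is to read off both sides from the explicit geometry of the rubber target $\bb P(\ca O_B \oplus \ca F)$. First I would fix the convention, consistent with the rest of the section, that the target $\bb P(\ca O_B\oplus \ca F)$ is the moduli of rank-$1$ \emph{sub}-bundles of $\ca O_B \oplus \ca F$, with the section $0$ corresponding to the sub-bundle $\ca O_B \hra \ca O_B \oplus \ca F$ and the section $\infty$ corresponding to $\ca F \hra \ca O_B \oplus \ca F$. With this convention, the universally injective map $\ca O_X(-D)(-\beta) \to \ca O_X \oplus \pi^*\ca F$ constructed above classifies the map $X \to \bb P(\ca O_B \oplus \ca F)$, and the zero locus of $E = \sum_{i:a_i>0} a_i z_i$ (where the first component $\ca O_X(-D)(-\beta)\to \ca O_X$ vanishes) is exactly the preimage of the section $\infty$, while the zero locus of $D$ is the preimage of $0$.

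Next I would compute the cotangent line of $\bb P(\ca O_B \oplus \ca F)$ along its $\infty$-section. For a projective bundle $\bb P(\ca E) \to B$ of a rank-$2$ bundle, given a sub-line-bundle $\ca M \hra \ca E$ defining a section $s\colon B \to \bb P(\ca E)$, the relative tangent bundle restricted along $s$ is canonically $\ca{H}om(\ca M, \ca E/\ca M)$; this is the content of \cite[\href{https://stacks.math.columbia.edu/tag/0FCY}{Example 0FCY}]{stacks-project} used already in the proof of \ref{lem_eta_comparison_1}. Applying this with $\ca E = \ca O_B \oplus \ca F$ and $\ca M = \ca F$ (the $\infty$-section), the quotient $\ca E/\ca M$ is canonically $\ca O_B$, so the relative tangent line along $\infty$ is $\ca{H}om(\ca F, \ca O_B) = \ca F^\vee$, hence the relative cotangent line — which is $\Psi_\infty$ by definition — is $(\ca F^\vee)^\vee = \ca F$. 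Wait: I need to be careful about a dual, since $\Psi_\infty$ is the cotangent line, i.e. the dual of the tangent line along $\infty$, giving $\Psi_\infty = (\ca{H}om(\ca F,\ca O_B))^\vee = \ca F$. But $\eta$ was defined as $\ca F^\vee$. So to match the claimed equality $\Psi_\infty = \eta = \ca F^\vee$ I would instead take the $\infty$-section to correspond to $\ca O_B \hra \ca O_B \oplus \ca F$ (so that the target-map identification above places $\infty$ over the vanishing of the $\ca O_X$-component), whence $\ca E/\ca M = \ca F$, the tangent line along $\infty$ is $\ca{H}om(\ca O_B, \ca F) = \ca F$, and $\Psi_\infty = \ca F^\vee = \eta$. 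The one genuinely delicate point — and the main obstacle — is pinning down consistently which end of the $\bb P^1$-chain is ``$\infty$'', i.e. matching the sign/orientation conventions of the rubber construction (where $\infty$ carries the poles, so $D$ or $E$?) with the sub-bundle convention; I would resolve this by tracking a single explicit local section through $\phi$ and the maps $\ca O_X \to \ca O_X(E)$, $\ca O_X \to \ca O_X(D)$, just as in the statement of Construction 3, and checking that the marked point $\infty$ of the stable map lands on the section of $\bb P(\ca O_B\oplus \ca F)$ for which the quotient is $\ca F$.

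Finally, I would note that since both $\Psi_\infty$ and $\eta$ are pulled back from $B$ (they are line bundles on the base, independent of the point of $X$), the canonical isomorphism established fiberwise over $B$ is an isomorphism of line bundles on $\bb P(\cat{Rub}_{\ca L})$, and it is functorial in $B$ because every step — the map $\ca O_X(-D)(-\beta) \to \ca O_X \oplus \pi^*\ca F$, the induced map to $\bb P(\ca O_B \oplus \ca F)$, and the identification of the relative (co)tangent line along a section with $\ca{H}om(\ca M, \ca E/\ca M)$ — commutes with base change. This gives $\Psi_\infty = \eta$ as claimed.
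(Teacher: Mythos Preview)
The paper's own treatment is a bare assertion: the displayed equation $\Psi_\infty = \ca F^\vee$ appears immediately before the lemma, followed only by ``We have deduced''. There is no argument in the paper to compare against beyond this.

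Your approach actually supplies the computation the paper omits, via the standard identification of the normal bundle to a section of $\bb P(\ca E)$ corresponding to a sub-line-bundle $\ca M$ with $\Hom(\ca M,\ca E/\ca M)$. That computation is correct, and the flip you go through is not a mistake but exactly the point: with $\infty$ taken as the $\ca O_B$-section one gets cotangent line $\ca F^\vee = \eta$, while the other choice gives $\ca F$. You are right that the only content of the lemma is pinning down this orientation, and your proposed method --- tracing an explicit local section through the map $\ca O_X(-D)(-\beta) \to \ca O_X \oplus \pi^*\ca F$ and checking which section of the target receives the points over $E$ versus $D$ --- is the correct way to do it. The paper does not make this step explicit either; it simply declares the answer. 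So your write-up, once you commit to one convention and carry out the check you describe, is strictly more complete than what appears in the paper.
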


\begin{remark}
Above we have constructed a rubber target of length 1 (\textit{i.e.}~with no expansions).  This is because we are only interested in what happens near the infinity section, so we do not need to construct the whole expanded chain. The reader who is more comfortable with expansions may verify that the length 1 target we construct here is exactly what is obtained by following through the proof of the expanded target in \cite[Proposition~50]{BHPSS}, and then contracting all except the top component.
\end{remark}

\subsection{Computation of \texorpdfstring{$\boldsymbol{\eta}$ for $\boldsymbol{k=0}$}{$\eta$ for $k=0$}}

Here we prove~\ref{thm:HDR_k_is_0}, which we restate for the convenience of the reader.

\begin{theorem}
\ref{conj:HDR} is true for $k=0$: for any $g,u \geq 0$ and any vector $A \in \mathbb{Z}^n$ with sum $|A|=0$, we have
\[
p_*\left(\left[\PP\left(\cat{Rub}_{\calL_A}\right) \right]^\mathrm{vir} \cdot \eta^u \right) 
\= p_*\left(\left[\Mbar_{g,A}\left(\mathbb{P}^1, 0, \infty\right)^\sim\right]^\mathrm{vir} \cdot \Psi_\infty^u \right)  
\= [r^u]{\rm Ch}_{g,A}^{0, r, g+u}.
\]
\end{theorem}

\begin{proof}
The first equality follows from~\Cref{lem_eta_comparison_1,lem_eta_comparison_2}. For the second equality, we note that the term on the left has been computed in \cite[Corollary 4.3]{FWY} in terms of a slightly modified Chiodo class. Indeed, we define an $r$-shifted version $A(r)$ of $A$ by
\[
A(r)_i \= \begin{cases}
a_i & \text{for }a_i \geq 0,\\
r+a_i & \text{for }a_i < 0.
\end{cases}
\]
In other words, for all indices $i$ with $a_i<0$ (which form a subset $I_\infty \subseteq \{1, \ldots, n\}$), we shift the vector $A$ by $r$ in the $\supth{i}$ entry.
Then the Chiodo class $\mathrm{Ch}_{g,A(r)}^{0,r,d}$ is a polynomial in $r$, for $r$ sufficiently large. Denote by
\[
\mathrm{Ch}_{g,A(r)}^{0,r,\bullet} \= \sum_{d \geq 0} \mathrm{Ch}_{g,A(r)}^{0,r,d}
\]
the associated mixed-degree class. Then in this notation, the formula from  \cite[Corollary 4.4]{FWY} reads as follows:
\begin{align*}
p_*\left(\left[\Mbar_{g,A}\left(\mathbb{P}^1, 0, \infty\right)^\sim\right]^\mathrm{vir} \cdot \Psi_\infty^u \right)  &\= \sum_{\vec e \in \mathbb{Z}_{\geq 0}^{I_\infty}} \prod_{i \in I_\infty} (-a_i \psi_i)^{e_i} \cdot \left[r^{u-|\vec e|}\right] \mathrm{Ch}_{g,A(r)}^{0,r,u+g-|\vec e|}\\
&\= [r^u] \left[\sum_{\vec e \in \mathbb{Z}_{\geq 0}^{I_\infty}} \prod_{i \in I_\infty} \left(- a_i  r  \psi_i\right)^{e_i} \cdot  \mathrm{Ch}_{g,A(r)}^{0,r,\bullet}\right]_{\mathrm{codim}\ g+u}\\
&\= [r^u] \left[\prod_{i \in I_\infty}  \frac{1}{1+ a_i r \psi_i} \cdot  \mathrm{Ch}_{g,A(r)}^{0,r,\bullet}\right]_{\mathrm{codim}\ g+u}\\
&\= [r^u] \left[\mathrm{Ch}_{g,A}^{0,r,\bullet}\right]_{\mathrm{codim}\ g+u}. 
\end{align*}
Here the last step uses \cite[Theorem 4.1(ii)]{DaniloEulerChar}.
\end{proof}

\subsection{(A)symmetry}

Above we gave three constructions of the line bundle $\eta = \eta(\ca L)$ on $\bb P(\cat{Rub}_\ca L)$. We know that the push-forwards to $\Mbar_{g,n}$ of $[\bb P(\cat{Rub}_{\ca L})]^\mathrm{vir}$ and $[\bb P(\cat{Rub}_{\ca L^\vee})]^\mathrm{vir}$ agree. However, once we intersect with the class $\eta$,  things are a little more subtle. The universal curve over $\PP(\cat{Rub})$ carries a PL function $\beta$, totally ordered and with maximum value $0$. The \emph{minimum} value of $\beta$ we denote by $\beta^{\min}$; this is a PL function on $\PP(\cat{Rub})$. We set $\eta(\beta^{\min}) = \eta \otimes_{\ca O} \ca O(\beta^{\min})$.

\begin{lemma}
We have 
\begin{equation}
p_*\left(\left[\bb P\left(\cat{Rub}_{\ca L^\vee}\right)\right]^\mathrm{vir} \cdot c_1(\eta)^u\right)
\= p_*\left(\left[\bb P\left(\cat{Rub}_{\ca L}\right)\right]^\mathrm{vir} \cdot \left(-c_1\left(\eta\left(\beta^{\min}\right)\right)\right)^{u}\right). 
\end{equation}
\end{lemma}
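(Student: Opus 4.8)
The plan is to relate the two spaces $\bb P(\cat{Rub}_{\ca L})$ and $\bb P(\cat{Rub}_{\ca L^\vee})$ by the natural ``flip'' isomorphism coming from negating the PL function. Concretely, I would first observe that there is an involution-type map
\[
\iota\colon \bb P(\cat{Rub}_{\ca L}) \to \bb P(\cat{Rub}_{\ca L^\vee})
\]
sending $(X/B, \beta, \ca F, \phi)$ to $(X/B, \beta^\flat, \ca F^\flat, \phi^\flat)$, where $\beta^\flat \coloneqq \beta - \beta^{\mathrm{min}}$ is the unique translate of $-(\text{reverse of }\beta)$ that is again totally ordered with maximum value $0$ — more precisely, one reverses the total order on the image of $\beta$ and renormalizes so that the top level is again $0$. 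The effect on slopes is to negate every $\kappa_h$, which is exactly what turns an enhanced level graph adapted to $\ca L$ into one adapted to $\ca L^\vee = \omega^{\otimes(-k)}(\sum(a_i-k)z_i)$; on the Picard side the twist $\ca O_X(\beta) \cong \ca L$ becomes $\ca O_X(-\beta) \cong \ca L^\vee$, and after correcting by the line bundle $\pi^*\calO_B(\beta^{\mathrm{min}})$ pulled up from the base one lands in $\bb P(\cat{Rub}_{\ca L^\vee})$. I would check that $\iota$ is an isomorphism of stacks over $\Mbar_{g,n}$, compatible with the forgetful maps $p$, and that it matches virtual classes, so that $\iota_* [\bb P(\cat{Rub}_{\ca L})]^{\mathrm{vir}} = [\bb P(\cat{Rub}_{\ca L^\vee})]^{\mathrm{vir}}$; this last point is really the content (already used) that the two push-forwards to $\Mbar_{g,n}$ agree, now upgraded to an identification of the spaces themselves.

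The second step is to track the tautological line bundle $\eta$ under $\iota$. By Construction~1, $\eta(\ca L)$ is the dual of the bundle $\ca F$, and $\eta(\ca L^\vee)$ is the dual of $\ca F^\flat = \ca F \otimes \pi^*\calO_B(\beta^{\mathrm{min}})$, or more precisely the dual of the base line bundle appearing in the object of $\bb P(\cat{Rub}_{\ca L^\vee})$ produced by $\iota$. Tracing the Abel--Jacobi/tautological description, the base line bundle that $\iota$ assigns is exactly $\ca F^\vee$ twisted by $\calO_B$-descent of $\beta^{\mathrm{min}}$, so that $\iota^* \eta(\ca L^\vee) = \eta(\ca L)^\vee \otimes \calO(-\beta^{\mathrm{min}})^{\pm 1}$; chasing signs one arrives at $\iota^* c_1(\eta(\ca L^\vee)) = -c_1(\eta(\ca L)(\beta^{\mathrm{min}}))$, which is the class appearing on the right-hand side of the lemma. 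I would phrase this cleanly in terms of the notation $\eta(\beta^{\mathrm{min}})$ introduced just before the statement, where $\eta(D)$ presumably means $\eta$ twisted by the line bundle associated to a PL function $D$.

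Granting these two facts the lemma is a one-line projection formula computation:
\[
p_*\!\left([\bb P(\cat{Rub}_{\ca L^\vee})]^{\mathrm{vir}}\cdot c_1(\eta)^u\right)
= p_*\!\left(\iota_*[\bb P(\cat{Rub}_{\ca L})]^{\mathrm{vir}}\cdot c_1(\eta(\ca L^\vee))^u\right)
= p_*\!\left([\bb P(\cat{Rub}_{\ca L})]^{\mathrm{vir}}\cdot \iota^* c_1(\eta(\ca L^\vee))^u\right),
\]
using that $p\circ\iota = p$, and then substituting $\iota^* c_1(\eta(\ca L^\vee)) = -c_1(\eta(\beta^{\mathrm{min}}))$.

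The main obstacle I expect is pinning down the exact twist by $\beta^{\mathrm{min}}$ and the accompanying sign when one renormalizes the flipped PL function so that its maximum is again $0$: the ``reverse and renormalize'' operation shifts every value by $-\beta^{\mathrm{min}}$, and one must be careful that this shift is by a PL function pulled back from the base (which it is, being a constant on each component over the closed stratum only after subtracting $\beta^{\mathrm{min}}$ — in general $\beta^{\mathrm{min}}$ genuinely varies and is itself only a PL function, not a base class, which is why the statement is phrased with $\eta(\beta^{\mathrm{min}})$ rather than a plain line bundle on $\bb P(\cat{Rub})$). Getting the interaction between the Abel--Jacobi normalization (fiber product over $\Picabs$ versus the relative Picard space) and this renormalization exactly right, including the distinction between moduli of sub-bundles versus quotient bundles that already caused the dualization in \ref{lem_eta_comparison_1}, is where the care is needed; the rest is formal.
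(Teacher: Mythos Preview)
Your approach is exactly the paper's: write down the explicit flip isomorphism $\iota\colon \bb P(\cat{Rub}_{\ca L}) \to \bb P(\cat{Rub}_{\ca L^\vee})$ over $\Mbar_{g,n}$, track the tautological bundle under it, and finish with the projection formula. One correction: your formula $\beta^\flat = \beta - \beta^{\mathrm{min}}$ has the wrong sign (that function has \emph{minimum} $0$, not maximum $0$); the paper sends $\beta$ to $\beta^{\mathrm{min}} - \beta$, which is what your verbal description ``reverse and renormalize'' actually gives. With that fixed the new base line bundle is $(\ca F(\beta^{\mathrm{min}}))^\vee$, so $\iota^*\eta(\ca L^\vee) = \ca F(\beta^{\mathrm{min}})$, and the rest goes as you say.
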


\begin{proof}
There is a natural isomorphism (compatible with the virtual fundamental classes) over $\Mbar_{g,n}$ from $\bb P(\cat{Rub}_{\ca L})$ to $\bb P(\cat{Rub}_{\ca L^\vee})$, given by
\begin{equation}
(X/B, \beta, \ca F, \phi) \longmapsto  \left(X/B, \beta^{\min} - \beta, \left(\ca F\left(\beta^{\min}\right)\right)^\vee, \phi'\right), 
\end{equation}
where $\phi'$ is the composite
\begin{equation}
\pi^*\left(\ca F\left(\beta^{\min}\right)\right)^\vee\left(\beta^{\min} - \beta\right) \= \pi^* \ca F^\vee (-\beta) \xrightarrow{(\phi^\vee)^{-1}} \ca L^\vee. \hfill\qedhere
\end{equation}
\renewcommand{\qed}{}    
\end{proof}

\section{Blowup descriptions}\label{sec:blowup_descriptions}

In this section we give a description of $\PP(\cat{Rub}_{\ca L}^{\sf{coarse}})$ as a global blowup. 

First, in genus zero, we construct an explicit sheaf of ideals on $\overline{\calM}_{0,n}$, such that blowing up $\overline{\calM}_{0,n}$ along this sheaf gives $\PP(\cat{Rub}_{\ca L}^{\sf{coarse}})$. 
In \cite{nguyen} Nguyen described the incidence variety compactification (IVC) in the case of genus zero as an explicit blowup of $\Mbar_{0,n}$.
Note that in genus zero there are no global residue conditions (because any top-level vertex must have a marked pole), and hence in genus zero the rubber space and the space of generalized multi-scale differentials coincide with the space of multi-scale differentials. Our blowup description can thus recover Nguyen's result about the IVC of the strata of meromorphic $1$-forms in genus zero as a blowup of $\overline{\calM}_{0,n}$. We also provide an example demonstrating the difference between the rubber space and the IVC in genus zero.
 
Next, for arbitrary genus, we construct a globally defined sheaf of ideals on the normalization of the incidence variety compactification (NIVC) whose blowup gives the (projectivized) multi-scale moduli space (\textit{i.e.}~the main component of $\cat{Rub}_{\ca L}^{\sf{coarse}}$). Consequently, it follows that the (coarse) space of projectivized (non-generalized) multi-scale differentials is a projective variety for all $g$. Recall that in \cite[Section~14.1]{LMS} the moduli space of multi-scale differentials was described as a local blowup, where the ideals locally defining the center of the blowup can differ by principal ideals on the overlaps of local charts. In particular, the description of \cite{LMS} did not yield projectivity of the space of multi-scale differentials. By constructing an explicit ample divisor class, the projectivity of the moduli space of multi-scale differentials was later established in \cite[Section~3]{ccm}. Our global blowup description thus provides a direct conceptual understanding of this projectivity result. 
 
Besides projectivity, knowing a blowup description of compactified strata of differentials can be helpful for obtaining geometric invariants, such as volumes of the strata, by using intersection theory; see \cite{nguyen}. We also provide a tropical interpretation of our blowup, which sheds further light on the geometry of the construction.

\subsection{The sheaf of ideals in genus zero} 

Let $\Gamma$ be the dual graph of a boundary stratum $D_\Gamma\subset\overline{\mathcal M}_{0,n}$. For each vertex $v\in V(\Gamma)$, let $d(v)$ be the degree of $\calL_\mu$ restricted to $v$ (so $\sum_{v\in V(\Gamma)} d(v) = 0$ by definition). Since $\Gamma$ is a tree, there exists a unique `slope'\footnote{The justification for this terminology is given by~\ref{eq:slopeadmiss}, which shows that the slopes of points $\cat{Rub}_{\ca L_\mu}$ satisfy the same conditions.} function $\kappa \colon H\to \mathbb Z$ from the set $H=H(\Gamma)$ of half-edges of $\Gamma$ such that 
\begin{enumerate}
\item $\kappa$ agrees with $m_i$ at the leg corresponding to a marked point $z_i$;
\item $\kappa(h) + \kappa(h') = 0$ for any $h$ and $h'$ that are opposite halves of an edge;
\item for all vertices $v$, we have $\sum_{h \in H(v)} \kappa(h) = d(v)$, where we sum over all half-edges attached to $v$. 
\end{enumerate}

For every pair of vertices $v$ and $v'$, let $\gamma$ be the unique path from~$v$ to~$v'$ in the tree~$\Gamma$. We view this (directed) path as a sequence of half-edges, where if an edge $e = (h, h')\in E(\Gamma)$ appears in $\gamma$ in the direction going \emph{from} $h$ \emph{to} $h'$, meaning that along the path $\gamma$ in the direction from $v$ to $v'$ the half-edge $h$ appears first, followed by $h'$, then we put (only) $h$ in our sequence of half-edges. We define an ideal locally around the boundary stratum $D_\Gamma\subseteq \overline{\calM}_{0,n}$ by 
\begin{equation*}
I(v, v') \coloneqq \prod_{h \in \gamma} \delta(h)^{\max(\kappa(h), 0)},
\end{equation*}
where we write $\delta(h)$ for the ideal associated to the edge containing $h$ (that is, for the defining equation of the boundary divisor of $\overline{\calM}_{0,n}$ where the corresponding node exists). Define
\begin{equation*}
J(v, v') \coloneqq I(v, v') + I(v', v);
\end{equation*}
this evidently satisfies $J(v, v') = J(v', v)$ and $J(v,v) = (1)$. Finally, we set 
\begin{equation*}
w(v) \coloneqq \on{valence}(v) - 2,
\end{equation*}
which is a positive integer by the stability of the curve, and define
\begin{equation*}
J(\Gamma) \coloneqq\prod_{(v, v') \in V \times V} J(v, v')^{w(v) w(v')}. 
\end{equation*}
A concrete example of this ideal is given in~\ref{ex:blowup-comparison} below.\footnote{If the reader prefers not splitting up the half-edges into ones with increasing and decreasing slopes, alternatively we can define the {\em fractional} ideal $J'(v, v') = \prod_{h\in \gamma} (\delta(h)^{\kappa (h)}, 1)$, and define $J'(\Gamma)\coloneqq \prod_{(v, v') \in V \times V} J'(v, v')^{w(v) w(v')}$. Then $J'$ induces a globally defined fractional ideal whose blowup is the same as that of $J$ (as we will justify for $J$ in the following subsections.)}

\begin{remark}
Note that it is possible to define {\em locally} an ideal that is simpler than $J$ and whose blowup produces the same space. Indeed, taking the product of $J$ and any  principal ideal works. Nevertheless, such local ideals may not always glue to form a {\em global} sheaf of ideals. Blowing up a globally defined sheaf of ideals (from a projective variety) can directly imply the projectivity of the resulting space, while gluing local blowups together does not. Therefore, the ideal $J$ in the above was designed with some delicate exponents to make it a globally defined sheaf of ideals, as we will check in the next section. This idea will be used in~\ref{subsec:blowup-g} to define a global ideal sheaf on NIVC to conclude the projectivity of the multi-scale space for arbitrary genus.
\end{remark}

\subsection{Compatibility under degeneration in genus zero}

To show that the ideals $J(\Gamma)$ defined in the neighbourhood of each stratum $D_\Gamma\subseteq \overline{\calM}_{0,n}$ glue to a global ideal sheaf over $\overline{\calM}_{0,n}$, we need to show that they behave well under degeneration. As any dual graph~$\Gamma$ can be obtained from any other~$\Gamma'$ by a series of operations of inserting and contracting edges, it is enough to check that the ideals glue under contracting a single edge of the graph.

\begin{lemma}\label{lm:Jcompatible}
Let $e$ be an edge of $\Gamma$, and let $\Gamma'$ be the graph obtained from $\Gamma$ by contracting~$e$. Then $J(\Gamma') = J(\Gamma)$, after inverting the ideal $\delta(e)$. 
\end{lemma}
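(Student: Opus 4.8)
The claim is local around the boundary stratum $D_{\Gamma'} \subseteq \overline{\calM}_{0,n}$, which is where both $\delta(e)$ is not invertible and the two ideals need to be compared; away from there $\delta(e)$ is a unit and there is nothing to prove. So first I would fix the combinatorial setup: write $v_1, v_2$ for the two endpoints of $e$ in $\Gamma$, and $\bar v$ for the vertex of $\Gamma'$ obtained by merging them under the contraction. The degrees satisfy $d(\bar v) = d(v_1) + d(v_2)$, and the slope function $\kappa'$ on $\Gamma'$ is the restriction of $\kappa$ to the half-edges surviving the contraction (the two halves of $e$ disappear). The valences satisfy $\on{valence}_{\Gamma'}(\bar v) = \on{valence}_\Gamma(v_1) + \on{valence}_\Gamma(v_2) - 2$, hence $w(\bar v) = w(v_1) + w(v_2)$, while $w$ is unchanged on all other vertices. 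These two additivity facts — of degrees and of the weights $w$ — are the structural reason the identity holds, and I would state them as a short preliminary observation.

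Next I would analyze how the building blocks $I(v,v')$ and $J(v,v')$ transform. The key point is that for any two vertices $u, u'$ distinct from $v_1, v_2$, the unique path in $\Gamma'$ between their images either coincides with the path in $\Gamma$ (if the path didn't cross $e$) or is obtained from it by deleting the single half-edge of $e$ that appeared in the sequence. Since $\max(\kappa(h),0) \ge 0$ and the half-edge of $e$ contributes exactly a factor $\delta(e)^{\max(\kappa(h_e),0)}$ where $h_e$ is whichever half of $e$ lies on the path, we get $I_\Gamma(u,u') = \delta(e)^{\max(\kappa(h_e),0)} \cdot I_{\Gamma'}(\bar u, \bar u')$ (with the convention that $\bar u = u$ for $u \ne v_1,v_2$), and similarly for the reversed path the factor is $\delta(e)^{\max(-\kappa(h_e),0)} = \delta(e)^{\max(\kappa(h_e'),0)}$. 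Taking the sum $J = I(\cdot,\cdot) + I(\cdot,\cdot)$ one of the two exponents of $\delta(e)$ is zero (since $\kappa(h_e) + \kappa(h_e') = 0$), so after inverting $\delta(e)$ the two summands of $J_\Gamma(u,u')$ rescale by the \emph{same} unit and hence $J_\Gamma(u,u') = J_{\Gamma'}(\bar u, \bar u')$ as ideals up to that unit — i.e. equal after inverting $\delta(e)$. The only remaining cases are pairs involving $v_1$ or $v_2$: for $J_\Gamma(v_1, v_2)$ one has $J_\Gamma(v_1,v_2) = \delta(e)^{\max(\kappa(h),0)} + \delta(e)^{\max(-\kappa(h),0)}$ which contains $1$ after inverting $\delta(e)$, matching $J_{\Gamma'}(\bar v, \bar v) = (1)$; and for $J_\Gamma(v_i, u)$ with $u$ a third vertex, the path from $v_i$ to $u$ in $\Gamma$ maps to the path from $\bar v$ to $\bar u$ in $\Gamma'$ (again possibly losing the half-edge of $e$), giving $J_\Gamma(v_i,u) = J_{\Gamma'}(\bar v, \bar u)$ after inverting $\delta(e)$.

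With these pairwise identities in hand, the final step is bookkeeping on the exponents in $J(\Gamma) = \prod_{(v,v')} J(v,v')^{w(v)w(v')}$. Split the product over $V(\Gamma)\times V(\Gamma)$ according to whether each coordinate is in $\{v_1,v_2\}$ or not. Pairs with neither coordinate in $\{v_1,v_2\}$ contribute $\prod_{(u,u')} J_{\Gamma'}(\bar u,\bar u')^{w(u)w(u')}$, exactly the corresponding part of $J(\Gamma')$ since $w$ is unchanged there. The pair $(v_1,v_2)$ (and $(v_2,v_1)$) contributes a factor that is a unit after inverting $\delta(e)$, as does each diagonal pair $(v_i,v_i)$, so these may be discarded. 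Finally, for a fixed third vertex $u$, the pairs $(v_1,u), (v_2,u)$ contribute $J_{\Gamma'}(\bar v,\bar u)^{w(v_1)w(u)} \cdot J_{\Gamma'}(\bar v,\bar u)^{w(v_2)w(u)} = J_{\Gamma'}(\bar v,\bar u)^{(w(v_1)+w(v_2))w(u)} = J_{\Gamma'}(\bar v, \bar u)^{w(\bar v)w(u)}$, using $w(\bar v) = w(v_1) + w(v_2)$; and likewise for $(u,v_1),(u,v_2)$. Multiplying everything together reconstitutes $J(\Gamma')$ exactly (after inverting $\delta(e)$). I expect the main obstacle to be purely organizational: making sure the half-edge conventions in the definition of $I(v,v')$ (which half-edge goes into the path sequence, and the corresponding $\delta(h)$) are handled consistently when an edge is contracted, and tracking that all discarded factors really are units in the localization $\delta(e)^{-1}\calO$ rather than merely elements of the ideal $(\delta(e))$. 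Once the half-edge/sign bookkeeping is pinned down, every step is a routine manipulation of monomial ideals.
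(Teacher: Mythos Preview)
Your argument is correct and follows essentially the same route as the paper's proof: both reduce to the additivity $w(\bar v)=w(v_1)+w(v_2)$, the observation that $J_\Gamma(v_i,u)\sim J_{\Gamma'}(\bar v,u)$ and $J_\Gamma(v_1,v_2)\sim(1)$ after inverting $\delta(e)$, and then bookkeeping on the product over $V\times V$. One small slip: the two summands of $J_\Gamma(u,u')$ do \emph{not} rescale by the same unit (the exponents are $\max(\kappa(h_e),0)$ and $\max(-\kappa(h_e),0)$, which differ), but this is harmless since both powers of $\delta(e)$ become units in the localization, so the ideal equality $J_\Gamma(u,u')=J_{\Gamma'}(\bar u,\bar u')$ still holds.
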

Note that inverting $\delta({e})$ geometrically corresponds to restricting to the locus where the edge $e$ is contracted, \textit{i.e.}~where the corresponding node of the curve is smoothed out.
\begin{proof}
We denote by $c\colon \Gamma \to \Gamma'$ the contraction map,  let $v_1$ and $v_2$ be the endpoints of $e$, and let $v'$ be the vertex of $\Gamma'$ to which $e$ is contracted, so that $d(v') = d(v_1) + d(v_2)$. 

If $v$ is any vertex of $\Gamma$ different from $v_1$ and $v_2$, then clearly $w(v) = w(c(v))$. Furthermore, the slope function on $\Gamma$ clearly restricts to the slope function on $\Gamma'$. Thus for any two vertices~$u_1$ and~$u_2$ of~$\Gamma$ distinct from $v_1$ and $v_2$, we have
\bes
J_\Gamma(u_1, u_2) \sim J_{\Gamma'}(u_1, u_2), 
\ees
where to simplify notation we write $I \sim J$ if the ideal sheaves $I$ and $J$ become equal after inverting $\delta(e)$. Similarly, $J_\Gamma(v_1, v_2) \sim (1)$. 

Based on the above analysis, we only need to consider the pairs of vertices in $\Gamma'$ and in $\Gamma$ that involve $v'$ and $v_1$ or $v_2$, respectively. 
It therefore suffices to show that 
\begin{equation}
\label{eq:sim_1}
\prod_{v \in V(\Gamma')} J(v', v)^{2w(v') w(v)} \sim \prod_{v \in V(\Gamma)} J(v_1, v)^{2 w(v) w(v_1)} J(v_2, v)^{2 w(v) w(v_2)}. 
\end{equation}
Let $V^\circ\coloneqq V(\Gamma)\setminus\{v_1,v_2\}=V(\Gamma')\setminus\{v'\}$. Then~\ref{eq:sim_1} reduces to showing
\begin{equation*}
\label{eq:sim_2}
\prod_{v \in V^\circ} J(v', v)^{w(v') w(v)} \sim \prod_{v \in V^\circ} J(v_1, v)^{w(v_1) w(v)} J(v_2, v)^{w(v_2) w(v)}. 
\end{equation*}
This follows from $w(v') = w(v_1) + w(v_2)$ and the relations 
\begin{equation*}
  J(v', v) \sim J(v_1, v) \sim J(v_2, v)
\end{equation*}
for all $v \in V'$.
\end{proof}

\begin{df}
\label{def:big_ideal_sheaf}
Define $J(\ca L_\mu)$ to be the (global) ideal sheaf on $\Mbar_{0,n}$ that for any boundary stratum $D_\Gamma$ restricts to the ideal~$J(\Gamma)$ on a neighbourhood of $D_\Gamma$.
\end{df}
The existence of $J(\ca L)$ follows from~\Cref{lm:Jcompatible}.

\subsection{A tropical picture in genus zero}
\label{subsec:tropical}

The normalized blowup in the ideal $J(\Gamma)$ corresponds tropically to a subdivision of the positive orthant in the vector space $\bb Q\Span{E}$, where $E=E(\Gamma)$ is the edge set. This subdivision is built by taking a hyperplane (or sometimes the whole space) for every pair of vertices in $\Gamma$: if $\gamma$ is the path from $v$ to $v'$ as above, then the corresponding hyperplane $L(v, v')$ is cut out by the equation
\begin{equation*}
\sum_{h \in \gamma} \kappa(h) e(h) \= 0 ,
\end{equation*}
where $e(h)$ is the edge containing the half-edge $h$, viewed as an element of the group $\bb N\Span{E}$ (and we recall that a half-edge $h$ is said to be contained in a directed path $\gamma$ if $\gamma$ goes via~$h$ before going through the complementary half-edge of the same edge).

These local subdivisions glue to a global subdivision of the tropicalization of $\Mbar_{0, n}$, inducing a proper birational map $\widetilde{\ca M}_{0, n}\to \Mbar_{0,n}$. 

\begin{lemma}\label{lem:blowup_to_subdivision}
The normalization of the blowup of\, $\overline{\calM}_{0,n}$ in the ideal $J(\ca L_\mu)$ is equal to the proper birational map $\widetilde{\ca M}_{0, n}\to \Mbar_{0,n}$ induced by the subdivision above. 
\end{lemma}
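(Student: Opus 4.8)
\textbf{Proof proposal for \ref{lem:blowup_to_subdivision}.}
The plan is to verify the equality \'etale-locally around each boundary stratum $D_\Gamma \subseteq \Mbar_{0,n}$, using the fact that both constructions are by definition compatible with the gluing procedures established in \ref{lm:Jcompatible} (for the ideal side) and by the stated gluing of the local subdivisions (for the tropical side). On a small enough chart, $\Mbar_{0,n}$ looks like $\bb A^{E} \times (\text{stuff transverse to the boundary})$, where $E = E(\Gamma)$ and the coordinate $\delta(e)$ on the $e$-th factor of $\bb A^E$ is the smoothing parameter for the corresponding node; the ideal $J(\Gamma)$ pulls back from $\bb A^E$, and likewise the subdivision lives on the cone $\bb R_{\geq 0}^{E} = \bb R_{\geq 0}\Span{E}$. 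So the whole statement reduces to a purely toric assertion: the normalized blowup of the affine toric variety $\bb A^E$ in the monomial ideal $J(\Gamma)$ equals the toric variety associated to the subdivision of $\bb R_{\geq 0}^E$ described in \ref{subsec:tropical}.

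First I would recall the standard dictionary (e.g. Kempf--Knudsen--Mumford--Saint-Donat, or the treatment in toric geometry texts): for a monomial ideal $I = (x^{\mathbf a_1}, \ldots, x^{\mathbf a_m})$ on an affine toric variety with cone $\sigma$, the normalized blowup of $I$ is the toric variety of the normal fan of the Newton polyhedron $\mathrm{Newt}(I) = \mathrm{conv}(\bigcup_j (\mathbf a_j + \sigma^\vee))$, equivalently the coarsest subdivision of $\sigma$ on which the piecewise-linear function $\mathbf v \mapsto \min_j \langle \mathbf a_j, \mathbf v\rangle$ is linear. Thus I must identify the PL function attached to $J(\Gamma)$ with the one whose domains of linearity are the cones of the subdivision built from the hyperplanes $L(v,v')$. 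Now $J(\Gamma) = \prod_{(v,v')} J(v,v')^{w(v)w(v')}$ and $J(v,v') = I(v,v') + I(v',v)$, so the PL function of $J(\Gamma)$ is
\[
\Phi_\Gamma(\mathbf v) \= \sum_{(v,v')} w(v)w(v') \cdot \min\Big( \sum_{h \in \gamma_{v\to v'}} \max(\kappa(h),0)\,\mathbf v(e(h)),\ \sum_{h \in \gamma_{v'\to v}} \max(\kappa(h),0)\,\mathbf v(e(h)) \Big),
\]
using that the PL function of a sum of ideals is the min of the PL functions and that of a product is the sum. The key elementary observation is that for a fixed ordered pair, writing the path from $v$ to $v'$ with signed slopes, one has $\sum_{h\in\gamma_{v'\to v}}\max(\kappa(h),0)\mathbf v(e(h)) - \sum_{h\in\gamma_{v\to v'}}\max(\kappa(h),0)\mathbf v(e(h)) = \sum_{h\in\gamma_{v\to v'}}\kappa(h)\mathbf v(e(h))$, because along the reversed path each half-edge $h$ with $\kappa(h)>0$ is replaced by its complement with slope $-\kappa(h)<0$, contributing $0$ to the $\max$, and conversely. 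Hence the $(v,v')$-summand is linear exactly on each side of the hyperplane $L(v,v') = \{\sum_{h\in\gamma}\kappa(h)e(h) = 0\}$, and its non-linearity locus is precisely $L(v,v')\cap \bb R_{\geq 0}^E$. Summing over all pairs, the domains of linearity of $\Phi_\Gamma$ are exactly the maximal cones of the common refinement of $\bb R_{\geq 0}^E$ by all the hyperplanes $L(v,v')$ — which is the subdivision of \ref{subsec:tropical}.

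It remains to check two bookkeeping points. One: the weights $w(v)w(v')$, being strictly positive (by stability, $w(v) = \mathrm{valence}(v)-2 \geq 1$), do not collapse any wall; since every hyperplane $L(v,v')$ appears with a positive coefficient, no cancellation occurs, and the refinement is exactly the hyperplane arrangement refinement regardless of the actual values of the weights. Two: one must confirm the normalization is harmless on both sides — the blowup side is normalized by hypothesis, and the toric variety of a fan is automatically normal, so taking the fan subdivision already produces the normalized object; this matches the standard statement that the normalized monomial blowup is the toric modification associated to the normal fan. Finally, the global statement follows because the local identifications are compatible: on the overlap corresponding to contracting an edge $e$, inverting $\delta(e)$ on the ideal side corresponds on the tropical side to passing to the star of the ray $\bb R_{\geq 0}e$, and \ref{lm:Jcompatible} together with the stated gluing of subdivisions guarantees the two restrictions agree, so the local isomorphisms glue to the asserted global isomorphism over $\Mbar_{0,n}$.

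The main obstacle I anticipate is purely notational rather than conceptual: carefully tracking the sign/orientation conventions for half-edges along directed paths (the distinction between putting $h$ versus $h'$ in the sequence $\gamma$, and the behavior of $\max(\kappa(h),0)$ under path reversal) so that the identity relating the two branches of each $\min$ to the linear form $\sum_{h\in\gamma}\kappa(h)e(h)$ comes out with the correct sign. Everything else is an application of the standard monomial-ideal-blowup/Newton-polyhedron dictionary together with the elementary fact that a min of positively-weighted PL functions has its corner locus equal to the union of the individual corner loci.
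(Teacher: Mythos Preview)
Your proposal is correct and follows essentially the same approach as the paper: reduce to the toric picture, invoke the standard dictionary between normalized monomial blowups and fan subdivisions, and observe that each factor $J(v,v')^{w(v)w(v')}$ contributes exactly the subdivision by the hyperplane $L(v,v')$, with the positive exponents $w(v)w(v')\ge 1$ being irrelevant for the resulting subdivision. The paper compresses this into three sentences (one for the dictionary applied to a single $J(v,v')$, one for irrelevance of positive powers, one for ``product of ideals $\leftrightarrow$ superimposed subdivisions''), while you unpack the same argument via the explicit PL function $\Phi_\Gamma$ and its corner locus; your identity relating the two branches of each $\min$ to the defining linear form of $L(v,v')$ is precisely the content behind the paper's first sentence.
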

\begin{proof}
  The standard dictionary (see \cite[Section~1.3.3, p.~14]{Kato1989Logarithmic-deg})
  between toric blowups and subdivisions implies that the normalized blowup in $J(v, v')$ is equal to that induced by the subdivision in $L(v, v')$. Since $w(v) \ge 1$ (by stability), blowing up in $J(v, v')$ is the same as blowing up in $J(v, v')^{w(v) w(v')}$.  Normalized blowup in a product of ideals corresponds to superimposing their subdivisions. 
\end{proof}

\subsection{Comparing blowups and rubber maps in genus zero}

We are ready to prove our main statement in genus zero.  

\begin{theorem}
\label{thm:blowup-0}
The normalization of the blowup $\widetilde{\ca M}_{0, n}$ of\, $\Mbar_{0,n}$ along the ideal sheaf\, $J(\calL_\mu)$ is the projectivized coarse moduli space of rubber differentials $\PP(\cat{Rub}_{\calL}^{\sf{coarse}})$. 
\end{theorem}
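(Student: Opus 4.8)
The strategy is to show that both $\widetilde{\ca M}_{0,n}$ and $\PP(\cat{Rub}_{\calL}^{\sf{coarse}})$ are normal, proper, birational over $\Mbar_{0,n}$, and that they define the \emph{same} log modification, by comparing the associated subdivisions of the tropicalization of $\Mbar_{0,n}$. By \ref{lem:blowup_to_subdivision} the left-hand side is the normalized modification attached to the subdivision built from the hyperplanes $L(v,v')$ in each cone $\bb Q_{\ge 0}\Span{E(\Gamma)}$. So it suffices to identify this subdivision with the one coming from the rubber space.

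First I would recall, as noted in \ref{sec:smoothness_of_Rub} and in the remark following \ref{def:rub_L}, that in genus zero the map $\mathfrak M\to\on{Pic}$ is an open immersion, so $\cat{Rub}_{\calL_\mu}=\cat{Rub}_{\ul 0}$ there (no global residue condition, every top-level vertex carries a marked pole), and the underlying coarse space $\PP(\cat{Rub}_{\calL}^{\sf{coarse}})\to\Mbar_{0,n}$ is proper, representable and birational. By the discussion in the \ref{fig:Rub_subdivision_fan} remark and the log \'etale description in the remark after the definition of $\cat{Rub}_\mu$, this morphism is a \emph{log modification}: it is given, locally over the cone $\sigma_\Gamma=\bb Q_{\ge 0}\Span{E(\Gamma)}$ parameterizing edge lengths $(\ell_e)$, by the coarsest subdivision on which the data of \ref{prop:rub_translation} becomes locally constant in the following sense. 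A point of $\cat{Rub}_{\ul 0}$ over a stratum refining $D_\Gamma$ is a PL function $\beta'$ with slopes $\kappa$ as above, whose values $\beta'(v)=-\sum_{h\in\gamma_{v_0\to v}}\kappa(h)\ell_{e(h)}$ (relative to a chosen top vertex $v_0$) must be totally ordered with maximum $0$. Hence the combinatorial type of the resulting enhanced level graph is determined exactly by the sign pattern of the differences $\beta'(v)-\beta'(v')=\sum_{h\in\gamma_{v'\to v}}\kappa(h)\ell_{e(h)}$, i.e.\ by which side of each hyperplane $L(v,v')$ the point $(\ell_e)$ lies on. Condition (3) of \ref{prop:rub_translation} (the root/divisibility condition forced by saturation) then amounts, on the coarse space, to no further subdivision — it only affects the lattice/stacky structure, which disappears upon passing to the coarse moduli space and normalizing. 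Therefore the coarsest subdivision making the rubber data locally constant over $\sigma_\Gamma$ is precisely the common refinement of the hyperplane arrangement $\{L(v,v')\}_{v,v'\in V(\Gamma)}$, which is exactly the subdivision of \ref{subsec:tropical}.

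The weights $w(v)w(v')$ in the ideal $J(\Gamma)$ do not change the subdivision, by the argument already given in the proof of \ref{lem:blowup_to_subdivision} (since $w(v)\ge 1$, blowing up $J(v,v')^{w(v)w(v')}$ gives the same normalized blowup as blowing up $J(v,v')$). They are included only to guarantee that the local ideals glue to the global $J(\calL_\mu)$, which is the content of \ref{lm:Jcompatible} — and on the rubber side, the gluing is automatic because $\cat{Rub}_{\calL_\mu}^{\sf{coarse}}\to\Mbar_{0,n}$ is a single globally-defined morphism. Having matched the two subdivisions cone-by-cone, and checked the matching is compatible with edge-contraction (which is again \ref{lm:Jcompatible} on one side and functoriality of tropicalization on the other), I conclude that $\widetilde{\ca M}_{0,n}$ and $\PP(\cat{Rub}_{\calL}^{\sf{coarse}})$ are the normalizations of the \emph{same} toroidal modification of $\Mbar_{0,n}$, hence canonically isomorphic over $\Mbar_{0,n}$.

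\textbf{Main obstacle.} The delicate point is the precise bookkeeping at the end of the second paragraph: translating ``the rubber data is locally constant'' into ``the point $(\ell_e)$ stays within one chamber of the arrangement $\{L(v,v')\}$'', and in particular checking that condition (3) of \ref{prop:rub_translation} genuinely contributes nothing to the \emph{coarse} subdivision (it is responsible for the half-integral lattice points seen in \ref{fig:Rub_subdivision_fan}, which matter for smoothness of the stack but are invisible after normalization of the coarse space). One must also verify that the totally-ordered-with-maximum-$0$ normalization of $\beta'$ introduces no extra walls beyond the $L(v,v')$: the maximum being attained at a varying vertex is exactly governed by the same hyperplanes, so this is a matter of careful phrasing rather than a new idea, but it is where the proof could go wrong if stated carelessly.
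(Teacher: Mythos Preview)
Your approach is essentially the same as the paper's: both identify the coarse rubber space over $\Mbar_{0,n}$ with the toroidal modification given by the hyperplane arrangement $\{L(v,v')\}$, using that in genus zero the PL function is uniquely determined by its slopes on a tree and that condition~(2) of \ref{def:rub} (your condition~(3) of \ref{prop:rub_translation}) disappears upon passing to the coarse space. The paper argues a bit more directly, characterizing points over a nuclear base rather than invoking the log-modification/subdivision dictionary; one imprecision to fix is your assertion ``$\cat{Rub}_{\calL_\mu}=\cat{Rub}_{\ul 0}$'', which is not literally true and should be replaced by the statement (the paper's Claim) that a PL function $\beta$ with $\calO_X(\beta)\cong\calL_\mu$ exists and is unique up to shift --- this holds because $\calL_\mu(-\beta)$ has multi-degree zero, hence is trivial in genus zero.
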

\begin{proof}
Let $X/B$ be a nuclear log curve of genus zero. 

\textit{Claim:} There exists a PL function $\beta$ on $X$ such that $\calL_\mu \cong \on O(\beta)$, and moreover such a $\beta$ is unique up to translation by an element of $\ghost_{B}(B)^\gp$. 

To prove the claim, we use the fact that the graph is a tree to deduce that there is a unique collection of admissible slopes $\kappa_e$. We pick a vertex $v_0$ and let $\beta$ be the unique PL function vanishing on $v_0$ and with slopes given by the $\kappa_e$. The line bundle $\calL_\mu(-\beta)$ has multi-degree zero, and is hence trivial since $X$ has genus zero. This proves the claim. 

Now recall that $\cat{Rub}^\coarse_\ca L$ can be obtained by omitting the divisibility condition~\eqref{d:r-2} from~\ref{def:rub}. In other words, the point $X/B$ lies in $\cat{Rub}^\coarse_\ca L$ if and only if the values of~$\beta$ on the vertices of $\Gamma$ form a totally ordered set. It therefore remains to check that this is equivalent to the map $B \to \Mbar_{0,n}$ factoring via the subdivision described in~\ref{subsec:tropical}. 

If $\gamma$ is a directed path in $\Gamma$, we define 
\begin{equation*}
\phi(\gamma) \coloneqq  \sum_{h \in \gamma} \kappa_h \delta_h. 
\end{equation*}
Since the difference of values of $\beta$ at the two ends of an edge is the slope $\kappa_e$ of that edge (with the appropriate sign), the values of $\beta$ at the two ends of a path~$\gamma$ differ by~$\phi(\gamma)$.

Fix a vertex $v_0$, and write $\gamma_{v}$ for the unique path from $v_0$ to $v$. Then the set $\{\beta(v) : v \in V(\Gamma)\}$ is totally ordered if and only if the set 
\begin{equation*}
\{\phi(\gamma_v) : v \in V(\Gamma)\}
\end{equation*}
is totally ordered. This is in turn equivalent to requiring that for every path $\gamma\subset\Gamma$ (not necessarily a path from $v_0$), the element $\phi(\gamma)$ is comparable to $0$, \textit{i.e.}~either $\phi(\gamma) \in \ghost_S$ or $-\phi(\gamma) \in \ghost_S$. Imposing this condition is equivalent to subdividing $\bb N\langle E\rangle$ in the hyperplane $L(v, v')$ of~\ref{subsec:tropical}, where~$v$ and $v'$ are the endpoints of $\gamma$.
\end{proof}

\subsection{Comparison to Nguyen's blowup in genus zero}

As mentioned, in genus zero Nguyen \cite{nguyen} described the IVC as an explicit blowup of $\Mbar_{0,n}$ (also for the more general case of $k$-differentials in genus zero). Since the rubber/multi-scale space is the normalization of a blowup of the normalization of the IVC,   
our blowup described in~\ref{thm:blowup-0} must dominate the blowup defined by Nguyen. In this subsection we recall Nguyen's construction, provide a viewpoint of his blowup from our setup, and give an alternative proof for Nguyen's result that blowing up $\Mbar_{0,n}$ in his ideal gives the~IVC.

We begin by recalling Nguyen's construction of a sheaf of ideals on $\Mbar_{0,n}$.
Let $X/B$ be a nuclear log curve of genus zero with graph $\Gamma$, and let $\kappa$ be the slope function on the edges of $\Gamma$, \textit{i.e.}~the PL function constructed in the proof of~\ref{thm:blowup-0}. 
For a given vertex $v\in V(\Gamma)$ and an edge $e \in E(\Gamma)$, let $h_v(e)$ be the half-edge of $e$ such that the path from the end of $h_v(e)$ to $v$ passes through $e$. For a vertex $v\in V(\Gamma)$, we define 
\begin{equation}\label{eq:delta_v}
    \delta_v \coloneqq\prod_{e\in E(\Gamma)} \delta_{e}^{\kappa_{v,e}},
\end{equation}
where $\kappa_{v,e} \coloneqq \max (\kappa(h_v(e)),0)$. Let $N(\Gamma)$ be the (local) ideal (in the variables $\delta_e$, as in our setup) generated by the set of elements $\delta_v$ for all vertices $v\in V(\Gamma)$. It was shown in \cite{nguyen} that these $N(\Gamma)$ can be patched together to a sheaf of ideals $N$ globally defined on $\overline{\ca M}_{0,n}$. This can be seen the same way as~\ref{lm:Jcompatible}, and we will discuss this in more generality in~\ref{rem:blowup-ivc} for arbitrary genera. 

Before proceeding, we illustrate Nguyen's ideal and our ideal in the following example.
\begin{example}\label{ex:blowup-comparison}
Consider a (partially ordered) dual graph $\Gamma$ as illustrated in~\ref{fig:blowupgraph}, with all slopes $\kappa=1$.
\begin{figure}[tb]
\[
\begin{tikzpicture}[baseline=0pt, vertex/.style={circle,draw,font=\huge,scale=0.5, thick}]
\node[vertex] (v0) at (0,0) {$v_0$};
\node[vertex] (v1) at (-2,2) {$v_1$};
\node[vertex] (v2) at (0,2) {$v_2$};
\node[vertex] (v3) at (2,2) {$v_3$};

\draw[thick] (v0) to node[near start, below right]{$\kappa_h=-1$} node[near end, below right]{$\kappa_{h'}=1$} node[midway, above left]{$e_3$}(v3);
\draw[thick] (v0) to node[near end, below left]{$e_2$}(v2);
\draw[thick] (v0) to node[near end, below left]{$e_1$} (v1);

\draw (v0) -- ++(270:0.6);
\draw (v1) -- ++(60:0.6);
\draw (v1) -- ++(120:0.6);
\draw (v2) -- ++(60:0.6);
\draw (v2) -- ++(120:0.6);
\draw (v3) -- ++(60:0.6);
\draw (v3) -- ++(120:0.6);
\end{tikzpicture}
\]
    \caption{The graph $\Gamma$ of a stratum in $\Mbar_{0,7}$; the desired slopes $\kappa$ can be obtained, \textit{e.g.}, by using the signature $\mu=(-1^6, 4)$ with the six markings associated to simple poles attached to the vertices $v_1$, $v_2$, and  $v_3$.}
    \label{fig:blowupgraph}
\end{figure}
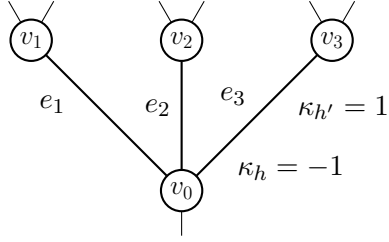
Recalling the definition of $\delta_v$ in~\ref{eq:delta_v} for a vertex $v\in V(\Gamma)$ and writing $\delta_i \coloneqq \delta_{e_i}$ to lighten notation, we obtain $\delta_{v_0} = \delta_1\delta_2\delta_3$, $\delta_{v_1} = \delta_2\delta_3$, $\delta_{v_2} = \delta_1\delta_3$, and $\delta_{v_3} = \delta_1\delta_2$. In this case Nguyen's ideal $N(\Gamma)$ is given by 
$$ N(\Gamma) \= (\delta_1\delta_2, \delta_1\delta_3, \delta_2\delta_3, \delta_1\delta_2\delta_3) \= (\delta_1\delta_2, \delta_1\delta_3, \delta_2\delta_3). $$
In contrast, our ideal $J(\Gamma)$ is given by 
$$ J(\Gamma) \= (\delta_1, \delta_2)^{2} (\delta_1, \delta_3)^{2}(\delta_2, \delta_3)^{2} (\delta_1)^4 (\delta_2)^4 (\delta_3)^4. $$
When we blow up $J(\Gamma)$, each ideal generated by a pair $(\delta_i, \delta_j)$ for $1\le i<j\le 3$ becomes principal, and so does the ideal $N(\Gamma)$. Therefore, the blowup in $J(\Gamma)$ dominates the blowup in $N(\Gamma)$.
\end{example}
Nguyen \cite{nguyen} proved that blowing up $\Mbar_{0,n}$ along the globally defined sheaf of ideals~$N$ gives the IVC. Indeed, in the example above we see explicitly that locally around the boundary stratum with the dual graph~$\Gamma$, the rubber/multi-scale space obtained by blowing up along~$J$ is a further blowup of the IVC.

The situation of this example can also be understood in general, from our viewpoint, which  gives an alternative proof of the result of Nguyen.

\begin{proposition}
\label{prop:blowup-0}
The local blowup of\, $\Mbar_{0,n}$ near $D_\Gamma$ along the ideal $J(\Gamma)$ makes the ideal $N(\Gamma)$ become principal. 

Moreover, in genus zero the blowup of\, $\Mbar_{0,n}$ along the ideal sheaf\, $N$ is the IVC. 
\end{proposition}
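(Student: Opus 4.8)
\textbf{Proof proposal for \ref{prop:blowup-0}.}

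The plan is to deduce the proposition from two sources: the tropical dictionary already set up in \ref{subsec:tropical} (for the first assertion), and the comparison between the rubber/multi-scale space and the IVC (for the second). For the first statement, I would argue that the normalized blowup of $\Mbar_{0,n}$ near $D_\Gamma$ along $J(\Gamma)$ corresponds, via \ref{lem:blowup_to_subdivision}, to the subdivision of the positive orthant $\bb Q\Span{E}$ obtained by superimposing all the hyperplanes $L(v,v')$. It then suffices to show that on each maximal cone $\tau$ of this subdivision the ideal $N(\Gamma) = (\delta_v : v \in V(\Gamma))$ becomes principal, i.e.\ one of the monomials $\delta_v = \prod_e \delta_e^{\kappa_{v,e}}$ divides all the others in the associated local ring of the toric chart. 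Unwinding the definitions: on a maximal cone $\tau$ of the subdivision, every difference $\phi(\gamma_{v}) - \phi(\gamma_{v'}) = \sum_{h \in \gamma} \kappa_h \delta_h$ (the path from $v'$ to $v$) has a well-defined sign, so the values $\{\phi(\gamma_v)\}$ are totally ordered on $\tau$. Letting $v_{\min}$ be the vertex minimizing $\phi(\gamma_v)$ on $\tau$, one checks (using properties (2),(3) of the slope function $\kappa$, exactly as in the construction of $\beta$ in \ref{thm:blowup-0}) that $\delta_{v_{\min}}$ divides every $\delta_v$ in the chart corresponding to $\tau$. Hence $N(\Gamma)$ is locally principal after the blowup along $J(\Gamma)$.

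For the second statement I would invoke \ref{thm:blowup-0} together with the structural fact (recalled in the introduction, and in the discussion preceding this proposition) that the projectivized multi-scale/rubber space $\PP(\cat{Rub}_{\calL}^{\sf{coarse}}) = \widetilde{\ca M}_{0,n}$ dominates the normalization of the IVC, which in genus zero coincides with the IVC since the IVC is already normal in genus $0$ (this normality can be seen directly, or deduced from the smoothness of $\PP(\cat{Rub}_\calL)$ in genus zero noted in \ref{sec:smoothness_of_Rub}). Since $\widetilde{\ca M}_{0,n} \to \Mbar_{0,n}$ is the normalized blowup along $J(\calL_\mu)$ and it factors through the IVC, and since the first part shows the blowup along $J(\Gamma)$ makes $N(\Gamma)$ principal, the universal property of blowing up gives a morphism $\widetilde{\ca M}_{0,n} \to \mathrm{Bl}_N \Mbar_{0,n}$ over the IVC. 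To conclude that the blowup of $\Mbar_{0,n}$ along $N$ \emph{equals} the IVC, I would check the reverse: that over $\mathrm{Bl}_N \Mbar_{0,n}$ the bundle $\calL_\mu(-\beta)$ (for $\beta$ the unique PL function constructed in \ref{thm:blowup-0}) acquires the property defining the IVC, namely that pushing forward yields the expected line sub-bundle of the Hodge bundle twisted by polar parts. This is a local statement on each toric chart where $N(\Gamma)$ is principal with generator $\delta_{v_{\min}}$, and amounts to the fact that after dividing the twisted differential by the smallest-order monomial it becomes a nowhere-vanishing section of the appropriate twist — precisely the computation carried out in \ref{subset:rescaled_from_log} and in the proof of \ref{thm:blowup-0}.

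I expect the main obstacle to be the \emph{normality} and \emph{exactness} bookkeeping around the IVC: one has to be careful that $\mathrm{Bl}_N \Mbar_{0,n}$ is the \emph{non-normalized} blowup and yet already agrees with the IVC, whereas $\widetilde{\ca M}_{0,n}$ is a \emph{normalized} blowup along a much larger ideal. The key lemma underlying both directions is that passing from the ideal $N(\Gamma)$ to $J(\Gamma)$ only superimposes hyperplanes $L(v,v')$ on which the combinatorial "total ordering of $\phi(\gamma_v)$" already holds once the minimal monomial $\delta_{v_{\min}}$ is principalized; in other words, once $N(\Gamma)$ is principal the remaining subdivisions of $J(\Gamma)$ separate the \emph{prong-matching} data but do not further refine the IVC. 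Making this precise — showing that the extra factors $(\delta_i,\delta_j)^{w(v)w(v')}$ and the principal factors $\delta_e^{4}$ in $J(\Gamma)$ do not change the image in the IVC — is where the real work lies, and it is essentially the genus-zero shadow of the local blowup analysis of \cite[Section~14]{LMS}. Everything else is the toric dictionary of \ref{lem:blowup_to_subdivision} applied twice.
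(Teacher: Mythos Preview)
Your argument for the first assertion is correct and is essentially the tropical rephrasing of the paper's direct argument. The paper works more elementarily: for any two vertices $v,v'$, the exponent of $\delta_e$ in $\delta_v$ and in $\delta_{v'}$ agree unless $e$ lies on the path from $v$ to $v'$, in which case these exponents are exactly those appearing in $I(v,v')$ and $I(v',v)$. Hence $(\delta_v,\delta_{v'})$ differs from $J(v,v')=(I(v,v'),I(v',v))$ by a principal ideal, so blowing up $J(\Gamma)$ makes every pair $(\delta_v,\delta_{v'})$ principal and therefore $N(\Gamma)$ principal. Your version via the subdivision of \ref{lem:blowup_to_subdivision} says the same thing on each maximal cone; either route is fine.

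The second assertion, however, has a genuine gap. You produce (or at least sketch) a map $\mathrm{Bl}_N\Mbar_{0,n}\to\mathrm{IVC}$ via the universal property of the Hodge bundle, and you have the rubber space $\widetilde{\ca M}_{0,n}$ mapping to both. But this configuration does not force $\mathrm{Bl}_N\Mbar_{0,n}\cong\mathrm{IVC}$: you are missing the map in the other direction. The paper constructs $\mathrm{IVC}\to\mathrm{Bl}_N\Mbar_{0,n}$ directly from the universal property of the blowup, and for this one must show that $N(\Gamma)$ pulls back to a \emph{principal} ideal on the IVC. The key observation making this work is that each $\delta_v$ is an \emph{adjusting parameter} for the component $X_v$ in the sense of \cite[Proposition~11.13]{LMS}: multiplying the stable differential by $\delta_v^{-1}$ makes it nonzero on $X_v$. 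On the IVC a point records a stable differential that is not identically zero, hence nonzero precisely on the global top-level components; the corresponding $\delta_v$'s then differ by units and divide all the others, so $N(\Gamma)$ is principal there. This adjusting-parameter interpretation is absent from your argument, and without it the reverse map cannot be built.

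Separately, your claim that the IVC is normal in genus zero because $\PP(\cat{Rub}_{\calL})$ is smooth is not valid: a smooth variety dominating $X$ does not make $X$ normal. The paper's proof does not use (or assert) normality of the IVC; it works directly with the two universal properties.
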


Before giving the proof, we first reinterpret $N(\Gamma)$ geometrically as follows. If two vertices $v$ and $v'$ are joined by an edge~$e$, 
and if $\ell(v)>\ell(v')$, then~$\delta_v$ divides~$\delta_{v'}$. Therefore, the ideal $N(\Gamma)$ is the same as the ideal generated only by the elements $\delta_v$ where $v$ ranges over all vertices that are local maxima of $\Gamma$ (in the sense that all edges from $v$ go down~-- recall that this is a partial order on the graph, and the datum of a multi-scale differential upgrades this to a full order). A vertex~$v$ that is a local maximum of~$\Gamma$, such that the corresponding $\delta_v$ generates the ideal $N(\Gamma)$ after the blowup, becomes a global top-level vertex. On the other hand, those local maxima~$v$ whose $\delta_v$ terms do {\em not} generate the principal ideal after blowing up $N(\Gamma)$ may not divide each other, and thus remain unordered. This corresponds to the fact that a point in the IVC records actual differentials merely on top-level vertices where the stable differential is not identically zero, while on any lower vertex the stable differential is identically zero (though the underlying marked zeros and poles of the twisted differential are still remembered).  

\begin{proof}
For the first claim, note that the edge parameter $\delta_e$ appears with the same exponent in the expressions of $\delta_v$ and $\delta_{v'}$ \emph{unless}~$e$ lies in the unique path from $v$ to $v'$, in which case the exponents of $\delta_e$ in $\delta_v$ and $\delta_{v'}$ are the same as those in $I(v,v')$ and $I(v',v)$, respectively. Since blowing up along $J(\Gamma)$ makes the ideal $(I(v,v'), I(v',v))$ principal, it follows that each ideal $(\delta_v, \delta_{v'})$ becomes principal under that blowup. This is to say that after blowing up in~$J(\Gamma)$, one of $\delta_v$ and $\delta_{v'}$ must divide the other. Doing this for all~$v$ and $v'$ shows that after the blowup along $J(\Gamma)$, there is a set of elements $\{\delta_{v_1},\dots,\delta_{v_k}\}$ such that for every $i$ and each $v\in V(\Gamma)$, $\delta_{v_i}$ divides $\delta_v$. 
In particular, such $\delta_{v_i}$ and $\delta_{v_j}$ divide each other and thus differ by multiplication by a unit, and the ideal $N(\Gamma)$, after the blowup along $J(\Gamma)$, is generated by any one of these $\delta_{v_i}$, and hence it becomes principal. 

For the second claim, we will construct the desired morphisms between the blowup and the IVC in both directions that are inverses of each other. Since the blowup and the IVC both admit natural maps onto $\Mbar_{0,n}$, these morphisms will be constructed locally over each boundary stratum $D_\Gamma$ of $\Mbar_{0,n}$. 

The upshot underneath the constructions is that $\delta_v$ for $v\in V(\Gamma)$ is an {\em adjusting parameter} in the sense of \cite[Proposition 11.13]{LMS}, which means that multiplying by $\delta_v^{-1}$ makes the limiting differential become not identically zero on the component corresponding to $v$. To see this, let $D_{e_i}$ be the boundary divisor of $\overline{\calM}_{0,n}$ corresponding to a given edge $e_i$ of $\Gamma$. Contracting all edges of $\Gamma$ except $e_i$ produces a graph with two vertices connected by the  edge $e_i$, and the family of stable differentials over it vanishes on the irreducible component corresponding to the lower-level vertex, with generic vanishing order $|\kappa_{e_i}|$. Given a vertex $v$ of $\Gamma$, if the image of $v$ under this contraction is the lower of the two resulting vertices, then over $D_\Gamma$ the family of stable differentials vanishes identically on the irreducible component corresponding to~$v$. Applying this observation to all edges $e_i$ in $\Gamma$ where $v$ becomes lower after the above edge contractions, it follows that the expression of $\delta_v$ records exactly the total vanishing order over $D_\Gamma$ of the stable differentials on the irreducible component corresponding to $v$. Therefore, multiplying by $\delta_v^{-1}$ makes the limiting differential become not identically zero on $v$.  
By definition, this implies that $\delta_v$ is an adjusting parameter for $v$.

Now we construct a morphism from the IVC to the blowup of $\Mbar_{0,n}$ along~$N$ by using the universal property of the blowup. More precisely, as we blow up (in a neighbourhood of $D_\Gamma$) the ideal generated by all $\delta_v$, it suffices to check that this ideal becomes principal on the IVC. Recall that the IVC parameterizes pointed stable differentials (of prescribed type) that are not identically zero, where  
a stable differential is a section of 
the dualizing sheaf over the stable curve, considered up to an overall scaling by a non-zero constant factor. If a vertex $v$ is not a local maximum of $\Gamma$, \textit{i.e.}~if there exists an edge $e$ going up from $v$, then the (stable) differential on the irreducible component corresponding to $v$ is identically zero. Thus given a (not identically zero)
stable differential, we can declare a local maximum vertex $v$ of $\Gamma$ to be a global maximum if and only if the stable differential on the corresponding irreducible component of the curve is not identically zero. By the preceding discussion, this is precisely to say that all adjusting parameters $\delta_v$ for the global maxima vertices $v$ differ by units, and divide all the other $\delta_v$. Hence the ideal $N(\Gamma)$ pulls back a  principal ideal on the IVC, which induces the map (locally) from the IVC to the blowup of $\Mbar_{0,n}$ along $N(\Gamma)$.  

Next we construct a morphism in the opposite direction, locally near $D_\Gamma$ from the blowup of $\Mbar_{0,n}$ along~$N(\Gamma)$ to the IVC, by using the universal property of the Hodge bundle over $\Mbar_{0,n}$ (twisted by the polar part of the differentials, and projectivized as always).

Consider the universal family of differentials with prescribed zeros and poles over a punctured neighbourhood of $D_\Gamma$ in $\calM_{0,n}$. We claim that this family of differentials extends to a family of stable differentials over the local blowup of $\Mbar_{0,n}$ along $N(\Gamma)$. Indeed, for each point in the preimage of $D_\Gamma$ in the blowup, we know the set of global maxima $v_1,\dots,v_k$ of the graph (with $k\ge 1$), such that any other $\delta_v$ is divisible by one of the adjusting parameters $\delta_{v_1},\dots,\delta_{v_k}$. It follows that the limiting stable differential will be not identically zero precisely on the irreducible components corresponding to $v_1,\dots,v_k$, and thus in particular not identically zero on the stable curve as a whole.
By the universal property of the projectivized Hodge bundle, the blowup along $N(\Gamma)$ carrying a family of (not identically zero) stable differentials 
admits locally a morphism to this bundle. Moreover, since, over the locus of smooth curves, this family of differentials coincides with the family of differentials in a given stratum, it implies that the image of the morphism from the blowup to the Hodge bundle is the closure of the stratum, \textit{i.e.}~the IVC. By construction, it is clear that this map is the inverse of the local morphism in the other direction. As both the blowup of $\Mbar_{0,n}$ along the global ideal sheaf $N$ and the IVC admit forgetful surjective maps to $\Mbar_{0,n}$, and as we proved that they are locally isomorphic near every stratum $D_\Gamma\subset\Mbar_{0,n}$, it follows that they are globally isomorphic.
\end{proof}

\subsection{A blowup description for arbitrary genus} 
\label{subsec:blowup-g}

Recall that the NIVC denotes the normalization of the incidence variety compactification (\textit{i.e.}~of the closure of the stratum in the Hodge bundle), and let $\Gamma$ be a partially ordered level graph of a boundary stratum in the NIVC. For every vertex $v\in V(\Gamma)$, by normality an \emph{adjusting parameter~ $h_v$} exists by \cite[Proposition~11.13]{LMS}. Recall that by definition this means that multiplying by $h_v^{-1}$ makes the limiting differential in a degenerating family not identically zero on the irreducible component of the stable curve corresponding to $v$.  Define an ideal locally around the boundary stratum of the NIVC corresponding to $\Gamma$ by 
$$ J(\Gamma) \coloneqq \prod_{(v, v') \in V(\Gamma) \times V(\Gamma)} (h_v, h_{v'})^{w(v) w(v')}, $$
where the product runs over {\em all ordered} pairs of vertices (including the case $v = v'$) and where $w(v) \coloneqq 2g(v) -2 + {\rm valence} (v)$. Since the blowup in $J(\Gamma)$ makes the adjusting parameters comparable for any two vertices, the (local) blowup of the NIVC along $J(\Gamma)$ is orderly (recall that this means that after the blowup the divisibility relation induces a full order on the set of adjusting parameters; see \cite[Definition 11.15]{LMS}). By the same argument as in the proof of \cite[Theorem~14.8]{LMS}, it follows that the normalization of this blowup is isomorphic to the moduli space of multi-scale differentials.

Finally, we show that the locally defined ideals $J(\Gamma)$ are compatible under degeneration, so that they glue to form a global sheaf of ideals~$J$ on the NIVC. For this, again it is enough to check compatibility under an edge contraction (recalling that unlike in the genus zero case, the edge can be a loop). First, in the case of a loop, by the formula for $w(v)$, we see that contracting a loop does not change $J(\Gamma)$. Now suppose that two distinct vertices $v_1$, $v_2$ of $\Gamma$ connected by an edge $e$ are merged, when $e$ is contracted, to a vertex~$v'$ in the resulting graph $\Gamma'$.  Note that this contraction makes $h_{v_1}\sim h_{v_2}\sim h_{v'}$ modulo units. Moreover, $w(v') = w(v_1) + w(v_2)$. Then for any vertex~$u$  different from $v_1, v_2, v'$, we have 
$$ \left(h_{v_1}, h_u\right)^{2w(v_1)w(u)} \left(h_{v_2}, h_u\right)^{2w(v_2)w(u)} \,\sim\, \left(h_{v'}, h_u\right)^{2 w(v')w(u)},  $$
$$ \left(h_{v_1}, h_{v_1}\right)^{w(v_1)^2} \left(h_{v_2}, h_{v_2}\right)^{w(v_2)^2} \left(h_{v_1}, h_{v_2}\right)^{2w(v_1)w(v_2)} \, \sim \, \left(h_{v'}, h_{v'}\right)^{w(v')^2}. $$ 
It follows that $J(\Gamma')$ specializes to $J(\Gamma)$. Therefore, the local ideals $J(\Gamma)$ can be glued to a global sheaf of ideals $J$. In summary, we have proven the following theorem. 

\begin{theorem}
\label{thm:blowup-g}
The main component $\PP(\mathcal{MS}_\mu)$ of\, $\PP(\GMS)$ is the normalization of the blowup of the NIVC in the ideal sheaf $J$; in particular, its coarse moduli space is a projective variety. 
\end{theorem}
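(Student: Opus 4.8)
The plan is to reduce everything to the already-established facts about orderly blowups from \cite[Section 14]{LMS} together with the compatibility-under-degeneration statement just proven, so that the local blowups glue to the global sheaf $J$. First I would recall precisely the local picture: fix a boundary stratum of the NIVC with partially ordered level graph $\Gamma$, and for each vertex $v \in V(\Gamma)$ take an adjusting parameter $h_v$ (which exists by normality and \cite[Proposition~11.13]{LMS}). The key local assertion is that blowing up the NIVC in the local ideal $J(\Gamma) = \prod_{(v,v')} (h_v, h_{v'})^{w(v)w(v')}$ makes all ratios $h_v/h_{v'}$ locally regular or its inverse regular, i.e.\ renders the adjusting parameters \emph{comparable} in the local ring at every point of the exceptional fiber. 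I would check this by the standard toric dictionary: blowing up an ideal $(h_v,h_{v'})$ is the universal way to make this pair of elements comparable, and taking the product of these ideals over all ordered pairs (with positive exponents, which are positive by stability since $w(v) = 2g(v)-2+\mathrm{valence}(v) \geq 1$) superimposes these modifications, so on the blowup every pair $h_v, h_{v'}$ becomes comparable. Having the adjusting parameters comparable at every point is precisely the \emph{orderliness} condition in the sense of \cite[Section 14]{LMS}, so the local blowup of the NIVC along $J(\Gamma)$ is an orderly blowup.

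Next I would invoke \cite[Theorem~14.8]{LMS}: the normalization of an orderly blowup of the NIVC is isomorphic to the (projectivized) moduli space of multi-scale differentials $\PP(\mathcal{MS}_\mu)$ — or more precisely, its restriction over the given stratum agrees with the corresponding chart of $\PP(\mathcal{MS}_\mu)$. This is exactly the content invoked in the paragraph preceding the theorem, so I would simply cite it; the only thing to be careful about is that we want the \emph{same} space $\PP(\mathcal{MS}_\mu)$ to arise locally everywhere, which is why the gluing step is essential and cannot be skipped. At this point the local statement is complete: over each boundary stratum the normalized blowup of the NIVC in $J(\Gamma)$ is the local chart of $\PP(\mathcal{MS}_\mu)$.

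The third step is the gluing, which is the compatibility-under-degeneration computation already carried out in the excerpt immediately before the theorem statement: the local ideals $J(\Gamma)$ transform correctly under edge contractions (distinguishing the loop case, where $w$ is unchanged, from the case of contracting an edge between distinct vertices $v_1, v_2$, where $w(v') = w(v_1)+w(v_2)$ and $h_{v_1} \sim h_{v_2} \sim h_{v'}$ modulo units), hence they patch to a global ideal sheaf $J$ on the NIVC. Since the formation of the normalized blowup commutes with restriction to the (open) charts, and the charts cover the NIVC, the global normalized blowup of the NIVC along $J$ restricts over each stratum to the local chart of $\PP(\mathcal{MS}_\mu)$, and these local isomorphisms are canonical (they are the identity on the locus of smooth curves, and the adjusting-parameter description is intrinsic), so they glue to a global isomorphism with $\PP(\mathcal{MS}_\mu)$. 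Finally, projectivity of the coarse moduli space is automatic: the NIVC is projective (it is the normalization of a closed subvariety of the projectivized twisted Hodge bundle over $\Mbar_{g,n}$, which is projective), a blowup of a projective variety along a coherent sheaf of ideals is projective, and normalization preserves projectivity; hence the coarse space of $\PP(\mathcal{MS}_\mu)$ is a projective variety.

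\textbf{Main obstacle.} I expect the delicate point to be verifying carefully that the local orderly blowups glue to give \emph{globally} the space $\PP(\mathcal{MS}_\mu)$ rather than merely a space that is locally isomorphic to it — i.e.\ checking that the isomorphisms of \cite[Theorem~14.8]{LMS} on overlapping charts are compatible. This hinges on the canonicity of the adjusting-parameter description together with the edge-contraction compatibility of the $J(\Gamma)$, and on the fact (used implicitly) that the construction of multi-scale differentials in \cite{LMS} from an orderly blowup is itself functorial in the appropriate sense. The ideal-theoretic compatibility ($J(\Gamma') \sim J(\Gamma)$) is the concrete computational heart of this, and it has already been done above; the remaining work is to phrase the patching cleanly, for which the argument structure of \cite[Section 14]{LMS} can be followed essentially verbatim.
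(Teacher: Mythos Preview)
Your proposal is correct and follows essentially the same approach as the paper: the paper's argument (presented in the paragraphs immediately preceding the theorem statement) likewise shows that blowing up the local ideal $J(\Gamma)$ makes the adjusting parameters comparable, hence orderly, then invokes \cite[Theorem~14.8]{LMS} for the local identification with $\PP(\mathcal{MS}_\mu)$, and finally uses the edge-contraction compatibility to glue the $J(\Gamma)$ to a global ideal sheaf. Your explicit deduction of projectivity from the standard chain (NIVC projective $\Rightarrow$ blowup projective $\Rightarrow$ normalization projective) spells out what the paper leaves implicit, and your ``main obstacle'' about canonicity of the local isomorphisms is a fair caution but is handled exactly as you suggest, by the globality of $J$ together with the intrinsic nature of the construction.
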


\begin{remark}
\label{rem:blowup-ivc}
For arbitrary genera, one can describe the IVC (and then also the rubber and multi-scale spaces) by blowing up the normalization of the closure of the stratum in the  Deligne--Mumford compactification $\Mbar_{g,n}$, which we denote by NDM. The argument is similar to the one in the proof of~\ref{prop:blowup-0}. Since the NDM is normal, for every vertex $v$ of $\Gamma$, an adjusting parameter $h_v$ for $v$ exists as in \cite[Proposition~11.13]{LMS}. Then the blowup of the NDM along the (local) ideals  $(h_{v_1}, \ldots, h_{v_k})$, where $v_1, \ldots, v_k$ are local maximum vertices of $\Gamma$, carries a family of stable differentials,  and hence it maps to the IVC by the universal property of the Hodge bundle. The inverse map from the IVC to this blowup is similarly obtained by using the universal property of the blowup.  

To see that these local ideals patch together to form a global sheaf of ideals, suppose that a local maximum vertex $v_1$ joins a lower vertex $v_0$ via an edge $e$.  Suppose further that $e$ is contracted so that $v_1$ and $v_0$ merge as one vertex $v'_1$, which makes $h_{v_1} \sim h_{v'_1}$ modulo units.  If $v'_1$ remains a local maximum, then we have $(h_{v_1}, h_{v_2}, \ldots, h_{v_k}) = (h_{v'_1}, h_{v_2}, \ldots, h_{v_k})$ after contracting $e$, so these ideals match. If $v'_1$ is not a local maximum, then there exists another local maximum vertex, say $v_2$, that goes along a path downward to $v'_1$ (in terms of the partial order of $\Gamma$). It follows that $h_{v_2}$ divides $h_{v_1} \sim h_{v'_1}$ and hence $(h_{v_1}, h_{v_2}, \ldots, h_{v_k}) = (h_{v_2}, \ldots, h_{v_k}) $ after contracting $e$, so these ideals still match. 
\end{remark}

\section*{Appendix. Sign conventions}\label{appendix:sign_conventions}
\addcontentsline{toc}{section}{Appendix. Sign conventions}

The sign conventions adopted in \cite{MarcusWiseLog} and in \cite{LMS} are opposite to one another; as this sign plays a more prominent role in \cite{LMS}, we follow that sign convention, which we now explain in the logarithmic language.

Let $(X, \M_X)$ be a log scheme and $\beta \in \ghost_X^\gp(X)$. The preimage of $\beta$ in the exact sequence
\begin{equation*}
1 \lra \ca O_X^\times \lra \M_X^\gp \lra \ghost_X^\gp \lra 1
\end{equation*}
is a $\bb G_m$-torsor that we denote by $\ca O_X^\times(\beta)$, from which we construct a line bundle $\ca O_X(\beta)$ by gluing in the \emph{zero}\footnote{Here Marcus and Wise glue in the $\infty$-section. } section. In particular, 
\begin{enumerate}
\item
if $X$ has divisorial log structure and $\beta \in \ghost_X(X)$, then $\ca O_X(\beta)$ is naturally an ideal sheaf on $X$;
\item
 if $(X,x)$ is a DVR with divisorial log structure at $x$, then the stalk of $\ghost_X$ at $x$ is naturally identified with $\bb N$, and the association $\beta \mapsto \ca O_X(\beta)$ sends $n$ to $\ca O_X(-nx)$; 
\item if $a \le b \in \ghost_X(X)^\gp$, then we have a natural map $\ca O_X(b) \to \ca O_X(a)$. 
\end{enumerate}

If $e\colon u \to v$ is a directed edge of a graph $\Gamma$ of length $\delta_e$, and $\beta$ is a function on the vertices of $\gamma$ with slope $\kappa$ along $e$, then $\beta(v) = \beta(u) + \kappa \cdot \delta_e$. We identify a half-edge $h$ attached to a vertex $e$ with an \emph{outgoing} edge at $e$.

If $(X/B, \beta)$ is a nuclear object of $\cat{Rub}$, then the image of $\beta$
is totally ordered with \emph{largest} element $0$. If the image of $\beta$ has
cardinality $N+1$, then there is a unique isomorphism of totally ordered sets
between $\operatorname{Im}(\beta)$ and $\{0,-1, \dots, -N\}$ (the latter having largest
element~$0$). We denote by $\ell\colon V \to \{0,-1, \dots, -N\}$ the induced map,
in accordance with~\ref{eq:normlev}.

If $e$ is an edge between vertices $u$ and $v$, we define $\ell^+(e)$ and $\ell^-(e)$ to be the unique pair of elements of $\{0,-1, \dots, -N\}$ such that $\ell^+(e) \ge \ell^-(e)$ and $\{\ell^+(e), \ell^-(e)\} = \{\ell(u), \ell(v)\}$.


\newcommand{\etalchar}[1]{$^{#1}$}
\def\cprime{$'$}
\providecommand{\bysame}{\leavevmode\hbox to3em{\hrulefill}\thinspace}

\end{document}